\newtheorem{theorem}{Theorem}[section]
\newtheorem{lemma}[theorem]{Lemma}
\newtheorem{proposition}[theorem]{Proposition}
\newtheorem{corollary}[theorem]{Corollary}
\newtheorem{definition}[theorem]{Definition}
\newtheorem{example}[theorem]{Example}
\newtheorem{remark}[theorem]{Remark}
\newcommand\Gr{\mathop{\rm Gr}}
\newcommand\supp{\mathop{\rm supp}}
\newcommand\esssup{\mathop{\rm esssup}}
\newcommand\id{\mathop{\rm id}}
\newcommand\tr{\mathop{\rm tr}}
\newcommand\nph{\varphi}
\newcommand\nul{\mathop{\rm null}}
\newcommand\loc{\mathop{\rm loc}}
\newcommand\vn{\mathop{\rm VN}}
\newcommand\clos{\mathop{\rm Clos}}
\newcommand\cb{\mathop{\rm cb}}
\newcommand\cg{\mathop{C_r^*(G)}}
\newcommand{\cl}[1]{\mathcal{#1}}
\newcommand{\bb}[1]{\mathbb{#1}}
\def\inv{^{-1}}
\begin{document}

\title{Sets of multiplicity and closable multipliers on group algebras}

\author{V.S. Shulman}
\address{Department of Mathematics, Vologda State Technical University, Vologda, Russia}
\email{shulman.victor80@gmail.com}

\author{I.G. Todorov}
\address{Pure Mathematics Research Centre, Queen's University Belfast, Belfast BT7 1NN, United Kingdom}
\email {i.todorov@qub.ac.uk}

\author{L. Turowska}
\address{Department of Mathematical Sciences,
Chalmers University of Technology and  the University of Gothenburg,
Gothenburg SE-412 96, Sweden}
\email{turowska@chalmers.se}

\subjclass[2010]{Primary 47L05;
Secondary 43A46, 22D25}

\thanks{ The third author was supported by the Swedish Research Council.}

\dedicatory{To the memory of William Arveson, with gratitude and admiration}


\date{19 September 2013}

\begin{abstract}
We undertake a detailed study of the sets of multiplicity in a second countable locally
compact group $G$ and their operator versions.
We establish a symbolic calculus for
normal completely bounded maps from the space $\cl B(L^2(G))$ of bounded
linear operators on $L^2(G)$ into the von Neumann algebra $\vn(G)$ of $G$ and
use it to show that a closed subset $E\subseteq G$
is a set of multiplicity if and only if the set $E^* = \{(s,t)\in G\times G : ts^{-1}\in E\}$ is a
set of operator multiplicity. Analogous results are established for $M_1$-sets and $M_0$-sets.
We show that the property of being a set of multiplicity is preserved under various operations,
including taking direct products, and establish an Inverse Image Theorem for
such sets. We characterise the sets of finite width that are also sets of operator multiplicity,
and show that every compact operator supported on a set of finite width
can be approximated by sums of rank one operators supported on the same set.
We show that, if $G$ satisfies a mild approximation condition,
pointwise multiplication by a given
measurable function $\psi : G\to \bb{C}$ defines a closable multiplier
on the reduced C*-algebra $C_r^*(G)$ of $G$ if and only if
Schur multiplication by the function
$N(\psi) : G\times G\to \bb{C}$, given by
$N(\psi)(s,t) = \psi(ts^{-1})$, is a closable operator when viewed as a densely defined linear map
on the space of compact operators on $L^2(G)$.
Similar results are obtained for multipliers on $\vn(G)$.
\end{abstract}

\maketitle

\tableofcontents

\section{Introduction}\label{s_intro}

The connections between Harmonic Analysis and the Theory of Operator Algebras have
a long and illustrious history. With his pivotal paper \cite{arveson}, W. B. Arveson opened up
a new avenue in that direction by introducing the notion of operator synthesis.
The relation between operator synthesis and spectral synthesis for
locally compact groups was explored in detail in \cite{f}, \cite{lt}, \cite{st}, \cite{et} and \cite{et2}, among others.
In this connection,  J. Froelich \cite{f} studied the
question of when the operator algebra associated with a commutative subspace lattice contains a non-zero compact operator. For any compact abelian group $G$ and a closed subset $E\subseteq G$,
he constructed  a commutative subspace lattice $\cl L_E$, such that the corresponding operator algebra
contains a non-zero compact operator if and only if $E$ is a set of multiplicity in the
sense of (commutative) Harmonic Analysis.

Recently, we observed \cite{stt} a connection between sets of multiplicity and
the closability of linear transformations that are a natural unbounded analogue of Schur multipliers.
This served as a motivation for our present study of sets of multiplicity in the general setting of
locally compact groups and their connection with
closable multipliers on group algebras.

Sets of multiplicity for the group of the circle arose in connection with the
problem of uniqueness of trigonometric series and have been
extensively studied (see \cite{gmcgehee}).
In a general locally compact group $G$,
sets of uniqueness (or, equivalently, of non-multiplicity)
were introduced by
M. Bo\.{z}ejko in \cite{bozejko_pams} as those closed subsets $E\subseteq G$ which
do not support non-zero elements
of the reduced C*-algebra $C^*_r(G)$ of $G$.

An operator counterpart of sets of multiplicity was introduced in \cite{stt}.
On the operator level, as well as on the level of locally compact groups,
two classes of sets of multiplicity have been mostly examined:
(operator) $M$-sets and (operator) $M_1$-sets.
Here we introduce the class of operator $M_0$-sets and
show, in Section \ref{s_sm}, that a closed subset $E$ of a second countable locally compact group
$G$ is an $M$-set (resp. $M_1$-set, $M_0$-set) if and only if the set
$E^* = \{(s,t) : ts^{-1}\in E\} \subseteq G\times G$ is an operator $M$-set
(resp. operator $M_1$-set, operator $M_0$-set). These results should be compared to
the result established in \cite{f}, \cite{lt} and \cite{st} stating that $E$ is a set of local spectral synthesis
if and only if $E^*$ is a set of operator synthesis.
The main technical tool we use here is a symbolic calculus
for weak* continuous completely bounded maps from
the algebra $\cl B(L^2(G))$ of bounded operators on $L^2(G)$ into
the von Neumann algebra $\vn(G)$ of $G$ (see Theorem \ref{p_maps}).
A significant role in our approach is played by
a locally compact version of the uniform Roe algebra which was introduced for
discrete groups in \cite{roe} and has been studied in various contexts.

In Section 4, we show that the property of being a set of
(operator) multiplicity is preserved under some natural operations.
These include direct products and a certain type of generalised union.
As a corollary, we recover M. Bo{\.z}ejko's result \cite{bozejko_pams} that
every countable closed set in a non-discrete locally compact group is
a set of uniqueness.
We also establish an Inverse Image Theorem for sets of operator multiplicity
(see Theorem \ref{th_invim}). En route, we give an affirmative answer of a question of
J. Froelich \cite{f} concerning the validity of a tensor product formula for masa-bimodules
(see Theorem \ref{l_tmin}).

In Section \ref{s_fw}, we examine sets of finite width. This class of sets has played a fundamental role
in the field since their introduction in \cite{arveson}
(see \cite{et}, \cite{et2}, \cite{st1} and the references therein).
We characterise the sets of finite width that are also
sets of operator multiplicity, and show that, in general, every
compact operator supported on a set of finite width is the norm limit of sums of rank one
operators supported on this set.

It is well-known that a measurable function
$\psi : G\to{\mathbb C}$ is a Herz-Schur multiplier
precisely when the function $N(\psi)$ given by $N(\psi)(s,t):=\psi(ts^{-1})$, is a
Schur multiplier on $G\times G$ \cite{bf} (see also \cite{j} and \cite{spronk}).
In Section \ref{s_cmga}, we establish a \lq\lq closable'' version of this result,
showing that for groups $G$ satisfying a certain approximation property,
$\psi$ is a closable multiplier on $C_r^*(G)$ if and only if $N(\psi)$
is a closable multiplier in the sense of \cite{stt}.
We present various examples of closable and non-closable multipliers.

In Section \ref{s_vna}, we discuss multiplier maps on the group von Neumann algebra $\vn(G)$.
We introduce the notion of a weak* closable operator, which is suitable
for the setting of dual Banach spaces, such as $\vn(G)$.
We show that a continuous function $\psi$ is a weak* closable multiplier
if and only if $N(\psi)$ is a local Schur multiplier \cite{stt},
which occurs precisely when $\psi$ belongs locally to the Fourier algebra $A(G)$.
Weak** closable multipliers on $C_r^*(G)$ \cite{stt} (see Section \ref{sub_cl})
are shown to form a proper subset of the class of
weak* closable multipliers, which in turn form a proper subset of the class of closable multipliers.

\section{Preliminaries}\label{s_prel}

In this section, we collect some definitions and results that will be needed in the sequel.

\subsection{Closable operators}\label{sub_cl}

Let $\cl X$ and $\cl Y$ be Banach spaces and $T :
D(T)\rightarrow \cl Y$ be a linear operator, where
the domain $D(T)$ of $T$ is a
dense linear subspace of $\cl X$. The operator $T$ is called
\emph{closable} if the closure $\overline{\Gr T}$ of its graph
$$\Gr T = \{(x,Tx) : x\in D(T)\} \subseteq \cl X \oplus \cl Y$$
is the graph of a linear operator. Equivalently, $T$ is
closable if $(x_k)_{k\in \bb{N}}\subseteq D(T)$, $y\in \cl Y$,
$\|x_k\|\rightarrow_{k\to \infty} 0$ and $\|T(x_k) - y\|\rightarrow_{k\to \infty} 0$ imply
that $y = 0$.
The operator $T$ is called \emph{weak**
closable} \cite{stt} if the weak* closure $\overline{\Gr T}^{w^*}$ of
$\Gr T$ in $\cl X^{**}\oplus \cl Y^{**}$ is the
graph of a linear operator. Equivalently, $T$ is weak** closable if
whenever $(x_j)_{j\in J}\subseteq D(T)$ is a net,
$y \in \cl Y^{**}$, $x_j\stackrel{w^*}{\rightarrow}_{j\in J} 0$ and
$T(x_j) \stackrel{w^*}{\rightarrow}_{j\in J} y$, we have that $y = 0$.
We note that in \cite{stt} weak** closable operators were called
weak* closable. We have chosen to alter our terminology
since we feel that the term \lq\lq weak* closable'' is better suited for
the notion introduced and studied in Section \ref{s_vna} of the present paper.

The domain of the adjoint operator of $T$ is the subspace
$$D(T^*) = \{g\in \cl Y^* : \exists f\in \cl X^* \mbox{ such that }
g(Tx) = f(x) \mbox{ for all } x\in D(T)\}$$
and the adjoint of $T$ is the operator $T^* : D(T^*) \to \cl X^*$
defined by letting $T^*(g) = f$, where $f$ is the
functional associated with $g$ in the definition of $D(T^*)$.

In the following proposition, which was stated in \cite{stt}, the equivalence
(iii)$\Leftrightarrow$(iv) is well-known (see, for example,
\cite[Chapter III, Section 5]{kato}), while the other implications can be
proved easily.

\begin{proposition}\label{wstarc}
Let $\cl X$ and $\cl Y$ be Banach spaces, $D(T)\subseteq \cl X$,
$T : D(T)\rightarrow \cl Y$ be a densely defined linear operator and
set $\cl D = D(T^*)$. Consider the
following conditions:

(i) \ \ $T$ is weak** closable;

(ii) \ $\overline{\cl D}^{\|\cdot\|} = \cl Y^*$;

(iii) $\overline{\cl D}^{w^*} = \cl Y^*$;

(iv) \ $T$ is closable.

\noindent Then (i)$\Longleftrightarrow$(ii)$\Longrightarrow$(iii)$\Longleftrightarrow$(iv).
\end{proposition}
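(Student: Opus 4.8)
The plan is to reduce both notions of closability to a single computation of the annihilator of the graph $\Gr T$, and then to read off (i)--(iv) by applying the Hahn--Banach/bipolar duality at two different levels. Throughout I identify $\cl X$ and $\cl Y$ with their canonical images in $\cl X^{**}$ and $\cl Y^{**}$, and I use the pairing of $\cl X\oplus\cl Y$ with $\cl X^*\oplus\cl Y^*$ given by $\langle(x,y),(f,g)\rangle=f(x)+g(y)$. The common engine is the observation that a pair $(f,g)\in\cl X^*\oplus\cl Y^*$ annihilates $\Gr T$ precisely when $f(x)+g(Tx)=0$ for every $x\in D(T)$, which by the definition of the adjoint is exactly the condition $g\in\cl D$ and $f=-T^*g$. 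Writing
$$\cl A=\{(-T^*g,g):g\in\cl D\}\subseteq\cl X^*\oplus\cl Y^*,$$
this says that $\cl A$ is simultaneously the annihilator of $\Gr T\subseteq\cl X\oplus\cl Y$ and the pre-annihilator of $\Gr T\subseteq\cl X^{**}\oplus\cl Y^{**}$.

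First I would record the graph reformulations of (iv) and (i): a linear subspace of $\cl X\oplus\cl Y$ (resp.\ of $\cl X^{**}\oplus\cl Y^{**}$) is the graph of a linear map exactly when its intersection with $\{0\}\oplus\cl Y$ (resp.\ with $\{0\}\oplus\cl Y^{**}$) is trivial. Hence $T$ is closable iff the only $y\in\cl Y$ with $(0,y)\in\overline{\Gr T}^{\|\cdot\|}$ is $y=0$, and $T$ is weak** closable iff the only $\eta\in\cl Y^{**}$ with $(0,\eta)\in\overline{\Gr T}^{w^*}$ (closure taken in $\cl X^{**}\oplus\cl Y^{**}$) is $\eta=0$.

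For (iii)$\Leftrightarrow$(iv) I would invoke the norm bipolar identity $\overline{\Gr T}^{\|\cdot\|}={}^\perp\cl A$ in $\cl X\oplus\cl Y$. Pairing $(0,y)$ against $\cl A$ leaves only the term $g(y)$, so $(0,y)\in{}^\perp\cl A$ iff $g(y)=0$ for all $g\in\cl D$, i.e.\ iff $y\in{}^\perp\cl D$. Thus $T$ is closable iff ${}^\perp\cl D=\{0\}$, which is exactly weak* density of $\cl D$ in $\cl Y^*$; this is the classical criterion recorded in \cite{kato}. For (i)$\Leftrightarrow$(ii) I would run the identical argument one level up, using the weak* bipolar identity $\overline{\Gr T}^{w^*}=\cl A^\perp$ in $\cl X^{**}\oplus\cl Y^{**}$. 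Pairing $(0,\eta)$ against $\cl A$ again retains only $\eta(g)$, so $(0,\eta)\in\cl A^\perp$ iff $\eta\in\cl D^\perp\subseteq\cl Y^{**}$; hence $T$ is weak** closable iff $\cl D^\perp=\{0\}$, which by Hahn--Banach is norm density of $\cl D$ in $\cl Y^*$, namely (ii). Finally (ii)$\Rightarrow$(iii) is immediate, since a norm-dense subspace is weak* dense.

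Since the whole argument is formal duality, I do not anticipate a genuine obstacle; the only point demanding care is keeping the two dualities straight --- for ordinary closability one closes $\Gr T$ in the norm of $\cl X\oplus\cl Y$ and dualises against $\cl X^*\oplus\cl Y^*$, whereas for weak** closability one closes in the weak* topology of the bidual and dualises against the same predual --- together with the routine check that the net and sequence formulations of the two definitions coincide with the graph-meets-$\{0\}\oplus\cl Y$ conditions used above.
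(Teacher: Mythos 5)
Your proof is correct, and it supplies precisely the standard annihilator-of-the-graph duality argument that the paper leaves implicit: the authors give no proof, citing Kato for (iii)$\Leftrightarrow$(iv) and calling the remaining implications easy, and the Kato argument is exactly your identification of $(\Gr T)^\perp$ with $\{(-T^*g,g):g\in\cl D\}$ followed by the two bipolar computations. The one point worth stating explicitly in a write-up is the (routine) equivalence between the graph-intersects-$\{0\}\oplus\cl Y$ criterion and the sequential/net formulations of closability given in Section \ref{sub_cl}, which you correctly flag as needing a check.
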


\subsection{Locally compact groups}

If $H$, $H_1$ and $H_2$ are Hilbert spaces, we denote by
$\cl B(H_1,H_2)$ the space of all bounded linear operators from $H_1$ to $H_2$,
and set $\cl B(H) = \cl B(H,H)$.
Let $G$ be a locally compact group.
Left Haar measure on $G$ will be denoted by $m_G$ or $m$ and
integration with respect to $m_G$
along the variable $s$ will be denoted by $ds$.
We denote by $L^p(G)$, $p = 1,2,\infty$, the corresponding
Lebesgue spaces associated with $m_G$.
For a function $\xi : G\rightarrow \bb{C}$, we set as
customary $\check{\xi}(s) = \xi(s^{-1})$, $s\in G$.
Let $\lambda : G\to \cl B(L^2(G))$ be the left regular representation of $G$,
that is, $\lambda_s f(t) = f(s^{-1}t)$, $f\in L^2(G)$, $s,t\in G$, and
$M(G)$ be the \emph{measure algebra} of $G$, consisting by definition
of all bounded complex Borel measures on $G$. 
We denote the variation of $\theta\in M(G)$ by $|\theta|$ and let $\|\theta\| = |\theta|(G)$.
The support of a measure $\theta\in M(G)$ is the (closed) subset
$$\supp\theta = \left(\cup\{U\subseteq G : U \mbox{ open, } |\theta|(U) = 0\}\right)^c;$$
it is the smallest closed subset $E$ of $G$ with the property that if $U\subseteq E^c$
is a Borel set then $\theta(U) = 0$.
For a closed set $E\subseteq G$, let $M(E)$ be the set of all measures
$\theta$ in $M(G)$ with $\supp \theta \subseteq E$.
If $\theta \in M(G)$  then
the operator $\lambda(\theta)$ of convolution by $\theta$ is given by
$\lambda(\theta)(f)(t) = \int_G f(s^{-1}t) d\theta(s)$;
the map $\lambda : M(G)\to \cl B(L^2(G))$ is a
representation of $M(G)$ of $L^2(G)$. Since $L^1(G)$
is a Banach subalgebra of $M(G)$, the restriction of $\lambda$
to $L^1(G)$ is a representation of $L^1(G)$; we have
$$\lambda(f) g(t) = f\ast g(t) = \int f(s) g(s^{-1}t)ds, \ \ f\in L^1(G),  g\in L^2(G), t\in G.$$
The \emph{Fourier algebra}
$A(G)$ of $G$ \cite{eymard} is the algebra of coefficients of $\lambda$,
that is, the algebra of
functions of the form $s\rightarrow (\lambda_s\xi,\eta)$, for
$\xi,\eta\in L^2(G)$.
The \emph{Fourier-Stieltjes algebra} $B(G)$ of $G$
\cite{eymard} is, on the other hand, the algebra of coefficients of all continuous
unitary representations of $G$ acting on some Hilbert space, that is,
the algebra of all functions of the form $s\rightarrow (\pi(s)\xi,\eta)$, where
$\pi : G\rightarrow \cl B(H)$ is a continuous unitary representation, and
$\xi, \eta\in H$.
We denote by $C_r^*(G)$ the \emph{reduced C*-algebra} of $G$, that is,
the closure of $\lambda(L^1(G))$ in the operator norm. We let
$\vn(G) = \overline{C_r^*(G)}^{w^*}$ be the \emph{von
Neumann algebra} of $G$, and $C^*(G)$ be the \emph{full C*-algebra} of
$G$. It is known \cite{eymard} that $A(G)$ is a semisimple, regular, commutative
Banach algebra with spectrum $G$,
which can be identified with the predual $\vn(G)_*$
of $\vn(G)$ via the pairing $\langle u,T\rangle=(T\xi,\eta)$, where $u\in A(G)$ is given by
$u(s) = (\lambda_s\xi,\eta)$.
If $T\in \vn(G)$ and $u\in A(G)$, the operator $u\cdot T \in \vn(G)$ is
given by the relations
$\langle u\cdot T,v\rangle = \langle T,uv\rangle$, $v\in A(G)$.
The map $(u,T)\mapsto u\cdot T$ turns $\vn(G)$ into a Banach $A(G)$-module.

Let
$$MA(G) = \{v : G\to\bb{C} \ : \ vu\in A(G), \mbox{ for all } u\in A(G)\}$$
be the multiplier algebra of $A(G)$.
For each $v\in MA(G)$, the map $u\mapsto vu$ on $A(G)$ is bounded; its norm will be denoted by
$\|v\|_{MA(G)}$.
As usual, let $M^{\cb}A(G)$ be the subalgebra of $MA(G)$ consisting
of those $v$ for which the map $u \mapsto vu$ on $A(G)$ is completely bounded
\cite{de-canniere-haagerup}. We refer the reader to \cite{paulsen} and \cite{pisier}
for the basic of Operator Space Theory and completely bounded maps.

We denote by $C_0(G)$ the space of all continuous functions on $G$ vanishing at
infinity. The dual of $C_0(G)$ can be canonically identified with $M(G)$; the
duality between the two spaces will be denoted by $\langle \cdot,\cdot\rangle$.
Note that $A(G)\subseteq C_0(G)$ and that the adjoint
of this inclusion gives rise to the inclusion $\lambda(M(G))\subseteq \vn(G)$.
We refer the reader to \cite{eymard} for more details
about the notions discussed above.

If $J\subseteq A(G)$ is an ideal, let
$$\nul J = \{s\in G : u(s) = 0 \mbox{ for all } u\in J\}.$$
On the other hand, for a closed set  $E\subseteq G$, let
$$I(E)=\{f\in A(G) : f(s)=0, s\in E\},$$
$$J_0(E)=\{f\in A(G) : f \text{ vanishes on a nbhd of }
E\}$$
and
$J(E)=\overline{J_0(E)}.$
We have that
$\nul J(E) = \nul I(E) = E$ and that if
$J\subseteq A(G)$ is a closed ideal with $\nul J = E$, then
$J(E)\subseteq J\subseteq I(E)$.
The support $\supp(T)$ of an operator $T\in \vn(G)$ is given by
$$\supp(T)=\{ t\in G: u\cdot T\ne 0\text{  whenever } u\in A(G) \mbox{ and } u(t)\ne 0\}.$$
It is known (see \cite{eymard}) that the annihilator $J(E)^\perp$
of $J(E)$ in $\vn(G)$ coincides with the space of all
operators $T\in \vn(G)$ with $\supp(T)\subseteq E$.

\subsection{Masa-bimodules}

We fix, throughout the paper, standard measure spaces $(X,\mu)$ and $(Y,\nu)$;
this means that $\mu$ and $\nu$ are regular Borel measures
with respect to some
complete metrisable topologies (henceforth called admissible topologies)
on $X$ and $Y$, respectively.
A subset of
$X\times Y$ will be called a {\it rectangle}
if it is of the form $\alpha\times\beta$, where
$\alpha\subseteq X$ and $\beta\subseteq Y$ are measurable.
We equip $X\times Y$ with the $\sigma$-algebra generated by all
rectangles and denote by $\mu\times\nu$ the product measure.
A subset $E\subseteq X\times Y$ is called {\it marginally null} if
$E\subseteq (X_0\times Y)\cup(X\times Y_0)$, where $\mu(X_0) =
\nu(Y_0) = 0$. We call two subsets $E,F\subseteq X\times Y$ {\it
marginally equivalent} (and write $E\simeq F$) if their symmetric
difference is marginally null.

A subset $E$ of $X\times Y$ is called {\it $\omega$-open} if it is
marginally equivalent to the union of a countable set of rectangles.
The complements of $\omega$-open sets are called {\it
$\omega$-closed}. It is clear that the class of all $\omega$-open
(resp. $\omega$-closed) sets is closed under countable unions (resp.
intersections) and finite intersections (resp. unions).
Let ${\mathfrak B}(X\times Y)$ be the space of all  measurable
complex valued functions defined on the measure space $(X\times
Y,\mu\times \nu)$.
We say that two functions $\varphi$, $\psi\in {\mathfrak B}(X\times Y)$ are
\emph{equivalent}, and write $\varphi\sim \psi$, if the set
$D=\{(x,y)\in X\times Y:\psi(x,y)\ne\varphi(x,y)\}$ is null with respect to
$\mu\times\nu$. If $D$ is marginally null then we say that
$\varphi$ and $\psi$ coincide marginally almost
everywhere or that they are \emph{marginally equivalent}, and write $\varphi\simeq\psi$.

The following lemma was proved in \cite{eks}.

\begin{lemma}\label{e-compactness}
Suppose that compact admissible topologies can be chosen on $X$ and $Y$ and that
$\mu$ and $\nu$ are finite.
Let $E \subseteq \cup_{n=1}^{\infty} \gamma_n$ where $E$ is $\omega$-closed and $\gamma_n$ is
$\omega$-open, $n\in \bb{N}$. Then for each $\varepsilon>0$ there are subsets $X_{\varepsilon}\subseteq X$, $Y_{\varepsilon}\subseteq Y$ such that $\mu(X\setminus X_\varepsilon)<\varepsilon$,  $\nu(Y\setminus Y_\varepsilon)<\varepsilon$ and $E\cap (X_\varepsilon\times Y_\varepsilon)$ is contained in the union of finitely many of the subsets $\gamma_n$, $n\in \bb{N}$.
\end{lemma}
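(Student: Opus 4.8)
The plan is to recast the conclusion as a compactness property and, after discarding two sets of small measure, reduce it to ordinary topological compactness in the given compact admissible topologies. Fix $\varepsilon > 0$ and set $F_N = E \cap \gamma_1^c \cap \cdots \cap \gamma_N^c$. Since $E$ is $\omega$-closed and each $\gamma_n$ is $\omega$-open, each $\gamma_n^c$ is $\omega$-closed and hence $F_N$ is $\omega$-closed; moreover $(F_N)_N$ is decreasing and, because $E \subseteq \cup_n \gamma_n$, its intersection $\cap_N F_N$ is marginally null. The conclusion is equivalent to the assertion that $F_N \cap (X_\varepsilon \times Y_\varepsilon) = \emptyset$ for some $N$ and some $X_\varepsilon, Y_\varepsilon$ of almost full measure, since this says precisely that $E \cap (X_\varepsilon \times Y_\varepsilon) \subseteq \gamma_1 \cup \cdots \cup \gamma_N$. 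Thus it suffices to prove the following: a decreasing sequence of $\omega$-closed sets with marginally null intersection can be made empty on a product of two sets of almost full measure.

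To prove this, I would first write each $F_N^c$, which is $\omega$-open, as a union $\cup_j a_j^{(N)} \times b_j^{(N)}$ of countably many rectangles up to a marginally null set, and collect all the exceptional marginally null sets, together with one carrying $\cap_N F_N$, into a single set of the form $(X_0 \times Y) \cup (X \times Y_0)$ with $\mu(X_0) = \nu(Y_0) = 0$. The key device is then to choose $X_\varepsilon$ and $Y_\varepsilon$ so that each of the countably many sides becomes relatively clopen. Concretely, for every side $a_j^{(N)} \subseteq X$ I would use the regularity of $\mu$ (a finite Borel measure on a compact metrizable space) to sandwich it between a compact set $K_j^{(N)}$ and an open set $U_j^{(N)}$ whose difference has measure at most the summable quantity $\varepsilon 2^{-\langle j, N\rangle}$, remove the union of all these differences together with $X_0$, and take $X_\varepsilon$ to be a large compact subset of what remains; then $a_j^{(N)} \cap X_\varepsilon = K_j^{(N)} \cap X_\varepsilon = U_j^{(N)} \cap X_\varepsilon$ is relatively clopen in $X_\varepsilon$. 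Performing the symmetric construction on $Y$ with the sides $b_j^{(N)}$ and $Y_0$ yields compact sets $X_\varepsilon, Y_\varepsilon$ with $\mu(X \setminus X_\varepsilon) < \varepsilon$ and $\nu(Y \setminus Y_\varepsilon) < \varepsilon$ that avoid $X_0$ and $Y_0$.

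Finally, on the compact space $X_\varepsilon \times Y_\varepsilon$ each restricted rectangle $(a_j^{(N)} \cap X_\varepsilon) \times (b_j^{(N)} \cap Y_\varepsilon)$ is relatively open, so $F_N^c \cap (X_\varepsilon \times Y_\varepsilon)$ is relatively open and hence $F_N \cap (X_\varepsilon \times Y_\varepsilon)$ is compact; the choice of $X_\varepsilon, Y_\varepsilon$ also guarantees that $F_N$ agrees there with $\cup_j a_j^{(N)} \times b_j^{(N)}$ and that $\cap_N F_N$ meets $X_\varepsilon \times Y_\varepsilon$ in the empty set. One thus obtains a decreasing sequence of compact sets with empty intersection, so by the finite intersection property one of them, namely $F_N \cap (X_\varepsilon \times Y_\varepsilon)$, is empty, which is the required conclusion. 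The main obstacle, and the point where the hypotheses are essential, is the passage from the natural marginal equivalence to genuine emptiness: the finite intersection property needs honestly closed sets with honestly empty intersection, and this is exactly what the simultaneous inner/outer regular approximation of all countably many rectangle sides, controlled by a summable error budget, is designed to deliver while absorbing every marginally null set into the discarded sets of small measure.
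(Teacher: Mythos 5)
Your argument is correct. The paper does not prove this lemma itself but simply cites \cite{eks}; your proof --- recasting the conclusion as the emptiness of some $F_N=E\cap\gamma_1^c\cap\cdots\cap\gamma_N^c$ on $X_\varepsilon\times Y_\varepsilon$, using inner/outer regularity with a summable error budget to make all countably many rectangle sides relatively clopen on large compact subsets $X_\varepsilon$, $Y_\varepsilon$ while absorbing the marginally null discrepancies, and then applying the finite intersection property to the resulting decreasing sequence of compact sets --- is essentially the standard argument used there.
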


For Hilbert spaces  $H_1$ and $H_2$, we denote
by $\cl K(H_1,H_2)$ (resp. $\cl C_1(H_1,H_2)$,
$\cl C_2(H_1,H_2)$) the space of compact (resp. nuclear,
Hilbert-Schmidt) operators in $\cl B(H_1,H_2)$.
We often write $\cl K = \cl K(H_1,H_2)$.
Throughout the paper, we let
$H_1=L^2(X,\mu)$ and $H_2=L^2(Y,\nu)$.
The operator norm of $T\in \cl B(H_1,H_2)$ is denoted by $\|T\|$.
The space $\cl C_1(H_2,H_1)$ (resp.
$\cl B(H_1,H_2)$) can be naturally identified with the Banach space
dual of $\cl K(H_1,H_2)$ (resp. $\cl C_1(H_2,H_1)$), the duality
being given by the map $(T,S)\mapsto \langle T,S\rangle
\stackrel{def}{=}\tr(TS)$. Here $\tr A$ denotes the trace of a
nuclear operator $A$.

The space $L^2(Y\times X)$ will be identified with
$\cl C_2(H_1,H_2)$ via the map sending an element $k\in L^2(Y\times X)$
to the integral operator $T_k$ given by
$T_k \xi (y) = \int_{X} k(y,x)\xi(x) d\mu(x)$,
$\xi\in H_1$, $y\in Y$.
In a similar fashion, $\cl C_1(H_2,H_1)$ will be identified with
the space $\Gamma(X,Y)$ of all (marginal equivalence classes of)
functions $h : X\times Y\to{\mathbb C}$
which admit a representation
$$h(x,y)=\sum_{i=1}^{\infty} f_i(x)g_i(y),$$
where $f_i\in H_1$, $g_i\in H_2$, $i\in \bb{N}$,
$\sum_{i=1}^{\infty}\|f_i\|^2_{2} < \infty$ and
$\sum_{i=1}^{\infty}\|g_i\|^2_{2}<\infty$. Equivalently,
$\Gamma(X,Y)$ can be defined as the projective tensor product
$H_1\hat\otimes H_2$; we write $\|h\|_{\Gamma}$ for
the projective norm of $h\in \Gamma(X,Y)$.
The duality between $\cl B(H_1,H_2)$ and $\Gamma(X,Y)$ is given by
$$\langle T,f\otimes g\rangle = (Tf,\bar g),$$
for $T\in \cl B(H_1,H_2)$, $f\in L^2(X,\mu)$ and $g\in L^2(Y,\nu)$.

If $f\in L^{\infty}(X,\mu)$, let $M_f\in \cl B(H_1)$ be the
operator on $H_1$ of multiplication by $f$. The collection
$\{M_f : f\in L^{\infty}(X,\mu)\}$ is a maximal abelian selfadjoint
algebra (for short, masa) on $H_1$.
If $\alpha\subseteq X$ is measurable, we write $P(\alpha) =
M_{\chi_{\alpha}}$ for the multiplication by the characteristic
function of the set $\alpha$. The same notation will be used for
$H_2$. A subspace $\cl W\subseteq \cl B(H_1,H_2)$ will
be called a \emph{masa-bimodule} if $M_{\psi}TM_{\nph} \in \cl  W$
for all $T\in \cl W$, $\nph\in L^{\infty}(X,\mu)$ and $\psi\in
L^{\infty}(Y,\nu)$.

We say that an $\omega$-closed subset $\kappa\subseteq X\times Y$
{\it supports} an operator $T\in \cl B(H_1,H_2)$
(or that $T$ {\it is supported on} $\kappa$)
if $P(\beta)TP(\alpha) = 0$ whenever
$(\alpha\times\beta)\cap \kappa \simeq \emptyset$.
For any subset ${\mathcal M}\subseteq \cl B(H_1,H_2)$, there exists a smallest
(up to marginal equivalence) $\omega$-closed set
$\supp\cl M$ which supports every operator $T\in{\mathcal M}$ \cite{eks}.
By \cite{arveson} and \cite{st1}, for any $\omega$-closed set $\kappa$ there exists a
smallest (resp. largest) weak* closed masa-bimodule ${\mathfrak
M}_{\min}(\kappa)$ (resp. ${\mathfrak M}_{\max}(\kappa)$) with
support $\kappa$, in the sense that if $\mathfrak M\subseteq \cl
B(H_1,H_2)$ is a weak* closed masa-bimodule with $\text{supp }{\mathfrak M}
= \kappa$ then ${\mathfrak M}_{\min}(\kappa)\subseteq {\mathfrak
M}\subseteq {\mathfrak M}_{\max}(\kappa)$.

Let
$$\Phi(\kappa)=\{h\in\Gamma(X,Y): h\chi_{\kappa} \simeq 0\}$$
and
$$\Psi(\kappa)=\overline{\{h\in \Gamma(X,Y):
h \text{ vanishes on an $\omega$-open nbhd of }\kappa\}}^{\|\cdot\|_{\Gamma}}.$$
By \cite[Theorem~4.3, 4.4]{st1}, ${\mathfrak
M}_{\min}(\kappa) = \Phi(\kappa)^{\perp}$ and ${\mathfrak M}_{\max}(\kappa)=\Psi(\kappa)^\perp$.

Let $\sigma$ be a complex measure of finite total variation,
defined on the product $\sigma$-algebra
$\cl F$ of $X\times Y$.
We let $|\sigma|$ denote the variation of $\sigma$; thus, for a subset $E\in \cl F$,
the quantity $|\sigma|(E)$ equals the total variation of $\sigma$ on the set $E$.
We let $|\sigma|_X$ be the $X$-marginal measure of $|\sigma|$, that is, the measure on $X$ given by $|\sigma|_X(\alpha) = |\sigma|(\alpha\times Y)$.
We define $|\sigma|_Y$ similarly by setting $|\sigma|_Y(\beta) = |\sigma|(X\times\beta)$.
A complex measure $\sigma$ on $\cl F$ will be called an \emph{Arveson measure} if
$\sigma$ has finite total variation and
there exists a constant $c > 0$ such that
\begin{equation}\label{eq_carv}
|\sigma|_X \leq c \mu \ \ \mbox{ and } \ \ |\sigma|_Y \leq c \nu.
\end{equation}
We denote by
$\bb{A}(X,Y)$ the set of all Arveson measures on $X\times Y$ and let
$\|\sigma\|_{\bb{A}}$ be the smallest constant $c$ which satisfies the inequalities (\ref{eq_carv}).
We note that if $\sigma\in \bb{A}(X,Y)$ then $|\sigma|\in \bb{A}(X,Y)$ as well.

\subsection{Schur multipliers}
If $\nph$ is a function defined on a measure space $(Z,\theta)$, and
$\cl E$ is a space of measurable functions on $Z$, we
write $\nph\in^{\theta}\cl E$ when there exists a function $\psi\in \cl E$
such that $\nph$ and $\psi$ differ on a $\theta$-null set.
Let
$$J_{\nph}^{\cl E} = \{h\in \cl E : \nph h\in^{\theta} \cl E\}.$$
For   $\varphi\in {\mathfrak B}(X\times Y)$, the function
$\hat{\nph} : Y\times X\to \bb{C}$ is given by
$\hat{\nph}(y,x) = \nph(x,y)$, $x\in X$, $y\in Y$.
We set
$D(S_{\nph})=J_{\hat{\nph}}^{L^2(Y\times X)}$. Identifying $L^2(Y\times X)$
with $\cl C_2(H_1,H_2)\subseteq\cl
K(H_1,H_2)$, define  $S_{\nph} : D(S_{\nph}) \rightarrow \cl
K(H_1,H_2)$ to be the mapping given by $S_\nph(T_k)= T_{\hat{\nph} k}$.
We say that $\nph\in {\mathfrak B}(X\times Y)$ is a
{\it closable multiplier} (resp. {\it weak** closable multiplier})
\cite{stt} if the map $S_\nph$ is closable (resp. weak** closable)
when viewed as a densely defined linear operator on $\cl
K(H_1,H_2)$. If $S_{\nph}$ is moreover bounded in the operator norm,
$\nph$ is called a \emph{Schur multiplier}. If $\varphi$ is a Schur multiplier then the mapping
$S_\varphi$ extends by continuity to a (bounded) mapping on $\cl
K(H_1,H_2)$.
After taking its second dual, one obtains a bounded weak* continuous linear transformation on
$\cl B(H_1,H_2)$ which will also be denoted by $S_\varphi$.
The map $S_{\nph}$ is automatically
completely bounded and its completely bounded norm is still
equal to $\|\nph\|_{\frak{S}}$
(the reader is referred to \cite{paulsen} and
\cite{pisier} for the basics of Operator Space Theory, which will be
used throughout the paper).
By a result of V. V. Peller \cite{peller} (see also \cite{kp} and \cite{spronk}),
a function $\nph \in {\mathfrak B}(X\times Y)$ is a Schur multipliers
if and only if there exist sequences $(a_k)_{k\in \bb{N}}\subseteq L^{\infty}(X,\mu)$ and
$(b_k)_{k\in \bb{N}}\subseteq L^{\infty}(Y,\nu)$ with
$\esssup_{x\in X} \sum_{k=1}^{\infty} |a_k(x)|^2 < \infty$ and
$\esssup_{y\in Y} \sum_{k=1}^{\infty} |b_k(y)|^2 < \infty$
such that
$$\nph(x,y) = \sum_{k=1}^{\infty} a_k(x) b_k(y), \ \ \ \mbox{a.e. } (x,y)\in X\times Y.$$
In this case, $S_{\nph}(T) = \sum_{k=1}^{\infty} M_{b_k} T M_{a_k}$, $T\in \cl B(H_1,H_2)$.

Let $\frak{S}(X,Y)$ be the set of all Schur multipliers (we will
also write $\frak{S}(X\times Y)$ in the place of $\frak{S}(X,Y)$ if there is no risk of confusion).
If $\nph\in \frak{S}(X,Y)$, we set $\|\nph\|_{\frak{S}} = \|S_{\nph}\|$.
By \cite{peller},
$$\frak{S}(X,Y) = \{\nph \in
L^{\infty}(X\times Y) : \nph h \in^{\mu\times\nu} \Gamma(X,Y), \ \forall \ h\in \Gamma(X,Y)\}.$$
If $\nph\in \frak{S}(X,Y)$, let $m_{\nph} : \Gamma(X,Y) \to \Gamma(X,Y)$ be the
mapping given by $m_{\nph}(h) = \nph h$, $h\in \Gamma(X,Y)$;
then the adjoint of $m_{\nph}$ coincides with $S_{\nph}$.

Let $G$ be a locally compact group. The
map $P : \Gamma(G,G)\rightarrow A(G)$ given by
\begin{equation}\label{eq_p}
P(f\otimes g)(t) = \langle \lambda_t, f\otimes g\rangle =
(\lambda_t f,\overline{g}) = \int_G f(t^{-1}s) g(s) ds = g\ast \check{f}(t)
\end{equation}
is a contractive surjection.
The next lemma will be used repeatedly.

\begin{lemma}\label{l_for}
If $h\in \Gamma(G,G)$ then
\begin{equation}\label{eq_P}
P(h)(t) = \int_G h(t^{-1}s,s)ds, \ \ \  t\in G.
\end{equation}
\end{lemma}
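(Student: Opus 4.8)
The plan is to verify the identity first on elementary tensors, where it is immediate from the definition of $P$, and then to pass to the limit using a representation of $h$ as an absolutely convergent series of such tensors; the only genuine subtlety is that the right-hand side restricts $h$ to a marginally null set, so one must check that it is well defined.

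First I would fix a representation $h = \sum_{i=1}^\infty f_i \otimes g_i$ with $f_i, g_i \in L^2(G)$, $\sum_i \|f_i\|_2^2 < \infty$ and $\sum_i \|g_i\|_2^2 < \infty$. By the Cauchy--Schwarz inequality $\sum_i \|f_i\|_2 \|g_i\|_2 < \infty$, so the series converges absolutely in $\|\cdot\|_\Gamma$ and the partial sums $h_N = \sum_{i=1}^N f_i \otimes g_i$ satisfy $h_N \to h$ in $\Gamma(G,G)$. On an elementary tensor, the defining formula (\ref{eq_p}) reads $P(f\otimes g)(t) = \int_G f(t^{-1}s) g(s)\,ds = \int_G (f\otimes g)(t^{-1}s,s)\,ds$, so (\ref{eq_P}) holds for each $h_N$ by linearity.

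Next I would pass to the limit on both sides. Since $P$ is contractive and $A(G)$ embeds contractively into $C_0(G)$, evaluation at $t$ is continuous on $A(G)$, whence $P(h_N)(t) \to P(h)(t)$. For the right-hand side, the key estimate uses the left invariance of Haar measure: $\int_G |f_i(t^{-1}s)|\,|g_i(s)|\,ds \le \|\lambda_t f_i\|_2 \|g_i\|_2 = \|f_i\|_2 \|g_i\|_2$, by Cauchy--Schwarz and unitarity of $\lambda_t$. Summing over $i$ shows that $s \mapsto \sum_i f_i(t^{-1}s) g_i(s)$, which is the restriction $s\mapsto h(t^{-1}s,s)$ of a representative of $h$, lies in $L^1(G)$; by dominated convergence the integrals $\int_G h_N(t^{-1}s,s)\,ds$ converge to $\int_G h(t^{-1}s,s)\,ds$, and each of these equals $P(h_N)(t)$. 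Combining the two limits gives (\ref{eq_P}).

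The main obstacle I anticipate is confirming that the right-hand side of (\ref{eq_P}) depends only on $h$ and not on the chosen representative of its marginal equivalence class, since the parametrising set $\{(t^{-1}s,s): s\in G\}$ is marginally null. If $h$ and $h'$ agree off $(G_0\times G)\cup(G\times G_0)$ with $m(G_0)=0$, then $h(t^{-1}s,s)\ne h'(t^{-1}s,s)$ can occur only when $t^{-1}s\in G_0$ or $s\in G_0$, that is, for $s$ lying in $tG_0 \cup G_0$, a null set by left invariance of $m$; hence the integrand is determined almost everywhere by $h$ alone. Independence of the particular series expansion then causes no difficulty, since two expansions yield marginally equivalent functions and hence the same integral.
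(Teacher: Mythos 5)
Your proof is correct and follows essentially the same route as the paper's: verify the formula on finite sums of elementary tensors, then pass to the limit using contractivity of $P$ and the sup-norm bound on $A(G)$ for the left-hand side, and integrability of $s\mapsto\sum_i|f_i(t^{-1}s)|\,|g_i(s)|$ together with dominated convergence for the right-hand side. The only cosmetic differences are that you derive the integrability estimate directly from Cauchy--Schwarz and left invariance of Haar measure (the paper cites it from \cite{lt}), you use absolute convergence of the series in place of the paper's extraction of a marginally a.e.\ convergent subsequence via \cite[Lemma 2.1]{st1}, and you add an explicit check that the right-hand side is independent of the chosen representative of $h$ --- a point the paper leaves implicit.
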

\begin{proof}
Identity (\ref{eq_P}) is a direct consequence of (\ref{eq_p}) if $h$ is a finite sum of
elementary tensors. Let $h = \sum_{i=1}^{\infty} f_i\otimes g_i \in \Gamma(G,G)$,
where $\sum_{i=1}^{\infty} \|f_i\|_2^2 < \infty$ and $\sum_{i=1}^{\infty} \|g_i\|_2^2 < \infty$, and
let $h_n$ be the $n$th partial sum of this series.
By the continuity of $P$, $\|P(h_n) - P(h)\|\to 0$ in $A(G)$; since
$\|\cdot\|_{\infty}$ is dominated by the norm of $A(G)$, we
conclude that $P(h_n)(t) \to P(h)(t)$ for every $t\in G$.

By \cite[Lemma 2.1]{st1}, there exists a subsequence $(h_{n_k})_{k\in \bb{N}}$
of $(h_n)_{n\in \bb{N}}$ such that $h_{n_k}\rightarrow h$ marginally almost everywhere.
It follows that, for every $t\in G$, one has $h_{n_k}(t^{-1}s,s)\rightarrow h(t^{-1}s,s)$ for
almost all $s\in G$. By \cite[(4.3)]{lt}, the function $s\rightarrow \sum_{i=1}^{\infty} |f_i(t^{-1}s)| |g_i(s)|$
is integrable, and hence an application of the Lebesgue Dominated Convergence Theorem shows that
$\int_G h_{n_k}(t^{-1}s,s)ds \rightarrow _{k\rightarrow \infty} \int_G h(t^{-1}s,s)ds$, for every $t\in G$.
The proof is complete.
\end{proof}

For a function $f : G\rightarrow \bb{C}$, let $N(f) : G\times G\rightarrow \bb{C}$
be the function given by
\begin{equation}\label{eq_mapn}
N(f)(s,t) = f(ts^{-1}), \ \ \ \ s,t\in G.
\end{equation}
Note that in \cite{lt} and \cite{st}, the map $N'$ given by $N'(f)(s,t) = f(st^{-1})$
was used instead of $N$, but the results established in these papers remain valid
with the current definition as well.
It follows from
\cite{bf} (see also \cite{j} and \cite{spronk}) that $N$ maps $M^{\cb}A(G)$ isometrically into
$\frak{S}(G,G)$. Note that, if $G$ is compact, then $\Gamma(G,G)$ contains the
constant functions and hence $\frak{S}(G,G) \subseteq \Gamma(G,G)$; thus,
in this case $N$ maps $A(G)$ into $\Gamma(G,G)$.

\section{Sets of multiplicity and their operator versions}\label{s_sm}

\subsection{ Sets of multiplicity in arbitrary locally compact groups}

In this section, we study sets of multiplicity and their operator versions,
and examine the relations between them.
We start by recalling
the classical notion, where
$G={\mathbb T}$ is the group of the circle;
in this case,
$A({\mathbb T})=\{\sum_{n\in{\mathbb Z}}c_ne^{int}: \sum_{n\in\mathbb Z}|c_n|<\infty\}\simeq \ell^1(\mathbb Z)$. The space of \emph{pseudo-measures}
$PM({\mathbb T})=A({\mathbb T})^*$ can be identified with $\ell^\infty(\mathbb Z)$
via Fourier transform $F\mapsto (\hat F(n))_{n\in\mathbb Z}$,
and the space of \emph{pseudo-functions}
$PF(\mathbb T)=\{F\in PM(\mathbb T):\hat F(n)\to 0, \text{ as }n\to\infty\}$
is $*$-isomorphic to $C^*({\mathbb T})=C_r^*({\mathbb T})$.
Note that there is a canonical embedding $M(\bb{T})\subseteq PM(\bb{T})$
arising from the inclusion $A(\bb{T})\subseteq C(\bb{T})$.

If $E$ is a closed subset of $\mathbb T$,
let $PM(E)$ denote the space of all pseudo-measures supported on $E$,
$M(E)$ the space of measures $\mu\in M(G)$ with $\supp\mu \subseteq E$,
and $N(E)$ the weak* closure of $M(E)$.
For an ideal $J\subseteq A(G)$, let $J^\perp$ denote the annihilator of $J$ in $PM(\bb{T})$;
then $PM(E)=J(E)^\perp$ and $N(E)=I(E)^\perp$ (see, {\it e.g.}, \cite{gmcgehee}).

A closed set $E\subseteq \mathbb T$ is called an $M$-set if $PM(E)\cap PF(\mathbb T)\ne\{0\}$,
an $M_1$-set if $N(E)\cap PF(\mathbb T)\ne\{0\}$, and an
$M_0$-set if $M(E)\cap PF(\mathbb T)\ne\{0\}$.
The closed sets that are not $M$-sets are called sets of uniqueness.

A definition of sets of multiplicity for locally compact abelian groups
was proposed by I. Piatetski-Shapiro (see \cite[p.190]{herz}).
In \cite{bozejko_pams}, M. Bo$\dot{\rm z}$ejko
introduced sets of uniqueness in general locally compact groups.
Here we extend his definition to include versions of $M_1$-sets and of $M_0$-sets.

\begin{definition}\label{d_msets}
A closed subset $E\subseteq G$ will be called

(i) \ \  an \emph{$M$-set} if $J(E)^{\perp}\cap C_r^*(G) \neq \{0\}$;

(ii) \ an \emph{$M_1$-set} if $I(E)^{\perp}\cap C_r^*(G) \neq \{0\}$;

(iii) an \emph{$M_0$-set} if $\lambda(M(E))\cap C_r^*(G)\neq\{0\}$.

The set $E$ will be called a \emph{$U$-set} (resp.
a \emph{$U_1$-set}, a \emph{$U_0$-set})
if it is not an $M$-set (resp. an $M_1$-set, an $M_0$-set).
\end{definition}

\begin{remark}\label{remark1}
\rm
\noindent  {\bf (i)}
Since $\lambda(M(E))\subseteq I(E)^\perp\subseteq J(E)^\perp$,
every $M_0$-set is an $M_1$-set, and every $M_1$-set is an $M$-set.
It is known that these three classes of sets are distinct, see \cite{gmcgehee}.

{\bf (ii)}
If $G$ is amenable then $C_r^*(G)$ is $*$-isomorphic to $C^*(G)$
and it is a direct consequence of the definition that
a closed set $E\subseteq G$ is an $M$-set (resp. an $M_1$-set)
if and only if $J(E)$ (resp. $I(E)$) is not weak* dense
in $B(G)$.

{\bf (iii)} Measures $\mu\in M(G)$ satisfying the condition $\lambda(\mu)\in C_r^*(G)$ were studied in \cite{Blumlinger}
where the author characterised them in terms of their values on
certain Borel subsets of $G$. If $G$ is  compact or
abelian then this class of measures coincides with the
\emph{Rajchman measures} on $G$,
that is, the measures whose Fourier-Stieltjes coefficients
vanish at infinity (see \cite{Blumlinger}).
\end{remark}

We point out an easy source of examples of sets of multiplicity:

\begin{remark}\label{r_posme}
Every closed subset of positive Haar measure in a locally compact second countable group  is an
$M_0$-set.
\end{remark}
\begin{proof}
Let $E\subseteq G$ be a measurable subset of positive Haar measure
and $E_0\subseteq E$ be a compact set of positive Haar measure;
then $m(E_0) < \infty$. Let
$\theta$ be the measure given by $d\theta(x) = \chi_{E_0}(x) dm(x)$.
Clearly, $\supp\theta\subseteq E$ and $0\ne \lambda(\theta)=\lambda(\chi_{E_0})\in \cg$.
\end{proof}

\subsection{ Sets of operator multiplicity}

We next recall the notion of a pseudo-integral operator, which will
be essential for some of the subsequent results.
Note that \cite{stt}, given a family $\cl E$ of
$\omega$-open sets, there exists a minimal (with respect to
inclusion up to a marginally null set) $\omega$-open set $E$
which marginally contains every element from $\cl E$.
The set $E$ is called the \emph{$\omega$-union} of $E$
and denoted by $\cup_{\omega}\cl E$.
Recall that $(X,\mu)$ and $(Y,\nu)$ are standard measure spaces
and let $\sigma$ be an Arveson measure on $Y\times X$.
Denote by $\supp\sigma$ the $\omega$-closed subset of $Y\times X$
defined by
$$(\supp\sigma)^c =
\cup\mbox{}_{\omega}\{R\subseteq Y\times X : R \mbox{ is a rectangle such that }$$
$$\sigma(R') = 0 \mbox{ for each rectangle } R'\subseteq R\}.$$

\begin{proposition}\label{p_sm}
Let $\sigma\in \bb{A}(Y,X)$.

(i) \ The set $\supp\sigma$ is the smallest (up to marginal equivalence) $\omega$-closed
subset $E$ of $Y\times X$ such that
$\sigma(R) = 0$ for every rectangle $R\subseteq E^c$.

(ii) If $E\subseteq Y\times X$ is an $\omega$-closed set then
$\supp \sigma\subseteq E$ if and only if $|\sigma|(E^c) = 0$.
\end{proposition}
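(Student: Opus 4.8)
The plan is to prove both statements directly from the definition of $\supp\sigma$ as the complement of the $\omega$-union of all rectangles $R$ whose subrectangles carry no $\sigma$-mass. For part (i), I would verify two things: first, that $\supp\sigma$ itself has the stated property (that $\sigma(R)=0$ for every rectangle $R\subseteq(\supp\sigma)^c$), and second, that it is the smallest $\omega$-closed set with this property. The first point follows because $(\supp\sigma)^c$ is, by construction, the $\omega$-union of rectangles with the vanishing property, so any rectangle $R$ sitting inside $(\supp\sigma)^c$ is covered (up to a marginally null set) by countably many such rectangles. The hard part here is passing from the countable covering to the conclusion $\sigma(R)=0$: I would intersect $R$ with the covering rectangles, use the lattice structure of rectangles under intersection to decompose $R$ (up to marginal equivalence) into subrectangles each contained in one of the covering rectangles, and then invoke countable additivity of $\sigma$ together with the fact that $\sigma$ annihilates every subrectangle of each covering rectangle.

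For the minimality claim in part (i), I would suppose $E$ is any $\omega$-closed set with $\sigma(R)=0$ for all rectangles $R\subseteq E^c$, and show $\supp\sigma\subseteq E$ marginally, equivalently $E^c\subseteq(\supp\sigma)^c$ marginally. Since $E^c$ is $\omega$-open, it is marginally a countable union of rectangles $R_n$; each $R_n\subseteq E^c$ has the property that all its subrectangles also lie in $E^c$ and hence carry no $\sigma$-mass, so each $R_n$ is one of the rectangles defining $(\supp\sigma)^c$. Therefore $E^c$ is marginally contained in the $\omega$-union $(\supp\sigma)^c$, which is exactly what is needed.

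For part (ii), the implication that $|\sigma|(E^c)=0$ forces $\supp\sigma\subseteq E$ is the easy direction: if $|\sigma|(E^c)=0$ then in particular $\sigma(R)=0$ for every rectangle $R\subseteq E^c$, so by the minimality established in (i) we get $\supp\sigma\subseteq E$. The substantive direction is the converse: assuming $\supp\sigma\subseteq E$, I must upgrade the rectangle-wise vanishing to the full statement $|\sigma|(E^c)=0$ on the whole $\omega$-open set $E^c$. Here the Arveson measure hypothesis is essential, since it controls the marginals of $|\sigma|$ and lets me reduce to the finite-measure setting. The plan is to exhaust $E^c$ by its countably many defining rectangles, reduce modulo marginally null sets (which are $|\sigma|$-null precisely because $|\sigma|_X\le c\mu$ and $|\sigma|_Y\le c\nu$, so sets of the form $X_0\times Y$ and $X\times Y_0$ with null marginals are $\sigma$-negligible), and then apply countable additivity of $|\sigma|$ to conclude that $|\sigma|$ vanishes on the union.

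The main obstacle I anticipate is the bookkeeping needed to move between \emph{rectangle-by-rectangle} vanishing of $\sigma$ and \emph{set-wise} vanishing of the variation $|\sigma|$ on an $\omega$-open set, while correctly discarding marginally null discrepancies. Two technical facts will carry most of the weight: that a countable union of rectangles can be disjointified or refined into rectangles compatible with a given covering (using that intersections of rectangles are rectangles), and that marginally null sets are genuinely $|\sigma|$-null by the Arveson inequalities (\ref{eq_carv}). Once these are in place, the rest reduces to routine applications of countable additivity and regularity of the finite measure $|\sigma|$.
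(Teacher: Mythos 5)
Your overall architecture matches the paper's, and your treatment of part (i) is a legitimate alternative route. Where the paper handles the ``hard part'' of (i) by reducing to finite measures and invoking the compactness statement of Lemma~\ref{e-compactness} to cover $R\cap(Y_n\times X_n)$ by finitely many of the defining rectangles, you instead disjointify the countable cover into countably many pairwise disjoint rectangles, each contained in one of the covering rectangles, and use countable additivity together with the fact that marginally null sets are $|\sigma|$-null. This works (note that you need differences of rectangles, not just intersections: $R_i\setminus(R_1\cup\dots\cup R_{i-1})$ is a finite disjoint union of rectangles by the usual grid argument), and it avoids the compactness lemma and the finite-measure reduction altogether. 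The easy direction of (ii) and the minimality claim in (i) are handled essentially as in the paper.

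The one place where your plan is not yet a proof is the substantive direction of (ii). You correctly isolate the obstacle --- passing from ``$\sigma(R')=0$ for every subrectangle $R'$ of a covering rectangle $R_i$'' to ``$|\sigma|(R_i)=0$'' --- but the tools you name do not cross it. The total variation $|\sigma|(R_i)$ is a supremum over arbitrary measurable partitions of $R_i$, and a general measurable subset of $R_i$ is not a countable disjoint union of rectangles, so disjointification plus countable additivity cannot reach it. The missing idea is that the product $\sigma$-algebra restricted to $R_i$ is generated by the subrectangles of $R_i$, which form a $\pi$-system containing $R_i$ itself; Dynkin's $\pi$--$\lambda$ theorem then upgrades $\sigma(R')=0$ on subrectangles to $\sigma(F)=0$ for every measurable $F\subseteq R_i$, whence $|\sigma|(R_i)=0$. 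This is exactly the one-line step in the paper's proof (``the product $\sigma$-algebra on $R_i$ is generated by the rectangles contained in $R_i$''). Your appeal to ``regularity of the finite measure $|\sigma|$'' could be made to work instead --- outer-approximate a measurable $F\subseteq R_i$ by open sets of an admissible second-countable topology and use that such open sets are countable unions of open rectangles --- but that requires an extra observation you do not state, and as written your argument has a hole precisely at the point you yourself identified as the crux.
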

\begin{proof}
(i) Let $\cl R$ be the set of all rectangles $R\subseteq Y\times X$ such that
$\sigma(R') = 0$ for every rectangle $R'$ contained in $R$.
By \cite[Lemma 2.1]{stt}, $(\supp\sigma)^c \simeq \cup_{i=1}^{\infty} R_i$ for some
family $\{R_i\}_{i\in \bb{N}}\subseteq \cl R$.
Let $R\subseteq (\supp\sigma)^c$ be a rectangle. We will show that $\sigma(R) = 0$;
without loss of generality, we may assume that the measures $\mu$ and $\nu$
are finite. By Lemma \ref{e-compactness}, for every $n\in \bb{N}$ there exist
measurable subsets $X_n\subseteq X$ and $Y_n\subseteq Y$ such that
$\mu(X\setminus X_n) < 1/n$, $\nu(Y\setminus Y_n) < 1/n$ and
$R\cap (Y_n\times X_n)$ is contained in the union of a finite
subfamily of $\{R_i\}_{i\in \bb{N}}$. It follows that $\sigma(R\cap (Y_n\times X_n)) = 0$
for every $n$ and, since $\cup_{n=1}^{\infty} X_n$ and $\cup_{n=1}^{\infty} Y_n$ have
full measure, $\sigma(R) = 0$.

Suppose that $E$ is an $\omega$-closed set with the property that
$\sigma(R) = 0$ for every rectangle $R\subseteq E^c$.
By the definition of $\supp\sigma$, the set $E^c$ is marginally contained in $(\supp\sigma)^c$,
and hence $\supp\sigma\subseteq E$ up to marginal equivalence.

(ii) Suppose that $E^c \simeq \Omega = \cup_{i=1}^{\infty} R_i$,
where $R_i\subseteq Y\times X$ is a rectangle, $i\in \bb{N}$.
Assume, without loss of generality, that $R_i\cap R_j = \emptyset$ if $i\neq j$.
Fix $i\in \bb{N}$. By (i), if $R\subseteq R_i$ is a rectangle, then
$\sigma(R) = 0$. Since the product $\sigma$-algebra on $R_i$ is
generated by the rectangles contained in $R_i$, it follows that
$\sigma(F) = 0$ for every measurable (with respect to the product $\sigma$-algebra)
subset $F\subseteq R_i$.
Thus, if $F\subseteq \Omega$ is an arbitrary measurable subset then
$\sigma(F\cap R_i) = 0$ for each $i$; therefore, $\sigma(F) = 0$.

Now suppose that $F\subseteq E^c$ is a measurable subset. Then
$F\subseteq F'\cup F''$ as a disjoint union, where $F'\subseteq \Omega$
and $F''$ is marginally null. By the previous paragraph, $\sigma(F') = 0$,
while, since $\sigma$ is an Arveson measure, $\sigma(F'') = 0$. It follows that
$\sigma(F) = 0$. Thus, $|\sigma|(E^c) = 0$.

Conversely, if $|\sigma|(E^c) = 0$ then $\sigma(R) = 0$ for every measurable rectangle
contained in $E^c$. By (i), $\supp\sigma\subseteq E$.
\end{proof}

The first part of the following result was established
in \cite[Theorem 1.5.1]{arveson}; we include its full proof for completeness.

\begin{theorem}\label{th_ps}
Let $\sigma\in {\mathbb A}(Y,X)$. There exists a unique operator $T_{\sigma} : H_1\rightarrow H_2$ such that
$$(T_{\sigma}f,g) = \int_{Y\times X} f(x)\overline{g(y)} d\sigma(y,x), \ \ \  f\in H_1, g\in H_2.$$
Moreover, $\|T_{\sigma}\| \leq \|\sigma\|_{\bb{A}}$ and, for a given $\omega$-closed
subset $\kappa\subseteq X\times Y$,
the operator $T_{\sigma}$ is supported on $\kappa$ if and only if
$\supp\sigma\subseteq \hat{\kappa} \stackrel{def}{=} \{(y,x) : (x,y)\in \kappa\}$.
If $h\in \Gamma(X,Y)$ and $\sigma\in \bb{A}(Y,X)$ then
$\langle T_{\sigma}, h \rangle = \int_{Y\times X} \hat{h} d\sigma$.
\end{theorem}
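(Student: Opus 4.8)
The plan is to construct $T_\sigma$ via a bounded sesquilinear form and then extract the support and pairing statements from Proposition \ref{p_sm}. First I would fix $f\in H_1$ and $g\in H_2$ and consider the quantity $B(f,g) = \int_{Y\times X} f(x)\overline{g(y)}\, d\sigma(y,x)$. The integrand $(y,x)\mapsto f(x)\overline{g(y)}$ is measurable on the product $\sigma$-algebra, so I must check it is $|\sigma|$-integrable. Here the Arveson condition (\ref{eq_carv}) does the work: applying the Cauchy--Schwarz inequality against $|\sigma|$ and then the marginal bounds $|\sigma|_X\le \|\sigma\|_{\bb A}\,\mu$ and $|\sigma|_Y\le \|\sigma\|_{\bb A}\,\nu$, I would estimate
\[
|B(f,g)| \leq \int_{Y\times X} |f(x)|\,|g(y)|\, d|\sigma|(y,x)
\leq \left(\int |f(x)|^2 d|\sigma|\right)^{1/2}\left(\int |g(y)|^2 d|\sigma|\right)^{1/2},
\]
and the two factors are bounded by $\|\sigma\|_{\bb A}^{1/2}\|f\|_2$ and $\|\sigma\|_{\bb A}^{1/2}\|g\|_2$ respectively, using that $\int|f(x)|^2 d|\sigma|(y,x) = \int |f|^2\, d|\sigma|_X \le \|\sigma\|_{\bb A}\int|f|^2 d\mu$. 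Thus $B$ is sesquilinear and bounded with $|B(f,g)|\le \|\sigma\|_{\bb A}\|f\|_2\|g\|_2$; by the Riesz representation of bounded forms on a Hilbert space, there is a unique $T_\sigma\in\cl B(H_1,H_2)$ with $(T_\sigma f,g)=B(f,g)$, and automatically $\|T_\sigma\|\le\|\sigma\|_{\bb A}$.

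Next I would establish the support characterisation. Recall that $T_\sigma$ is supported on $\kappa$ means $P(\beta)T_\sigma P(\alpha)=0$ whenever $(\alpha\times\beta)\cap\kappa\simeq\emptyset$. For measurable $\alpha\subseteq X$, $\beta\subseteq Y$ one computes, from the defining form, that $(P(\beta)T_\sigma P(\alpha)f,g)=(T_\sigma(\chi_\alpha f),\chi_\beta g)=\int_{\beta\times\alpha} f(x)\overline{g(y)}\,d\sigma(y,x)$, so this operator vanishes for all $f,g$ precisely when $\sigma$ vanishes on every measurable subset of the rectangle $\beta\times\alpha\subseteq Y\times X$, i.e. when $|\sigma|(\beta\times\alpha)=0$. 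Translating via the flip $(x,y)\mapsto(y,x)$, the condition $(\alpha\times\beta)\cap\kappa\simeq\emptyset$ corresponds to $(\beta\times\alpha)\subseteq\hat\kappa^{\,c}$. I would then argue that $T_\sigma$ being supported on $\kappa$ is equivalent to $|\sigma|(\hat\kappa^{\,c})=0$ — for the forward direction one covers $\hat\kappa^{\,c}$ (an $\omega$-open set) by countably many rectangles disjoint from $\hat\kappa$ and adds up; for the reverse one restricts to a single rectangle — and this is exactly the condition $\supp\sigma\subseteq\hat\kappa$ by Proposition \ref{p_sm}(ii).

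Finally, for the pairing formula I would verify $\langle T_\sigma,h\rangle=\int_{Y\times X}\hat h\, d\sigma$ first on elementary tensors $h=f\otimes g$, where by the duality convention $\langle T_\sigma,f\otimes g\rangle=(T_\sigma f,\bar g)=B(f,\bar g)=\int f(x)g(y)\,d\sigma(y,x)=\int\hat h\,d\sigma$ since $\hat h(y,x)=h(x,y)=f(x)g(y)$. I would then pass to a general $h=\sum_i f_i\otimes g_i\in\Gamma(X,Y)$ with $\sum\|f_i\|_2^2,\sum\|g_i\|_2^2<\infty$ by a dominated-convergence argument: the partial sums converge to $T_\sigma$ in the pairing (by continuity of the functional $S\mapsto\langle T_\sigma,S\rangle$ on $\Gamma$), and on the integral side the series $\sum_i|f_i(x)||g_i(y)|$ is $|\sigma|$-integrable by the same Cauchy--Schwarz-plus-marginal estimate as above, justifying interchange of sum and integral. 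The main obstacle throughout is integrability against the non-product measure $|\sigma|$, and it is precisely the Arveson condition (\ref{eq_carv}) that controls it at each stage; once integrability is secured, the boundedness, support, and pairing claims all follow by routine manipulation.
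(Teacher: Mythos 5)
Your proposal is correct and follows essentially the same route as the paper's proof: the Cauchy--Schwarz estimate against $|\sigma|$ combined with the marginal bounds to get a bounded sesquilinear form and Riesz representation, the support characterisation reduced to Proposition \ref{p_sm} by testing $P(\beta)T_\sigma P(\alpha)$ on (characteristic) functions, and the pairing formula proved on elementary tensors and extended by dominated convergence using the same integrability estimate. No substantive differences or gaps to report.
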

\begin{proof}
Fix $\sigma\in \bb{A}(Y,X)$ and consider the sesqui-linear form $\phi : H_1\times H_2\rightarrow \bb{C}$
given by
$$\phi(f,g) = \int_{Y\times X} f(x)\overline{g(y)} d\sigma(y,x).$$
Note that $\phi$ is well-defined:
\begin{eqnarray*}
\left|\int_{Y\times X} f(x)\overline{g(y)} d\sigma(y,x)\right|^2 & \leq & \left(\int_{Y\times X} |f(x)||g(y)| d|\sigma|(y,x)\right)^2\\
&\leq & \int_{Y\times X} |f(x)|^2 d|\sigma|(y,x) \int_{Y\times X} |g(y)|^2 d|\sigma|(y,x)\\
& = & \int_{X} |f(x)|^2 d|\sigma|_X (x) \int_{Y} |g(y)|^2 d|\sigma|_Y(y)\\
& \leq & \|\sigma\|_{\bb{A}}^2 \|f\|_2^2 \|g\|_2^2.
\end{eqnarray*}
By the Riesz Representation Theorem, there exists a unique operator $T_{\sigma} : H_1\rightarrow H_2$ such that
$(T_{\sigma}f,g) = \phi(f,g)$; moreover, $\|T_{\sigma}\|\leq \|\sigma\|_{\bb{A}}$.

Let $\kappa\subseteq X\times Y$ and suppose that $\supp \sigma \subseteq \hat{\kappa}$.
Let $\alpha\subseteq X$ and $\beta\subseteq Y$ be measurable
subsets with $(\alpha\times\beta)\cap \kappa \simeq \emptyset$. By deleting null sets from
$\alpha$ and $\beta$ we may assume that, in fact, $(\alpha\times\beta)\cap \kappa = \emptyset$.
If $f\in H_1$ (resp. $g\in H_2$) is supported on $\alpha$ (resp. $\beta$) then, by Proposition \ref{p_sm},
$$(T_{\sigma}f,g) = \int_{(\beta\times\alpha)\cap\hat{\kappa}} f(x)\overline{g(y)} d\sigma(y,x) = 0;$$
thus, $T_{\sigma}$ is supported on $\kappa$.

Conversely, suppose that $T_{\sigma}$
is supported on $\kappa$
and let $\beta\times\alpha \subseteq Y\times X$ be a
rectangle of finite measure, marginally disjoint from $\hat{\kappa}$.
Then
$$\sigma(\beta\times\alpha) = (T_{\sigma}\chi_{\alpha},\chi_{\beta}) = 0,$$
and Proposition \ref{p_sm} implies that $\supp\sigma\subseteq \hat{\kappa}$, up to
a marginally null set.

Finally, suppose that $h\in \Gamma(X,Y)$ and $\sigma \in \bb{A}(Y,X)$.
Write $h = \sum_{i=1}^{\infty} f_i\otimes g_i$, where
$(f_i)_{i\in \bb{N}}\subseteq H_1$ and $(g_i)_{i\in \bb{N}}\subseteq H_2$ are
sequences of functions with
$\sum_{i=1}^{\infty} \|f_i\|_2^2 < \infty$ and
$\sum_{i=1}^{\infty} \|g_i\|_2^2 < \infty$.
The estimate in the first paragraph of the proof shows that
$\int_{Y\times X} \sum_{i=1}^{\infty} |f_i(x)||g_i(y)| d|\sigma|(y,x) < \infty$.

Let $h_n = \sum_{i=1}^n f_i\otimes g_i$; by the
Lebesgue Dominated Convergence Theorem,
$\int_{Y\times X} \hat{h}_n d\sigma \to_{n\to \infty} \int_{Y\times X} \hat{h} d\sigma$.
Thus,
\begin{eqnarray*}
\langle T_{\sigma}, h \rangle & = &
\lim_{n\to\infty} \langle T_{\sigma}, h_n \rangle
= \lim_{n\to\infty} \sum_{i=1}^n  \langle T_{\sigma}, f_i\otimes g_i\rangle\\
& = &
\lim_{n\to\infty}  \int_{Y\times X} \sum_{i=1}^n f_i(x) g_i(y) d\sigma(y,x)
= \int_{Y\times X} h(x,y) d\sigma(y,x).
\end{eqnarray*}
\end{proof}

For an $\omega$-closed set $F\subseteq Y\times X$,
 we denote by ${\mathbb A}(F)$ the set of all measures $\sigma$ in
${\mathbb A}(Y,X)$ such that $\supp\sigma\subseteq F$.

Operator versions of $M$-sets and $M_1$-sets were introduced
by the authors in \cite{stt} in connection with the study of closable multipliers.
We recall the relevant definition now,
introducing the additional notion of an $M_0$-set.

\begin{definition}\label{d_opver}
Let $(X,\mu)$ and $(Y,\nu)$ be standard measure spaces.
An $\omega$-closed set $\kappa\subseteq X\times Y$ is called

(i) \ \ an \emph{operator $M$-set} if $\cl K(H_1, H_2)\cap
\frak{M}_{\max}(\kappa) \neq \{0\}$;

(ii) \  an \emph{operator $M_1$-set} if $\cl K(H_1, H_2)\cap
\frak{M}_{\min}(\kappa) \neq \{0\}$;

(iii) an \emph{operator $M_0$-set} if there exists a non-zero measure $\sigma\in {\mathbb A}(\hat\kappa)$ such that $T_\sigma\in \cl K(H_1, H_2).$

We call $\kappa$ an \emph{operator $U$-set} (resp.
an \emph{operator $U_1$-set}, an \emph{operator $U_0$-set})
if it is not an operator $M$-set (resp. an operator $M_1$-set,
an operator $M_0$-set).
\end{definition}

(Operator) $M$-sets will be referred to as
\emph{sets of (operator) multiplicity}, while (operator) $U$-sets
-- as \emph{sets of (operator) uniqueness}.
It will follow from Theorem \ref{th_pse} that if $\sigma\in \bb{A}(\hat{\kappa})$ then
$T_{\sigma}\in \frak{M}_{\min}(\kappa)$.
Therefore, every operator $M_0$-set is an operator $M_1$-set,
while  every operator $M_1$-set is trivially an operator $M$-set.

\medskip

\noindent {\bf Remark }
Recall that $\mu\in M(G)$ is called a Rajchman measure if
$\lambda(\mu)\in C_r^*(G)$.
The compact operators of the form $T_{\sigma}$, where $\sigma\in \bb{A}(Y,X)$,
can be thought of as an operator version of these measures.

\medskip

\subsection{ A symbolic calculus}

Aiming at applications to multiplicity sets we establish here a kind of symbolic calculus for completely bounded maps
from $\cl B(L^2(G))$ to $\vn(G)$ (Theorem \ref{p_maps}).
We first recall the Stone-von Neumann Theorem in a suitable for our needs form.
Let $\cl D = \{M_a : a\in L^{\infty}(G)\}$ and $\cl D_0 = \{M_a : a\in C_0(G)\}$.
For each $s\in G$, let $\alpha_s : C_0(G)\rightarrow C_0(G)$ be given by
$\alpha_s f(t) = f(s^{-1}t)$. The map $s\mapsto \alpha_s$
is a homomorphism from $G$ into the automorphism group of $C_0(G)$,
and thus gives rise to the (C*-algebraic) crossed product $C_0(G)
\rtimes_{\alpha} G$. Denoting for a moment by $\pi :
C_0(G)\rightarrow \cl B(L^2(G))$ the representation
given by $\pi(g) = M_g$,
we have
that the pair $(\pi,\lambda)$ (where $\lambda$ is the left regular
representation of $G$ on $L^2(G)$), is a covariant representation of
the dynamical system $(C_0(G),G,\alpha)$. Thus, $(\pi,\lambda)$
gives rise to a representation $\pi\times\lambda$
of $C_0(G) \rtimes_{\alpha} G$ on $L^2(G)$.
By the Stone-von Neumann theorem (see \cite[Theorem 4.23]{williams}), this representation is
faithful and its image coincides with the algebra $\cl K$
of all compact operators on $L^2(G)$.
In particular, we claim that
\begin{equation}\label{eq_k}
\cl K = \overline{[AT : A\in \cl D_0, T\in C^*_r(G)]}^{\|\cdot\|}
= \overline{[ATB : A, B\in \cl D_0, T\in C^*_r(G)]}^{\|\cdot\|}
\end{equation}
(here, and in the sequel, $[\cl E]$ denotes the linear span of $\cl E$).
To see that (\ref{eq_k}) holds,
note that if $f\in L^1(G)$,
$T=\lambda(f)$ and $A,B\in \cl D_0$, then
$$AT=\int_G f(t)A \lambda_t dt\in (\pi\times \lambda)(C_0(G)\rtimes_\alpha G)=\cl K,$$
and thus $ATB\in \cl K$ as well.
Conversely, it is easy to observe (see, {\it e.g.}, \cite{ped}) that the operators of the form
$\sum_{i=1}^k \int_{E_i} A_i \lambda_s ds$,
where $E_i\subseteq G$ are measurable sets of finite measure and $A_i\in \cl D_0$, $i = 1,\dots,k$,
form a dense subset of
$(\pi\times \lambda)(C_0(G)\rtimes_\alpha G)$;
however, $\int_{E_i} A_i \lambda_s ds = A_i \lambda(\chi_{E_i})$, and
the first equality in
(\ref{eq_k}) is established.
To complete the proof of the second equality, let $(B_i)_{i=1}^{\infty}\subseteq \cl D_0$ be a
sequence strongly converging to the identity operator on $L^2(G)$, and
note that if $A\in \cl D_0$ and $T\in C^*_r(G)$, then
$AT = \lim_i ATB_i$ in norm, by the compactness of $AT$.

In the sequel, we will use the norm closed $\cl D$-bimodule generated by
$C_r^*(G)$
\begin{equation}\label{eq_a}
\cl A = \overline{[ATB : A, B\in \cl D, T\in C_r^*(G)]}^{\|\cdot\|}
\end{equation}
and the smallest norm closed subspace of $\cl B(L^2(G))$ containing
$C_r^*(G)$ and invariant under Schur multipliers
\begin{equation}\label{eq_aa}
\cl R = \overline{[S_{\nph}(T) : T \in C_r^*(G), \nph\in
\frak{S}(G,G)]}^{\|\cdot\|}.
\end{equation}
By (\ref{eq_k}),
\begin{equation}\label{eq_kcon}
\cl K \subseteq \cl A\subseteq \cl R.
\end{equation}

\begin{remark}\label{r_aatilde}
{\bf (i)}
{\rm Let $G$ be discrete. Then $\cl A = \cl R$. Indeed,
in this case $\cg$ is generated as a closed linear space by the
unitaries $\lambda_s$, $s\in G$, which are normalisers of the multiplication masa $\cl D$.
However, if $\nph\in \frak{S}(G,G)$ then $S_{\nph}(\lambda_s) = M_f\lambda_s\in \cl A$ for some
$f\in L^{\infty}(G)$ (see, {\it e.g.}, \cite[Proposition 14]{kp}). It follows that $\cl A$ is invariant
under Schur multiplication, and hence $\cl R = \cl A$. Note that, in the case $G$ is
infinite, $\cl K$ is strictly contained in $\cl A$ since $\lambda_s$ is a unitary operator in $C^*_r(G)$
which is not compact.}

{\rm In \cite{roe},
given a discrete group $G$,
J. Roe introduced what is now known as \emph{the uniform Roe algebra}
$UC^*_r(G)$ which equals, by definition, to the uniform closure in $\cl B(\ell^2(G))$
of the space of all matrices indexed by $G\times G$ with uniformly bounded entries
supported on sets of the form $\{(s,t)\in G\times G : ts^{-1}\in E\}$, where $E$ is finite.
We note that
$UC^*_r(G)$ coincides in this case with $\cl R$.
Indeed, the unitary generators $\lambda_s$ are represented by
matrices (indexed by $G\times G$) whose $s$th diagonal has all entries equal to 1,
and all other diagonals are zero.
Multiplying by an operator of the form $M_a$, where $a\in \ell^{\infty}(G)$,
we see that all matrices which, on a given diagonal, have a sequence from
$\ell^{\infty}(G)$, are in $\cl A = \cl R$; thus, $UC^*_r(G)\subseteq \cl R$.
Conversely, since $C_r^*(G)$ is generated as a norm closed subspace by
the operators of the form $\lambda_s$, we have that $\cl A\subseteq UC^*_r(G)$,
and hence $UC^*_r(G) = \cl R$.

The previous paragraph shows that the space
$\cl R$ can be thought of as a locally compact version of
the uniform Roe algebra.}

\smallskip

\noindent {\bf (ii)} {\rm If $G$ is compact then
$\cl K = \cl A = \cl R$. Indeed, in this case $C^*_r(G)\subseteq \cl K$
and since the compact operators are invariant under Schur multipliers, we
have that $\cl R\subseteq \cl K$, and the equalities follow from (\ref{eq_kcon}).}
\end{remark}

In view of Remark \ref{r_aatilde}, it is natural to ask whether
$\cl A = \cl R$ for every locally compact group $G$;
we do not  know whether this equality always holds.

If $G$ is compact then $N(A(G))\subseteq \Gamma(G,G)$ and hence
the formula
$$\langle E(T),u\rangle = \langle T, N(u)\rangle, \ \ \ T\in \cl B(L^2(G)), u\in A(G),$$
defines a canonical expectation $E$ from $\cl B(L^2(G))$ onto $\vn(G)$.
This is the motivation behind
the next theorem, where we exhibit a symbolic
calculus for completely bounded maps from $\cl B(L^2(G))$ into
$\vn(G)$ (that are not necessarily projections). Let us denote by
$CB^{w^*}(\cl B(L^2(G)),\vn(G))$ the space of weak* continuous completely
bounded maps from $\cl B(L^2(G))$ into $\vn(G)$. It has a
natural structure of a right Banach module over $\frak{S}(G,G)$, the action being
given by $\Phi\cdot\nph = \Phi\circ S_{\nph}$. Note that
$\Gamma(G,G)$ is also a right Banach module over $\frak{S}(G,G)$
under the action $\psi\cdot \nph = \psi \nph$.

\begin{theorem}\label{p_maps}
For every $\nph\in \Gamma(G,G)$ and every
$T\in \cl B(L^2(G))$, there exists a unique operator $E_{\nph}(T)\in \vn(G)$ such that
$$\langle E_{\nph}(T),u\rangle = \langle T,\nph N(u)\rangle, \ \ \ u\in A(G).$$
The transformation $\nph\rightarrow E_{\nph}$ is a contractive
$\frak{S}(G,G)$-module map from $\Gamma(G,G)$ into
$CB^{w^*}(\cl B(L^2(G)),\vn(G))$.
Moreover, if $\nph\in \Gamma(G,G)$ then
$E_\varphi(\lambda_s)=P(\varphi)(s)\lambda_s$, $s\in G$,
and  $E_{\nph}(T)\in C_r^*(G)$, for all $T\in \cl R$.
\end{theorem}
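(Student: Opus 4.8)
The plan is to obtain existence, weak*-continuity, the module identity and the two explicit formulae by essentially formal manipulations, and to isolate complete boundedness as the only point that requires a concrete operator representation of $E_{\varphi}$. For \emph{existence, uniqueness and the norm bound}, fix $\varphi\in\Gamma(G,G)$ and $T\in\cl B(L^2(G))$. Since $A(G)$ sits contractively in $M^{\cb}A(G)$ and $N$ embeds the latter isometrically into $\frak{S}(G,G)$, we have $\|N(u)\|_{\frak{S}}\le\|u\|_{A(G)}$; as multiplication by $N(u)$ is bounded on $\Gamma(G,G)$ with norm $\|N(u)\|_{\frak{S}}$, the functional $u\mapsto\langle T,\varphi N(u)\rangle$ obeys $|\langle T,\varphi N(u)\rangle|\le\|T\|\,\|\varphi\|_{\Gamma}\,\|u\|_{A(G)}$ and is bounded on $A(G)$. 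Under the identification $A(G)^*=\vn(G)$ it is represented by a unique $E_{\varphi}(T)\in\vn(G)$ satisfying the stated relation, with $\|E_{\varphi}(T)\|\le\|\varphi\|_{\Gamma}\|T\|$; linearity in $T$ and in $\varphi$ is immediate, so $\varphi\mapsto E_{\varphi}$ is a contraction into the bounded maps $\cl B(L^2(G))\to\vn(G)$.

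For \emph{weak*-continuity and the module property}, let $R_{\varphi}:A(G)\to\Gamma(G,G)$ be $R_{\varphi}(u)=\varphi N(u)$; the estimate above shows $R_{\varphi}$ is bounded, and the defining relation says exactly that $E_{\varphi}=R_{\varphi}^*$, whence $E_{\varphi}$ is weak*-continuous. Since $S_{\psi}$ is the adjoint of multiplication by $\psi$ on $\Gamma(G,G)$, one has $\langle S_{\psi}(T),h\rangle=\langle T,\psi h\rangle$, so $\langle E_{\varphi}(S_{\psi}(T)),u\rangle=\langle T,\varphi\psi N(u)\rangle=\langle E_{\varphi\psi}(T),u\rangle$; thus $E_{\varphi}\circ S_{\psi}=E_{\varphi\psi}$, i.e. $\varphi\mapsto E_{\varphi}$ is an $\frak{S}(G,G)$-module map.

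\emph{Complete boundedness is the main obstacle.} Here I would produce a concrete representation. Writing $u(r)=(\lambda_r\xi,\eta)$ one computes directly $N(u)(s,t)=\int_G\xi(sz)\overline{\eta(tz)}\,dz$, and expanding $\varphi=\sum_i a_i\otimes b_i$ gives $(\varphi N(u))(s,t)=\sum_i\int_G\big(a_i(s)\xi(sz)\big)\big(b_i(t)\overline{\eta(tz)}\big)\,dz$. Recognising the right-hand side as a convergent tensor expansion of $\varphi N(u)$ in $\Gamma(G,G)$ and pairing with $T$ yields $(E_{\varphi}(T)\xi,\eta)=\sum_i\int_G (TW_{i,z}\xi,\,V_{i,z}\eta)\,dz$, where $W_{i,z}\xi(s)=a_i(s)\xi(sz)$ and $V_{i,z}\eta(t)=\overline{b_i(t)}\,\eta(tz)$; that is, $E_{\varphi}(T)=\sum_i\int_G V_{i,z}^*\,T\,W_{i,z}\,dz$. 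Integrating in $z$ and using left-invariance of Haar measure (which absorbs the modular function arising from the right translations) gives $\sum_i\int_G W_{i,z}^*W_{i,z}\,dz\le\big(\sum_i\|a_i\|_2^2\big)I$ and the analogous bound for $V$; the standard criterion for maps $T\mapsto\int V^*TW$ then shows $E_{\varphi}$ is completely bounded with $\|E_{\varphi}\|_{\cb}\le\big(\sum_i\|a_i\|_2^2\big)^{1/2}\big(\sum_i\|b_i\|_2^2\big)^{1/2}$, and infimising over representations of $\varphi$ yields $\|E_{\varphi}\|_{\cb}\le\|\varphi\|_{\Gamma}$. I expect the delicate part to be the rigorous justification of this representation: that $\varphi N(u)$ genuinely carries the displayed continuous-plus-countable tensor expansion in $\Gamma(G,G)$, the interchange of $\langle T,\cdot\rangle$ with the sum and the $z$-integral, and the bookkeeping of the modular function.

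Finally, for \emph{the two formulae and the range}, take $T=\lambda_s$: Lemma \ref{l_for} gives $\langle E_{\varphi}(\lambda_s),u\rangle=P(\varphi N(u))(s)=\int_G\varphi(s^{-1}r,r)\,N(u)(s^{-1}r,r)\,dr$, and the key identity $N(u)(s^{-1}r,r)=u\big(r(s^{-1}r)^{-1}\big)=u(s)$ collapses this to $u(s)P(\varphi)(s)=\langle P(\varphi)(s)\lambda_s,u\rangle$, so $E_{\varphi}(\lambda_s)=P(\varphi)(s)\lambda_s$. For the range statement, the module identity reduces the problem to generators: since $E_{\varphi}(S_{\psi}(T_0))=E_{\varphi\psi}(T_0)$ with $\varphi\psi\in\Gamma(G,G)$ and $\cl R$ is the closed span of such $S_{\psi}(T_0)$ with $T_0\in C_r^*(G)$, norm-continuity of $E_{\varphi}$ and norm-closedness of $C_r^*(G)$ reduce matters to checking $E_{\chi}(\lambda(f))\in C_r^*(G)$ for $\chi\in\Gamma(G,G)$ and $f\in L^1(G)$. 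Using $\lambda(f)=\int_G f(s)\lambda_s\,ds$ and the same identity for $P(\chi N(u))$, one finds $\langle E_{\chi}(\lambda(f)),u\rangle=\int_G f(s)\,u(s)\,P(\chi)(s)\,ds=\langle\lambda(f\,P(\chi)),u\rangle$; since $P(\chi)\in A(G)\subseteq L^{\infty}(G)$ we have $f\,P(\chi)\in L^1(G)$, whence $E_{\chi}(\lambda(f))=\lambda(f\,P(\chi))\in C_r^*(G)$. Passing to closures completes the argument.
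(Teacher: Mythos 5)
Your proposal is correct in substance and shares the paper's formal skeleton for existence, uniqueness, weak* continuity, the module identity, and the formula $E_\varphi(\lambda_s)=P(\varphi)(s)\lambda_s$ (all of these are the same duality computations, with $R_\varphi$ playing the role of the paper's $e_\varphi$). Where you genuinely diverge is on complete boundedness. The paper never constructs an operator representation of $E_\varphi$: it factors $e_\varphi$ as the complete isometry $N:A(G)\to\frak{S}(G,G)$ followed by the multiplication map $\psi\mapsto\varphi\psi$ from $\frak{S}(G,G)$ into $\Gamma(G,G)$, and proves the latter is completely bounded with cb norm at most $\|\varphi\|_\Gamma$ by a direct matrix-norm estimate using the functional $F_\varphi$ on $\cl B(L^2(G))$; taking the adjoint then delivers normality and complete boundedness of $E_\varphi$ simultaneously, with no integrals to justify. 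Your route instead exhibits $E_\varphi(T)=\sum_i\int_G V_{i,z}^*TW_{i,z}\,dz$ and invokes the row/column criterion; the computation of $N(u)(s,t)=\int_G\xi(sz)\overline{\eta(tz)}\,dz$ and the bound $\sum_i\int_G\|W_{i,z}\xi\|^2dz=\big(\sum_i\|a_i\|_2^2\big)\|\xi\|^2$ are both right (and no modular function actually enters, since for fixed $s$ the substitution $z\mapsto sz$ preserves left Haar measure), but, as you concede, you still owe the reader the justification that $\varphi N(u)$ admits this continuous-plus-countable expansion in $\Gamma(G,G)$ and that the pairing with $T$ commutes with the sum and the $z$-integral. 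That is precisely the kind of dominated-convergence bookkeeping the paper's predual argument sidesteps, though your representation has the compensating virtue of being explicit. For the range statement your shortcut $\langle\lambda(f),h\rangle=\int_G f(s)P(h)(s)\,ds$ applied to $h=\chi N(u)$ gives $E_\chi(\lambda(f))=\lambda(fP(\chi))$ for general $\chi$ in one step, whereas the paper first proves $S_{N(u)}(\lambda(f))=\lambda(uf)$, derives the formula for elementary tensors, and passes to limits; the two are equivalent, and your identity requires the same interchange-of-limits care (it is exactly Lemma 2.3 plus a Fubini argument) that the paper spells out in establishing its equation (3.6). So: correct, but the cb step and the two interchange-of-limits claims should be written out before this could stand as a complete proof.
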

\begin{proof}
Fix $\nph\in \Gamma(G,G)$ and consider the mapping $e_{\nph} :
A(G)\rightarrow \Gamma(G,G)$ given by
$e_{\nph}(u)=\nph N(u)$, $u\in A(G)$.
The mapping $N : A(G)\rightarrow \frak{S}(G,G)$ is completely isometric
(see, {\it e.g.}, \cite{spronk}). On the other hand,
the mapping $\psi\rightarrow \nph\psi$ from $\frak{S}(G,G)$ into
$\Gamma(G,G)$ is completely bounded with completely bounded
norm not exceeding $\|\nph\|_{\Gamma}$. Indeed,
let $\psi_{i,j}\in \frak{S}(G,G)$, $i,j = 1,\dots,n$;
then, denoting by $F_{\nph}$ the functional on
$\cl B(L^2(G))$ given by $F_{\nph}(T) = \langle \nph,T\rangle$, we have
\begin{eqnarray*}
\left\|(\nph \psi_{i,j})_{i,j}\right\|_{M_n(\Gamma(G,G))}
& = & \left\|(\nph \psi_{i,j})_{i,j}\right\|_{CB(\cl B(L^2(G)), M_n(\mathbb C))}\\
&=&
\sup_{\|(T_{p,q})_{p,q}\|\leq 1} \|(\langle \nph \psi_{i,j}, T_{p,q}\rangle)_{(i,p),(j,q)}\|\\
& = &
\sup_{\|(T_{p,q})_{p,q}\|\leq 1} \|(\langle \nph, S_{\psi_{i,j}}(T_{p,q})\rangle)_{(i,p),(j,q)}\|\\
& \leq &
\sup_{\|(T_{p,q})_{p,q}\|\leq 1} \|F_{\nph}\| \|(S_{\psi_{i,j}}(T_{p,q}))_{(i,p),(j,q)}\|\\
& \leq &
\|\nph\|_{\Gamma} \|(S_{\psi_{i,j}})_{i,j}\|_{\cb}
= \|\nph\|_{\Gamma} \|(\psi_{i,j})_{i,j}\|_{M_n(\frak{S}(G,G))}.
\end{eqnarray*}
Thus, $e_{\nph}$ is completely
bounded and $\|e_{\nph}\|_{\rm cb}\leq \|\nph\|_{\Gamma}$.
It follows that the map $E_{\nph}  = e_{\nph}^*$
is a normal
completely bounded map from $\cl B(L^2(G))$ into $\vn(G)$
and $\|E_{\nph}\|_{\rm cb} \leq \|\nph\|_{\Gamma}$. The
identity
$$\langle E_{\nph}(T),u\rangle = \langle T,\nph N(u)\rangle, \ \ \ u\in A(G), T\in \cl B(L^2(G)),$$
holds by the definition of $E_{\nph}$.

It is obvious that the map $E:\nph\rightarrow E_{\nph}$ is linear
and, by the previous paragraph, it is contractive. Moreover, if
$\nph\in \Gamma(G,G)$, $\psi\in \frak{S}(G,G)$ and $u\in A(G)$, then
$$\langle E_{\nph\psi}(T),u\rangle = \langle T,\psi\nph N(u)\rangle =
\langle S_{\psi}(T),\nph N(u)\rangle
= \langle (E_{\nph}\circ S_{\psi})(T),u\rangle,$$ which shows that
$E$ is a $\frak{S}(G,G)$-module map.

Using (\ref{eq_P}), for every $u\in A(G)$ we have
\begin{eqnarray*}
\langle E_\varphi(\lambda_s),u\rangle & =& \langle\lambda_s,\varphi N(u)\rangle=
P(N(u)\varphi)(s)\\ & = & u(s)P(\varphi)(s) = \langle P(\varphi)(s)\lambda_s,u\rangle,
\end{eqnarray*}
which shows that $E_\varphi(\lambda_s) = P(\varphi)(s)\lambda_s$.

Now suppose that $f \in L^1(G)$ and let $a,b\in L^2(G)$.
Then
\begin{equation}\label{eq_snu}
S_{N(u)}(\lambda(f))=\lambda(uf), \ \ u\in A(G).
\end{equation}
Indeed, write
$N(u)=\sum_{i=1}^{\infty} f_i\otimes g_i$, where
$\|\sum_{i=1}^{\infty} |f_i|^2\|_{\infty} \leq C<\infty$
and
$\|\sum_{i=1}^{\infty} |g_i|^2\|_{\infty} \leq C<\infty$.
Then
$$(S_{N(u)}(\lambda(f))a,b)
= \sum_{i=1}^{\infty}\iint g_i(t)f(s)f_i(s^{-1}t)a(s^{-1}t)\overline{b(t)}dsdt.$$
For a fixed $s\in G$,
\begin{eqnarray*}
\left|\sum_{i=1}^{\infty} g_i(t)f_i(s^{-1}t)a(s^{-1}t)\overline{b(t)}\right|^2
& \leq & \sum_{i=1}^{\infty} \left|g_i(t)\overline{b(t)}\right|^2
\sum_{i=1}^{\infty} \left|f_i(s^{-1}t) a(s^{-1}t)\right|^2\\
& \leq & C^2 |a(s^{-1}t)|^2 |b(t)|^2,
\end{eqnarray*}
for almost every $t\in G$.
On the other hand,
$$\iint |a(s^{-1}t)| |b(t)| dt \leq \|a\|_2\|b\|_2$$
and hence the $L^1$-norm of the function $t\to |a(s^{-1}t)| |b(t)|$
is independent of $s$.
The Lebesgue Dominated Convergence Theorem implies that
\begin{eqnarray*}
& & \iint \sum_{i=1}^{\infty} f(s) g_i(t)f_i(s^{-1}t)a(s^{-1}t)\overline{b(t)}dtds\\
& = & \int f(s) \sum_{i=1}^{\infty}\int g_i(t)f_i(s^{-1}t)a(s^{-1}t)\overline{b(t)}dt ds.
\end{eqnarray*}
Since the function $s\mapsto \sum_{i=1}^{\infty}\int g_i(t)f_i(s^{-1}t)a(s^{-1}t)\overline{b(t)}dt$
is (essentially) bounded, while $f\in L^1(G)$,
another application of the Lebesgue Dominated Convergence Theorem
shows that
\begin{eqnarray*}
& & \iint \sum_{i=1}^{\infty} f(s) g_i(t)f_i(s^{-1}t)a(s^{-1}t)\overline{b(t)}dtds\\
& = & \sum_{i=1}^{\infty} \iint  f(s) g_i(t)f_i(s^{-1}t)a(s^{-1}t)\overline{b(t)}dt ds.
\end{eqnarray*}
It now follows that
\begin{eqnarray*}
(S_{N(u)}(\lambda(f))a,b)
& = & \iint f(s)N(u)(s^{-1}t,t)a(s^{-1}t)\overline{b(t)}dtds \\
&=& \iint u(s)f(s)a(s^{-1}t)\overline{b(t)}dtds
 =  (\lambda(uf)a,b).
\end{eqnarray*}
Thus, (\ref{eq_snu}) is established.

The mapping $u\mapsto N(u)$ from $A(G)$ into ${\mathfrak S}(G,G)$ is an isometry (see, {\it e.g.}, \cite{spronk});
hence
$\|S_{N(u)}(\lambda(f))\|\leq \|N(u)\|_{{\mathfrak S}}\|\lambda(f)\|=\|u\|_{A(G)}\|\lambda(f)\|$ and
therefore  the mapping $u\mapsto \lambda(uf), A(G)\to C_r^*(G)$, is continuous.
We also have
\begin{eqnarray*}
& & \langle E_{a\otimes b}(\lambda(f)),u\rangle = \langle
\lambda(f),(a\otimes b) N(u)\rangle =
(S_{N(u)}(\lambda(f))a,\overline{b})\\
& = & (\lambda(uf)a,\overline{b})
 =  \iint u(s)f(s)a(s^{-1}t) b(t) ds dt\\
& = & \int u(s)f(s) \left(\int a(s^{-1}t) b(t) dt\right) ds
 =  \int u(s)f(s) (b\ast \check{a})(s) ds.
\end{eqnarray*}
Using (\ref{eq_p}), we conclude that
$$\langle E_{a\otimes b}(\lambda(f)),u\rangle = \int u(s)f(s) P(a\otimes b)(s) ds.$$
Note that, since $P(a\otimes b)\in A(G)$, the function
$P(a\otimes b)f$ belongs to $L^1(G)$ and hence
\begin{equation}\label{eq_cl}
\langle E_{\nph}(\lambda(f)),u\rangle  = \langle \lambda(P(\nph)f),
u\rangle
\end{equation}
for $\nph = a\otimes b$. By linearity,
(\ref{eq_cl}) holds whenever $\nph$ is a finite sum of elementary
tensors. By the continuity of the transformations $\nph\rightarrow E_{\nph}$,
$\nph \rightarrow P(\nph)$ and
$g\rightarrow \lambda(gf)$ (the last one mapping $A(G)$ into $C^*_r(G)$),
we conclude that (\ref{eq_cl}) holds
for all $\nph\in \Gamma(G,G)$.

Relation (\ref{eq_cl}) implies that
$E_{\nph}(\lambda(f)) = \lambda(P(\nph)f)$ $\in$ $\cg$, for all
$f\in L^1(G)$ and all $\nph\in \Gamma(G,G)$. Since $E_{\nph}$ is
norm continuous and $\lambda(L^1(G))$ is dense in $\cg$, we have
that $E_{\nph}(\cg)\subseteq \cg$. If $\psi\in \frak{S}(G,G)$ and
$T\in \cg$ then
$$E_{\nph}(S_{\psi}(T)) = E_{\nph\psi}(T)\in \cg.$$
It follows that $E_{\nph}(\cl R)\subseteq \cg$, for every $\nph \in \Gamma(G,G)$.
\end{proof}

A version of the next lemma for $\frak{M}_{\min}(\kappa)$ was proved in \cite[Proposition 5.3]{st2}.

\begin{lemma}\label{l_mmax}
If $\kappa\subseteq X\times Y$ is an $\omega$-closed set then
$$\frak{M}_{\max}(\kappa) = \{T\in \cl B(H_1,H_2) :
S_{\nph}(T) = 0, \mbox{ for all } \nph\in \frak{S}(X,Y),$$
$$\mbox{ vanishing on an $\omega$-open neighbourhood of } \kappa\}.$$
\end{lemma}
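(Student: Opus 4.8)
The plan is to pass to pre-annihilators in $\Gamma(X,Y)$ and reduce the statement to a single approximation result. Denote by $\cl N$ the masa-bimodule on the right-hand side and set
$$\cl V = [\,\nph h : \nph\in\frak{S}(X,Y) \text{ vanishing on an } \omega\text{-open neighbourhood of }\kappa,\ h\in\Gamma(X,Y)\,].$$
Since the adjoint of the multiplication map $m_{\nph} : h\mapsto\nph h$ on $\Gamma(X,Y)$ is $S_{\nph}$, we have $\langle S_{\nph}(T),h\rangle = \langle T,\nph h\rangle$ for $T\in\cl B(H_1,H_2)$ and $h\in\Gamma(X,Y)$; hence $S_{\nph}(T)=0$ for every admissible $\nph$ precisely when $T$ annihilates every generator of $\cl V$, so $\cl N = \cl V^{\perp} = (\overline{\cl V})^{\perp}$. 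On the other hand, $\frak{M}_{\max}(\kappa) = \Psi(\kappa)^{\perp}$ by \cite{st1}. As $\Psi(\kappa)$ and $\overline{\cl V}^{\|\cdot\|_{\Gamma}}$ are closed subspaces of $\Gamma(X,Y)$, equal subspaces have equal annihilators, and it suffices to prove
$$\overline{\cl V}^{\|\cdot\|_{\Gamma}} = \Psi(\kappa).$$

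The inclusion $\overline{\cl V}\subseteq\Psi(\kappa)$ is immediate: if $\nph$ vanishes on an $\omega$-open neighbourhood $U$ of $\kappa$ and $h\in\Gamma(X,Y)$, then $\nph h\in\Gamma(X,Y)$ vanishes on $U$ as well, so $\nph h\in\Psi(\kappa)$ by definition. The key observation for the reverse inclusion is that the characteristic function $\chi_R$ of any measurable rectangle $R=\alpha\times\beta$ is a Schur multiplier (indeed $\chi_R=\chi_\alpha\otimes\chi_\beta$, whence $S_{\chi_R}(T)=P(\beta)TP(\alpha)$), and that $\chi_R$ vanishes on $R^c$. Consequently, whenever $R$ is marginally disjoint from $\kappa$ we have $\kappa\subseteq R^c$, and since $R^c$ is $\omega$-open, $\chi_R$ is admissible in the definition of $\cl V$. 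The same holds for any finite union of such rectangles: its complement is a finite intersection of $\omega$-open sets, hence $\omega$-open, and its characteristic function is again a Schur multiplier.

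To prove $\Psi(\kappa)\subseteq\overline{\cl V}$ it is enough, by definition of $\Psi(\kappa)$ and continuity of the pairing, to show that every $h\in\Gamma(X,Y)$ vanishing on an $\omega$-open neighbourhood $U=\cup_n R_n$ of $\kappa$ (with $R_n$ rectangles) lies in $\overline{\cl V}$. After the routine reduction to the case where $\mu,\nu$ are finite and the admissible topologies are compact (replacing the measures by equivalent finite ones and realising the spaces as compact metric spaces, as in \cite{eks} and \cite{st1}; none of the notions involved are affected), Lemma \ref{e-compactness} yields, for each $\varepsilon>0$, subsets $X_\varepsilon\subseteq X$, $Y_\varepsilon\subseteq Y$ with $\mu(X\setminus X_\varepsilon)<\varepsilon$, $\nu(Y\setminus Y_\varepsilon)<\varepsilon$, such that $\kappa\cap(X_\varepsilon\times Y_\varepsilon)$ is contained in a finite union $U_\varepsilon=\cup_{n\le N(\varepsilon)} R_n$. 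Writing $Q_\varepsilon=X_\varepsilon\times Y_\varepsilon$, the finite union of rectangles $Q_\varepsilon\setminus U_\varepsilon$ is marginally disjoint from $\kappa$, so $\chi_{Q_\varepsilon\setminus U_\varepsilon}$ is a Schur multiplier vanishing on a neighbourhood of $\kappa$ and $\chi_{Q_\varepsilon\setminus U_\varepsilon}h\in\cl V$. Since $h$ vanishes on $U\supseteq U_\varepsilon$ one has $\chi_{U_\varepsilon}h=0$, whence $\chi_{Q_\varepsilon\setminus U_\varepsilon}h=\chi_{Q_\varepsilon}h=(\chi_{X_\varepsilon}\otimes\chi_{Y_\varepsilon})h$. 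Finally, a direct estimate of the projective norm of $h-(\chi_{X_\varepsilon}\otimes\chi_{Y_\varepsilon})h$, using a representation $h=\sum_i f_i\otimes g_i$ and $\int_{X\setminus X_\varepsilon}\sum_i|f_i|^2\,d\mu\to 0$, $\int_{Y\setminus Y_\varepsilon}\sum_i|g_i|^2\,d\nu\to 0$, shows that $(\chi_{X_\varepsilon}\otimes\chi_{Y_\varepsilon})h\to h$ in $\|\cdot\|_{\Gamma}$ as $\varepsilon\to 0$, so $h\in\overline{\cl V}$.

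The hard part is exactly this reverse inclusion. The naive attempt to write $h=\nph h$ with a single Schur multiplier $\nph$ vanishing on a neighbourhood of $\kappa$ fails, because finite-rectangle approximations to $U^c$ need not contain $\kappa$, so the corresponding $\nph$ need not vanish near $\kappa$. The device that circumvents this is to cut down to a large rectangle $Q_\varepsilon$ on which, by Lemma \ref{e-compactness}, the portion of $\kappa$ is captured by finitely many of the $R_n$; this is precisely where the $\omega$-compactness lemma, and hence the preliminary reduction to finite measures and compact admissible topologies, is indispensable.
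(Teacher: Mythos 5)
Your proof is correct, and one half of it coincides with the paper's: the inclusion $\frak{M}_{\max}(\kappa)\subseteq \cl V^{\perp}$ (your left-to-right containment) is exactly the observation that $\nph h$ vanishes on an $\omega$-open neighbourhood of $\kappa$, hence lies in $\Psi(\kappa)$, which $\frak{M}_{\max}(\kappa)=\Psi(\kappa)^{\perp}$ annihilates. Where you genuinely diverge is the reverse inclusion. The paper disposes of it in two lines with no approximation at all: if $S_{\nph}(T)=0$ for every admissible $\nph$, then in particular for $\nph=\chi_{\alpha\times\beta}$ with $(\alpha\times\beta)\cap\kappa\simeq\emptyset$ (such $\chi_{\alpha\times\beta}$ is a Schur multiplier vanishing on the $\omega$-open neighbourhood $(\alpha\times\beta)^{c}$ of $\kappa$), whence $P(\beta)TP(\alpha)=0$ for all such rectangles; thus $T$ is supported on $\kappa$, and $\frak{M}_{\max}(\kappa)$ is by construction precisely the set of operators supported on $\kappa$. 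You instead prove the predual density $\Psi(\kappa)\subseteq\overline{\cl V}^{\|\cdot\|_{\Gamma}}$, which is logically equivalent to that inclusion by the bipolar theorem but costs more: it forces the reduction to finite measures and compact admissible topologies and an appeal to Lemma \ref{e-compactness}, together with the cut-down $\chi_{Q_{\varepsilon}\setminus U_{\varepsilon}}h=\chi_{Q_{\varepsilon}}h\to h$. Those steps do check out (in particular $Q_{\varepsilon}\setminus U_{\varepsilon}$ is a finite union of rectangles marginally disjoint from $\kappa$, so its characteristic function is an admissible Schur multiplier, and the tail estimate on $\|h-(\chi_{X_{\varepsilon}}\otimes\chi_{Y_{\varepsilon}})h\|_{\Gamma}$ is standard), so the argument is sound; what it buys is the explicit identification of $\Psi(\kappa)$ as the closed linear span of the products $\nph h$ with $\nph$ admissible, a marginally more quotable fact. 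But note that the support characterisation of $\frak{M}_{\max}(\kappa)$ makes the direction you laboured over essentially immediate, since only the multipliers $\chi_{\alpha\times\beta}$ ever need to be tested.
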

\begin{proof}
Suppose that $T\in \cl B(H_1,H_2)$ belongs to the set on the right hand side of the
above equality. If $\kappa\cap(\alpha\times\beta) \simeq \emptyset$ then
$\chi_{\alpha\times\beta} \in \frak{S}(X,Y)$ vanishes on the $\omega$-open
neighbourhood $(\alpha\times\beta)^c$ of $\kappa$ and hence
$M_{\chi_{\beta}}TM_{\chi_{\alpha}} = S_{\chi_{\alpha\times\beta}}(T) = 0$;
thus, $T\in \frak{M}_{\max}(\kappa)$.

Conversely, suppose that $T\in \frak{M}_{\max}(\kappa)$ and let
$\nph\in \frak{S}(X,Y)$ vanish on an $\omega$-open neighbourhood of $\kappa$.
If $h\in \Gamma(X,Y)$ then $\nph h\in \Gamma(X,Y)$ and vanishes on an
$\omega$-open neighbourhood of $\kappa$. By \cite{st1},
$$\langle S_{\nph}(T),h\rangle = \langle T,\nph h\rangle = 0,$$ showing that
$S_{\nph}(T) = 0$.
\end{proof}

The following lemma
will be needed in the proof of Theorem \ref{th_iffop}.

\begin{lemma}\label{eab}
Suppose that $T\in
\cl B(L^2(G))$ is non-zero. Then there exist $a$, $b\in L^2(G)$ such that
$E_{a\otimes b}(T) \ne  0$.
\end{lemma}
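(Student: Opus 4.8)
The plan is to prove the contrapositive: if $E_{a\otimes b}(T) = 0$ for all $a, b \in L^2(G)$, then $T = 0$. The key observation is that $E_{a\otimes b}(T)$ is an element of $\vn(G)$, and an element of $\vn(G)$ is determined by its pairings $\langle E_{a\otimes b}(T), u\rangle$ against all $u \in A(G)$. By the defining identity of Theorem \ref{p_maps}, these pairings are
\begin{equation*}
\langle E_{a\otimes b}(T), u\rangle = \langle T, (a\otimes b)N(u)\rangle, \ \ \ u\in A(G).
\end{equation*}
So $E_{a\otimes b}(T) = 0$ for all $a, b$ is equivalent to $\langle T, (a\otimes b)N(u)\rangle = 0$ for all $a, b \in L^2(G)$ and all $u \in A(G)$. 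Since $T \in \cl B(L^2(G))$ is paired with $\Gamma(G,G)$, and $T = 0$ if and only if it annihilates a sufficiently rich subset of $\Gamma(G,G)$, the goal reduces to showing that the functions of the form $(a\otimes b)N(u)$ are total (norm-dense in the span, or at least weak*-separating) in $\Gamma(G,G)$.

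First I would unwind the pairing concretely. Recall the duality $\langle T, f\otimes g\rangle = (Tf, \overline g)$, and that $N(u)(s,t) = u(ts^{-1})$. For an elementary tensor $a\otimes b$ and $u \in A(G)$, the product $(a\otimes b)N(u)$ is the function $(s,t)\mapsto a(s)b(t)u(ts^{-1})$; I would expand $N(u)$ itself as a sum $\sum_i f_i\otimes g_i$ in $\Gamma(G,G)$ (coming from $u(r) = (\lambda_r\xi,\eta)$) so that $(a\otimes b)N(u) = \sum_i (af_i)\otimes(bg_i)$, and compute $\langle T, (a\otimes b)N(u)\rangle$ as a convergent sum of inner products $(T(af_i), \overline{bg_i})$. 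The hypothesis then says this vanishes for every choice of $a, b, u$.

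The heart of the argument is then to show that forcing all these pairings to vanish forces $T = 0$. Here I expect to use the richness of $A(G)$: because $A(G)$ is a regular Banach algebra with spectrum $G$, its elements (and their translates) separate points and are locally abundant, so as $u$ ranges over $A(G)$ and $a, b$ range over $L^2(G)$, the functions $(s,t)\mapsto a(s)\overline{b(t)}\,u(ts^{-1})$ should exhaust enough of $L^2$-type kernels to detect any nonzero $T$. Concretely, I would aim to show that one can localize: by choosing $u$ supported near the identity (using regularity of $A(G)$) and $a, b$ appropriately, the pairing $\langle T, (a\otimes b)N(u)\rangle$ approximates $(Ta, \overline b)$ up to controllable error, and $(Ta,\overline b) = 0$ for all $a, b$ yields $T = 0$. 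Alternatively, one may argue that the closed linear span of $\{(a\otimes b)N(u) : a,b \in L^2(G),\, u\in A(G)\}$ is all of $\Gamma(G,G)$, perhaps by noting that $N(A(G))$ contains approximate identities for the pointwise multiplication action on $\Gamma(G,G)$ when $G$ is, say, compact, and handling the general locally compact case by an approximation using compactly supported elements.

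The main obstacle I anticipate is precisely this totality/density claim: establishing that multiplying arbitrary rank-one kernels $a\otimes b$ by the specific multipliers $N(u)$ still yields a separating (or dense) family in $\Gamma(G,G)$. The subtlety is that $N(u)$ is constant along the cosets $\{ts^{-1} = \mathrm{const}\}$, so these products live in a constrained subclass, and one must verify that the freedom in $a$, $b$, and $u$ compensates. I expect the cleanest route is the localization idea: exploit the regularity of $A(G)$ to build $u$ that concentrate mass near the identity of $G$ (so that $N(u)$ concentrates near the diagonal), recovering the diagonal pairing $(Ta,\overline b)$ in a limit, which pins down $T$.
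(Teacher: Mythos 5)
Your reduction is sound up to the point you yourself flag as the obstacle: everything hinges on showing that the family $\{(a\otimes b)N(u)\}$ separates points of $\cl B(L^2(G))$, and you do not prove this. Worse, the route you declare "cleanest" goes in the wrong direction. If $u\in A(G)$ is supported near the identity, then $N(u)(s,t)=u(ts^{-1})$ is supported near the diagonal $\{t=s\}$ of $G\times G$, so $(a\otimes b)N(u)$ is a kernel concentrated near the diagonal; pairing $T$ against such kernels sees only a "diagonal restriction" of $T$ (which is not even well defined for a general bounded operator) and does \emph{not} recover $(Ta,\overline b)$. To recover $(Ta,\overline b)$ you need $N(u)$ to tend to the constant function $1$, i.e.\ you need $u$ to equal $1$ on larger and larger compact sets, which is the opposite of concentrating $u$ at $e$. (A second, minor, slip: for non-compact $G$ the function $N(u)$ lies in $\frak{S}(G,G)$, not in $\Gamma(G,G)$, so it has no expansion $\sum_i f_i\otimes g_i$ with $\sum_i\|f_i\|_2^2<\infty$; only the product $(a\otimes b)N(u)$ lands in $\Gamma(G,G)$.)

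The gap closes easily once the localisation is turned around. Take $a,b\in L^2(G)$ vanishing off compact sets $\alpha,\beta$ respectively; by regularity of $A(G)$ choose $u\in A(G)$ with $u\equiv 1$ on the compact set $\beta\alpha^{-1}$. Then $N(u)\equiv 1$ on $\alpha\times\beta$, so $(a\otimes b)N(u)=a\otimes b$ almost everywhere, and the hypothesis gives $0=\langle T,(a\otimes b)N(u)\rangle=\langle T,a\otimes b\rangle=(Ta,\overline b)$. Since compactly supported functions are dense in $L^2(G)$ ($G$ being second countable, hence $\sigma$-compact), $T=0$. This is in fact more elementary than the paper's own argument, which first reduces to $T=M_{\chi_K}TM_{\chi_K}$, upgrades the hypothesis to $S_{N(u)}(T)=0$ for all $u\in A(G)$, forms the $\frak{S}(G,G)$-invariant subspace $\cl W=\mathrm{span}\{N(u)\psi:\psi\in\Gamma(G,G),\,u\in A(G)\}$, shows its null set is marginally empty using exactly the functions $u_n\equiv 1$ on an exhausting sequence of compacts, and then invokes a density criterion for invariant subspaces from \cite{st1}. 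Both proofs rest on the same underlying fact — that $u$ can be made identically $1$ on arbitrary compacta — but your write-up neither states nor uses it, and the heuristic you offer in its place would not work.
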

\begin{proof}
Let $T\in \cl B(L^2(G))$ be a non-zero operator, and suppose,
by way of contradiction, that
$E_{a\otimes b}(T) = 0$ for all $a,b\in L^2(G)$.
We may assume that
$T = M_{\chi_K}TM_{\chi_K}$ for some compact set $K\subseteq G$.
By Theorem \ref{p_maps},
$E_\varphi(T) = 0$ for every $\varphi\in\Gamma(G,G)$.
Since
$$\langle E_\varphi(T),u\rangle=\langle T,\varphi N(u)\rangle=\langle S_{N(u)}(T),\varphi\rangle,
\ \ \ u\in A(G), \varphi\in \Gamma(G,G),$$
we have that $S_{N(u)}(T) = 0$ for every $u\in A(G)$.
Let
$$\cl W = {\rm span}\{N(u)\psi : \psi\in \Gamma(G,G), u\in A(G)\}.$$
Then $\cl W\subseteq\Gamma(G,G)$ is a subspace, invariant under $\frak{S}(G)$, 
and $T\in \cl W^\perp$.
Denoting by $\text{null}(\cl W)$ the complement of the $\omega$-union \cite{stt} of the family 
$\{h^{-1}(\mathbb C\setminus\{0\} : h\in \cl W\}$, we have
$\text{null}(\cl W)\simeq\emptyset$. In fact, since $G$ is second countable and
locally compact, there exists an increasing sequence of compact sets $\{K_n\}$ such that 
$\cup_{n=1}^\infty K_n=G$. For each $n\in\mathbb N$, choose 
a function $u_n\in A(G)$ that takes the value $1$ on $K_n$. Then, up to a marginally null set, 
\begin{eqnarray*}
\text{null}(\cl W) & \subseteq & \cap_{n,m=1}^\infty\text{null} (N(u_n)\chi_{K_m}\times\chi_{K_m})
\subseteq \cap_{n,m=1}^\infty(K_n^*\cap(K_m\times K_m))^c\\
& = & \cap_{n=1}^\infty ((K_n^c)^*\cup(\cup_{m=1}^\infty K_m\times K_m)^c)
=  \cap_{n=1}^\infty (K_n^c)^*\\ & = & ((\cup_{n=1}^\infty K_n)^c)^*=\emptyset.
\end{eqnarray*}
By \cite[Corollary 4.3]{st1}, $\cl W$ is dense in $\Gamma(G,G)$ and hence $T=0$, a contradiction.
\end{proof}

If $E\subseteq G$, we let $$E^* = \{(s,t)\in G\times G : ts^{-1}\in E\}.$$
\noindent If $E$ is closed then $E^*$ is closed and hence, if $G$ is second countable, it is $\omega$-closed.

\subsection{ Multiplicity versus  operator multiplicity}

In the case of compact abelian groups,
a connection between $M$-sets (resp. $M_1$-sets) and
operator $M$-sets (resp. operator $M_1$-sets) was established in \cite{f}
(resp. \cite{stt}). Our aim now is to extend these results to arbitrary locally compact groups;
a corresponding statement for $M_0$-sets will be proved in the next subsection.

\begin{theorem}\label{th_iffop}
Let $G$ be a locally compact second countable group and
let $E\subseteq G$ be a closed subset.

\noindent (a) The following are equivalent:

(i) \ \ $E$ is an $M$-set;

(ii) \ $E^*$ is an operator $M$-set;

(iii) $\cl A\cap \frak{M}_{\max}(E^*) \neq \{0\}$;

(iv) $\cl R\cap \frak{M}_{\max}(E^*) \neq \{0\}$.

\noindent (b) The following are equivalent:

(i') \ \ $E$ is an $M_1$-set;

(ii') \ $E^*$ is an operator $M_1$-set;

(iii') $\cl A\cap \frak{M}_{\min}(E^*) \neq \{0\}$;

(iv') $\cl R\cap \frak{M}_{\min}(E^*) \neq \{0\}$.
\end{theorem}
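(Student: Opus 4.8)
The plan is to exploit the symbolic calculus map $E_\nph$ from Theorem \ref{p_maps} as the bridge between the group side and the operator side, together with the description of $\frak{M}_{\max}$ and $\frak{M}_{\min}$ via Schur multipliers. Since the arguments for parts (a) and (b) are parallel, I will describe (a) and indicate the modification for (b). First I would observe that (iii)$\Rightarrow$(iv) and (iii')$\Rightarrow$(iv') are trivial from $\cl A\subseteq\cl R$ (see (\ref{eq_kcon})), so the substance is a cycle (i)$\Rightarrow$(iii) $\Rightarrow$(iv)$\Rightarrow$(ii)$\Rightarrow$(i).

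\medskip

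\noindent\textbf{From the group to the operator level.} The key point is to relate the support conditions. I would first record the elementary fact that for $T\in\vn(G)$ one has $\supp(T)\subseteq E$ (in the sense of $\vn(G)$) if and only if $T$, viewed as an operator in $\cl B(L^2(G))$, is supported on $E^*$ in the masa-bimodule sense; this is essentially the identification underlying the relation $E\leftrightarrow E^*$ and the formula $E_\varphi(\lambda_s)=P(\varphi)(s)\lambda_s$. Concretely, $J(E)^\perp$ corresponds to $\frak{M}_{\max}(E^*)\cap\vn(G)$ and $I(E)^\perp$ to $\frak{M}_{\min}(E^*)\cap\vn(G)$; the first matches the \lq\lq largest'' masa-bimodule because $J(E)$ is the smaller ideal, and the second the \lq\lq smallest'' because $I(E)$ is the larger one. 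Granting this, if $E$ is an $M$-set, pick a non-zero $T\in J(E)^\perp\cap C_r^*(G)$; then $T\in\cl R\cap\frak{M}_{\max}(E^*)$ (indeed $T\in C_r^*(G)\subseteq\cl A$), giving (iii), hence (iv) and (ii).

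\medskip

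\noindent\textbf{From the operator level back to the group.} For (iv)$\Rightarrow$(i) (and likewise (iv')$\Rightarrow$(i')), suppose $0\neq T\in\cl R\cap\frak{M}_{\max}(E^*)$. Here the symbolic calculus does the work: by Lemma \ref{eab} there exist $a,b\in L^2(G)$ with $E_{a\otimes b}(T)\neq 0$, and by the last clause of Theorem \ref{p_maps}, $E_{a\otimes b}(T)\in C_r^*(G)$ since $T\in\cl R$. It remains to check that $E_{a\otimes b}(T)$ lies in $J(E)^\perp$, i.e.\ has support in $E$. This should follow from the module property $E_{\nph\psi}=E_\nph\circ S_\psi$ together with Lemma \ref{l_mmax}: if $u\in A(G)$ vanishes on a neighbourhood of $E$, then $N(u)$ (times the fixed $a\otimes b\in\Gamma(G,G)$) vanishes on an $\omega$-open neighbourhood of $E^*$, so that $\langle E_{a\otimes b}(T),u\rangle=\langle T,(a\otimes b)N(u)\rangle=\langle S_{(a\otimes b)N(u)}(T),\cdot\rangle=0$ because $T\in\frak{M}_{\max}(E^*)$. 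This places $E_{a\otimes b}(T)$ in $J_0(E)^\perp=J(E)^\perp$, producing a non-zero element of $J(E)^\perp\cap C_r^*(G)$, so $E$ is an $M$-set.

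\medskip

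\noindent\textbf{The (b)-variant and the main obstacle.} Part (b) is identical in structure, replacing $\frak{M}_{\max}$ by $\frak{M}_{\min}$ and $J(E)$ by $I(E)$; there the relevant Schur-multiplier characterisation is $\frak{M}_{\min}(E^*)=\Phi(E^*)^\perp$, and one tests against functions $\nph$ with $\nph\chi_{E^*}\simeq 0$ rather than functions vanishing on a neighbourhood. I expect the main obstacle to be the precise verification that the support of $E_{a\otimes b}(T)$ in $\vn(G)$ is controlled by the $\omega$-support of $T$ on $E^*$ — that is, rigorously matching $J(E)^\perp$ with $\frak{M}_{\max}(E^*)$ (and $I(E)^\perp$ with $\frak{M}_{\min}(E^*)$) through $E_\nph$. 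The delicate step for the $\frak{M}_{\min}$ case is that membership in $\frak{M}_{\min}(E^*)=\Phi(E^*)^\perp$ is a condition against \emph{all} $h\in\Gamma$ with $h\chi_{E^*}\simeq 0$, and one must ensure the functions $(a\otimes b)N(u)$ arising from $u\in I(E)$ genuinely satisfy $(a\otimes b)N(u)\,\chi_{E^*}\simeq 0$; establishing this marginal-vanishing statement from $u|_E=0$, using the structure of $E^*$ and Lemma \ref{l_for}, is where the careful measure-theoretic argument will be required.
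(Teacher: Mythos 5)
The overall architecture is the paper's: Lemma \ref{eab} plus the symbolic calculus of Theorem \ref{p_maps} is exactly how one gets from (iv) back to (i), and your verification that $E_{a\otimes b}(T)$ annihilates $J_0(E)$ matches the paper's argument. But there is a genuine gap at condition (ii). Your announced cycle is (i)$\Rightarrow$(iii)$\Rightarrow$(iv)$\Rightarrow$(ii)$\Rightarrow$(i), yet the implication into (ii) is never proved: what you actually establish is (iv)$\Rightarrow$(i), and the remark ``giving (iii), hence (iv) and (ii)'' is a non-sequitur. The inclusions run $\cl K\subseteq\cl A\subseteq\cl R$, so exhibiting a non-zero $T\in C_r^*(G)\cap\frak{M}_{\max}(E^*)\subseteq\cl A\cap\frak{M}_{\max}(E^*)$ gives (iii) and (iv) but not (ii), because elements of $C_r^*(G)$ are in general not compact (consider $\lambda_s$ when $G$ is non-compact, or any non-zero element when $G$ is discrete and infinite). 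Condition (ii) demands a non-zero \emph{compact} operator supported on $E^*$. The missing idea is the compression step: since $\cl D_0$ is weak* dense in $\cl D$, there is $A\in\cl D_0$ with $AT\neq 0$; by the Stone--von Neumann identification (\ref{eq_k}) one has $AT\in\cl K$, and $AT$ remains in $\frak{M}_{\max}(E^*)$ because the latter is a masa-bimodule. Without this, (ii) is never reached, and the same remark applies to (ii').

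Part (b) is also not a purely formal substitution of $I$ for $J$ and $\frak{M}_{\min}$ for $\frak{M}_{\max}$. For (i')$\Rightarrow$(ii') one must show that a non-zero $T\in I(E)^\perp\cap C_r^*(G)$ actually lies in the \emph{smallest} bimodule $\frak{M}_{\min}(E^*)$, which does not follow from the support considerations you invoke for $\frak{M}_{\max}$. The paper does this by first proving $\lambda_s\in\frak{M}_{\min}(E^*)$ for each $s\in E$ --- if $w\chi_{E^*}\simeq 0$ then $w(s^{-1}r,r)=0$ for almost every $r$, so $P(w)(s)=0=\langle\lambda_s,w\rangle$ by (\ref{eq_P}) --- and then using $I(E)^\perp=\overline{[\lambda_s:s\in E]}^{w^*}$; you would need to supply this. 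By contrast, the step you single out as the ``main obstacle'' --- that $(a\otimes b)N(u)\chi_{E^*}\simeq 0$ when $u\in I(E)$ --- is immediate: $N(u)(s,t)=u(ts^{-1})$ vanishes \emph{pointwise} on $E^*=\{(s,t):ts^{-1}\in E\}$ as soon as $u$ vanishes on $E$, so no delicate measure-theoretic argument is needed there.
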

\begin{proof}
(a) (i)$\Rightarrow$(ii) Let $E$ be an $M$-set; then there
exists a non-zero operator $T\in J(E)^{\perp}\cap C^*_r(G)$.
Suppose that $AT = 0$ for all $A\in \cl D_0$. Since $\cl D_0$ is
weak* dense in $\cl D$, there exists a net $(A_j)_{j\in \bb{J}}\subseteq \cl D_0$
such that $\lim_{j\in \bb{J}} A_j = I$ in the weak* topology. After passing to a limit, we obtain
that $T = 0$, a contradiction. Thus, there exists $A\in \cl D_0$
such that $AT \neq 0$; in view of (\ref{eq_k}), $AT \in \cl K$. By
\cite[Lemma 4.1]{lt}, $T\in \frak{M}_{\max}(E^*)$ and hence $AT\in
\frak{M}_{\max}(E^*)$; thus, $E^*$ is an $M$-set.

(ii)$\Rightarrow$(iii)$\Rightarrow$(iv) follow from the inclusions (\ref{eq_kcon}).

(iv)$\Rightarrow$(i)
Suppose that $T\in \cl R\cap \frak{M}_{\max}(E^*)$ is non-zero.
By Lemma~\ref{eab}, there exist $a,b\in L^2(G)$
such that $E_{a\otimes b}(T) \neq 0$. By Theorem \ref{p_maps},
$E_{a\otimes b}(T)\in C_r^*(G)$; we claim that, moreover,
$E_{a\otimes b}(T) \in J(E)^{\perp}$. To see this, let $u\in A(G)$ vanish on an open
neighbourhood of $E$. Then $N(u) \in \frak{S}(G,G)$ vanishes on an
$\omega$-open neighbourhood of $E^*$, and hence the function
$(a\otimes b)N(u) \in \Gamma(G,G)$ vanishes on an $\omega$-open
neighbourhood of $E^*$. On the other hand, by \cite[Theorem~4.3]{st1}, we have
$$(S_{N(u)}(T)a,\overline{b}) = \langle T, (a\otimes b) N(u)\rangle=0,$$
giving $\langle E_{a\otimes
b}(T),u\rangle = 0$. Thus, $0\neq E_{a\otimes b}(T) \in
J(E)^{\perp}$ and hence $E$ is an $M$-set.

(b) (i')$\Rightarrow$(ii')
We claim that $\lambda_s\in \frak{M}_{\min}(E^*)$
for every $s\in E$. To see this, suppose that
$w\in \Gamma(G,G)$ vanishes on the set $E^*$, that is, $w\chi_{E^*}
= 0$ marginally almost everywhere.
For every $r\in G$ and $s\in E$, we have that $(s^{-1}r,r)\in E^*$
and hence $w(s^{-1}r,r) = 0$ for every $s\in E$ and almost every $r\in G$.
By (\ref{eq_P}), $P(w)(s) = 0$ for every $s\in E$ and hence, by
(\ref{eq_p}), $\langle \lambda_s,w\rangle = 0$ for every $s\in E$;
the claim is thus proved.

Suppose that $E$ is an $M_1$-set, and let
$0\neq T\in I(E)^{\perp} \cap C_r^*(G)$.
A direct verification shows that $I(E)^{\perp} = \overline{[\lambda_s : s\in
E]}^{w^*}$.
It follows from the previous paragraph that
$T\in \frak{M}_{\min}(E^*)$. As in the proof of the implication
(i)$\Rightarrow$(ii), we conclude that there exists $A\in \cl D_0$
such that $0\neq AT \in \cl K\cap \frak{M}_{\min}(E^*)$, that is,
$E^*$ is an $M_1$-set.

(ii')$\Rightarrow$(iii')$\Rightarrow$(iv') follow from the inclusions (\ref{eq_kcon}).

(iv')$\Rightarrow$(i') Suppose that
$0\neq T\in \cl
R\cap \frak{M}_{\min}(E^*)$. As in the proof of (a), we can show
that there exist $a,b\in L^2(G)$ such that $E_{a\otimes b}(T)$
is a non-zero element of $C_r^*(G)$ annihilating $I(E)$.
\end{proof}

\subsection{ The case of $M_0$-sets}

In order to establish a statement for $M_0$-sets,
analogous to the ones from Theorem \ref{th_iffop},
we need a couple of auxiliary lemmas.

\begin{lemma}\label{l_compph}
If $\sigma$ is an Arveson measure on $G\times G$ then
for every $\nph \in \Gamma(G,G)$
there exists a unique measure $\sigma_{\nph} \in M(G)$ such that
$E_{\nph}(T_{\sigma}) = \lambda(\sigma_{\nph})$.
Moreover, if $\supp \sigma \subseteq \widehat{E^*}$
then $\supp \sigma_{\nph}\subseteq E$.
\end{lemma}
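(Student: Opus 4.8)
The plan is to compute $E_{\nph}(T_{\sigma})$ explicitly against a generic element $u\in A(G)$ and to exhibit it as convolution by a measure obtained by pushing $\sigma$ forward along a suitable map. First I would evaluate the pairing: for $u\in A(G)$,
\[
\langle E_{\nph}(T_{\sigma}),u\rangle = \langle T_{\sigma},\nph N(u)\rangle.
\]
By the last assertion of Theorem \ref{th_ps}, the right-hand side equals $\int_{G\times G} \widehat{\nph N(u)}\, d\sigma$, where $\widehat{h}(y,x)=h(x,y)$. Writing out $N(u)(x,y)=u(yx^{-1})$, this becomes an integral over $G\times G$ of $\nph(x,y)u(yx^{-1})$ against $d\sigma(y,x)$. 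The key idea is to recognise this as $\int_G u\, d\theta$ for the measure $\theta$ on $G$ obtained as the image of the (finite, since $\sigma$ has finite total variation) measure $\nph\, d\sigma$ under the continuous map $q:(y,x)\mapsto yx^{-1}$. Because $\nph\in\Gamma(G,G)\subseteq L^1(|\sigma|)$ by the integrability estimate in the proof of Theorem \ref{th_ps}, the product $\nph\, d\sigma$ is a genuine finite complex measure, so the pushforward $\sigma_{\nph}:=q_*(\nph\,d\sigma)$ lies in $M(G)$. The pairing then reads $\langle E_{\nph}(T_{\sigma}),u\rangle = \int_G u\, d\sigma_{\nph} = \langle \lambda(\sigma_{\nph}),u\rangle$, using the embedding $\lambda(M(G))\subseteq \vn(G)$ and the duality $\langle\lambda(\theta),u\rangle=\int_G u\,d\theta$ coming from $A(G)\subseteq C_0(G)$. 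Since $A(G)$ separates points of $\vn(G)$ as its predual, this identifies $E_{\nph}(T_{\sigma})=\lambda(\sigma_{\nph})$, and uniqueness of $\sigma_{\nph}\in M(G)$ follows from the injectivity of $\lambda$ on $M(G)$ together with the fact that $A(G)\subseteq C_0(G)$ is norm dense, so a measure is determined by its integrals against $A(G)$.

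For the support statement, I would argue that if $\supp\sigma\subseteq\widehat{E^*}$, then $|\sigma|$ is concentrated on $\widehat{E^*}=\{(y,x):(x,y)\in E^*\}=\{(y,x):yx^{-1}\in E\}$ by Proposition \ref{p_sm}(ii). Hence the map $q(y,x)=yx^{-1}$ sends $|\sigma|$-almost every point into $E$, so the pushforward measure $\sigma_{\nph}$ (being absolutely continuous with respect to $q_*|\sigma|$) is concentrated on $E$, giving $\supp\sigma_{\nph}\subseteq E$. More carefully, for any Borel set $U\subseteq E^c$ one has $q^{-1}(U)\cap\widehat{E^*}=\emptyset$, so $|\sigma_{\nph}|(U)\le \int_{q^{-1}(U)}|\nph|\,d|\sigma|=0$; since this holds for all such $U$ and $E$ is closed, the support characterisation of measures in $M(G)$ yields $\supp\sigma_{\nph}\subseteq E$.

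The main obstacle I anticipate is the rigorous justification of the pushforward/change-of-variables step, i.e.\ that $\int_{G\times G}\nph(x,y)u(yx^{-1})\,d\sigma(y,x)=\int_G u\,d\sigma_{\nph}$. This requires that $\nph$ be integrable against $|\sigma|$ (available from Theorem \ref{th_ps}), that $q$ be Borel measurable so the pushforward of a finite measure is well defined, and a Fubini/approximation argument reducing to the elementary-tensor case $\nph=a\otimes b$ where $\widehat{\nph N(u)}(y,x)=b(y)a(x)u(yx^{-1})$ can be handled directly, then passing to the limit via dominated convergence exactly as in the proofs of Lemma \ref{l_for} and Theorem \ref{th_ps}. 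A secondary technical point is confirming that $\sigma_{\nph}$ has finite total variation, which is immediate since $\|\sigma_{\nph}\|\le\int|\nph|\,d|\sigma|\le\|\nph\|_{\Gamma}\|\sigma\|_{\bb A}<\infty$, so indeed $\sigma_{\nph}\in M(G)$.
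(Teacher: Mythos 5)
Your proposal is correct and follows essentially the same route as the paper: both reduce the claim to the identity $\langle E_{\nph}(T_{\sigma}),u\rangle=\int_{G\times G}\nph(x,y)\,u(yx^{-1})\,d\sigma(y,x)$ via Theorem \ref{th_ps}, establish boundedness from $\|\nph\|_{\Gamma}\|\sigma\|_{\bb{A}}$, and prove the support statement by testing against functions supported off $E$ and invoking Proposition \ref{p_sm}. The only difference is cosmetic: the paper produces $\sigma_{\nph}$ by applying the Riesz representation theorem to the bounded functional $u\mapsto\int\nph(x,y)u(yx^{-1})\,d\sigma(y,x)$ on $C_0(G)$, whereas you realise it directly as the pushforward of $\nph\,d\sigma$ under $(y,x)\mapsto yx^{-1}$, which is legitimate here since $G$ is second countable, so the map is product-measurable and finite Borel measures are automatically regular.
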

\begin{proof}
Let $\nph = \sum_{i=1}^{\infty} f_i\otimes g_i \in \Gamma(G,G)$ (here $\sum_{i=1}^{\infty} \|f_i\|_2^2 < \infty$
and $\sum_{i=1}^{\infty} \|g_i\|_2^2$ $<$ $\infty$);  note that
$$\left\|\sum_{i=1}^{\infty} |f_i|\otimes |g_i|\right\|_{\Gamma} \leq \|\nph\|_{\Gamma}.$$
If $u\in C_0(G)$ then
\begin{eqnarray*}
& & \left|\int_{G\times G} \nph(s,t) u(ts^{-1}) d \sigma(t,s) \right| \leq
\int_{G\times G} |\nph(s,t)| |u(ts^{-1})| d |\sigma|(t,s)\\
& \leq & \|u\|_{\infty} \int_{G\times G} \sum_{i=1}^{\infty} |f_i(s)| |g_i(t)| d |\sigma|(t,s)
=  \|u\|_{\infty}  \sum_{i=1}^{\infty} (T_{|\sigma|} |f_i|, |g_i|)\\
& = & \|u\|_{\infty} \langle T_{|\sigma|}, \sum_{i=1}^{\infty} |f_i|\otimes |g_i|\rangle
\leq \|u\|_{\infty} \|T_{|\sigma|}\| \left\|\sum_{i=1}^{\infty} |f_i|\otimes |g_i|\right\|_{\Gamma}\\
& \leq & \|u\|_{\infty} \| \sigma \|_{\bb{A}} \|\nph\|_{\Gamma}.
\end{eqnarray*}
It follows that the functional $R : C_0(G) \rightarrow \bb{C}$ given by
$$R(u) = \int_{G\times G} \nph(s,t) u(ts^{-1}) d \sigma(t,s), \ \ \ u\in C_0(G),$$
is well-defined and bounded. Hence,
there exists $\sigma_{\nph} \in M(G)$ such that
\begin{equation}\label{eq_sph}
\int_{G\times G} \nph(s,t) u(ts^{-1}) d \sigma(t,s) = \int_G u(x) d\sigma_{\nph}(x), \ \ u\in C_0(G).
\end{equation}
On the other hand,
$$\int_G u(x) d\sigma_{\nph}(x) = \langle \lambda(\sigma_{\nph}), u\rangle, \ \ \ u\in A(G).$$
By (\ref{eq_sph}) and Theorem \ref{th_ps},
$$\langle \lambda(\sigma_{\nph}), u\rangle =\langle T_\sigma,\nph N(u)\rangle= \langle E_{\nph}(T_{\sigma}), u\rangle, \ \ \ u\in A(G);$$
thus, $E_{\nph}(T_{\sigma}) = \lambda(\sigma_{\nph})$.

Now suppose that $\supp \sigma \subseteq \widehat{E^*}$ and that
$U\subseteq G$ is an open set, disjoint from $E$.
For any function $u\in C_0(G)$ with $\supp u\subseteq U$, we have that
$\supp N(u)\subseteq \widehat{U^*}$.
On the other hand, $\widehat{U^*}$ is  disjoint from
$\widehat{E^*}$ and hence Proposition \ref{p_sm} implies that
$|\sigma|(\widehat{U^*}) = 0$. Now (\ref{eq_sph}) shows that
$\int_G u(x) d\sigma_{\nph}(x) = 0$. It follows that $\sigma_{\nph}(U) = 0$;
thus, $\supp \sigma_{\nph}\subseteq E$.
\end{proof}

We will need the following fact, which was discussed in \cite[p. 347]{sourour} in the case
of a finite measure (here we need a $\sigma$-finite version of this as the Haar measure on a
locally compact non-compact group is such).

\begin{lemma}\label{l_sa}
Let $(X,\mu)$ and $(Y,\nu)$ be $\sigma$-finite standard measure spaces and $(\sigma^x)_{x\in X}$ be a family of
complex Borel measures on $Y$ such that, for every measurable $F\subseteq Y$, the function $x\mapsto \sigma^x(F)$
is measurable.
Suppose that the function $x\mapsto \|\sigma^x\|$ is integrable and essentially bounded
(with respect to the measure $\mu$). Then there exists a
Borel measure $\sigma$ on $Y\times X$ such that $\sigma(E) = \int_X\int_Y \chi_E(y,x) d\sigma^x(y) d \mu(x)$,
for every measurable set $E\subseteq Y\times X$,
and a constant $c > 0$ such that $|\sigma|(Y\times \alpha)\leq c \mu(\alpha)$
for every measurable set $\alpha\subseteq X$.
\end{lemma}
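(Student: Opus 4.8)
The plan is to define $\sigma$ directly on the product $\sigma$-algebra of $Y\times X$ by the formula
\[
\sigma(E) = \int_X \sigma^x(E_x)\, d\mu(x), \qquad E_x := \{y\in Y : (y,x)\in E\},
\]
and then to verify, in turn, that (a) the integrand is measurable and integrable, so that $\sigma$ is well defined; (b) $\sigma$ is countably additive; and (c) the variation of $\sigma$ satisfies the required marginal estimate, from which the statement follows by specialising to $E=Y\times\alpha$.

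For well-definedness, the first step is to show that $x\mapsto\sigma^x(E_x)$ is measurable for every $E$ in the product $\sigma$-algebra. I would let $\mathcal M$ denote the family of all measurable $E\subseteq Y\times X$ for which this holds and check that $\mathcal M$ is a Dynkin ($\lambda$-)system containing the $\pi$-system of measurable rectangles. For a rectangle $F\times\alpha$ one has $\sigma^x((F\times\alpha)_x)=\chi_\alpha(x)\sigma^x(F)$, which is measurable by hypothesis; closure under complements follows from $\sigma^x((E^c)_x)=\sigma^x(Y)-\sigma^x(E_x)$ (taking $F=Y$ in the hypothesis); and closure under countable disjoint unions holds because the series $\sum_n\sigma^x((E_n)_x)$ converges absolutely, its terms being dominated by $|\sigma^x|(Y)=\|\sigma^x\|<\infty$, so $\sigma^x((\sqcup_n E_n)_x)$ is a pointwise limit of measurable functions. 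The Dynkin $\pi$-$\lambda$ theorem then gives measurability on the whole product $\sigma$-algebra. Integrability is immediate since $|\sigma^x(E_x)|\le\|\sigma^x\|$ and $x\mapsto\|\sigma^x\|$ is assumed integrable; this also shows that $\sigma$ is bounded.

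Countable additivity of $\sigma$ is obtained from that of each $\sigma^x$: for a disjoint sequence $(E_n)$, the partial sums $\sum_{n\le N}\sigma^x((E_n)_x)=\sigma^x((\sqcup_{n\le N}E_n)_x)$ are bounded in modulus by the integrable function $\|\sigma^x\|$ and converge pointwise to $\sigma^x((\sqcup_n E_n)_x)$, so the Lebesgue Dominated Convergence Theorem permits interchanging the sum and the integral. For the final estimate I would first establish the pointwise variation bound $|\sigma|(E)\le\int_X|\sigma^x|(E_x)\,d\mu(x)$: for any measurable partition $E=\sqcup_k E_k$,
\[
\sum_k|\sigma(E_k)| \le \sum_k\int_X|\sigma^x((E_k)_x)|\,d\mu(x) = \int_X\sum_k|\sigma^x((E_k)_x)|\,d\mu(x) \le \int_X|\sigma^x|(E_x)\,d\mu(x),
\]
where the interchange is justified by Tonelli (non-negative integrands) and the last inequality uses that $\{(E_k)_x\}_k$ partitions $E_x$; taking the supremum over partitions gives the claim. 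Specialising to $E=Y\times\alpha$, for which $(Y\times\alpha)_x=Y$ when $x\in\alpha$ and $\emptyset$ otherwise, yields $|\sigma|(Y\times\alpha)\le\int_\alpha\|\sigma^x\|\,d\mu(x)\le c\,\mu(\alpha)$ with $c=\esssup_x\|\sigma^x\|$.

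The main obstacle is the measurability step: because the $\sigma^x$ are complex (signed) rather than positive measures, one cannot simply invoke monotone convergence for increasing unions, and the Dynkin-system argument must instead be run through complements and disjoint unions, the crucial point being the uniform bound $\sum_n|\sigma^x((E_n)_x)|\le\|\sigma^x\|$ that forces absolute convergence of the relevant series. The role of $\sigma$-finiteness is only to keep the product $\sigma$-algebra and the section-measurability machinery standard; it is the integrability hypothesis on $x\mapsto\|\sigma^x\|$ that controls the (possibly infinite) $X$-side throughout.
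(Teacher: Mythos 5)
Your proof is correct and follows the same route as the paper: define $\sigma$ by the iterated integral, bound $|\sigma|(E)$ by $\int_X|\sigma^x|(E_x)\,d\mu(x)$, and specialise to $E=Y\times\alpha$ with $c=\esssup_x\|\sigma^x\|$. The paper compresses the verification that $\sigma$ is a well-defined countably additive measure into the phrase \lq\lq a direct verification''; your Dynkin-system argument for measurability of $x\mapsto\sigma^x(E_x)$ and the dominated-convergence step for countable additivity are exactly the details being elided there.
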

\begin{proof}
First of all, notice that the quantity
$$\sigma(E) = \int_X\int_Y \chi_E(y,x) d\sigma^x(y) d \mu(x)$$
is finite.
Indeed,
\begin{eqnarray*}
\left|\int_X\int_Y \chi_E(y,x) d\sigma^x(y) d m(x)\right| & \leq &
\int_X\int_Y \chi_E(y,x) d|\sigma^x|(y) d \mu(x)\\
& \leq & \int_X |\sigma^x|(Y) d \mu(x) < \infty.
\end{eqnarray*}
A direct verification now shows that $\sigma$ is a measure.
Moreover, the above estimate yields
$$|\sigma|(E) \leq \int_X\int_Y \chi_E(y,x) d|\sigma^x|(y) d \mu(x),$$
for every measurable set $E\subseteq Y\times X$.
Letting $c = \esssup_{x\in X} \|\sigma^x\|$, for every measurable $\alpha\subseteq X$, we have
$$|\sigma|(Y\times \alpha) \leq \int_{\alpha} \|\sigma^x\| d\mu(x) \leq  c\mu(\alpha).$$
\end{proof}

In the next theorem, we let
$$\frak{P}(\kappa) = \{T_{\mu} : \mu\in \bb{A}(\hat{\kappa})\}.$$

\begin{theorem}\label{thm0set}
Let $E\subseteq G$ be a closed set. The following are equivalent:

(i) \ \ $E$ is an $M_0$-set;

(ii) \ $E^*$ is an operator $M_0$-set;

(iii) $\cl A\cap \frak{P}(E^*) \neq \{0\}$;

(iv) $\cl R\cap \frak{P}(E^*) \neq \{0\}$.
\end{theorem}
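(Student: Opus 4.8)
The plan is to mirror the structure of the proofs of Theorem \ref{th_iffop}, adapting the key implications to the setting of pseudo-integral operators $T_\sigma$ and the class $\frak{P}(\kappa)$. The cycle I would establish is (i)$\Rightarrow$(ii)$\Rightarrow$(iii)$\Rightarrow$(iv)$\Rightarrow$(i), where the middle two implications are immediate from the inclusions $\cl K\subseteq \cl A\subseteq \cl R$ in (\ref{eq_kcon}): indeed (ii) means precisely that there is a non-zero $T_\sigma\in \cl K\cap \frak{P}(E^*)$, so (iii) and (iv) follow by enlarging the ambient space.

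For the implication (i)$\Rightarrow$(ii), suppose $E$ is an $M_0$-set, so there is a non-zero measure $\theta\in M(E)$ with $\lambda(\theta)\in C_r^*(G)$. The goal is to manufacture a non-zero \emph{compact} operator of the form $T_\sigma$ supported on $E^*$. The idea is to disintegrate: I would build an Arveson measure $\sigma$ on $G\times G$ out of $\theta$ whose associated pseudo-integral operator $T_\sigma$ is (essentially) a cut-down of $\lambda(\theta)$ by multiplication operators $M_a, M_b$ with $a,b\in C_0(G)$. Concretely, for each $x\in G$ one takes the translate of $\theta$ to the fibre over $x$ (so that $\supp\sigma$ lands inside $\widehat{E^*}$, reflecting the relation $ts^{-1}\in E$), weights the fibres by compactly supported $a,b$, and invokes Lemma \ref{l_sa} to assemble these fibrewise measures $\sigma^x$ into a genuine measure $\sigma$ satisfying the Arveson marginal bounds (\ref{eq_carv}). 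The operator $T_\sigma$ is then $M_b\lambda(\theta)M_a$ up to the conventions of Theorem \ref{th_ps}; since $\lambda(\theta)\in C_r^*(G)$ and $a,b\in C_0(G)$, equation (\ref{eq_k}) forces $T_\sigma\in \cl K$, and by choosing $a,b$ appropriately (as in Lemma \ref{eab}) we keep $T_\sigma\neq 0$. The support condition $\supp\sigma\subseteq\widehat{E^*}$ is exactly what Theorem \ref{th_ps} converts into $T_\sigma$ being supported on $E^*$, giving (ii).

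The implication (iv)$\Rightarrow$(i) is the analogue of (iv)$\Rightarrow$(i) in Theorem \ref{th_iffop}, and here Lemma \ref{l_compph} does the decisive work. Suppose $0\neq T_\sigma\in \cl R\cap \frak{P}(E^*)$, so $\sigma\in\bb{A}(\widehat{E^*})$. By Lemma \ref{eab} there are $a,b\in L^2(G)$ with $E_{a\otimes b}(T_\sigma)\neq 0$. Applying Lemma \ref{l_compph} with $\nph = a\otimes b$, we obtain a measure $\sigma_\nph\in M(G)$ with $E_{a\otimes b}(T_\sigma)=\lambda(\sigma_\nph)$ and, crucially, $\supp\sigma_\nph\subseteq E$. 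Since $T_\sigma\in\cl R$, Theorem \ref{p_maps} gives $E_{a\otimes b}(T_\sigma)\in C_r^*(G)$; hence $0\neq \lambda(\sigma_\nph)\in \lambda(M(E))\cap C_r^*(G)$, which is exactly the statement that $E$ is an $M_0$-set.

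The main obstacle I anticipate is the construction in (i)$\Rightarrow$(ii): verifying measurability of the fibre map $x\mapsto \sigma^x(F)$ and the integrability/essential boundedness hypotheses of Lemma \ref{l_sa}, while simultaneously controlling the support of the assembled measure $\sigma$ so that $\supp\sigma\subseteq\widehat{E^*}$ and keeping $T_\sigma$ non-zero and compact. One must be careful that the weighting functions $a,b$ are chosen in $C_0(G)$ (not merely $L^2$) so that the compactness conclusion from (\ref{eq_k}) genuinely applies, yet chosen so that the resulting operator does not vanish; reconciling these constraints, together with the bookkeeping of left/right translation in passing between $\theta$ on $G$ and $\sigma$ on $G\times G$ via the relation $ts^{-1}\in E$, is where the real care is required.
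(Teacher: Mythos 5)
Your proposal is correct and follows essentially the same route as the paper's proof: (i)$\Rightarrow$(ii) by realising $M_g\lambda(\theta)M_f$ (for $f,g\in C_0(G)\cap L^1(G)$) as a pseudo-integral operator of an Arveson measure assembled from the fibrewise translates of $\theta$ via Lemma \ref{l_sa}, with compactness from (\ref{eq_k}); the middle implications from the inclusions (\ref{eq_kcon}); and (iv)$\Rightarrow$(i) via Lemma \ref{eab}, Lemma \ref{l_compph} and Theorem \ref{p_maps}. The only cosmetic difference is that the non-vanishing of the cut-down in (i)$\Rightarrow$(ii) needs nothing as elaborate as Lemma \ref{eab} — it follows at once from $\theta\neq 0$ by letting $f,g$ run through an approximate identity.
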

\begin{proof}
(i)$\Rightarrow$(ii) Let $\theta\in M(G)$ be such that $\supp\theta\subseteq E$ and
$\lambda(\theta) \in C_r^*(G)$. Then $M_g\lambda(\theta)M_f$ is a compact operator
for all $f,g\in C_0(G)$ (see (\ref{eq_k})).

For each $x\in G$, let $\theta^x\in M(G)$ be given by
$\theta^x(\alpha) = \theta(x\alpha^{-1})$ and
$\theta_x$ be given by $\theta_x(\alpha) =
\theta(\alpha x^{-1})$, for any measurable
$\alpha\subseteq G$ (here $\alpha^{-1} = \{s^{-1} : s\in \alpha\}$).
Let $\theta^*\in M(G)$ be defined by $d\theta^*(s) = \overline{d\theta(s^{-1})}$;
then $\lambda(\theta^*) = \lambda(\theta)^*$.
First observe that $\|\theta^x\|
= \|\theta\|$ for each $x\in G$. Indeed, if $\{\alpha_j\}_{j=1}^N$ is a
measurable partition of $G$ then $\{x\alpha_j^{-1}\}_{j=1}^N$
is also such, and hence
$$\sum_{j=1}^N |\theta^x(\alpha_j)| = \sum_{j=1}^N
|\theta(x\alpha_j^{-1})| \leq \|\theta\|.$$ On the other hand, for
every $\epsilon > 0$, letting $\{\beta_k\}_{k=1}^K$ be a measurable
partition of $G$ such that $\sum_{k=1}^K |\theta(\beta_k)| > \|\theta\| -
\epsilon$, we see that $\{\beta_k^{-1} x\}_{k=1}^K$ is a
measurable partition of $G$ with $\sum_{k=1}^K |\theta^x(\beta_k^{-1}
x)| > \|\theta\| - \epsilon$, and so $\|\theta^x\| \geq \|\theta\|$.
Similarly, $\|\theta^*_x\| = \|\theta^*\|$ for all $x\in G$.

If $f,g\in C_0(G)$ then
\begin{eqnarray}\label{eq_forgsi}
(M_g\lambda(\theta)M_f \xi,\eta)&=& \iint f(y^{-1}x)\xi(y^{-1}x) g(x) \overline{\eta(x)} d\theta(y) dx\\
&=& \iint f(z)\xi(z) g(x) \overline{\eta(x)} d\theta^x(z) dx \nonumber
\end{eqnarray}
and, also,
\begin{eqnarray*}
(M_g\lambda(\theta)M_f \xi,\eta)&=&(M_{f}\xi,\lambda(\theta^*)M_{\bar g}\eta)\\
&=& \iint f(z)\xi(z) g(x^{-1}z) \overline{\eta(x^{-1}z)} \overline{d\theta^*(x)} dz\\
&=& \iint f(z)\xi(z) g(x) \overline{\eta(x)} \overline{d(\theta^*)^z(x)} dz.
\end{eqnarray*}
If, moreover, $f,g\in C_0(G)\cap L^1(G)$ and
$x\in G$, the total variation of the measure
$g(x) f(\cdot) d\theta^x(\cdot)$
equals $\int_G |f(z)| d |g(x)
\theta^x|$ which does not exceed $\|f\|_{\infty} \|g(x) \theta^x\|$.
Hence,
$\|g(x) f(\cdot) d\theta^x(\cdot)\|\leq
\|g\|_{\infty}\|f\|_{\infty}\|\theta\|$ for all $x\in G$.
Furthermore, the function $x\mapsto \|f\|_{\infty} \|g(x) \theta^x\|$
is integrable since $x\mapsto \|\theta^x\|$ is a constant function.

Similarly, the total variation of the measure $f(z) g(\cdot)
d(\theta^*)^z(\cdot)$ does not exceed
$\|g\|_{\infty}\|f\|_{\infty}\|\theta^*\|$, and the function $z\rightarrow \|g\|_{\infty} \|f(z) d(\theta^*)^z\|$
is integrable. Lemma \ref{l_sa} now shows that, if $f,g\in C_0(G)\cap L^1(G)$, then
$M_g\lambda(\theta)M_f$ is the pseudo-integral operator of the Arveson
measure $\sigma_{f,g,\theta}$ given by
$d\sigma_{f,g,\theta}(x,z) = g(x)f(z)\overline{d(\theta^*)^z(x)}dz=g(x) f(z) d\theta^x(z)dx$.
On the other hand, since $\lambda(\theta)\in C_r^*(G)$, the operator
$M_g\lambda(\theta)M_f$ is compact whenever $f,g\in C_0(G)\cap L^1(G)$.
It is now clear that, since $\theta\neq 0$, we can find functions $f,g\in C_0(G)\cap L^1(G)$ such that
$M_g\lambda(\theta)M_f$ is non-zero.

Suppose that $\alpha\times\beta$ is a measurable rectangle with
$(\alpha\times\beta)\cap E^* = \emptyset$ and $\xi\in L^2(G)$ (resp.
$\eta\in L^2(G)$) vanishes everywhere on $\alpha^c$ (resp.
$\beta^c$). For each $x\in G$, the function $y\mapsto
\xi(y^{-1}x)\overline{\eta(x)}$ vanishes on $E$ and hence, by (\ref{eq_forgsi}),
$(M_g\lambda(\theta)M_f \xi,\eta) = 0$. Thus, $M_g\lambda(\theta)M_f$ is supported on $E^*$.

\smallskip

(ii)$\Rightarrow$(iii)$\Rightarrow$(iv) are trivial.

\smallskip

(iv)$\Rightarrow$(i) Suppose that $\sigma$ is an Arveson measure
supported on $\hat{E^*}$ such that $0\neq T_{\sigma}\in \cl R$.
By Lemma \ref{eab},
there exists $\nph\in \Gamma(G,G)$ such that $E_{\nph}(T_{\sigma}) \neq 0$.
By Lemma \ref{l_compph},
$E_{\nph}(T_{\sigma}) = \lambda(\sigma_{\nph})$, where
$\sigma_{\nph}$ is supported on $E$ and, by Theorem \ref{p_maps},
$\lambda(\sigma_{\nph})$ belongs to $C_r^*(G)$.
\end{proof}

\subsection{ An application: unions of sets of uniqueness}

It was shown in \cite[Proposition 5.3]{stt} that the union of two operator $U$-sets
(resp. operator $U_1$-sets) is an operator $U$-set (resp.
an operator $U_1$-set).
A similar statement holds for operator $U_0$-sets.

\begin{proposition}\label{m0union}
Let $E_1$, $E_2\subset X\times Y$ be $\omega$-closed
operator $U_0$-sets.
Then $E_1\cup E_2$ is an operator $U_0$-set.
\end{proposition}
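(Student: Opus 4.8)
The plan is to argue by contraposition: I will assume that $E_1$ and $E_2$ are operator $U_0$-sets and derive that $E_1\cup E_2$ is an operator $U_0$-set. So suppose, towards a contradiction, that $E_1\cup E_2$ is an operator $M_0$-set, and fix a non-zero measure $\sigma\in\bb{A}(\widehat{E_1\cup E_2})=\bb{A}(\hat E_1\cup\hat E_2)$ with $T_\sigma\in\cl K(H_1,H_2)$. The tempting first move — decomposing $\sigma=\sigma|_{\hat E_1}+\sigma|_{\hat E_1^c}$ and treating the two pieces separately — is precisely the step I expect to fail, and it is the main obstacle: restricting an Arveson measure to an $\omega$-closed set amounts to applying the (generally unbounded) ``Schur multiplier'' given by the characteristic function of that set, so $T_{\sigma|_{\hat E_1}}$ need \emph{not} be compact even though $T_\sigma$ is. I would therefore localise \emph{only} through genuine, bounded Schur multipliers, which do leave $\cl K(H_1,H_2)$ invariant.

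Concretely, let $\varphi\in\frak S(X,Y)$ vanish on some $\omega$-open neighbourhood of $E_1$. Since $S_\varphi$ is the adjoint of $m_\varphi\colon h\mapsto\varphi h$, for every $h\in\Gamma(X,Y)$ one computes, using Theorem \ref{th_ps}, that $\langle S_\varphi(T_\sigma),h\rangle=\langle T_\sigma,\varphi h\rangle=\int_{Y\times X}\hat\varphi\,\hat h\,d\sigma=\langle T_{\hat\varphi\sigma},h\rangle$, so that $S_\varphi(T_\sigma)=T_{\hat\varphi\sigma}$, where $\hat\varphi\sigma$ is the measure with $d(\hat\varphi\sigma)=\hat\varphi\,d\sigma$. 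As $\varphi\in L^\infty(X\times Y)$, we have $\hat\varphi\sigma\in\bb{A}(Y,X)$; and because $\varphi$ vanishes near $E_1$ while $\supp\sigma\subseteq\hat E_1\cup\hat E_2$, Proposition \ref{p_sm} gives $\supp(\hat\varphi\sigma)\subseteq\hat E_2$. Since Schur multiplication preserves compactness, $T_{\hat\varphi\sigma}$ is a compact pseudo-integral operator carried by $\hat E_2$; as $E_2$ is an operator $U_0$-set, this forces $\hat\varphi\sigma=0$, i.e. $S_\varphi(T_\sigma)=0$.

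Thus $S_\varphi(T_\sigma)=0$ for \emph{every} $\varphi\in\frak S(X,Y)$ that vanishes on an $\omega$-open neighbourhood of $E_1$, which by Lemma \ref{l_mmax} is exactly the statement $T_\sigma\in\frak M_{\max}(E_1)$; equivalently, $T_\sigma$ is supported on $E_1$, so Theorem \ref{th_ps} yields $\supp\sigma\subseteq\hat E_1$. But then $\sigma$ is a non-zero Arveson measure with $\supp\sigma\subseteq\hat E_1$ and $T_\sigma$ compact, so $E_1$ is an operator $M_0$-set, contradicting the hypothesis. Hence $E_1\cup E_2$ is an operator $U_0$-set. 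The crux of the argument, and what makes it go through where the naive measure-splitting does not, is the replacement of the raw restriction of $\sigma$ by the module action $\varphi\mapsto S_\varphi(T_\sigma)=T_{\hat\varphi\sigma}$ through bounded Schur multipliers, combined with the neighbourhood characterisation of $\frak M_{\max}$ in Lemma \ref{l_mmax} and the support dictionary of Theorem \ref{th_ps}.
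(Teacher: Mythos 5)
Your argument is correct and shares the paper's overall strategy --- localise $T_\sigma$ away from $E_1$ by a Schur multiplier, recognise the result as a compact pseudo-integral operator carried by $\hat E_2$, kill it using the $U_0$-hypothesis on $E_2$, and conclude that $\sigma$ itself must be carried by $\hat E_1$ --- but your implementation differs in a way worth recording. The paper pairs $T_\sigma$ with products $\theta_1\theta_2$, where $\theta_i\in\Phi(E_i)\cap\frak S(X,Y)$ vanish on the sets $E_i$ themselves, identifies $S_{\theta_1}(T_\sigma)$ with $T_\rho$ for $d\rho=\theta_1\,d\sigma$, and then needs an auxiliary density fact ($\Phi(E_2)\cap\frak S(X,Y)$ is $\|\cdot\|_\Gamma$-dense in $\Phi(E_2)$, proved via the truncations $\chi_{X_N\times Y_N}h$) in order to place $T_\rho$ in $\frak M_{\min}(E_2)=\Phi(E_2)^\perp$ before extracting $\supp\rho\subseteq\hat E_2$. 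You instead work with multipliers vanishing on an $\omega$-open \emph{neighbourhood} of $E_1$, read off $\supp(\hat\varphi\sigma)\subseteq\hat E_2$ directly at the level of measures via Proposition \ref{p_sm}, and close the loop with Lemma \ref{l_mmax} and $\frak M_{\max}(E_1)$ rather than $\frak M_{\min}(E_1)$ --- which suffices, since membership in $\frak M_{\max}(E_1)$ already means ``supported on $E_1$'' and hence, by Theorem \ref{th_ps}, $\supp\sigma\subseteq\hat E_1$. Your route is more economical (no pairing with $\theta_2$, no density lemma); the paper's route yields the density of $\Phi(E_2)\cap\frak S(X,Y)$ in $\Phi(E_2)$ as a by-product and runs parallel to the proof of the union theorem for operator $U$- and $U_1$-sets in \cite{stt}.

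One step should be tightened. The assertion that $\hat\varphi\sigma\in\bb A(Y,X)$ ``as $\varphi\in L^\infty(X\times Y)$'', and every integration of $\hat\varphi$ against $\sigma$, does not follow from essential boundedness alone: $|\sigma|$ need not be absolutely continuous with respect to $\mu\times\nu$ (Arveson measures may live entirely on $\mu\times\nu$-null sets, e.g.\ a diagonal), so a $\mu\times\nu$-a.e.\ statement about $\varphi$ gives no control $|\sigma|$-a.e. The repair is exactly the one the paper makes explicit: fix the Peller representative $\varphi=\sum_k a_k\otimes b_k$ with $\esssup\sum_k|a_k|^2\le C$ and $\esssup\sum_k|b_k|^2\le C$. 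Then $|\varphi|\le C$ off a marginally null set, Arveson measures annihilate marginally null sets, and both the bound $|\hat\varphi\sigma|_X\le C\,\|\sigma\|_{\bb A}\,\mu$ (similarly for the $Y$-marginal) and the identity $\langle T_\sigma,\varphi h\rangle=\int_{Y\times X}\hat\varphi\,\hat h\,d\sigma$ become legitimate; likewise ``$\varphi$ vanishes on a neighbourhood of $E_1$'' must be read marginally almost everywhere, which is again precisely what $|\sigma|$ respects. With that adjustment the proof is complete.
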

\begin{proof}
Let $T_\sigma$ be a pseudo-integral compact operator supported on $E_1\cup E_2$;
we may assume that the total variation of $\sigma$ is $1$.
Let $\theta_i\in \Phi(E_i)\cap{\mathfrak S}(X,Y)$, $i = 1,2$,
and write $\theta_1(x,y)=\sum_{i=1}^{\infty} f_i(x)g_i(y)$, where
$\|\sum_{i=1}^{\infty} |f_i|^2\|_\infty\leq C$ and
$\|\sum_{i=1}^{\infty} |g_i|^2\|_\infty$ $\leq$ $C$.
We have that
$\theta_1\theta_2\in\Phi(E_1\cup E_2)$ and hence
\begin{equation}\label{theta1}
0=\langle T_\sigma,\theta_1\theta_2\rangle=\langle S_{\theta_1}(T_\sigma),\theta_2\rangle.
\end{equation}

Let $\rho$ be the measure on $Y\times X$ given by
$$\rho(E) = \int_{E} \theta_1(x,y)d\sigma(y,x).$$
Denoting by $\dot{\cup}$ the union of a pairwise
disjoint family of measurable sets, we have
\begin{eqnarray*}
& & |\rho|_X(\alpha) =  |\rho|(Y\times \alpha) =
\sup\left\{\sum_{j=1}^r |\rho(E_j)| : Y\times \alpha = \dot{\cup}_{j=1}^r E_j\right\}\\
& = &
\sup\left\{\sum_{j=1}^r \left|\int_{E_j} \theta_1(x,y)d\sigma(y,x)\right| :
Y\times \alpha = \dot{\cup}_{j=1}^r E_j\right\}\\
& \leq &
\sup\left\{\sum_{j=1}^r \int_{E_j} |\theta_1(x,y)|d|\sigma|(y,x) :
Y\times \alpha = \dot{\cup}_{j=1}^r E_j\right\}\\
&\leq &
\int_{Y\times\alpha}\sum_{i=1}^{\infty} |f_i(x)||g_i(y)|d|\sigma|(y,x)\\
& \leq &
\sum_{i=1}^{\infty} \left(\int_{Y\times\alpha}|f_i(x)|^2d|\sigma|(y,x)\right)^{1/2}
\left(\int_{Y\times\alpha}|g_i(y)|^2d|\sigma|(y,x)\right)^{1/2}\\
\end{eqnarray*}
\begin{eqnarray*}
& \leq &
\left(\int_{Y\times\alpha}\sum_{i=1}^{\infty} |f_i(x)|^2d|\sigma|(y,x)\right)^{1/2}
\left(\int_{Y\times\alpha}\sum_{i=1}^{\infty} |g_i(y)|^2d|\sigma|(y,x)\right)^{1/2}
\\
&\leq & C^2|\sigma|_X(\alpha)
\end{eqnarray*}
Similarly, $|\rho|_Y(\beta)\leq C^2 |\sigma|_Y(\beta)$ showing that $\rho$ is an Arveson measure.
Now the identity
$$(S_{\theta_1}(T_\sigma)\xi,\eta)=\int_{Y\times X}\theta_1(x,y)\xi(x)\overline{\eta(y)}d\sigma(y,x), \ \
\xi\in H_1, \eta\in H_2,$$
shows that $S_{\theta_1}(T_\sigma) = T_{\rho}$.

Let $h\in \Phi(E_2)$ and write
$h = \sum_{i=1}^{\infty} f_i\otimes g_i$, where
$\sum_{i=1}^{\infty} \|f_i\|_2^2 < \infty$ and
$\sum_{i=1}^{\infty} \|g_i\|_2^2 < \infty$.
Let $X_N = \{x\in X : \sum_{i=1}^{\infty} |f_i(x)|^2 \leq N\}$ and
$Y_N = \{y\in Y : \sum_{i=1}^{\infty} |g_i(y)|^2 \leq N\}$.
Then
$\chi_{X_N\times Y_N} h\in \frak{S}(X,Y)$ and
$\|\chi_{X_N\times Y_N} h -  h\|_{\Gamma} \to_{N\to\infty} 0$.
Thus, $\Phi(E_2)\cap{\mathfrak S}(X,Y)$ is dense in $\Phi(E_2)$,
and, by (\ref{theta1}), $T_\rho\in\Phi(E_2)^\perp = \frak{M}_{\min}(E_2)$.
As $E_2$ is an operator $U_0$-set,
$T_\rho=0$ and therefore $\rho = 0$.
By Theorem \ref{th_ps},
$\langle T_\sigma, \theta_1\rangle = \rho(Y\times X) = 0$.
Since this holds for any $\theta_1\in  \Phi(E_1)\cap{\mathfrak S}(X,Y)$,
the operator $T_\sigma$ is supported on $E_1$. Since $E_1$ is an operator $U_0$-set,
$T_\sigma=0$.
\end{proof}

Proposition \ref{m0union}, \cite[Proposition 5.3]{stt}, Theorem \ref{th_iffop} and Theorem~\ref{thm0set} have the following immediate corollary.

\begin{corollary}\label{c_usm}
Let $G$ be a locally compact second countable group.
Suppose that $E_1,E_2\subseteq G$ are $U$-sets (resp., $U_1$-sets, $U_0$-sets). Then $E_1\cup E_2$ is a $U$-set (resp. a $U_1$-set, a $U_0$-set).
\end{corollary}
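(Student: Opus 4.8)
The plan is to transfer the assertion from the group $G$ to the product $G\times G$ via the starring correspondence $E\mapsto E^*$, where the already-established operator-level union results apply. The starting observation is the elementary identity
$$(E_1\cup E_2)^* = E_1^*\cup E_2^*,$$
which is immediate from the definition $E^* = \{(s,t) : ts^{-1}\in E\}$: a pair $(s,t)$ satisfies $ts^{-1}\in E_1\cup E_2$ precisely when $ts^{-1}\in E_1$ or $ts^{-1}\in E_2$, that is, when $(s,t)\in E_1^*\cup E_2^*$. Since $G$ is second countable, the star of a closed set is $\omega$-closed, so $(E_1\cup E_2)^*$ is a legitimate input for the operator-level results.

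First I would treat the case of $U$-sets. By definition, $E_i$ being a $U$-set means it is not an $M$-set, so by the equivalence (i)$\Leftrightarrow$(ii) in part (a) of Theorem \ref{th_iffop}, each $E_i^*$ is an operator $U$-set. By \cite[Proposition 5.3]{stt}, the union $E_1^*\cup E_2^*$ is again an operator $U$-set, and by the identity above this union equals $(E_1\cup E_2)^*$. Applying Theorem \ref{th_iffop} once more, now to the closed set $E_1\cup E_2$, yields that $E_1\cup E_2$ is a $U$-set. The $U_1$-set case is identical, using part (b) of Theorem \ref{th_iffop} in place of part (a) together with the corresponding clause of \cite[Proposition 5.3]{stt}. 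For $U_0$-sets I would replace Theorem \ref{th_iffop} by Theorem \ref{thm0set} and \cite[Proposition 5.3]{stt} by Proposition \ref{m0union}, the argument being otherwise unchanged.

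There is essentially no obstacle: all the analytic content has already been packaged into the operator-level union results and the dictionary theorems, so the corollary reduces to bookkeeping. The only points requiring any care are the trivial set-theoretic identity $(E_1\cup E_2)^* = E_1^*\cup E_2^*$ and the observation, noted just after the definition of $E^*$, that stars of closed sets are $\omega$-closed under second countability, which together ensure that the three operator-level theorems may be invoked with $(E_1\cup E_2)^*$ as argument.
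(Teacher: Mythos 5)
Your proof is correct and follows exactly the route the paper intends: the paper derives this corollary immediately from Proposition \ref{m0union}, \cite[Proposition 5.3]{stt}, Theorem \ref{th_iffop} and Theorem \ref{thm0set}, via the identity $(E_1\cup E_2)^* = E_1^*\cup E_2^*$, which is precisely your argument. Nothing is missing.
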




\section{Preservation properties}\label{s_pp}

The aim of this section is to show that the property of being
a set of multiplicity, or a set of uniqueness,
is preserved under some natural
operations. The section is divided into three subsections.

\subsection{Sets possessing an m-resolution}

Here we consider a certain type of a countable union of
operator $U$-sets. Theorem \ref{p_msep} should
be compared to the classical result of N. K. Barry that a countable union of
$U$-sets is a $U$-set \cite{kl}.

\begin{definition}\label{d_res}
Let $(X,\mu)$ and $(Y,\nu)$ be standard measure spaces.

(i) A pair ($\kappa_1,\kappa_2$) of $\omega$-closed subsets of the
direct product $X\times Y$ will be called \emph{m-separable} if
there exist a function $\nph_1\in \frak{S}(X,Y)$
and $\omega$-open neighbourhoods $E_1$ and $E_2$ of $\kappa_1$
and $\kappa_2$, respectively,
such that
$\nph|_{E_1} = 1$ and $\nph|_{E_2} = 0$.

(ii) Let $\kappa\subseteq X\times Y$ be an $\omega$-closed set and
$\alpha$ be a countable ordinal. We call  a family
$(\kappa_{\beta})_{\beta \leq \alpha}$ of $\omega$-closed sets an \emph{m-resolution}
of $\kappa$ if
\begin{itemize}
\item  $\kappa_1 = \kappa$.

\item $\kappa_{\beta+1} \subseteq \kappa_{\beta}$, the set $\kappa_{\beta}\setminus\kappa_{\beta+1}$ is $\omega$-closed and the pair
$\kappa_{\beta+1},\kappa_{\beta}\setminus\kappa_{\beta+1}$ is an m-separable, for every ordinal $\beta < \alpha$;

\item $\kappa_{\beta} = \cap_{\gamma < \beta} \kappa_{\gamma}$, for every limit ordinal $\beta\leq \alpha$.
\end{itemize}
\end{definition}

\begin{theorem}\label{p_msep}
Let $(X,\mu)$ and $(Y,\nu)$ be standard measure spaces
and $\kappa\subseteq X\times Y$ be an $\omega$-closed set
which possesses an m-resolution
$(\kappa_{\beta})_{\beta\leq \alpha}$
such that $\kappa_{\beta} \setminus \kappa_{\beta+1}$
is an operator $U$-set, for each $\beta < \alpha$, and $\kappa_{\alpha}$ is an operator $U$-set.
Then $\kappa$ is an operator $U$-set.
\end{theorem}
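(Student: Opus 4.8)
The plan is to prove the statement in its direct form, namely that $\cl K(H_1,H_2)\cap \frak{M}_{\max}(\kappa)=\{0\}$, which is precisely the assertion that $\kappa$ is an operator $U$-set. So I fix a compact operator $T\in\frak{M}_{\max}(\kappa)$ and aim to show $T=0$. The engine is a transfinite induction along the m-resolution, establishing the statement $P(\beta)$: ``$T\in\frak{M}_{\max}(\kappa_\beta)$'' for every $\beta$ with $1\le\beta\le\alpha$. The base case $P(1)$ is exactly the hypothesis, since $\kappa_1=\kappa$. Because each $S_\nph$ maps $\cl K$ into $\cl K$, the operator stays compact at every stage, so once $P(\alpha)$ is reached we will have $T\in\frak{M}_{\max}(\kappa_\alpha)\cap\cl K=\{0\}$ as $\kappa_\alpha$ is an operator $U$-set, completing the argument. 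Throughout I will use the Schur-multiplier characterisation of $\frak{M}_{\max}$ from Lemma~\ref{l_mmax} and the elementary monotonicity $\kappa\subseteq\kappa'\Rightarrow\frak{M}_{\max}(\kappa)\subseteq\frak{M}_{\max}(\kappa')$.

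For the successor step, assume $P(\beta)$ and set $\Lambda_\beta=\kappa_\beta\setminus\kappa_{\beta+1}$, so that $\kappa_\beta=\kappa_{\beta+1}\cup\Lambda_\beta$. The m-separability of the pair $(\kappa_{\beta+1},\Lambda_\beta)$ provides $\nph_\beta\in\frak{S}(X,Y)$ equal to $1$ on an $\omega$-open neighbourhood of $\kappa_{\beta+1}$ and to $0$ on one of $\Lambda_\beta$, and I decompose $T=S_{\nph_\beta}(T)+S_{1-\nph_\beta}(T)$. If $\psi\in\frak{S}(X,Y)$ vanishes on a neighbourhood of $\Lambda_\beta$, then $\psi(1-\nph_\beta)$ vanishes on an $\omega$-open neighbourhood of $\Lambda_\beta\cup\kappa_{\beta+1}=\kappa_\beta$, so by Lemma~\ref{l_mmax} and $P(\beta)$ we get $S_\psi(S_{1-\nph_\beta}(T))=S_{\psi(1-\nph_\beta)}(T)=0$; hence $S_{1-\nph_\beta}(T)\in\frak{M}_{\max}(\Lambda_\beta)$. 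Being compact and $\Lambda_\beta$ an operator $U$-set, this operator is $0$, so $T=S_{\nph_\beta}(T)$. A symmetric computation (multiplying instead by $\nph_\beta$, which vanishes near $\Lambda_\beta$) shows $S_{\nph_\beta}(T)\in\frak{M}_{\max}(\kappa_{\beta+1})$, which is $P(\beta+1)$.

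For the limit step, let $\beta\le\alpha$ be a limit ordinal with $P(\gamma)$ valid for all $\gamma<\beta$, so $T\in\frak{M}_{\max}(\kappa_\gamma)$ for each $\gamma<\beta$ and $\kappa_\beta=\cap_{\gamma<\beta}\kappa_\gamma$. By Lemma~\ref{l_mmax} it suffices to prove $S_\nph(T)=0$ whenever $\nph\in\frak{S}(X,Y)$ vanishes on an $\omega$-open neighbourhood $U$ of $\kappa_\beta$. Then the $\omega$-closed set $U^c$ is contained in $\kappa_\beta^c=\cup_{\gamma<\beta}\kappa_\gamma^c$, a countable union of $\omega$-open sets (countable since $\beta$ is a countable ordinal). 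After reducing to equivalent finite measures and compact admissible topologies — which leaves $\frak{S}$, $\frak{M}_{\max}$, compactness and the $\omega$-topology intact — Lemma~\ref{e-compactness} provides, for each $\varepsilon>0$, sets $X_\varepsilon,Y_\varepsilon$ with $\mu(X\setminus X_\varepsilon),\nu(Y\setminus Y_\varepsilon)<\varepsilon$ such that $U^c\cap(X_\varepsilon\times Y_\varepsilon)$ lies in a finite union $\kappa_{\gamma_1}^c\cup\cdots\cup\kappa_{\gamma_n}^c$; since the resolution is decreasing this collapses to $\kappa_{\gamma^*}^c$ with $\gamma^*=\max_i\gamma_i<\beta$. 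Thus $\nph\chi_{X_\varepsilon\times Y_\varepsilon}\in\frak{S}(X,Y)$ is supported in $\kappa_{\gamma^*}^c$, hence vanishes on an $\omega$-open neighbourhood of $\kappa_{\gamma^*}$, and Lemma~\ref{l_mmax} together with $T\in\frak{M}_{\max}(\kappa_{\gamma^*})$ yields $P(Y_\varepsilon)S_\nph(T)P(X_\varepsilon)=S_{\nph\chi_{X_\varepsilon\times Y_\varepsilon}}(T)=0$. Letting $\varepsilon\to0$ along a sequence, $P(X_\varepsilon)\to I$ and $P(Y_\varepsilon)\to I$ strongly, so $S_\nph(T)=0$, establishing $P(\beta)$.

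The main obstacle is precisely this limit step: one must upgrade the hypothesis that $\nph$ vanishes only near the intersection $\cap_{\gamma<\beta}\kappa_\gamma$ to vanishing near a single $\kappa_{\gamma^*}$, so that a member of the inductive family can be invoked. This is exactly what the $e$-compactness Lemma~\ref{e-compactness} delivers, at the cost of passing to finite measures and localising to a large rectangle; the strong-operator limit then dispenses with the rectangle and recovers the full operator. The successor step, by comparison, is a routine Schur-multiplier ``partition of unity'' once Lemma~\ref{l_mmax} is available, and the use of compactness there is essential only to invoke the $U$-set hypothesis for $\Lambda_\beta$.
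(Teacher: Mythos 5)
Your proof is correct, and its engine --- the successor step, where you write $T=S_{\nph_\beta}(T)+S_{1-\nph_\beta}(T)$ using the m-separating multiplier, push each summand into $\frak{M}_{\max}(\kappa_{\beta+1})$ resp.\ $\frak{M}_{\max}(\kappa_\beta\setminus\kappa_{\beta+1})$ via Lemma~\ref{l_mmax}, and kill the second summand by compactness and the $U$-set hypothesis --- is exactly the paper's argument. Where you diverge is the limit step. The paper dispatches it in one line: since $\frak{M}_{\max}(\kappa_\gamma)$ consists precisely of the operators supported on $\kappa_\gamma$, and $\supp T$ is the smallest $\omega$-closed set supporting $T$, the inclusions $\supp T\subseteq\kappa_\gamma$ (each valid up to a marginally null set, of which there are only countably many since $\beta$ is a countable ordinal) give $\supp T\subseteq\cap_{\gamma<\beta}\kappa_\gamma=\kappa_\beta$ directly. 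You instead re-derive this special case from Lemma~\ref{e-compactness}: reduce to finite measures, cover $U^c$ by the increasing family $\kappa_\gamma^c$, localise to a large rectangle where only one $\kappa_{\gamma^*}^c$ is needed, apply Lemma~\ref{l_mmax} there, and remove the rectangle by a strong-operator limit. Your route is longer but self-contained at the level of Lemmas~\ref{l_mmax} and~\ref{e-compactness}, avoiding any appeal to the existence of the minimal support $\supp T$; the paper's route is shorter but leans on that piece of machinery. Both are valid, and your verification that $\nph\chi_{X_\varepsilon\times Y_\varepsilon}$ vanishes on the $\omega$-open set $U\cup(X_\varepsilon\times Y_\varepsilon)^c\supseteq\kappa_{\gamma^*}$ is the right way to make the localisation precise.
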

\begin{proof}
Let $(\kappa_{\beta})_{\beta \leq \alpha}$ be an m-resolution of $\kappa$
such that $\kappa_{\beta}\setminus\kappa_{\beta+1}$ is an operator $U$-set for each $\beta < \alpha$.

We first observe  that if $T\in \frak{M}_{\max}(\kappa_{\beta})\cap \cl K$
for some ordinal $\beta < \alpha$, then $T\in \frak{M}_{\max}(\kappa_{\beta + 1})\cap \cl K$. In fact, let
 $\kappa_{\beta}' = \kappa_{\beta-1}\setminus\kappa_{\beta}$.
By our assumptions, $\kappa_{\beta}'$ is  an $\omega$-closed set
and there exists a function
$\nph \in \frak{S}(X,Y)$ such that $\nph = 1$
on an $\omega$-open neighbourhood of $\kappa_{\beta+1}$ and
$\nph = 0$ on an $\omega$-open neighbourhood of $\kappa_{\beta}'$.
Clearly, $1 - \nph \in \frak{S}(X,Y)$, $1 - \nph = 0$
on an $\omega$-open neighbourhood of $\kappa_{\beta+1}$ and
$1 - \nph = 1$ on an $\omega$-open neighbourhood of $\kappa_{\beta}'$.
Moreover, $T = S_{\nph}(T) + S_{1-\nph}(T)$.

For each $\psi\in \frak{S}(X,Y)$ vanishing on an $\omega$-open neighbourhood of $\kappa_{\beta+1}$,
the function $\psi\nph\in\frak{S}(X,Y)$ vanishes on an $\omega$-open neighbourhood of $\kappa_{\beta}$
and, since $T\in \frak{M}_{\max}(\kappa_{\beta})$, Lemma \ref{l_mmax} implies that
$S_{\psi}(S_{\nph}(T)) = S_{\psi\nph}(T) = 0$. By Lemma \ref{l_mmax} again,
$S_{\nph}(T)\in \frak{M}_{\max}(\kappa_{\beta+1})$. Similarly, $S_{1-\nph}(T)\in \frak{M}_{\max}(\kappa_{\beta}')$.
Since $\cl K$ is invariant under Schur multipliers, we conclude that
$S_{1-\nph}(T)\in \frak{M}_{\max}(\kappa_{\beta}') \cap \cl K$.
However, $\kappa_{\beta}'$ is an operator $U$-set by assumption.
It follows that $S_{1-\nph}(T) = 0$ and hence $T = S_{\nph}(T) \in \frak{M}_{\max}(\kappa_{\beta+1}) \cap \cl K$.

Let now $T\in \frak{M}_{\max}(\kappa)\cap \cl K$.
It follows by transfinite induction that $T\in \frak{M}_{\max}(\kappa_\beta)\cap \cl K$ for all $\beta\leq \alpha$. In fact, assuming that the statement holds for all $\gamma<\beta$ we get by the previous paragraph that $T\in \frak{M}_{\max}(\kappa_\beta)\cap \cl K$ if $\beta$ has a predecessor while,
if $\beta$ is a limit ordinal, the inclusion follows from the assumption that
$\kappa_\beta=\cap_{\gamma<\beta}\kappa_\gamma$.

Since $\kappa_{\alpha}$
is an operator $U$-set, we have now $T = 0$ and hence
$\kappa$ is an operator $U$-set.
\end{proof}

The following corollary should be compared to
M. Bo$\dot{\rm z}$ejko's result
\cite{bozejko_pams} that every compact countable set in a
non-discrete locally compact group is a $U$-set.

\begin{corollary}\label{c_count}
Let $G$ be a non-discrete locally compact second countable group and $E\subseteq G$ be a
closed countable set. Then $E$ is a $U$-set.
\end{corollary}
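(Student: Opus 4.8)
The plan is to pass to the operator side and then build an m-resolution of $E^*$ to which Theorem \ref{p_msep} applies. By Theorem \ref{th_iffop}, $E$ is a $U$-set if and only if $E^*$ is an operator $U$-set; since $E$ is closed and $G$ is second countable, $E^*$ is $\omega$-closed, so it suffices to prove that $E^*$ is an operator $U$-set.

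The elementary building block is a single point. For non-discrete $G$, a point mass $\delta_x$ is not a Rajchman measure (its Fourier--Stieltjes transform has constant modulus and does not vanish at infinity), so $\lambda_x\notin C_r^*(G)$; as the only operators in $\vn(G)$ supported on $\{x\}$ are the scalar multiples of $\lambda_x$, we get $J(\{x\})^{\perp}\cap C_r^*(G)=\{0\}$, i.e. $\{x\}$ is a $U$-set. By Theorem \ref{th_iffop} the set $\{x\}^*$, a translate of the diagonal, is then an operator $U$-set, and being closed it is also $\omega$-closed. The crucial subtlety is that one cannot remove all the relatively isolated points of a Cantor--Bendixson level at once: the corresponding union of translates of the diagonal accumulates to a lower level and fails to be $\omega$-closed (already for $E=\{0\}\cup\{1/n:n\ge 1\}\subseteq\mathbb R$ the set $\bigcup_n\{1/n\}^*$ is not $\omega$-closed, since any rectangle of positive measure contained in its complement would, by Steinhaus, meet some line $ts^{-1}=1/n$). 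Points must therefore be peeled off one at a time.

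Accordingly, I would construct the resolution by transfinite recursion, realising the Cantor--Bendixson analysis one point at a time. Since $G$ is metrisable and $E$ is countable and closed, $E$ is scattered and its Cantor--Bendixson rank is a countable ordinal, so the iterated derived sets reach $\emptyset$. At a successor stage the current set has the form $F^*$ for a closed $F\subseteq E$; choosing a point $x$ isolated in $F$, I set the next set to be $(F\setminus\{x\})^*$, which is again closed (hence $\omega$-closed) because $F\setminus\{x\}$ is closed, with difference exactly $\{x\}^*$, an operator $U$-set by the previous paragraph. At limit stages I take intersections; since the operation $F\mapsto F^*$ commutes with intersections, these are precisely the starred Cantor--Bendixson derived sets, so the recursion descends through the derived sequence and terminates, after countably many steps, at $\emptyset^*=\emptyset$, which is trivially an operator $U$-set.

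It remains to verify m-separability of each pair $\big((F\setminus\{x\})^*,\{x\}^*\big)$. As $x$ is isolated in $F$, it is separated in $G$ from the closed set $F\setminus\{x\}$; using the regularity of $A(G)$, I choose $w\in A(G)$ with compact support, equal to $1$ near $x$ and vanishing on a neighbourhood of $F\setminus\{x\}$, and set $v=1-w\in B(G)\subseteq M^{\cb}A(G)$. Then $\varphi=N(v)\in\frak{S}(G,G)$ equals $0$ on an $\omega$-open neighbourhood of $\{x\}^*$ and $1$ on an $\omega$-open neighbourhood of $(F\setminus\{x\})^*$, giving the required separation. With all hypotheses of Theorem \ref{p_msep} verified, $E^*$ is an operator $U$-set, and hence $E$ is a $U$-set. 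The main obstacle is the bookkeeping in the third paragraph: organising the one-point-at-a-time removals together with the limit stages into a genuine m-resolution of countable length whose successor differences remain $\omega$-closed, which is exactly what dictates the single-point removals.
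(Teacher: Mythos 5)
Your argument is essentially the paper's: reduce to $E^*$ via Theorem \ref{th_iffop}, build an m-resolution of $E^*$ using the m-separability of pairs $(\{s\}^*,F^*)$ coming from the regularity of $A(G)$, and apply Theorem \ref{p_msep}. The paper's proof invokes the Cantor--Bendixson derivatives and then says "one hence easily obtains an m-resolution"; you have correctly diagnosed why one cannot simply take $\kappa_\beta=E_\beta^*$ (the difference $(E_\beta\setminus E_{\beta+1})^*$ need not be $\omega$-closed and cannot be separated by a Schur multiplier from $E_{\beta+1}^*$, since the removed points accumulate there) and supplied the one-point-at-a-time transfinite refinement that the paper leaves implicit. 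That bookkeeping, the termination at a countable ordinal, and the separating function $1-N(w)$ with $w\in A(G)$ are all fine.

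The one step whose justification does not cover the stated generality is the base case. You deduce that $\{x\}$ is a $U$-set from "$\delta_x$ is not a Rajchman measure because its Fourier--Stieltjes transform has constant modulus and does not vanish at infinity"; this reasoning is meaningful only for abelian $G$, whereas the corollary concerns arbitrary non-discrete second countable groups. The fact you need is true in general, but it should be proved differently. The paper does it on the operator side: $\frak{M}_{\max}(\{s\}^*)=\lambda_s\cl D$, and a nonzero multiplication operator $M_f$ is never compact when Haar measure is non-atomic, so $\{s\}^*$ is an operator $U$-set directly. Alternatively, to keep your group-side route: if $\lambda_x\in C_r^*(G)$ then by (\ref{eq_k}) the operator $M_g\lambda_x M_f=\lambda_x M_{g(x\,\cdot)f}$ is compact for all $f,g\in C_0(G)$, forcing $g(x\,\cdot)f=0$ a.e., a contradiction for non-discrete $G$. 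With this repair the proof is complete and coincides with the paper's.
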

\begin{proof}
Recall  that the successive Cantor-Bendixson
derivatives of the set $E$ are defined as follows: let $E_0 = E$ and  for an ordinal
$\beta$, let $E_{\beta}$ be equal to the set of all
limit points of $E_{\beta-1}$ if $\beta$ has a predecessor, and to
$\cap_{\gamma < \beta} E_{\gamma}$ if $\beta$ is a limit ordinal.
Since $E$ is countable, there exists a countable ordinal $\alpha$
such that $E_{\alpha} = \emptyset$.
Moreover, $E_{\beta}\setminus E_{\beta+1}$ is a countable set consisting of
isolated points of $E$. By the regularity of $A(G)$, a pair of the form  $(\{s\}^*, F^*)$, where
$F$ is a closed set and $s\not\in F$, is m-separable. One hence easily obtains an m-resolution
for $E^*$.
On the other hand, if $G$ is not discrete then
${\mathfrak M}_{\max}(\{s\}^*) = \lambda_s\cl D$ does not contain non-zero compact operators.
It follows from Theorem \ref{p_msep} and Theorem
\ref{th_iffop} that $E$ is a $U$-set.
\end{proof}



\subsection{Inverse images}

In this subsection, we establish an Inverse Image Theorem for sets of multiplicity.
Our result, Theorem \ref{th_invim}, should be compared to
\cite[Theorem 4.7]{st1}, an inverse image result for
operator synthesis.

Let $(X,\mu)$, $(X_1,\mu_1)$, $(Y,\nu)$ and $(Y_1,\nu_1)$ be standard measure spaces.
We fix, for the remainder of this section, measurable mappings
$\nph : X\rightarrow X_1$ and $\psi : Y\rightarrow Y_1$  such that
$\nph(X)$ and $\psi(Y)$ are measurable,
the measure $\nph_*\mu$ on $X_1$ given by $\nph_*\mu(\alpha_1) = \mu(\nph^{-1}(\alpha_1))$
is absolutely continuous with respect to $\mu_1$, and the measure $\psi_*\nu$, defined similarly, is absolutely
continuous with respect to $\nu_1$.

Let $r : X_1\rightarrow \bb{R}^+$ be the Radon-Nikodym derivative of $\nph_*\mu$
with respect to $\mu_1$, that is, the $\mu_1$-measurable function such that
$\mu(\nph^{-1}(\alpha_1)) = \int_{\alpha_1} r(x_1)d\mu_1(x_1)$
for every measurable set $\alpha_1\subseteq X_1$.
Similarly, let $s : Y_1\rightarrow \bb{R}^+$
be the Radon-Nikodym derivative of $\psi_*\nu$ with respect to $\nu_1$.
Let $M_1 = \{x_1\in X_1 : r(x_1) = 0\}$ and $N_1  = \{y_1\in Y_1 : s(y_1) = 0\}$.
Note that
$\mu(\nph^{-1}(M_1)) = \int_{M_1} r(x_1)d\mu_1(x_1) = 0$.
Similarly, $\nu(\psi^{-1}(N_1)) = 0$.
Observe that, up to a $\mu_1$-null set, $M_1^c\subseteq \nph(X)$.
Indeed,
letting $M_2 = M_1^c \cap \nph(X)^c \subseteq X_1$, we see that $\nph^{-1}(M_2) = \emptyset$ and hence
$0 = \mu(\nph^{-1}(M_2)) = \int_{M_2} r_1(x) d\mu_1(x)$. Since $r_1(x) > 0$ for every $x\in M_2$,
we have that $\mu_1(M_2) = 0$. Similarly, $N_1^c\subseteq \nph(Y)$, up to a null set.

We will say that
$\nph : X\to X_1$ is injective up to a null set and
has measurable inverse if there exists a subset $\Lambda\subseteq X$
with $\mu(\Lambda) = 0$, such that $\nph : \Lambda^c \to X_1$
is injective, $\varphi(\Lambda^c)$ is $\mu_1$-measurable and
$\nph^{-1}|_{\nph(\Lambda^c)}$ is a measurable function.
 If moreover  $\Lambda$ can be chosen so that $\mu_1(\nph(\Lambda^c)^c) = 0$, then we say that $\nph$ is bijective up to a null set. The following result must be known but we could not find a precise reference.

\begin{lemma}\label{l_vpi}
The operator $V_{\nph} : L^2(X_1,\mu_1)\rightarrow L^2(X,\mu)$ given by
$$V_{\nph}\xi (x) =
\begin{cases}
\frac{\xi(\nph(x))}{\sqrt{r(\nph(x))}} \;&\text{if }x\not\in \nph^{-1}(M_1),\\
0 &\text{if }x\in \nph^{-1}(M_1)
\end{cases}$$
is a partial isometry with initial space $L^2(M_1^c,\mu_1|_{M_1^c})$.
Moreover, if $\nph$ is injective up to a null set and has
measurable inverse then $V_{\nph}$ is surjective.
\end{lemma}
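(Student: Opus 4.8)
The plan is to base everything on the pushforward change-of-variables identity
\begin{equation}\label{eq_covpf}
\int_X (g\circ \nph)\, d\mu \;=\; \int_{X_1} g\, r\, d\mu_1,
\end{equation}
valid for every non-negative measurable function $g$ on $X_1$; this is the standard formula for integrating against the image measure $\nph_*\mu$, combined with the identity $d\nph_*\mu = r\, d\mu_1$ furnished by the Radon--Nikodym derivative. First I would check that $V_{\nph}$ is well defined and bounded: the function $V_{\nph}\xi$ is measurable, being assembled from the compositions $\xi\circ\nph$ and $r\circ\nph$ (the latter strictly positive off $\nph^{-1}(M_1)$), and applying \eqref{eq_covpf} to $g=\tfrac{|\xi|^2}{r}\chi_{M_1^c}$ yields
\begin{equation}\label{eq_isopf}
\|V_{\nph}\xi\|_{L^2(X,\mu)}^2
=\int_{X\setminus\nph^{-1}(M_1)}\frac{|\xi(\nph(x))|^2}{r(\nph(x))}\, d\mu(x)
=\int_{M_1^c}|\xi|^2\, d\mu_1
=\|\chi_{M_1^c}\xi\|_{L^2(X_1,\mu_1)}^2 ,
\end{equation}
where the first equality is the definition of $V_{\nph}$ and the second is exactly \eqref{eq_covpf}.

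From \eqref{eq_isopf} the partial-isometry claim is immediate. Writing $P$ for multiplication by $\chi_{M_1^c}$, that is, the orthogonal projection of $L^2(X_1,\mu_1)$ onto $L^2(M_1^c,\mu_1|_{M_1^c})$, identity \eqref{eq_isopf} reads $\langle V_{\nph}^*V_{\nph}\xi,\xi\rangle=\langle P\xi,\xi\rangle$ for every $\xi$; since both operators are self-adjoint, polarisation gives $V_{\nph}^*V_{\nph}=P$. As $P$ is a projection, $V_{\nph}$ is a partial isometry, and its initial space is $\ran P = L^2(M_1^c,\mu_1|_{M_1^c})$, as asserted.

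For the surjectivity statement I would exhibit an explicit preimage. Fix a set $\Lambda$ with $\mu(\Lambda)=0$ on whose complement $\nph$ is injective with measurable inverse, and put $B=\nph(\Lambda^c)\cap M_1^c$ (measurable, by the hypotheses) together with $A=\Lambda^c\cap\nph^{-1}(M_1^c)$. Since $\mu(\Lambda)=0$ and $\mu(\nph^{-1}(M_1))=0$, the set $A$ has full measure in $X$, and $\nph$ restricts to a bijection of $A$ onto $B$ with $\nph^{-1}(\nph(x))=x$ there. Given $f\in L^2(X,\mu)$, set $\xi=\sqrt{r}\,\bigl(f\circ(\nph^{-1}|_B)\bigr)\chi_B$, which is measurable by the measurable-inverse hypothesis. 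A direct substitution on $A$ shows $V_{\nph}\xi=f$ almost everywhere, while applying \eqref{eq_covpf} to $g=\bigl|\,f\circ(\nph^{-1}|_B)\,\bigr|^2\chi_B$ gives $\|\xi\|_{L^2(X_1,\mu_1)}^2=\int_A|f|^2\, d\mu=\|f\|^2$; in particular $\xi\in L^2(X_1,\mu_1)$ and $f\in\ran V_{\nph}$. Hence $V_{\nph}$ is onto.

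The only genuinely delicate point is the null-set bookkeeping in this last step: one must make sure that $\xi$ is both measurable and square-integrable (which is precisely what the measurable-inverse hypothesis and \eqref{eq_covpf} deliver), and that the set $A$ on which the pointwise identity $V_{\nph}\xi=f$ is verified really has full measure, so that the equality persists $\mu$-almost everywhere. By contrast, the isometry computation \eqref{eq_isopf} and the passage to the partial-isometry conclusion are routine once \eqref{eq_covpf} is available.
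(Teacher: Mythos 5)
Your proof is correct and follows essentially the same route as the paper's: the isometry computation is the same pushforward/Radon--Nikodym change of variables (the paper writes it out via $d\nph_*\mu = r\,d\mu_1$ rather than quoting the identity $\int_X (g\circ\nph)\,d\mu=\int_{X_1}g\,r\,d\mu_1$, and the paper introduces an auxiliary measure $\tilde\mu(\alpha)=\mu_1(\nph(\alpha))$ where you reuse the same identity), and the surjectivity argument constructs the identical preimage $\xi=\sqrt{r}\,(f\circ\nph^{-1})$ on the image of the co-null set, with the same null-set bookkeeping. Your explicit passage through $V_{\nph}^*V_{\nph}=P$ by polarisation is a slightly more formal rendering of the paper's one-line conclusion, but not a different argument.
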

\begin{proof}
Note that, if $\xi \in L^2(X_1,\mu_1)$ then
\begin{eqnarray*}
\|V_{\nph}\xi\|^2 & = & \int_{\nph^{-1}(M_1)^c}\left|\frac{\xi(\nph(x))}{\sqrt{r(\nph(x))}}\right|^2 d\mu(x)
 =  \int_{M_1^c}\left|\frac{\xi(x_1)}{\sqrt{r(x_1)}}\right|^2 d\nph_*\mu(x_1)\\
& = & \int_{M_1^c}r(x_1)\left|\frac{\xi(x_1)}{\sqrt{r(x_1)}}\right|^2 d\mu_1(x_1)
= \int_{M_1^c} |\xi(x_1)|^2 d\mu_1(x_1).
\end{eqnarray*}
It follows that $V_{\nph}$ is a partial isometry with initial space $L^2(M_1^c,\mu_1|_{M_1^c})$.

Suppose that there exists a set $M\subseteq X$ such that $\mu(M) = 0$, $\nph(M^c)$ is measurable,
$\nph|_{M^c}$ is one-to-one, and $\nph^{-1} : \nph(M^c)\rightarrow M^c$ is measurable.
Let $\eta\in L^2(X,\mu)$ and define $\xi : X_1\rightarrow \bb{C}$ by setting
$\xi(x_1) = \sqrt{r(x_1)}\eta(\nph^{-1}(x_1))$ if $x_1\in \nph(M^c)$ and
$\xi(x_1) = 0$ if $x_1\not\in \nph(M^c)$. We claim that $\xi \in L^2(X_1,\mu_1)$. To
see this, note that
$$\mu(\nph^{-1}(\alpha_1)) = \int_{\alpha_1} r(x_1) d\mu_1(x_1),$$
for all $\mu_1$-measurable sets $\alpha_1\subseteq \nph(M^c)$.
Setting $\tilde{\mu}$ to be the measure on $M^c$ given by
$\tilde{\mu}(\alpha) = \mu_1(\nph(\alpha))$ for $\mu$-measurable subset $\alpha\subseteq M^c$
we have
$$\mu(\alpha) = \int_{\alpha} r(\nph(x)) d\tilde{\mu}(x).$$
It follows that
\begin{eqnarray*}
\|\xi\|_{L^2(X_1,\mu_1)} & = & \int_{\nph(M^c)} r(x_1)|\eta(\nph^{-1}(x_1))|^2d\mu_1(x_1)\\
& = & \int_{M^c} r(\nph(x))|\eta(x)|^2 d\tilde{\mu}(x)\\
& = & \int_{M^c} |\eta(x)|^2 d\mu(x) = \|\eta\|_{L^2(X,\mu)}
\end{eqnarray*}
since $M$ is $\mu$-null.
On the other hand,
$$\Lambda \stackrel{def}{=} \nph^{-1}(M_1^c \cap \nph(M^c)^c) \subseteq \nph^{-1}(\nph(M^c)^c)\subseteq M,$$
and hence $\Lambda$ is $\mu$-null. It follows as in the third paragraph of the present subsection that
$M_1^c \cap \nph(M^c)^c$ is $\mu_1$-null.
Thus, $V_{\nph}\xi = \eta$, and the proof is complete.
\end{proof}

We recall some facts from \cite{st1} that will be needed subsequently.
If $\kappa\subseteq X\times Y$ is $\omega$-closed, a \emph{$\kappa$-pair} is
an element
$$(P,Q)\in (\cl B(\ell^2)\bar\otimes L^\infty(X,\mu))\times  (\cl B(\ell^2)\bar\otimes L^\infty(Y,\nu))$$
such that, after the identification of $P$ and $Q$ with operator-valued
weakly measurable functions, defined on $X$ and $Y$, respectively,
$P$ and $Q$ take values that are projections and
$P(x)Q(y)=0$ marginally almost everywhere on $\kappa$.
A $\kappa$-pair is called simple if $P$ and $Q$ take finitely many values. The following was established in \cite{st1}.

\begin{theorem}\label{th_kappapair}
Let $\kappa\subseteq X\times Y$ be an $\omega$-closed set. Then
$$\frak{M}_{\min}(\kappa) = \{T\in \cl B(H_1,H_2) : Q(I\otimes T)P = 0, \ \forall \ \kappa\mbox{-pair }
(P,Q)\}$$
and
$$\frak{M}_{\max}(\kappa) = \{T\in \cl B(H_1,H_2) : Q(I\otimes T)P = 0, \ \forall \mbox{  simple }\kappa\mbox{-pair }
(P,Q)\}.$$
\end{theorem}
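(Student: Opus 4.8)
The plan is to compute the operator $Q(I\otimes T)P$ entrywise with respect to a fixed orthonormal basis $(e_k)_k$ of $\ell^2$ and to recognise each entry as a Schur multiplication applied to $T$. Writing $P=(P_{kl})_{k,l}$, $Q=(Q_{kl})_{k,l}$ for the matrix coefficients of the pair (so that $P_{kl}\in L^\infty(X,\mu)$ and $Q_{kl}\in L^\infty(Y,\nu)$ act as multiplication operators), a direct matrix multiplication gives
$$(Q(I\otimes T)P)_{mn}=\sum_k M_{Q_{mk}}TM_{P_{kn}}=S_{\psi_{mn}}(T),\quad\text{with}\quad\psi_{mn}(x,y)=\sum_k P_{kn}(x)Q_{mk}(y)=(Q(y)P(x))_{mn},$$
where each $\psi_{mn}$ is a Schur multiplier by Peller's criterion, the column sums $\sum_k|P_{kn}(x)|^2$ and $\sum_k|Q_{mk}(y)|^2$ being dominated by $\|P(x)\|\le1$ and $\|Q(y)\|\le1$. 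Taking adjoints in the $\kappa$-pair relation $P(x)Q(y)=0$ (valid marginally a.e. on $\kappa$) yields $Q(y)P(x)=0$ on $\kappa$, so every $\psi_{mn}$ vanishes marginally a.e. on $\kappa$. Throughout I would use the descriptions $\frak M_{\min}(\kappa)=\Phi(\kappa)^\perp$ and the fact that $\frak M_{\max}(\kappa)$ is the set of operators supported on $\kappa$, i.e. those $T$ with $M_{\chi_\beta}TM_{\chi_\alpha}=0$ whenever $(\alpha\times\beta)\cap\kappa\simeq\emptyset$ (immediate from the maximality in the definition of $\frak M_{\max}$ together with $\supp\frak M_{\max}(\kappa)=\kappa$).

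First I would settle the $\frak M_{\min}$-identity. If $T\in\frak M_{\min}(\kappa)=\Phi(\kappa)^\perp$ and $(P,Q)$ is an arbitrary $\kappa$-pair, then for every $h\in\Gamma(X,Y)$ the product $\psi_{mn}h$ again lies in $\Phi(\kappa)$ (because $\psi_{mn}\chi_\kappa\simeq0$), so $\langle S_{\psi_{mn}}(T),h\rangle=\langle T,\psi_{mn}h\rangle=0$; as this holds for all $h$, every entry of $Q(I\otimes T)P$ vanishes and $Q(I\otimes T)P=0$. For the converse I would, given $h\in\Phi(\kappa)$ written as $h=\sum_i f_i\otimes g_i$ with $\sum_i\|f_i\|_2^2,\ \sum_i\|g_i\|_2^2<\infty$, introduce the $\ell^2$-valued functions $F(x)=(f_i(x))_i\in L^2(X;\ell^2)$ and $G(y)=(g_i(y))_i\in L^2(Y;\ell^2)$ and let $P(x)$, $Q(y)$ be the rank-one projections onto $F(x)$ and $\overline{G(y)}$ (and $0$ where these vectors vanish). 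Since $h(x,y)=(F(x),\overline{G(y)})$ vanishes on $\kappa$, the vectors $F(x)$ and $\overline{G(y)}$ are orthogonal there, whence $P(x)Q(y)=0$ on $\kappa$ and $(P,Q)$ is a $\kappa$-pair. Setting $\xi=\sum_i e_i\otimes f_i$ and $\eta=\sum_i e_i\otimes\overline{g_i}$, one verifies $P\xi=\xi$, $Q\eta=\eta$ and $((I\otimes T)\xi,\eta)=\sum_i(Tf_i,\overline{g_i})=\langle T,h\rangle$; hence $\langle T,h\rangle=(Q(I\otimes T)P\,\xi,\eta)=0$, giving $T\in\Phi(\kappa)^\perp=\frak M_{\min}(\kappa)$.

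For the $\frak M_{\max}$-identity I would rerun the entrywise computation for simple pairs, where $P$ and $Q$ take finitely many projection values $R_i$, $S_j$ on measurable partitions $\{\alpha_i\}$ of $X$ and $\{\beta_j\}$ of $Y$; then $Q(I\otimes T)P=\sum_{i,j}S_jR_i\otimes M_{\chi_{\beta_j}}TM_{\chi_{\alpha_i}}$, and compressing by $M_{\chi_{\alpha_{i_0}}}$ and $M_{\chi_{\beta_{j_0}}}$ shows that this vanishes precisely when, for every $(i,j)$, either $S_jR_i=0$ or $M_{\chi_{\beta_j}}TM_{\chi_{\alpha_i}}=0$. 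If $T$ is supported on $\kappa$ the second alternative holds whenever $(\alpha_i\times\beta_j)\cap\kappa\simeq\emptyset$, while the $\kappa$-pair relation forces $R_iS_j=0$ (equivalently $S_jR_i=0$) otherwise; hence $Q(I\otimes T)P=0$. Conversely, given $\alpha,\beta$ with $(\alpha\times\beta)\cap\kappa\simeq\emptyset$, the simple pair $P=R\otimes M_{\chi_\alpha}$, $Q=R\otimes M_{\chi_\beta}$ (with $R$ a fixed rank-one projection on $\ell^2$) is a $\kappa$-pair, and $Q(I\otimes T)P=R\otimes M_{\chi_\beta}TM_{\chi_\alpha}=0$ forces $M_{\chi_\beta}TM_{\chi_\alpha}=0$; thus $T$ is supported on $\kappa$, i.e. $T\in\frak M_{\max}(\kappa)$.

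The hard part will be the converse inclusion for $\frak M_{\min}$, where a genuinely non-simple $\kappa$-pair must be manufactured from an arbitrary $h\in\Phi(\kappa)$. The delicate points are checking that the rank-one projection fields have $L^\infty$ matrix coefficients, so that they really define elements of $\cl B(\ell^2)\bar\otimes L^\infty$ — this rests on the bound $|P(x)_{ij}|=|f_i(x)\overline{f_j(x)}|/\|F(x)\|^2\le\tfrac12$ — and that the pairing bookkeeping recovers $\langle T,h\rangle$ exactly. This is precisely where simple pairs are inadequate, which is what accounts for the gap between $\frak M_{\min}(\kappa)$ and $\frak M_{\max}(\kappa)$.
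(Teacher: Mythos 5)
Your argument is correct, but note that the paper does not actually prove Theorem \ref{th_kappapair}: it is quoted verbatim from \cite{st1}, so there is no in-paper proof to compare against. What you have written is a sound self-contained reconstruction, and it runs along the same circle of ideas as the original source: the entrywise identification $(Q(I\otimes T)P)_{mn}=S_{\psi_{mn}}(T)$ with $\psi_{mn}=(Q(\cdot)P(\cdot))_{mn}$ a Schur multiplier vanishing marginally a.e.\ on $\kappa$ (handled via Peller's criterion and the duality $\langle S_\nph(T),h\rangle=\langle T,\nph h\rangle$, using $\frak M_{\min}(\kappa)=\Phi(\kappa)^\perp$), and, for the converse, the rank-one projection fields $P(x),Q(y)$ built from an element $h=\sum_i f_i\otimes g_i$ of $\Phi(\kappa)$ so that $\langle T,h\rangle=(Q(I\otimes T)P\xi,\eta)$. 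All the delicate points you flag are genuine and are handled adequately: weak measurability and uniform boundedness of the projection fields, the identity $Q(y)P(x)=(P(x)Q(y))^*$ to pass between the two orderings, and the absolute convergence of the entry series. Two small spots deserve one extra line each if this were written out in full: the identification of $\frak M_{\max}(\kappa)$ with $\{T: M_{\chi_\beta}TM_{\chi_\alpha}=0 \mbox{ whenever } (\alpha\times\beta)\cap\kappa\simeq\emptyset\}$ needs the observation that this bimodule contains $\frak M_{\min}(\kappa)$ and hence has support exactly $\kappa$ (so maximality applies); and in the simple-pair case one should record that the level sets of a finitely-valued weakly measurable projection field form a measurable partition, which justifies writing $P=\sum_i R_i\otimes M_{\chi_{\alpha_i}}$. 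Neither is a gap, merely bookkeeping.
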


We now formulate and prove the main result of this subsection.

\begin{theorem}\label{th_invim}
Let $\nph : X \to X_1$ and $\psi : Y\to Y_1$
be measurable functions.
Let $\kappa_1\subseteq X_1\times Y_1$ and $\kappa = \{(x,y)\in X\times Y : (\nph(x),\psi(y))\in \kappa_1\}$.

(i) \ \
Suppose that $\nph$ and $\psi$ are injective up to a null set and have measurable inverses. If $\kappa_1$ is a an operator $U$-set (resp. an operator $U_1$-set)
then $\kappa$ is a an operator $U$-set (resp. an operator $U_1$-set).

(ii) \ Suppose that $\kappa_1\subseteq M_1^c\times N_1^c$.
If $\kappa_1$ is an operator $M$-set
(resp. an operator $M_1$-set) then $\kappa$ is an operator $M$-set (resp. an operator $M_1$-set).

(iii) Suppose that $\mu_1$ (resp. $\nu_1$) is equivalent to
$\nph_*\mu$ (resp. $\psi_*\nu$) and that $\nph$ and $\psi$ are
injective up to a null set and have measurable inverses. Then $\kappa_1$ is an
operator $M$-set (resp. an operator $M_1$-set) if and only if $\kappa$ is an
operator $M$-set (resp. an operator $M_1$-set).
\end{theorem}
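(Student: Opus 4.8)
The plan is to reduce all three parts of Theorem~\ref{th_invim} to the transfer of the predual data through the partial isometries $V_{\nph}$ and $V_{\psi}$ from Lemma~\ref{l_vpi}, and then exploit the characterisation of $\frak{M}_{\min}$ and $\frak{M}_{\max}$ via $\kappa$-pairs in Theorem~\ref{th_kappapair}. The central object is the map $T\mapsto V_{\psi} T V_{\nph}^*$ sending $\cl B(L^2(X_1,\mu_1),L^2(Y_1,\nu_1))$ into $\cl B(H_1,H_2)$, together with its companion $S\mapsto V_{\psi}^* S V_{\nph}$ in the reverse direction. The first thing I would verify is the key support-transfer identity: if $(P,Q)$ is a (simple) $\kappa$-pair then the conjugated pair $(\tilde P,\tilde Q)$ built from $P,Q$ via $V_{\nph},V_{\psi}$ is a (simple) $\kappa_1$-pair, and conversely a $\kappa_1$-pair pulls back to a $\kappa$-pair. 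This is exactly where the definition of $\kappa = \{(x,y): (\nph(x),\psi(y))\in\kappa_1\}$ is used: the condition $P(x)Q(y)=0$ marginally on $\kappa$ corresponds to $\tilde P(x_1)\tilde Q(y_1)=0$ marginally on $\kappa_1$ because the multiplication operators $M_r, M_s$ coming from the Radon--Nikodym derivatives are invertible off the null sets $M_1,N_1$.

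For part~(i), since $\nph$ and $\psi$ are injective up to a null set with measurable inverses, Lemma~\ref{l_vpi} gives that $V_{\nph}$ and $V_{\psi}$ are surjective partial isometries. I would argue contrapositively: suppose $\kappa$ is an operator $M$-set, so there is a nonzero compact $T\in\frak{M}_{\max}(\kappa)$; then $S = V_{\psi}^* T V_{\nph}$ is a nonzero compact operator (nonzero because the $V$'s are surjective, compact because compactness is preserved under composition with bounded operators), and the support-transfer identity together with Theorem~\ref{th_kappapair} shows $S\in\frak{M}_{\max}(\kappa_1)$, so $\kappa_1$ is an operator $M$-set. The $M_1$ case is identical with $\frak{M}_{\min}$ in place of $\frak{M}_{\max}$ and the clause of Theorem~\ref{th_kappapair} for arbitrary $\kappa$-pairs. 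For part~(ii), the hypothesis $\kappa_1\subseteq M_1^c\times N_1^c$ is what lets us run the construction in the opposite direction without surjectivity: starting from a nonzero compact operator in $\frak{M}_{\max}(\kappa_1)$ (resp. $\frak{M}_{\min}(\kappa_1)$) supported inside the region where $r$ and $s$ are nonzero, the push-forward $T = V_{\psi}SV_{\nph}^*$ is compact, and it is nonzero precisely because $S$ lives on $M_1^c\times N_1^c$, which is the range of the relevant initial spaces of $V_{\nph}$ and $V_{\psi}$; the support-transfer identity then places $T$ in the appropriate masa-bimodule over $\kappa$. Part~(iii) follows by combining (i) and (ii): equivalence of $\mu_1$ with $\nph_*\mu$ forces $M_1$ to be $\mu_1$-null and likewise $N_1$ to be $\nu_1$-null, so $\kappa_1\subseteq M_1^c\times N_1^c$ holds automatically up to a marginally null set, and both implications are available.

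I expect the main obstacle to be the careful bookkeeping in the support-transfer identity, specifically checking that conjugation by $V_{\nph}, V_{\psi}$ carries projection-valued weakly measurable functions to projection-valued weakly measurable functions and intertwines the marginal-null vanishing condition on $\kappa$ with the one on $\kappa_1$. The subtlety is that $V_{\nph}$ is only a partial isometry, so $V_{\nph}V_{\nph}^*$ is the projection onto $M_1^c$ rather than the identity; one must track this projection through the computation and confirm that it does not destroy the $\kappa$-pair condition (it effectively restricts attention to $M_1^c\times N_1^c$, which is harmless in (i) by surjectivity and is guaranteed in (ii) by hypothesis). A secondary technical point is verifying that compactness is genuinely preserved and that nonzero operators map to nonzero operators; this rests on the surjectivity statement in Lemma~\ref{l_vpi} for (i) and on the location of the support for (ii), so I would state these two facts as short lemmas before assembling the three parts.
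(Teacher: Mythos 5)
Your part (i) is a sound alternative to the paper's argument: the paper proves (i) entirely on the predual, via the contractive weak*-continuous map $\Theta:\Gamma(X_1,Y_1)\to\Gamma(X,Y)$, $f\otimes g\mapsto V_{\nph}f\otimes V_{\psi}g$, showing $\Theta(\Phi(\kappa_1))\subseteq\Phi(\kappa)$ and using density of annihilators, whereas you argue the contrapositive at the operator level. Your route works: if $0\neq T\in\frak M_{\max}(\kappa)\cap\cl K$ then $S=V_{\psi}^*TV_{\nph}$ is nonzero (since surjectivity gives $V_{\psi}V_{\psi}^*=I$, $V_{\nph}V_{\nph}^*=I$, so $V_{\psi}SV_{\nph}^*=T$), and the \emph{pull-back} of a $\kappa_1$-pair $(P_1,Q_1)$ to the $\kappa$-pair $(P_1\circ\nph,Q_1\circ\psi)$ is legitimate because $\nph_*\mu\ll\mu_1$ and $\psi_*\nu\ll\nu_1$ send marginally null sets to marginally null sets; combined with the intertwining relation $(I\otimes V_{\nph}^*)(R\circ\nph)=R(I\otimes V_{\nph}^*)$ (which does require proof -- the paper spends the better part of a page on it via Kaplansky density) this gives $Q_1(I\otimes S)P_1=0$.

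The genuine gap is in part (ii), in the direction of your "support-transfer identity" that you actually need there: producing a $\kappa_1$-pair from a $\kappa$-pair. In (ii) the maps $\nph,\psi$ are \emph{not} assumed injective, so there is no conjugation that turns a projection-valued function $P$ on $X$ into one on $X_1$: the fibre $\nph^{-1}(x_1)$ may meet many values of $P$, and the invertibility of $M_r$, $M_s$ off $M_1$, $N_1$ is irrelevant to this. The actual construction (from the proof of Theorem 4.7 of Shulman--Turowska) takes a dense sequence $(\xi_j)$ in $\ell^2$, measurable selections $g_j$ with $\nph(g_j(x_1))=x_1$ chosen so that $(P(g_j(\nph(x)))\xi_j,\xi_j)>(P(x)\xi_j,\xi_j)-\tfrac1j$ a.e., and sets $\hat P(x_1)=\vee_{j=1}^{\infty}P(g_j(x_1))$; one must then verify that $(\hat P,\hat Q)$ is a $\kappa_1$-pair, that $P\leq\hat P\circ\nph$, and -- for the $\frak M_{\max}$ case -- work with the truncations $\hat P_n=\vee_{j=1}^nP(g_j(\cdot))$ and a strong-operator limit, since the infinite supremum is not a simple pair even when $P$ is. None of this is recoverable from the mechanism you propose, and without it the claim that $T=V_{\psi}SV_{\nph}^*$ lies in $\frak M_{\max}(\kappa)$ (resp. $\frak M_{\min}(\kappa)$) is unsupported. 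Part (iii), which you correctly reduce to (i) and (ii) via $\mu_1(M_1)=\nu_1(N_1)=0$, inherits this gap.
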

\begin{proof}
(i) Let $\Theta$ be the linear map from the algebraic tensor product
$L^2(X_1,$ $\mu_1)\otimes L^2(Y_1,\nu_1)$
of $L^2(X_1,\mu_1)$ and $L^2(Y_1,\nu_1)$
sending $f\otimes g$ to $V_{\nph}f \otimes V_{\psi}g$. Since $V_{\nph}$ and $V_{\psi}$ are partial isometries,
$\Theta$ is contractive in the norm of $\Gamma(X_1,Y_1)$, and hence extends to a contractive
linear map  $\Theta : \Gamma(X_1,Y_1)\rightarrow \Gamma(X,Y)$.
By Lemma \ref{l_vpi}, $V_{\nph}$ and $V_{\psi}$ are surjective, and hence $\Theta$ has dense range.
Moreover, if $h\in \Gamma(X_1,Y_1)$ then
\begin{equation}\label{eq_eft}
\Theta(h)(x,y) = \frac{h(\nph(x),\psi(y))}{\sqrt{r(\nph(x)) s(\psi(y))}}, \
\mbox{ for m.a.e. } (x,y)\in \nph^{-1}(M_1)\times \psi^{-1}(N_1).
\end{equation}
To show (\ref{eq_eft}), write $h = \sum_{i=1}^{\infty} f_i\otimes g_i$,
where $\sum_{i=1}^{\infty} \|f_i\|_2^2 < \infty$ and
$\sum_{i=1}^{\infty} \|g_i\|_2^2$ $< \infty$, and
set $h_n = \sum_{i=1}^n f_i\otimes g_i$, $n\in \bb{N}$.
By the definition of $\Theta$,
identity (\ref{eq_eft}) holds for all $h_n$, $n\in \bb{N}$, if $(x,y)$
belongs to the set $\nph^{-1}(M_1)\times \psi^{-1}(N_1)$.
By \cite[Lemma 2.1]{st1}, there exists a subsequence $(h_{n_k})$ of $(h_n)$ which converges to
$h$ marginally almost everywhere. By passing to a further subsequence, we may assume that
$\Theta(h_{n_k})$ converges to $\Theta(h)$ marginally almost everywhere.
Identity (\ref{eq_eft}) now follows from the fact that if $E\subseteq X_1\times Y_1$
is marginally null then $\{(x,y)\in X\times Y : (\nph(x),\psi(y))\in E\}$ is marginally null.

We claim that $\Theta$ is weak* continuous. To show this, let
$(h_i)\subseteq \Gamma(X_1,Y_1)$ be a bounded net such that $h_i\rightarrow h$ in the weak* topology,
for some $h\in \Gamma(X_1,Y_1)$.
Suppose that $f\in L^2(X,\mu)$ and $g\in L^2(Y,\nu)$, and identify $f\otimes g$ with its corresponding rank one operator in $\cl B(L^2(X,\mu),L^2(Y,\nu))$.
Let $f_1 : X_1\to \bb{C}$ (resp. $g_1 : Y_1\to \bb{C}$) be the function given by
$f_1(x_1) = f(\nph^{-1}(x_1))$ (resp. $g_1(y_1) = g(\psi^{-1}(y_1))$) if $x_1\in \nph(\Lambda^c)$
(resp. $y_1\in \psi(M^c)$) and $f_1(x_1) = 0$ (resp. $g(y_1) = 0$) if
$x_1\not\in \nph(\Lambda^c)$ (resp. $y_1\not\in \psi(M^c)$), $\Lambda$, $M$ being null sets from the definition of injectivity up to null sets of $\nph$ and $\psi$, respectively.
Using (\ref{eq_eft}) and the facts that
$\nph_*\mu(M_1) = 0$ and $\psi_*\nu (N_1) = 0$,
we have
\begin{eqnarray*}
\langle \Theta(h_i), f\otimes g\rangle & = &
\int_{X\times Y} \frac{h_i(\nph(x),\psi(y))}{\sqrt{r(\nph(x))}\sqrt{s(\psi(y))}}f(x) g(y) d\mu\times\nu (x,y)\\
& = &
\int_{X_1\times Y_1} \frac{h_i}{\sqrt{r\otimes s}}(f_1\otimes g_1) d\nph_*\mu \times \psi_*\nu\\
& = &
\int_{X_1\times Y_1} \sqrt{r\otimes s} (f_1\otimes g_1) h_i d\mu_1\times \nu_1.
\end{eqnarray*}
The function
$\sqrt{r\otimes s} (f_1\otimes g_1)$ belongs to
$L^2(X_1,\mu_1)\otimes L^2(Y_1,\nu_1)$
(see the proof of Lemma \ref{l_vpi}).
Since $h_i\to^{w^*} h$,
the last integrals converge to
$$\int_{X_1\times Y_1} \sqrt{r\otimes s} (f_1\otimes g_1) h d\mu_1\times \nu_1.$$
Thus,
$$\langle \Theta(h_i), f\otimes g\rangle \rightarrow \langle \Theta(h), f\otimes g\rangle.$$
It follows by the boundedness of $(h_i)$ that
$$\langle \Theta(h_i), K\rangle \rightarrow \langle \Theta(h), K\rangle$$ for every compact operator $K$, that is,
$\Theta(h_i)\rightarrow \Theta(h)$ in the weak* topology.
Using the Krein-Shmulyan theorem, we obtain that $\Theta$ is weak* continuous.
Hence, if $\cl M_1\subseteq \Gamma(X_1,Y_1)$, then
\begin{equation}\label{eq_thetain}
\Theta(\overline{\cl M_1}^{w^*}) \subseteq \overline{\Theta(\cl M_1)}^{w^*}.
\end{equation}

Suppose that $h|_{\kappa_1} = 0$. If $(x,y)\in \kappa \setminus ((\nph^{-1}(M_1)\times Y)\cup
(X\times \psi^{-1}(N_1)))$ then, by (\ref{eq_eft}),
$$\Theta(h)(x,y) = \frac{h(\nph(x),\psi(y))}{\sqrt{r(\nph(x)) s(\psi(y))}}  = 0;$$
thus, $\Theta(\Phi(\kappa_1)) \subseteq \Phi(\kappa)$.
On the other hand, if $E_1$ is an $\omega$-open neighbourhood of $\kappa_1$ then
$(\nph\times\psi)^{-1}(E_1)$ is an $\omega$-open neighbourhood of $\kappa$.
Applying the same reasoning as above, and using the
continuity of $\Theta$ with respect to $\|\cdot\|_{\Gamma}$,
we conclude that $\Theta(\Psi(\kappa_1))\subseteq \Psi(\kappa)$.

Now suppose that $\kappa_1$ is an operator $U$-set, that is, $\overline{\Phi(\kappa_1)}^{w^*} = \Gamma(X_1,Y_1)$.
Using (\ref{eq_thetain}), we have
$$\Gamma(X,Y) = \overline{\Theta(\Gamma(X_1,Y_1))}^{\|\cdot\|} =
\overline{\Theta(\overline{\Phi(\kappa_1)}^{w^*})}^{\|\cdot\|}
\subseteq \overline{\Theta(\Phi(\kappa_1))}^{w^*}
\subseteq \overline{\Phi(\kappa)}^{w^*}.$$
Thus, $\overline{\Phi(\kappa)}^{w^*} = \Gamma(X,Y)$ and hence $\kappa$
is an operator $U$-set. It follows similarly that if $\kappa_1$
is an operator $U_1$-set then $\kappa$ is an operator $U_1$-set.

\smallskip

(ii)
Suppose that $\kappa_1$ is an operator $M_1$-set and
let $K_1$ be a non-zero compact operator in ${\mathfrak M}_{\min}(\kappa_1)$.
Let $K = V_{\psi}K_1 V_{\nph}^*$. As $\kappa_1\subseteq M_1^c \times N_1^c$,
$V_\nph^*V_\nph = P(M_1^c)$ and $V_\psi^*V_\psi = P(N_1^c)$,
we have that $K_1 = V_{\psi}^*KV_{\nph}$ and hence $K$ is a non-zero compact operator.

Let
$$(P,Q)\in (\cl B(\ell^2)\bar\otimes L^\infty(X,\mu))\times  (\cl B(\ell^2)\bar\otimes L^\infty(Y,\nu))$$ be a
$\kappa$-pair \cite{st1}; this means that,
after the identification of $P$ and $Q$ with operator-valued
weakly measurable functions, defined on $X$ and $Y$, respectively,
$P$ and $Q$ are projection-valued and
$P(x)Q(y)=0$ marginally almost everywhere on $\kappa$.
It follows from the proof of \cite[Theorem~4.7]{st1} that there exists a $\kappa_1$-pair
$$(\hat P,\hat Q)\in (\cl B(\ell^2)\bar\otimes L^\infty(X_1,\mu))\times  (\cl B(\ell^2)\bar\otimes L^\infty(Y_1,\nu)),$$
such that
$P(x)\leq \hat P(\nph(x))$ and $Q(y)\leq \hat Q(\psi(x))$ for almost all $x\in X$ and
almost all $y\in Y$.
By Theorem \ref{th_kappapair},
\begin{equation}\label{eq_qkp}
\hat Q(I\otimes K_1)\hat P=0.
\end{equation}
We claim that
\begin{equation}\label{eq_exch}
(I\otimes V_\nph^*)(R\circ\nph) =
R(I\otimes V_\nph^*) \mbox{ and } (S\circ\psi) (I\otimes V_\psi)=
(I\otimes V_\psi)S,
\end{equation}
whenever $R$ and $S$ are bounded operator-valued weakly measurable
functions on $X_1$ and $Y_1$, respectively.
It clearly suffices to show only the first of these identities.
Start by observing that
$P(\nph^{-1}(\alpha))V_\nph=V_\nph P(\alpha)$,
for all measurable subsets $\alpha\subseteq X_1$.
It follows that
(\ref{eq_exch}) holds when $R = \sum_{j=1}^k a_j\otimes \chi_{E_j}$,
where $(E_j)_{j=1}^k$ is a family of pairwise disjoint
measurable subsets of $X_1$ and $(a_i)_{i=1}^k$
is a family of bounded operators  on $\ell^2$.
If $R$ is arbitrary then, by Kaplansky's Density Theorem,
it is the strong limit of a sequence $(R_n)_{n\in \bb{N}}$, where $R_n$ is of the latter form and
$\|R_n\|\leq \|R\|$ for each $n$. By the proof of \cite[Theorem~4.6]{st1}, there exists
$S_1\subseteq X_1$ with $\mu_1(S_1)=0$ such that
$$R(x_1)=\mbox{s-lim}_{n\to\infty}R_{n_k}(x_1) \text{ if }x_1\notin S_1.$$
Let $S = \nph^{-1}(S_1)$; then
$R(\nph(x))=\mbox{s-lim}_{k\to\infty} R_{n_k}(\nph(x))$ if $x\notin S$.
As $\mu(S)=\nph_*\mu(S_1)$ and $\nph_*\mu$ is absolutely continuous with respect to $\mu_1$,
we have that $\mu(S)=0$ and hence $(R_{n_k}\circ \nph)_{k\in \bb{N}}$
converges almost everywhere to $R\circ \nph$.

Since $\|R_n\|=\esssup_{x_1\in X_1}\|R_n(x)\|_{\cl B(\ell^2)}$
and $(R_n)_{n\in \bb{N}}$ is bounded by $\|R\|$, there exists a $\mu_1$-null set $M\subseteq X_1$
such that
$\|R_n(x_1)\|_{\cl B(\ell^2)}\leq \|R\|$ for all $x_1\notin M$ and all $n\in\mathbb N$.
Therefore $\|R_n(\varphi(x))\|_{\cl B(\ell^2)}\leq \|R\|$ for all
$x\notin\nph^{-1}(M)$ and all $n\in \bb{N}$.
As $\mu(\nph^{-1}(M))=0$, we have
that $\|R_n\circ \varphi\|=\esssup_{x\in X}\|R_n(\varphi(x))\|_{\cl B(\ell^2)}\leq \|R\|$,
for all $n\in\bb{N}$.
By a straightforward application of the Lebesgue Dominated Convergence Theorem,
$(R_{n_k}\circ\nph)_{k\in \bb{N}}$ converges strongly to $R\circ \nph$.
As $(I\otimes V_\nph^*)(R_n\circ\nph) = R_n(I\otimes V_\nph^*)$ holds for every $n$ and
$(I\otimes V_\nph^*)(R_n\circ\nph)\to (I\otimes V_\nph^*)(R\circ\nph)$,
$R_n(I\otimes V_\nph^*) \to R(I\otimes V_\nph^*)$ in the strong operator topology,
(\ref{eq_exch}) is proved.
Using (\ref{eq_qkp}) and (\ref{eq_exch}), we now obtain
\begin{eqnarray*}
Q(I\otimes K)P & = & Q(\hat Q\circ\psi)(I\otimes V_\psi K_1V_\nph^*)(\hat P\circ\nph)P\\
& = & Q(I\otimes V_\psi)\hat Q(I\otimes  K_1)\hat P(I\otimes V_\psi^*)P = 0.
\end{eqnarray*}
By Theorem \ref{th_kappapair}, $K\in {\mathfrak M}_{\min}(E)$;
hence, $\kappa$ is an operator $M_1$-set.

Now suppose that $\kappa_1$ is an operator $M$-set and let
$K_1\in \frak{M}_{\max}(\kappa_1)$ be a non-zero compact operator.
Let $(P,Q)$ be a simple $\kappa$-pair \cite{st1}, that is, a $\kappa$-pair
$(P,Q)$ for which each of the projection valued functions
$P$ and $Q$ takes finitely many values.
We recall the construction of the pair $(\hat{P},\hat{Q})$ from \cite{st1}.
Let $(\xi_j)_{j\in \bb{N}}$ be a dense sequence in $\ell^2$.
It was shown on \cite[p. 311]{st1} that
there are null sets $M_0^1\subseteq X_1$ and $M_0\subseteq X$ and,
for each $j\in \bb{N}$,  a measurable function
$g_j : \nph(X)\setminus M_0^1 \rightarrow X$
with $\nph(g_j(x_1)) = x_1$ for all $x_1\in \nph(X)\setminus M_0^1$, and
$(P(g_j(\nph(x))\xi_j,\xi_j) > (P(x)\xi_j,\xi_j) - \frac{1}{j}$, $x\in X\setminus M_0$.
Let, similarly,
$(\eta_j)_{j\in \bb{N}}$ be a dense sequence in $\ell^2$ and
for each $j\in \bb{N}$, let $h_j : \nph(Y)\setminus N_0^1 \rightarrow Y$ be a measurable function
with $\psi(h_j(y_1)) = y_1$ for all $y_1\in \psi(Y)\setminus N_0^1$, and
$(Q(h_j(\psi(y))\eta_j,\eta_j) > (Q(y)\eta_j,\eta_j) - \frac{1}{j}$, $y\in Y\setminus N_0$,
where $N_0^1\subseteq Y_1$ and $N_0\subseteq Y$ are null sets.
Set
$$\hat{P}_n(x_1) = \vee_{j=1}^n P(g_j(x_1)), \ \ \hat{P}(x_1) = \vee_{j=1}^\infty P(g_j(x_1)), \ \ x_1 \in \nph(X)\setminus M_0^1,$$
$$\hat{Q}_n(y_1) = \vee_{j=1}^n Q(h_j(y_1)), \ \ \hat{Q}(y_1) = \vee_{j=1}^\infty Q(h_j(y_1)), \ \ y_1 \in \psi(Y)\setminus N_0^1.$$
We have that $\hat{P}_n\rightarrow_{n\rightarrow \infty} \hat{P}$ and
$\hat{Q}_n\rightarrow_{n\rightarrow \infty} \hat{Q}$ in the strong operator topology.
Furthermore, since $P$ (reps. $Q$) takes only finitely many values, the same is true for
$\hat{P}_n$ (resp. $\hat{Q}_n$), $n\in \bb{N}$.
If
$$(x_1,y_1)\in \kappa_1\cap (((\nph(X)\setminus M_0^1)\times ((\psi(Y)\setminus N_0^1)))$$
then $(g_j(x_1),h_j(y_1))\in \kappa$. However, $\kappa_1\subseteq M_1^c\times N_1^c$, while
$M_1^c\times N_1^c$ is marginally contained in $\nph(X)\times\psi(Y)$.
It follows that
$\hat{P}_n(x_1) \hat{Q}_n(y_1) = 0$ for marginally almost all $(x_1,y_1)\in \kappa_1$
and every $n\in \bb{N}$.
Thus, $(\hat{P}_n, \hat{Q}_n)$ is a simple $\kappa_1$-pair, $n\in \bb{N}$,
and hence, by Theorem \ref{th_kappapair}, $\hat{Q}_n (I\otimes K_1) \hat{P}_n = 0$ for every $n$.
Since $P\leq \hat{P}\circ \nph$ and $Q\leq \hat{Q}\circ \psi$, it follows from
(\ref{eq_exch}) and the first part of the proof that
$$P = P (\hat{P}\circ \nph) = \mbox{s-lim}_{n\rightarrow \infty} P (\hat{P}_n\circ \nph) \mbox{ and }
Q = Q (\hat{Q}\circ \psi) = \mbox{s-lim}_{n\rightarrow \infty} Q (\hat{Q}_n\circ \psi).$$
As in the previous paragraph, we conclude that
$$Q(I\otimes K)P  = \mbox{w-lim}_{n\rightarrow\infty} Q(I\otimes V_\psi)\hat{Q}_n(I\otimes  K_1)\hat{P}_n(I\otimes V_\psi^*)P = 0.$$
By Theorem \ref{th_kappapair}, $K\in \frak{M}_{\max}(\kappa)$ and since
$K$ is a non-zero compact operator, $\kappa$ is an operator $M$-set.

\smallskip

(iii) In this case $\mu_1(M_1) = 0$ and $\nu_1(N_1) = 0$;
thus, (iii) is immediate from (i) and (ii).
\end{proof}

\begin{remark}\label{elef} {\rm {\bf (i)}  The statement in Theorem \ref{th_invim} (ii) does not hold
without the assumption $\kappa_1\subseteq M_1^c\times N_1^c$; indeed,
assuming that $M_1$ and $N_1$ are non-null and
letting $\kappa_1 = M_1\times N_1$, we see that $\kappa_1$ is an operator $M$-set;
but $\kappa$ is marginally equivalent to the empty set and hence is an
operator $U$-set.

\smallskip

\noindent {\bf (ii)}  G. Eleftherakis has recently proved part (i) of Theorem \ref{th_invim}
without the injectivity assumption on the mappings $\nph$ and $\psi$, see \cite{george}.}
\end{remark}

\begin{corollary}\label{c_iig}
Let $G$ and $H$ be locally compact second countable groups with Haar measures
$m_G$ and $m_H$, respectively,  $\nph:G\to H$ be a
continuous homomorphism and $E$ be a closed subset of $H$.
Assume that $\nph_*m_G$ is absolutely continuous with respect to $m_H$.

(i) \ \
Suppose that $\nph$ is injective and has a continuous inverse on $\nph(G)$. If $E$ is a U-set (resp. a $U_1$-set) then
$\nph^{-1}(E)$ is a U-set (resp. a $U_1$-set).

(ii) \ Suppose that $\nph_*m_G$ is equivalent to $m_H$. If $E$ is an M-set (resp. an $M_1$-set) then $\nph^{-1}(E)$ is an M-set (resp. an $M_1$-set).

(iii) If $\nph$ is an isomorphism then $E$ is an M-set (resp. an $M_1$-set) if and only if $\nph^{-1}(E)$ is an M-set (resp. an $M_1$-set).
\end{corollary}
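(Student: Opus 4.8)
The plan is to deduce the corollary from the operator-level Inverse Image Theorem (Theorem \ref{th_invim}), via the dictionary between sets of multiplicity in a group and operator multiplicity sets of the associated subsets of the square (Theorem \ref{th_iffop}). I would apply Theorem \ref{th_invim} with $X = Y = G$ and $X_1 = Y_1 = H$, equipped with their Haar measures (which make them standard measure spaces), taking both of the measurable maps in that theorem to be the given homomorphism $\nph$ and setting $\kappa_1 = E^*\subseteq H\times H$. The crucial point is that the resulting inverse image set is exactly $(\nph^{-1}(E))^*$: since $\nph$ is a homomorphism,
$$\kappa = \{(s,t)\in G\times G : (\nph(s),\nph(t))\in E^*\} = \{(s,t) : \nph(ts^{-1})\in E\} = (\nph^{-1}(E))^*.$$
As $\nph$ is continuous and $E$ is closed, $\nph^{-1}(E)$ is closed, so that $E^*$ and $(\nph^{-1}(E))^*$ are both $\omega$-closed and Theorem \ref{th_iffop} applies at each end.

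Next I would check that the topological hypotheses on $\nph$ yield the measure-theoretic hypotheses of Theorem \ref{th_invim}. The standing assumption that $\nph_*m_G$ is absolutely continuous with respect to $m_H$ is precisely what is required; moreover $\nph(G)$, being the continuous image of the Polish space $G$, is analytic and hence $m_H$-measurable. For part (i), injectivity of $\nph$ together with continuity of its inverse on $\nph(G)$ makes $\nph$ a homeomorphism onto $\nph(G)$; thus $\nph(G)$ is a locally compact, and therefore closed, subgroup of $H$, and $\nph$ is injective up to a null set (with $\Lambda = \emptyset$) with measurable inverse, as demanded by Theorem \ref{th_invim}(i). For part (ii), equivalence of $\nph_*m_G$ and $m_H$ forces the Radon--Nikodym derivatives $r$ and $s$ to be positive almost everywhere, so that $M_1 = \{x_1 : r(x_1) = 0\}$ and $N_1 = \{y_1 : s(y_1) = 0\}$ are null; hence $\kappa_1 = E^*$ is contained in $M_1^c\times N_1^c$ up to a marginally null set, which is the hypothesis of Theorem \ref{th_invim}(ii).

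With these identifications, each part follows formally. For (i): if $E$ is a $U$-set (resp.\ a $U_1$-set), then $E^* = \kappa_1$ is an operator $U$-set (resp.\ operator $U_1$-set) by Theorem \ref{th_iffop}; Theorem \ref{th_invim}(i) gives that $\kappa = (\nph^{-1}(E))^*$ is an operator $U$-set (resp.\ operator $U_1$-set); and a second application of Theorem \ref{th_iffop}, now over $G$, shows that $\nph^{-1}(E)$ is a $U$-set (resp.\ a $U_1$-set). Part (ii) is the same argument with $U$ replaced by $M$ and Theorem \ref{th_invim}(i) replaced by Theorem \ref{th_invim}(ii). Part (iii) combines the two: an isomorphism $\nph$ is injective with continuous inverse and satisfies equivalence of $\nph_*m_G$ and $m_H$, so both (i) and (ii) are available and yield the equivalence in both directions.

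I expect the only real difficulty to be bookkeeping rather than substance: one must ensure that the measurability of $\nph(G)$, the absolute continuity, and---in part (ii)---the containment $\kappa_1\subseteq M_1^c\times N_1^c$ up to marginal equivalence are genuinely consequences of the stated topological hypotheses. The most delicate of these is the measurability and, in (i), the closedness of $\nph(G)$, which is exactly what the locally-compact-subgroup argument supplies.
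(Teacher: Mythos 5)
Your proposal is correct and follows essentially the same route as the paper: the key identity $\nph^{-1}(E)^* = (\nph\times\nph)^{-1}(E^*)$, followed by translating between $M$-sets and operator $M$-sets via Theorem \ref{th_iffop} and applying the Inverse Image Theorem \ref{th_invim}. Your verification of the hypotheses (measurability of $\nph(G)$, closedness of a locally compact subgroup for part (i), nullity of $M_1$ and $N_1$ for part (ii), and equivalence of $\nph_*m_G$ with $m_H$ for an isomorphism, which the paper delegates to Remark \ref{r_suff}) just makes explicit what the paper's terse proof leaves to the reader.
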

\begin{proof}
First observe that, since  $\nph$ is a homomorphism,
$\nph^{-1}(E)^*=(\nph\times\nph)^{-1}(E^*)$. If $\nph$ is an isomorphism then $\nph_*m_G$ is equivalent to $m_H$, see Remark \ref{r_suff}.
The corollary  now follows from Theorems~\ref{th_iffop} and \ref{th_invim}.
\end{proof}

\begin{remark}\label{r_suff}
\rm
We note that if $m_H(\nph(G))\ne 0$, $\nph$ is injective and proper
({\it i.e.}, the preimage of each compact set is compact) and
has continuous inverse on $\nph(G)$, when $\nph(G)$ is given
the relative topology, then the measure
$\nph_*m_G$ is absolutely continuous with respect to $m_H$.
If $\nph$ is an isomorphism, then the two measures are in fact equivalent.

In fact, as $G$ is second countable, it is $\sigma$-compact and hence there exist compact sets
$K_n\subseteq G$, $K_n\subseteq K_{n+1}$, $n\in \bb{N}$, such that $G=\cup_{n=1}^{\infty} K_n$.
Since $\nph$ is continuous, $\nph(K_n)$ is compact for each $n$ and hence
$\nph(G)=\cup_{n=1}^{\infty} \nph(K_n)$ is $\sigma$-compact.
Assume that $W$ is a Borel subset of $\nph(G)$ and $s\in\nph(G)$, say $s=\nph(t)$. Then
\begin{eqnarray*}
\nph_*m_G(sW) & = & m_G(\nph^{-1}(sW))=m_G(t\nph^{-1}(W))\\
& = & m_G(\nph^{-1}(W))=\nph_*m_G(W).
\end{eqnarray*}
Hence $\nph_*m_G$ is  left invariant  when restricted to the subgroup $\nph(G)$ of $H$. As $\nph$ is proper, $\nph_*m_G$ is finite on any compact subset of $\nph(G)$. Moreover, $\nph_*m_G$ is regular on $\nph(G)$:
for a measurable $W\subseteq\nph(G)$, we have
\begin{eqnarray*}\nph_*m_G(W)&=&m_G(\nph^{-1}(W))=\inf\{m_G(U):\nph^{-1}(W)\subseteq U, U\text{ is open}\}\\
&=&\inf\{m_G(U):\nph^{-1}(W)\subseteq\nph^{-1}(\nph(U)),  U\text{ is open}\}\\&=&
\inf\{\nph_*m_G(\nph(U)):W\subseteq\nph(U),  U\text{ is open}\}\\
&=&\inf\{\nph_*m_G(V) : W\subseteq V,  V\text{ is open in }\nph(G)\}
\end{eqnarray*}
(since $\nph$ has continuous inverse on $\nph(G)$,
the set $\nph(U)$ is open in the relative topology of $\nph(G)$ and any open set $V$ in $\nph(G)$ is the image of an open set in $G$),
{\it i.e.}, $\nph_*m_G$ is outer regular.
In a similar way one shows that it is inner regular.
Hence $\nph_*m_G$ satisfies the conditions of a Haar measure on $\nph(G)$.
Since the same holds true for the restriction of $m_H$ to $\nph(G)$, and $m_H(\nph(G))\ne 0$,
there exists $c > 0$ such that $\nph_*m_G = cm_H$.
Let $W\subseteq H$ be a any Borel subset of $H$. Then
$$\nph_*m_G(W)=\nph_*m_G(W\cap\nph(G))=cm_H(W\cap\nph(G))=c\int_W\chi_{\nph(G)}(x)dm_H(x),$$ giving the claim.
It follows that the measures $\nph_*m_G$ and $m_H$ are equivalent in the case
$\nph$ is an isomorphism.
\end{remark}

\subsection{Direct products}

In this subsection, we show that direct products
preserve the property of being an operator $M$-set (resp. an operator
$M_1$-set, an operator $M_0$-set).
En route, we establish two additional results which
we believe are interesting on their own right. Namely, we show that a tensor product formula holds
for the minimal masa-bimodules, answering in this way affirmatively a question posed by J. Froelich in \cite{f}.
Simultaneously, we show that the minimal masa-bimodule $\frak{M}_{\min}(\kappa)$ associated with an
$\omega$-closed set $\kappa$ is the closure of all pseudo-integral operators with symbols supported on
$\kappa$; this provides an alternative, \lq\lq synthetic'' description of $\frak{M}_{\min}(\kappa)$ in
measure-theoretic terms, similar to the topological one given originally by Arveson in \cite{arveson}.

Let $(X,\mu)$ and $(Y,\nu)$ are standard measure spaces.
Recall that $\cl F$ denotes the product $\sigma$-algebra on $Y\times X$.


\begin{lemma}\label{l_resam}
If $\sigma\in {\mathbb A}(Y,X)$ and $E\in \cl F$ then the measure $\sigma_E$ given by
$\sigma_E(F) = \sigma (E\cap F)$, $F\in \cl F$,
belongs to ${\mathbb A}(Y,X)$.
\end{lemma}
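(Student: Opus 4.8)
The plan is to show that $\sigma_E$ inherits each of the defining properties of an Arveson measure directly from $\sigma$, with the same Arveson constant. First observe that $\sigma_E$ is a genuine complex measure on $\cl F$: countable additivity is immediate from that of $\sigma$ together with the identity $E\cap(\cup_j F_j) = \cup_j (E\cap F_j)$ for pairwise disjoint $F_j$. The only point that requires a small argument is the control of the total variation, namely the bound $|\sigma_E|(F) \leq |\sigma|(E\cap F)$ for every $F\in\cl F$; in particular $|\sigma_E|\leq |\sigma|$ as positive measures.

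To establish this variation bound, I would take an arbitrary $F\in\cl F$ and an arbitrary measurable partition $\{F_j\}_j$ of $F$. Since each $F_j\subseteq F$ gives $\sigma_E(F_j) = \sigma(E\cap F_j)$ and the family $\{E\cap F_j\}_j$ is a measurable partition of $E\cap F$, we have
$$\sum_j |\sigma_E(F_j)| = \sum_j |\sigma(E\cap F_j)| \leq |\sigma|(E\cap F).$$
Taking the supremum over all such partitions yields $|\sigma_E|(F)\leq |\sigma|(E\cap F)\leq |\sigma|(F)$. (Equality in fact holds, but only the inequality is needed below.) As a consequence $|\sigma_E|(Y\times X)\leq |\sigma|(Y\times X) < \infty$, so $\sigma_E$ has finite total variation.

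For the marginal conditions I would invoke that $\sigma\in\bb{A}(Y,X)$ supplies a constant $c=\|\sigma\|_{\bb{A}}$ with $|\sigma|_X\leq c\mu$ and $|\sigma|_Y\leq c\nu$. Applying the variation bound to the sets $Y\times\alpha$ and $\beta\times X$ (keeping in mind that $\sigma$ lives on $Y\times X$, so $|\sigma|_X$ is the marginal over the second coordinate), we get, for every measurable $\alpha\subseteq X$ and $\beta\subseteq Y$,
$$|\sigma_E|_X(\alpha) = |\sigma_E|(Y\times\alpha) \leq |\sigma|(Y\times\alpha) = |\sigma|_X(\alpha) \leq c\mu(\alpha),$$
and symmetrically $|\sigma_E|_Y(\beta)\leq |\sigma|_Y(\beta)\leq c\nu(\beta)$. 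Thus $\sigma_E$ satisfies (\ref{eq_carv}) with the same constant $c$, so $\sigma_E\in\bb{A}(Y,X)$ and in fact $\|\sigma_E\|_{\bb{A}}\leq\|\sigma\|_{\bb{A}}$. Since every step reduces to an elementary estimate, there is no real obstacle here; the only thing to watch is the bookkeeping of which marginal index corresponds to which coordinate, given that the measure is defined on $Y\times X$ rather than $X\times Y$.
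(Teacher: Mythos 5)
Your proof is correct and follows essentially the same route as the paper's: bounding the variation of $\sigma_E$ by that of $\sigma$ via partitions and then applying the marginal inequalities for $\sigma$. The only cosmetic difference is that you first isolate the general bound $|\sigma_E|(F)\leq|\sigma|(E\cap F)$ and then specialise to $Y\times\alpha$ and $\beta\times X$, whereas the paper runs the partition computation directly on the marginal sets.
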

\begin{proof}
Let $\sigma\in {\mathbb A}(Y,X)$ and $E\in \cl F$.
If $\alpha\subseteq X$ is measurable then,
denoting by $\dot{\cup}$ the union of a family of pairwise disjoint
measurable sets, we have
\begin{eqnarray*}
|\sigma_E|_X (\alpha) & = & |\sigma_E|(Y\times\alpha) =
\sup \left\{\sum_{i=1}^k |\sigma_E(F_i)| : \dot{\cup}_{i=1}^k F_i = Y\times\alpha\right\}\\
& = & \sup \left\{\sum_{i=1}^k |\sigma(E\cap F_i)| : \dot{\cup}_{i=1}^k F_i = Y\times\alpha\right\}\\
& \leq & \sup \left\{\sum_{i=1}^k |\sigma|(E\cap F_i) : \dot{\cup}_{i=1}^k F_i = Y\times\alpha\right\}\\
& \leq & \sup \left\{\sum_{i=1}^k |\sigma|(F_i) : \dot{\cup}_{i=1}^k F_i = Y\times\alpha\right\} =
|\sigma|(Y\times\alpha)\\ & = & |\sigma|_X(\alpha).
\end{eqnarray*}
One shows similarly that $|\sigma_E|_Y \leq |\sigma|_Y$; it now follows that
$\sigma_E\in \bb{A}(Y,X)$.
\end{proof}

\begin{theorem}\label{th_pse}
Let $\kappa\subseteq X\times Y$ be an $\omega$-closed set. Then
$$\frak{M}_{\min}(\kappa) = \overline{\{T_{\sigma} : \sigma\in \bb{A}(Y,X), \supp\sigma\subseteq \hat{\kappa}\}}^{w^*}.$$
\end{theorem}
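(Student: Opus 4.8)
The plan is to prove the two inclusions separately, writing $N = \overline{\frak{P}(\kappa)}^{w^*}$ with $\frak{P}(\kappa) = \{T_\sigma : \sigma\in\bb{A}(\hat\kappa)\}$, and to extract the nontrivial inclusion from the minimality property of $\frak{M}_{\min}(\kappa)$ rather than by a direct duality computation. First I would record the elementary inclusion $N\subseteq\frak{M}_{\min}(\kappa)$. Since $\frak{M}_{\min}(\kappa) = \Phi(\kappa)^\perp$ is weak* closed, it suffices to check $T_\sigma\in\Phi(\kappa)^\perp$ for each $\sigma\in\bb{A}(\hat\kappa)$. Given $h\in\Phi(\kappa)$, the function $\hat h\chi_{\hat\kappa}$ is marginally null; being an Arveson measure, $|\sigma|$ annihilates every marginally null set (because $|\sigma|_X\le c\mu$ and $|\sigma|_Y\le c\nu$), while $|\sigma|(\hat\kappa^c)=0$ by Proposition \ref{p_sm}(ii). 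Hence $\int_{Y\times X}\hat h\,d\sigma = 0$, and the last formula of Theorem \ref{th_ps} gives $\langle T_\sigma,h\rangle = 0$. Thus $N\subseteq\frak{M}_{\min}(\kappa)$, and in particular $\supp N\subseteq\supp\frak{M}_{\min}(\kappa) = \kappa$.

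Next I would verify that $N$ is a weak* closed masa-bimodule. The set $\frak{P}(\kappa)$ is a linear subspace (since $T_{\sigma_1}+T_{\sigma_2}=T_{\sigma_1+\sigma_2}$ and $\bb{A}(\hat\kappa)$ is closed under sums and scalars), and a direct computation with the defining sesqui-linear form of Theorem \ref{th_ps} shows that $M_bT_\sigma M_a = T_{\sigma'}$, where $d\sigma'(y,x) = a(x)b(y)\,d\sigma(y,x)$; here $\sigma'\in\bb{A}(\hat\kappa)$ because $|\sigma'|\le\|a\|_\infty\|b\|_\infty\,|\sigma|$. Thus $\frak{P}(\kappa)$, and hence its weak* closure $N$, is a masa-bimodule. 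For the reverse inclusion I would invoke the fact (from \cite{arveson}, \cite{st1}) that $\frak{M}_{\min}(\kappa)$ is the \emph{smallest} weak* closed masa-bimodule whose support is $\kappa$. Since $N$ is such a bimodule and $\supp N\subseteq\kappa$, it remains only to prove $\supp N = \kappa$, i.e. $\supp N\supseteq\kappa$. By Theorem \ref{th_ps} the smallest $\omega$-closed set supporting $T_\sigma$ is $\widehat{\supp\sigma}$, so $\supp N$ is the $\omega$-union of $\{\widehat{\supp\sigma} : \sigma\in\bb{A}(\hat\kappa)\}$; equivalently, $\supp N = \kappa$ will follow once I show that $\hat\kappa$ is, up to marginal equivalence, the $\omega$-union of the supports of the Arveson measures it carries.

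The heart of the argument, and the step I expect to be the main obstacle, is therefore the following measure-theoretic lemma: \emph{every measurable subset $\Delta\subseteq Y\times X$ that is not marginally null supports a nonzero positive Arveson measure}. Granting it, suppose $\supp N = \kappa_0\subsetneq\kappa$; then $\Delta := \hat\kappa\setminus\hat{\kappa_0}$ is measurable and not marginally null, so it carries a nonzero positive $\sigma\in\bb{A}(Y,X)$ with $\supp\sigma\subseteq\Delta\subseteq\hat\kappa$. Positivity forces $T_\sigma\neq 0$ (evaluate the form on suitable characteristic functions), while $\supp T_\sigma = \widehat{\supp\sigma}$ is marginally disjoint from $\kappa_0$; since $T_\sigma\in N$, this contradicts $\supp N = \kappa_0$. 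Combining the two inclusions yields $\frak{M}_{\min}(\kappa) = N$, which is the assertion.

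To establish the lemma I would localise to sets of finite measure (the spaces being $\sigma$-finite, as used already in Lemma \ref{l_sa}), take a Jankov--von Neumann measurable selection $\phi$ of the analytic projection of $\Delta$ onto $X$, restrict $\phi$ to a positive-measure set $\alpha$ on which the push-forward $\phi_*(\mu|_\alpha)$ is absolutely continuous with respect to $\nu$ with bounded density, and let $\sigma$ be the graph measure $\sigma(E) = \mu(\{x\in\alpha : (\phi(x),x)\in E\})$. The density bound is exactly what makes $|\sigma|_Y\le c\nu$, whereas $|\sigma|_X\le\mu$ holds automatically, so $\sigma\in\bb{A}(Y,X)$, and it is manifestly positive, nonzero, and concentrated on $\Delta$. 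The delicate point is that the selection can be arranged with a non-$\nu$-singular push-forward \emph{precisely because} $\Delta$ is not marginally null: a selection landing in a $\nu$-null fibre set would witness marginal nullity. This is the technical core, and it is where I would lean on the structural theory of marginal nullity and of supports in \cite{eks}.
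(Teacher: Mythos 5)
Your architecture coincides with the paper's: both proofs get the easy inclusion by pairing $T_{\sigma}$ against $\Phi(\kappa)$ via $\langle T_{\sigma},h\rangle=\int_{Y\times X}\hat h\,d\sigma$ (Theorem \ref{th_ps}), both verify that the weak* closure of $\{T_{\sigma}:\sigma\in\bb{A}(\hat\kappa)\}$ is a weak* closed masa-bimodule, and both obtain the hard inclusion from the minimality of $\frak{M}_{\min}(\kappa)$ once the support of that bimodule is shown to be all of $\kappa$. The only divergence is in how $\supp N\supseteq\kappa$ is extracted. The paper tests directly against a finite-measure rectangle $\beta\times\alpha$ annihilating $N$: for an \emph{arbitrary} $\tau\in\bb{A}(Y,X)$, the restriction $\tau_{\hat\kappa}$ of Lemma \ref{l_resam} lies in $\bb{A}(\hat{\kappa})$, so $\tau((\beta\times\alpha)\cap\hat\kappa)=(T_{\tau_{\hat\kappa}}\chi_{\alpha},\chi_{\beta})=0$, and Arveson's Null Set Theorem \cite[Theorem 1.4.3]{arveson} then yields $(\beta\times\alpha)\cap\hat\kappa\simeq\emptyset$. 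You instead argue by contradiction with a hypothetical smaller support $\kappa_0$ and must \emph{produce} a nonzero positive Arveson measure concentrated on $\hat\kappa\setminus\hat{\kappa_0}$. These are the two directions of one and the same statement: your ``measure-theoretic lemma'' is precisely the contrapositive of Arveson's Null Set Theorem, so the two routes are logically equivalent; the paper cites the theorem where you set out to reprove it.

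That reproof is where the genuine gap lies. The justification you give for the selection step --- that ``a selection landing in a $\nu$-null fibre set would witness marginal nullity'' --- is false: take $\Delta=Y\times X$ and the constant selection $\phi\equiv y_0$; its push-forward is a point mass, singular with respect to $\nu$, yet $\Delta$ is as far from marginally null as possible. A single Jankov--von Neumann selection controls only the $X$-marginal of the resulting graph measure; nothing forces its $Y$-marginal to be non-singular, and no single selection can detect marginal nullity. The correct statement needs the full strength of the Null Set Theorem, whose proof is a maximality/exhaustion argument over all positive measures concentrated on $\Delta$ with marginals dominated by $\mu$ and $\nu$ (one shows that if the supremum of their total masses is zero then $\Delta$ is marginally null), not a one-shot selection. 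Since the paper invokes \cite[Theorem 1.4.3]{arveson} at exactly this point, the repair is to cite it rather than rederive it; with that substitution, and no other changes, your argument closes.
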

\begin{proof}
Let $\frak{M}_0(\kappa)$ denote the right hand side of the identity.
We first show that
$\frak{M}_0(\kappa)$ is a weak* closed masa-bimodule.
Since $T_{\sigma} + T_{\nu} = T_{\sigma + \nu}$, we have that $\frak{M}_0(\kappa)$ is a (weak*) closed subspace
of $\cl B(H_1,H_2)$.
It is moreover easy to check that if $\nph\in L^{\infty}(X,\mu)$ and $\psi\in L^{\infty}(Y,\nu)$ then
$M_{\psi}T_{\sigma}M_{\nph} = T_{\sigma'}$, where $\sigma'\in \bb{A}(Y,X)$ is given by
$$\sigma'(E) = \int_{Y\times X} \psi(y)\nph(x) d\sigma(y,x).$$
If $\sigma$ is supported on $\hat{\kappa}$ then clearly so is $\sigma'$;
hence, $\frak{M}_0(\kappa)$ is a masa-bimodule.

We next claim that $\supp\frak{M}_0(\kappa) = \kappa$.
Suppose that $\alpha\times\beta$ is a rectangle of finite measure
such that $P(\beta)T_{\sigma}P(\alpha) = 0$ for all $\sigma\in \bb{A}(Y,X)$
with $\supp\sigma \subseteq \hat{\kappa}$.
Let $\tau\in \bb{A}(X,Y)$ be arbitrary, and $\tau_{\hat{\kappa}}$ be the measure
defined as in Lemma \ref{l_resam}. Then  $\supp \tau_{\hat\kappa}\subseteq \hat{\kappa}$ and hence
$$\tau((\beta\times\alpha)\cap\hat{\kappa}) = \tau_{\hat\kappa}((\beta\times\alpha)\cap\hat{\kappa})
= (P(\beta)T_{\tau_{\hat\kappa}}P(\alpha)\chi_{\alpha},\chi_{\beta}) = 0.$$
By Arveson's Null Set Theorem \cite[Theorem 1.4.3]{arveson},
$(\beta\times\alpha)\cap\hat{\kappa}\simeq \emptyset$.
It follows that $\kappa$ is contained in the support of $\frak{M}_0(\kappa)$; on the other hand,
by Theorem \ref{th_ps}, $\supp\frak{M}_0(\kappa)\subseteq \kappa$, up to a marginally null set.
It follows that
$\kappa \simeq \supp\frak{M}_0(\kappa)$.

Thus 
$\frak{M}_{\min}(\kappa)\subseteq \frak{M}_0(\kappa)$. To show the converse inclusion, it suffices, by \cite[Theorem 4.4]{st1}, to show that if a function $h\in \Gamma(X,Y)$ vanishes on $\kappa$ then $\langle T_{\sigma},h\rangle = 0$  for each $\sigma\in \bb{A}(Y,X)$ supported by $\hat\kappa$. But this follows from the equality $\langle T_{\sigma},h\rangle = \int_{Y\times X} \hat h d\sigma$ (see Theorem~\ref{th_ps}).
\end{proof}

\begin{corollary}\label{c_mminal}
Let $\kappa\subseteq X\times X$ be an $\omega$-closed set such that $\frak{M}_{\max}(\kappa)$
is a unital algebra. Then $\frak{M}_{\min}(\kappa)$ is a (unital) algebra.
\end{corollary}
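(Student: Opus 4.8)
The plan is to use the description of $\frak{M}_{\min}(\kappa)$ furnished by Theorem~\ref{th_pse}, namely as the weak* closure of the set of pseudo-integral operators $T_\sigma$ with $\sigma\in\bb{A}(X,X)$ and $\supp\sigma\subseteq\hat\kappa$; since $Y=X$ here, $\frak{M}_{\min}(\kappa)\subseteq\cl B(L^2(X))$ and composition is meaningful. Two things must be verified: that the identity lies in $\frak{M}_{\min}(\kappa)$, and that $\frak{M}_{\min}(\kappa)$ is closed under multiplication. For the first, observe that $I=T_{\sigma_\Delta}$, where $\sigma_\Delta$ is the Arveson measure on the diagonal $\Delta=\{(x,x):x\in X\}$ given by $\sigma_\Delta(\beta\times\alpha)=\mu(\alpha\cap\beta)$, and that $\supp\sigma_\Delta\simeq\Delta$. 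Since $\frak{M}_{\max}(\kappa)$ is unital we have $I\in\frak{M}_{\max}(\kappa)$, so $\Delta=\supp I\subseteq\kappa$; hence $\supp\sigma_\Delta\subseteq\hat\kappa$ and Theorem~\ref{th_pse} gives $I=T_{\sigma_\Delta}\in\frak{M}_{\min}(\kappa)$.

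The crux for multiplicative closure is the claim that the product of two pseudo-integral operators with Arveson symbols is again pseudo-integral with an Arveson symbol. I would realise $T_\sigma$ and $T_\tau$ through disintegrated families of complex measures $(\sigma^y)_y$ and $(\tau^y)_y$ on $X$: such families exist because the Arveson condition forces $|\sigma|_Y\le c\mu$, so that $y\mapsto\|\sigma^y\|$ is integrable and essentially bounded, which are exactly the hypotheses under which Lemma~\ref{l_sa} reconstitutes a measure from a family. Composing the two operators yields, informally, $(T_\sigma T_\tau f)(y)=\int_X\bigl(\int_X f(w)\,d\tau^x(w)\bigr)\,d\sigma^y(x)$, which suggests setting $\rho^y=\int_X\tau^x\,d\sigma^y(x)$ and assembling $\rho$ from $(\rho^y)_y$ via Lemma~\ref{l_sa}. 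The bound $\|\rho^y\|\le\|\sigma^y\|\,\esssup_x\|\tau^x\|$ controls the $Y$-marginal of $\rho$, while the symmetric computation applied to $(T_\sigma T_\tau)^*=T_\tau^*T_\sigma^*$ — whose symbol is a reflection of the conjugate of the corresponding measure, still an Arveson measure — controls the $X$-marginal, so that $\rho\in\bb{A}(X,X)$ and $T_\sigma T_\tau=T_\rho$. These marginal estimates are of the same nature as those carried out for the auxiliary measure $\rho$ in the proof of Proposition~\ref{m0union}.

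Granting this, the hypothesis enters exactly once. If $\sigma,\tau$ are generators of $\frak{M}_{\min}(\kappa)$ then $T_\sigma,T_\tau\in\frak{M}_{\min}(\kappa)\subseteq\frak{M}_{\max}(\kappa)$, so $T_\rho=T_\sigma T_\tau\in\frak{M}_{\max}(\kappa)$ because $\frak{M}_{\max}(\kappa)$ is an algebra; thus $\supp T_\rho\subseteq\kappa$. By Theorem~\ref{th_ps} this forces $\supp\rho\subseteq\hat\kappa$, whence $T_\rho\in\frak{M}_{\min}(\kappa)$ by Theorem~\ref{th_pse}. Hence products of generators stay in $\frak{M}_{\min}(\kappa)$. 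To pass to arbitrary elements I would use that, for a fixed bounded operator, both $S\mapsto ST$ and $S\mapsto TS$ are weak* continuous on $\cl B(L^2(X))$ (their preadjoints being one-sided multiplication on the trace class). Fixing a generator $T_\tau$, the set $\{S:ST_\tau\in\frak{M}_{\min}(\kappa)\}$ is weak* closed and contains every generator, hence contains $\frak{M}_{\min}(\kappa)$; fixing then an arbitrary $S\in\frak{M}_{\min}(\kappa)$, the set $\{T:ST\in\frak{M}_{\min}(\kappa)\}$ is weak* closed and contains every generator, hence contains $\frak{M}_{\min}(\kappa)$. This yields $\frak{M}_{\min}(\kappa)\cdot\frak{M}_{\min}(\kappa)\subseteq\frak{M}_{\min}(\kappa)$, completing the argument.

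I expect the main obstacle to be the measure-theoretic content of the second paragraph: making rigorous, after the customary reduction to finite measure spaces, the existence and measurability of the disintegrated families, the identity $T_\sigma T_\tau=T_\rho$, and in particular the two-sided marginal bounds needed to certify that $\rho$ is an Arveson measure. The remaining steps (the unitality argument, the single use of the algebra hypothesis, and the weak*-continuity bootstrap) are routine once this composition lemma is in place.
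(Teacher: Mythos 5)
Your argument is correct and follows essentially the same route as the paper, which observes that the pseudo-integral operators form an algebra (citing \cite{arveson}), intersects this algebra with the unital algebra $\frak{M}_{\max}(\kappa)$, and takes the weak* closure, identified with $\frak{M}_{\min}(\kappa)$ via Theorems \ref{th_ps} and \ref{th_pse}. The one step you flag as the main obstacle --- that $T_\sigma T_\tau$ is again a pseudo-integral operator with an Arveson symbol --- is precisely the result of Arveson that the paper cites rather than re-proves, so the disintegration argument you sketch need not be carried out from scratch.
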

\begin{proof}
It was shown in \cite{arveson} that the set of all pseudo-integral operators
is an algebra. Since $\frak{M}_{\max}(\kappa)$ is an algebra,
the set $\frak{M}_0(\kappa)$ of all pseudo-integral operators in
$\frak{M}_{\max}(\kappa)$ is also an algebra.
Hence its weak* closure $\overline{\frak{M}_0(\kappa)}^{w^*}$ is also an algebra.
By Theorems \ref{th_ps} and \ref{th_pse}, $\frak{M}_{\min}(\kappa) = \overline{\frak{M}_0(\kappa)}^{w^*}$
and the proof is complete.
\end{proof}

The next theorem establishes a tensor product formula for the minimal masa-bimodules.
Let $(X_i,\mu_i)$ and $(Y_i,\nu_i)$ be
standard measure spaces, $i= 1,2$,
and consider the flip
$$\rho : (X_1\times Y_1) \times (X_2\times Y_2) \rightarrow (X_1\times X_2) \times (Y_1\times
Y_2)$$ given by
$$\rho((x_1,y_1),(x_2,y_2)) = ((x_1,x_2),(y_1,y_2)).$$
Below, for two weak* closed subspaces $\cl U$ and $\cl V$ of operators,
we denote by $\cl U\bar{\otimes}\cl V$ the
weak* closed subspace generated by the elementary tensors
$A\otimes B$ where $A\in \cl U$ and $B\in \cl V$.

\begin{theorem}\label{l_tmin}
Let $(X_i,\mu_i)$ and $(Y_i,\nu_i)$ be standard measure spaces and
$\kappa_i\subseteq X_i\times Y_i$ be $\omega$-closed sets, $i = 1,2$. Then
\begin{equation}\label{eq_eqq}
\frak{M}_{\min}(\kappa_1)\bar{\otimes} \frak{M}_{\min}(\kappa_2) =
\frak{M}_{\min}(\rho(\kappa_1\times\kappa_2)).
\end{equation}
\end{theorem}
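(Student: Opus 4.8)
The plan is to prove both inclusions through the pseudo-integral description of $\frak{M}_{\min}$ furnished by Theorem \ref{th_pse}, which identifies $\frak{M}_{\min}(\kappa)$ with the weak* closed linear span of the operators $T_{\sigma}$, $\sigma\in\bb{A}(Y,X)$ with $\supp\sigma\subseteq\hat\kappa$. Writing $X = X_1\times X_2$ and $Y = Y_1\times Y_2$, the first observation is that, after reorganising coordinates by the flip $\rho$, the set $\widehat{\rho(\kappa_1\times\kappa_2)}$ corresponds to $\hat\kappa_1\times\hat\kappa_2\subseteq (Y_1\times X_1)\times(Y_2\times X_2)$.

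For the inclusion $\subseteq$, given $\sigma_i\in\bb{A}(Y_i,X_i)$ with $\supp\sigma_i\subseteq\hat\kappa_i$, I would form the product measure $\sigma_1\times\sigma_2$ and transport it through $\rho$ to a measure $\tilde\sigma$ on $Y\times X$. Three verifications are needed. First, $\tilde\sigma\in\bb{A}(Y,X)$: since $|\sigma_1\times\sigma_2| = |\sigma_1|\times|\sigma_2|$, the marginal bounds $|\tilde\sigma|_X\le\|\sigma_1\|_{\bb{A}}\|\sigma_2\|_{\bb{A}}\,\mu$ and the analogous one for $Y$ follow by evaluating on product rectangles. Second, $T_{\tilde\sigma} = T_{\sigma_1}\otimes T_{\sigma_2}$, which I would check on elementary tensors $f_1\otimes f_2$ and $g_1\otimes g_2$ using the defining formula of Theorem \ref{th_ps} and Fubini. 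Third, $\supp\tilde\sigma\subseteq\widehat{\rho(\kappa_1\times\kappa_2)}$, which by Proposition \ref{p_sm}(ii) amounts to $|\tilde\sigma|$ vanishing off $\hat\kappa_1\times\hat\kappa_2$ and is immediate from $|\sigma_i|(\hat\kappa_i^c)=0$. Hence each $T_{\sigma_1}\otimes T_{\sigma_2}$ lies in $\frak{M}_{\min}(\rho(\kappa_1\times\kappa_2))$ by Theorem \ref{th_pse}; since the maps $A\mapsto A\otimes T_{\sigma_2}$ and $B\mapsto T_{\sigma_1}\otimes B$ are weak* continuous and the $T_{\sigma_i}$ span a weak* dense subspace of $\frak{M}_{\min}(\kappa_i)$, the whole weak* tensor product $\frak{M}_{\min}(\kappa_1)\bar{\otimes}\frak{M}_{\min}(\kappa_2)$ is contained in $\frak{M}_{\min}(\rho(\kappa_1\times\kappa_2))$.

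For the inclusion $\supseteq$, set $\cl M = \frak{M}_{\min}(\kappa_1)\bar{\otimes}\frak{M}_{\min}(\kappa_2)$, a weak* closed masa-bimodule over the masa $\{M_a : a\in L^{\infty}(X)\}$. The strategy is to show that $\supp\cl M = \rho(\kappa_1\times\kappa_2)$ and then invoke the minimality of $\frak{M}_{\min}(\rho(\kappa_1\times\kappa_2))$ among weak* closed masa-bimodules with that support. The key points are: (a) since $\{T : P(b)TP(a)=0\}$ is a weak* closed $\cl D_X$-$\cl D_Y$-bimodule, $\cl M$ vanishes on a rectangle $a\times b$ if and only if every elementary tensor $T_1\otimes T_2$, with $T_i\in\frak{M}_{\min}(\kappa_i)$, does; (b) the support formula $\supp(T_1\otimes T_2) = \rho(\supp T_1\times\supp T_2)$, proved by computing on product rectangles, where $P(\beta_1\times\beta_2)(T_1\otimes T_2)P(\alpha_1\times\alpha_2) = (P(\beta_1)T_1P(\alpha_1))\otimes(P(\beta_2)T_2P(\alpha_2))$, and then covering an arbitrary rectangle marginally disjoint from the right-hand side by finitely many product rectangles on sets of almost full measure, using that the complement of $\rho(\supp T_1\times\supp T_2)$ is an $\omega$-union of product-of-product rectangles together with Lemma \ref{e-compactness}; (c) as $T_i$ range over $\frak{M}_{\min}(\kappa_i)$, the sets $\supp T_i$ exhaust $\kappa_i$ (because $\supp\frak{M}_{\min}(\kappa_i)=\kappa_i$), so the $\omega$-union of the sets $\rho(\supp T_1\times\supp T_2)$ equals $\rho(\kappa_1\times\kappa_2)$. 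Combining (a)--(c), a rectangle $a\times b$ is a vanishing rectangle for $\cl M$ precisely when it is marginally disjoint from $\rho(\kappa_1\times\kappa_2)$, which yields $\supp\cl M = \rho(\kappa_1\times\kappa_2)$ and hence the desired inclusion.

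The main obstacle is the transition in steps (b) and (c) from the transparent behaviour of elementary tensors on product rectangles to statements about arbitrary measurable rectangles $a\times b$ in $X\times Y$, whose sections need not respect the decompositions $X = X_1\times X_2$ and $Y = Y_1\times Y_2$. I expect to overcome this exactly as indicated in (b): the $\omega$-open complement of $\rho(\supp T_1\times\supp T_2)$ is an $\omega$-union of product-of-product rectangles, on each of which the operator $T_1\otimes T_2$ manifestly vanishes, and Lemma \ref{e-compactness} reduces the containment of a general rectangle in this complement to a finite subfamily on sets of measure arbitrarily close to full. This is the technical heart of the argument and the place where the pseudo-integral description of Theorem \ref{th_pse} and the $\omega$-topological machinery must be used most carefully.
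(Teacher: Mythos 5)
Your treatment of the inclusion $\frak{M}_{\min}(\kappa_1)\bar{\otimes} \frak{M}_{\min}(\kappa_2) \subseteq \frak{M}_{\min}(\rho(\kappa_1\times\kappa_2))$ is sound and essentially coincides with the paper's argument: both reduce, via Theorem \ref{th_pse}, to checking that $T_{\sigma_1}\otimes T_{\sigma_2}$ is the pseudo-integral operator of a product Arveson measure supported on $\widehat{\rho(\kappa_1\times\kappa_2)}$, and then use weak* continuity of the maps $A\mapsto A\otimes B$. Your direct verification $|\sigma_1\times\sigma_2|\bigl((\hat\kappa_1\times\hat\kappa_2)^c\bigr)=0$ via Proposition \ref{p_sm} is a legitimate substitute for the paper's detour through $\supp T_\sigma$.

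The gap is in the reverse inclusion. To invoke minimality you must establish $\supp\bigl(\frak{M}_{\min}(\kappa_1)\bar{\otimes}\frak{M}_{\min}(\kappa_2)\bigr)\simeq\rho(\kappa_1\times\kappa_2)$, and the hard half of this is the containment $\rho(\kappa_1\times\kappa_2)\subseteq\supp\cl M$: you must show that a rectangle $a\times b$, with $a\subseteq X_1\times X_2$ and $b\subseteq Y_1\times Y_2$ \emph{not} of product form, on which every elementary tensor $T_1\otimes T_2$ vanishes, is necessarily marginally disjoint from $\rho(\kappa_1\times\kappa_2)$. Your step (b) does not deliver this. The identity $P(\beta_1\times\beta_2)(T_1\otimes T_2)P(\alpha_1\times\alpha_2)=(P(\beta_1)T_1P(\alpha_1))\otimes(P(\beta_2)T_2P(\alpha_2))$ settles the question only for product rectangles, and the covering argument via Lemma \ref{e-compactness} runs in the wrong direction: it shows that marginal disjointness from $\rho(\supp T_1\times\supp T_2)$ forces vanishing, i.e.\ $\supp(T_1\otimes T_2)\subseteq\rho(\supp T_1\times\supp T_2)$, not that vanishing on a general rectangle forces disjointness. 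Nor can the defect be repaired with Arveson's Null Set Theorem, since the vanishing hypothesis only controls the \emph{product} Arveson measures $\sigma_1\times\sigma_2$ on $(b\times a)\cap(\hat\kappa_1\times\hat\kappa_2)$, whereas the Null Set Theorem requires nullity for all Arveson measures on $(Y_1\times Y_2)\times(X_1\times X_2)$. The missing containment is precisely the tensor product support formula $\supp(\cl U\bar{\otimes}\cl V)\simeq\rho(\supp\cl U\times\supp\cl V)$, which the paper imports as a black box from \cite{m}; this is a substantive external result, not a routine verification, and your proof must either cite it or supply an actual argument for it.
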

\begin{proof}
We first note that, by \cite{m},
\begin{equation}\label{eq_tpfm}
\supp(\frak{M}_{\min}(\kappa_1)\bar{\otimes} \frak{M}_{\min}(\kappa_2))\simeq \rho(\kappa_1\times\kappa_2).
\end{equation}
By the minimality property of $\frak{M}_{\min}(\rho(\kappa_1\times\kappa_2))$ we have that
$$\frak{M}_{\min}(\rho(\kappa_1\times\kappa_2))\subseteq \frak{M}_{\min}(\kappa_1)\bar{\otimes} \frak{M}_{\min}(\kappa_2).$$

To see the reverse inclusion, it is enough prove that if $m\in \mathbb A(Y_1,X_1)$ and $n\in{\mathbb A}(Y_2,X_2)$ then $T_m\otimes T_n=T_\sigma$ for some measure $\sigma\in\mathbb A(Y_1\times Y_2,X_1\times X_2)$.
Indeed, by (\ref{eq_tpfm}),
$\supp T_\sigma\subseteq \rho(\kappa_1\times\kappa_2)$ and hence Theorem \ref{th_ps}
implies that $\supp\sigma\subseteq \widehat{\rho(\kappa_1\times\kappa_2)}$.
By Theorem \ref{th_pse}, $T_\sigma\in {\mathfrak M} _{\min}(\rho(\kappa_1\times\kappa_2))$.

Let $$\sigma(E)=\int_{Y_2\times X_2}\int_{Y_1\times X_1}\chi_{E}(y,x)dm(y_1,x_1)dn(y_2,x_2)$$ for every measurable $E\subseteq (Y_1\times Y_2)\times(X_1\times X_2)$.
If $\beta_i\subseteq Y_i$, $i = 1,2$, are measurable then
\begin{eqnarray*}
&&|\sigma|((\beta_1\times \beta_2)\times (X_1\times X_2))\\
&&\leq \int_{Y_2\times X_2}\int_{Y_1\times X_1}\chi_{(\beta_1\times \beta_2)\times (X_1\times X_2)}(y,x)d|m|(y_1,x_1)d|n|(y_2,x_2)\\
&&= |m|(\beta_1\times X_1)|n|(\beta_2\times X_2)\\
&& \leq \|m\|_{\bb{A}}\|n\|_{\bb{A}} \nu_1(\beta_1)\nu_2(\beta_2)
= \|m\|_{\bb{A}} \|n\|_{\bb{A}} (\nu_1\times\nu_2)(\beta_1\times\beta_2).
\end{eqnarray*}
It now easily follows that
$|\sigma|(F\times(X_1\times X_2))\leq  \|m\|_{\bb{A}} \|n\|_{\bb{A}} (\nu_1\times\nu_2)(F)$,
for any element $F$ in the product $\sigma$-algebra on $Y_1\times Y_2$.
Similar arguments show that
$|\sigma|((Y_1\times Y_2)\times E)\leq \|m\|_{\bb{A}} \|n\|_{\bb{A}} (\mu_1\times\mu_2)(E)$, for
every measurable $E\subseteq X_1\times X_2$. Hence $\sigma$ is an Arveson measure and $T_\sigma$ is a bounded operator from $L^2(X_1\times X_2)$ to $L^2(Y_1\times Y_2)$.

If $f_i\in L^2(X_i,\mu_i)$, $g_i\in L^2(Y_i,\nu_i)$, $i=1,2$, we have
\begin{eqnarray*}
&&((T_m\otimes T_n) f_1\otimes f_2,g_1\otimes g_2)\\
&&=\int_{Y_2\times X_2}\int_{Y_1\times X_1}f_1(x_1)f_2(x_2)\overline{g_1(y_1)g_2(y_2)}dm(y_1,x_1)dn(y_2,x_2)\\
&&=
\int_{(Y_1\times Y_2)\times(X_1\times X_2)} (f_1\otimes f_2)(x)\overline{(g_1\otimes g_2)(y)}d\sigma(y,x).
\end{eqnarray*}
and hence $T_m\otimes T_n=T_\sigma$, proving the statement.

\end{proof}


\begin{theorem}\label{produ}
Let $(X_i,\mu_i)$ and $(Y_i,\nu_i)$ be standard measure spaces and
$\kappa_i\subseteq X_i\times Y_i$ be $\omega$-closed sets, $i = 1,2$.
The set $\rho(\kappa_1\times\kappa_2)$ is an operator
$M$-set (resp. operator $M_1$-set) if and only if both
$\kappa_1$ and $\kappa_2$ are operator $M$-sets (resp. operator $M_1$-sets).
\end{theorem}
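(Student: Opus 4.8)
The plan is to reduce everything to two facts about tensor products of weak* closed masa-bimodules: the identity $\frak{M}_{\min}(\rho(\kappa_1\times\kappa_2)) = \frak{M}_{\min}(\kappa_1)\bar\otimes\frak{M}_{\min}(\kappa_2)$ from Theorem \ref{l_tmin}, and the characterisation (implicit in the proof of Lemma \ref{l_mmax}) of $\frak{M}_{\max}(\kappa)$ as the set of all operators supported on $\kappa$. Throughout I would write $H_1 = L^2(X_1,\mu_1)$, $H_2 = L^2(Y_1,\nu_1)$, $H_3 = L^2(X_2,\mu_2)$ and $H_4 = L^2(Y_2,\nu_2)$, so that an operator on the product spaces lives in $\cl B(H_1\otimes H_3, H_2\otimes H_4)$ and $\rho(\kappa_1\times\kappa_2)\subseteq (X_1\times X_2)\times(Y_1\times Y_2)$.

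For the ``if'' directions I would take non-zero compact operators $K_i\in\frak{M}_\bullet(\kappa_i)\cap\cl K$ and form $K_1\otimes K_2$, which is again a non-zero compact operator. In the $M_1$ case Theorem \ref{l_tmin} places $K_1\otimes K_2$ directly in $\frak{M}_{\min}(\rho(\kappa_1\times\kappa_2))$. In the $M$ case I would instead observe that $\frak{M}_{\max}(\kappa_1)\bar\otimes\frak{M}_{\max}(\kappa_2)$ is a weak* closed masa-bimodule (for the product masas) whose support is $\rho(\kappa_1\times\kappa_2)$, by the tensor support formula of \cite{m} applied to the factors $\frak{M}_{\max}(\kappa_i)$, which have support $\kappa_i$; maximality of $\frak{M}_{\max}(\rho(\kappa_1\times\kappa_2))$ then yields $\frak{M}_{\max}(\kappa_1)\bar\otimes\frak{M}_{\max}(\kappa_2)\subseteq\frak{M}_{\max}(\rho(\kappa_1\times\kappa_2))$, so $K_1\otimes K_2$ is supported on $\rho(\kappa_1\times\kappa_2)$.

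For the ``only if'' directions I would slice a given non-zero compact operator $K\in\frak{M}_\bullet(\rho(\kappa_1\times\kappa_2))$. For $\xi\in H_3$, $\eta\in H_4$ let $K_{\xi,\eta}\in\cl B(H_1,H_2)$ be the compression given by $(K_{\xi,\eta}f,g)=(K(f\otimes\xi),g\otimes\eta)$; equivalently $K_{\xi,\eta}=W_\eta^* K V_\xi$ with $V_\xi f=f\otimes\xi$ and $W_\eta g=g\otimes\eta$, so $K_{\xi,\eta}$ is compact, and since elementary tensors are total, $K_{\xi,\eta}\neq 0$ for a suitable choice of $\xi,\eta$ whenever $K\neq 0$. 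A short computation shows that $P(\beta)K_{\xi,\eta}P(\alpha)$ is read off from $P(\beta\times Y_2)KP(\alpha\times X_2)$, and that $((\alpha\times X_2)\times(\beta\times Y_2))\cap\rho(\kappa_1\times\kappa_2)\simeq\emptyset$ whenever $(\alpha\times\beta)\cap\kappa_1\simeq\emptyset$; hence $K_{\xi,\eta}$ is supported on $\kappa_1$. In the $M$ case this immediately gives $K_{\xi,\eta}\in\frak{M}_{\max}(\kappa_1)$, and compressing in the complementary pair of variables produces a non-zero compact operator in $\frak{M}_{\max}(\kappa_2)$, so both $\kappa_i$ are operator $M$-sets. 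In the $M_1$ case I would instead view $K_{\xi,\eta}$ as the slice map $R_\omega(K)$ attached to the normal functional $\omega=(\,\cdot\,\xi,\eta)$ on $\cl B(H_3,H_4)$; since $K\in\frak{M}_{\min}(\kappa_1)\bar\otimes\frak{M}_{\min}(\kappa_2)$ by Theorem \ref{l_tmin}, the slice map property places $R_\omega(K)$ in $\frak{M}_{\min}(\kappa_1)$.

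The compactness and non-vanishing of the compressions and the marginal-equivalence bookkeeping in the support computation are routine. The one genuinely substantive point is the $M_1$ ``only if'' direction: there, being supported on $\kappa_1$ only places $K_{\xi,\eta}$ in $\frak{M}_{\max}(\kappa_1)$, so I cannot conclude as in the $M$ case and must invoke the slice map property $R_\omega(\cl U\bar\otimes\cl V)\subseteq\cl U$ for weak* closed $\cl U$. This is standard---$R_\omega$ is weak* continuous for $\omega$ in the predual and agrees with $A\otimes B\mapsto\omega(B)A$ on elementary tensors, so it carries the weak* closed span into $\cl U$---but it is the step where the argument uses more than the mere support description, and I would either cite it or insert the one-line verification via weak* continuity.
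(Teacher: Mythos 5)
Your proof is correct, and its overall architecture --- elementary tensors of compact operators for the ``if'' direction, slice maps for the ``only if'' direction --- coincides with the paper's. The one point of genuine divergence is the $M_1$ ``only if'' step. There the paper does not go through Theorem \ref{l_tmin}: it takes a $\kappa_1$-pair $(P,Q)$, observes that $(P\otimes I,Q\otimes I)$ is a $\rho(\kappa_1\times\kappa_2)$-pair, computes
$$Q(I\otimes L_\omega(T))P=(\id\otimes L_\omega)\bigl((Q\otimes I)(I\otimes T)(P\otimes I)\bigr)=0,$$
and invokes the $\kappa$-pair characterisation of $\frak{M}_{\min}$ (Theorem \ref{th_kappapair}) to place $L_\omega(T)$ in $\frak{M}_{\min}(\kappa_1)$. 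You instead feed $K\in\frak{M}_{\min}(\rho(\kappa_1\times\kappa_2))=\frak{M}_{\min}(\kappa_1)\bar\otimes\frak{M}_{\min}(\kappa_2)$ into the standard weak*-continuity argument for slice maps, so your route uses the tensor product formula in both directions of the $M_1$ equivalence, whereas the paper needs it only for the ``if'' direction and otherwise stays at the level of supports and $\kappa$-pairs; the trade-off is that your version avoids Theorem \ref{th_kappapair} entirely. Two further cosmetic differences: the paper slices with arbitrary trace-class functionals $\omega\in\cl C_1(K_2,H_2)$, while your compressions $K_{\xi,\eta}=W_\eta^*KV_\xi$ are the rank-one special case (which, as you note, already suffices to detect non-vanishing); and the paper phrases the ``only if'' direction contrapositively (if either $\kappa_i$ is an operator $U$-set, so is the product), which is equivalent to your direct formulation. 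You have correctly isolated the only non-routine ingredient --- weak* continuity of $R_\omega$ together with its action on elementary tensors --- and the one-line verification you sketch is exactly what is needed.
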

\begin{proof}
By \cite{m}, the support of
$\frak{M}_{\max}(\kappa_1)\bar{\otimes} \frak{M}_{\max}(\kappa_2)$ is
$\rho(\kappa_1\times\kappa_2)$. It follows that
\begin{equation}\label{eq_t1}
\frak{M}_{\max}(\kappa_1)\bar{\otimes} \frak{M}_{\max}(\kappa_2) \subseteq \frak{M}_{\max}(\rho(\kappa_1\times\kappa_2)).
\end{equation}
Assume first that $\kappa_1$ and $\kappa_2$ are operator $M_1$-sets (resp. operator $M$ sets).
Suppose that $T_i$ is a non-zero compact operator in
$\frak{M}_{\min}(\kappa_i)$ (resp. $\frak{M}_{\max}$ $(\kappa_i)$), $i
= 1,2$. By Theorem \ref{l_tmin} (resp. by (\ref{eq_t1})),
$T_1\otimes T_2$ is a non-zero compact operator in
$\frak{M}_{\min}(\rho(\kappa_1\times\kappa_2))$ (resp.
$\frak{M}_{\max}(\rho(\kappa_1\times\kappa_2))$). Hence $\rho(\kappa_1\times\kappa_2)$ is an
operator $M_1$-set (resp. an operator $M$-set).

We next show that if either $\kappa_1$ or $\kappa_2$ is an operator $U$-set then so is $\rho(\kappa_1\times\kappa_2)$.
Suppose that $T\in \cl K(H_1\otimes H_2,K_1\otimes K_2)$ is
supported on $\rho(\kappa_1\times\kappa_2)$.
Let $\omega\in (\cl K(H_2,K_2))^*=\cl C_1(K_2,H_2)$ and let $L_\omega$ be the slice map from  $\cl K(H_1\otimes H_2,K_1\otimes K_2)$ to
$\cl K(H_1,K_1)$ defined on elementary tensors by $L_\omega(A\otimes B) = \omega(B)A$.
Then $\supp L_\omega(T)\subseteq\kappa_1$.
In fact, if $\alpha\times\beta$ is a measurable rectangle marginally disjoint from
$\kappa_1$, then
$((\alpha\times X_2)\times(\beta\times Y_2))\cap  \rho(\kappa_1\times\kappa_2)\simeq\emptyset$ and
$$P(\beta)L_\omega(T)P(\alpha) = L_\omega((P(\beta)\otimes I)T(P(\alpha)\otimes I) = 0.$$
If $\kappa_1$ is an operator $U$-set, $L_\omega(T) = 0$ for all $\omega$ and hence $T = 0$.

If  $T\in \cl K(H_1\otimes H_2,K_1\otimes K_2)\cap
\frak{M}_{\min}(\rho(\kappa_1\times\kappa_2))$ and ($P$, $Q$) is a $\kappa_1$-pair, then
($P\otimes I$, $Q\otimes I$) is a $\rho(\kappa_1\times\kappa_2)$-pair and hence
$$Q(I_{\ell^2}\otimes L_\omega(T))P =
(\id\otimes L_\omega)((Q\otimes I)(I_{\ell^2}\otimes T)(P\otimes I)) =0;$$
by Theorem \ref{th_kappapair}, $L_\omega(T)\in \frak{M}_{\min}(\kappa_1)$.
If $\kappa_1$ is an operator $U_1$-set, arguments similar to the ones above show that
$T = 0$ and hence $\rho(\kappa_1\times\kappa_2)$ is an operator $U_1$-set.
\end{proof}

\begin{corollary}\label{c_produ}
Let $G_1$ and $G_2$ be locally compact second countable groups and $E_1\subseteq G_1$,
$E_2\subseteq G_2$ be closed sets. If $E_1$ and $E_2$ are $M$-sets
(resp. $M_1$-sets) then $E_1\times E_2$ is an $M$-set (resp. an
$M_1$-set).
\end{corollary}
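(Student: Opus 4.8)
The plan is to reduce the statement to the operator-level product theorem, Theorem \ref{produ}, through the dictionary supplied by Theorem \ref{th_iffop}. Write $G = G_1\times G_2$, equipped with the product Haar measure; since $G_1$ and $G_2$ are second countable, so is $G$, the standard measure space $G$ is the product of the standard measure spaces $G_1$ and $G_2$, and $E_1\times E_2$ is a closed subset of $G$. The central observation is that, under the canonical identification of $(G_1\times G_2)\times(G_1\times G_2)$ with $(G_1\times G_1)\times(G_2\times G_2)$ effected by the flip $\rho$, the sets $(E_1\times E_2)^*$ and $\rho(E_1^*\times E_2^*)$ coincide.

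To verify this identity I would simply unwind both definitions. An element $((s_1,s_2),(t_1,t_2))$ of $G\times G$ lies in $(E_1\times E_2)^*$ precisely when $(t_1,t_2)(s_1,s_2)^{-1} = (t_1 s_1^{-1}, t_2 s_2^{-1})$ belongs to $E_1\times E_2$, that is, when $t_1 s_1^{-1}\in E_1$ and $t_2 s_2^{-1}\in E_2$. On the other hand, $\rho$ sends $((s_1,t_1),(s_2,t_2))$ to $((s_1,s_2),(t_1,t_2))$, and $((s_1,t_1),(s_2,t_2))\in E_1^*\times E_2^*$ exactly when $(s_1,t_1)\in E_1^*$ and $(s_2,t_2)\in E_2^*$, i.e. when $t_1 s_1^{-1}\in E_1$ and $t_2 s_2^{-1}\in E_2$. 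The two conditions agree, so $(E_1\times E_2)^* = \rho(E_1^*\times E_2^*)$.

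With this identity in hand, the conclusion follows by chaining the cited results. Suppose $E_1$ and $E_2$ are $M$-sets (resp. $M_1$-sets). By Theorem \ref{th_iffop}, applied to the groups $G_1$ and $G_2$, the sets $E_1^*$ and $E_2^*$ are operator $M$-sets (resp. operator $M_1$-sets). Theorem \ref{produ} then shows that $\rho(E_1^*\times E_2^*)$ is an operator $M$-set (resp. operator $M_1$-set); by the identity above this is exactly $(E_1\times E_2)^*$. A final application of Theorem \ref{th_iffop}, now to the group $G$ and the closed set $E_1\times E_2$, yields that $E_1\times E_2$ is an $M$-set (resp. an $M_1$-set).

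Since the argument is a routine assembly of already-established machinery, no genuinely deep step arises. The only point requiring care -- and hence the place I would be most vigilant -- is the bookkeeping of the identification $\rho$: one must check that the two factors of each starred set are matched to the correct coordinates, so that the support sets in $G\times G$ and in $(G_1\times G_1)\times(G_2\times G_2)$ truly correspond under $\rho$, rather than only up to some spurious permutation of the four coordinates. Once this identity is confirmed, the rest is immediate.
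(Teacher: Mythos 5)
Your proposal is correct and follows exactly the paper's own argument: apply Theorem \ref{th_iffop} to pass to $E_1^*$ and $E_2^*$, invoke Theorem \ref{produ} together with the identity $(E_1\times E_2)^* = \rho(E_1^*\times E_2^*)$, and apply Theorem \ref{th_iffop} once more. Your explicit verification of that identity (which the paper asserts without comment) is accurate, so nothing is missing.
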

\begin{proof}
Suppose that $E_1\subseteq G_1$ and $E_2\subseteq G_2$ are $M$-sets. By
Theorem \ref{th_iffop}, $E_1^*$ and $E_2^*$ are operator $M$-sets, and by Theorem \ref{produ},
$\rho(E_1^*\times E_2^*) = (E_1\times E_2)^*$ is an operator $M$-set. By Theorem \ref{th_iffop} again,
$E_1\times E_2$ is an $M$-set. A similar argument applies to $M_1$-sets.
\end{proof}

\section{Sets of finite width}\label{s_fw}

Let $(X,\mu)$ and $(Y,\nu)$ be standard measure spaces.
A subset $E\subseteq X\times Y$ is called
a \emph{set of finite width} if
there exist measurable functions
$f_i : X\to {\mathbb R}$, $g_i : Y\to {\mathbb R}$, $i=1,\ldots,n$,
such that
\begin{equation}\label{eq_fw}
E = \{(x,y)\in X\times Y : f_i(x)\leq g_i(y), i=1,\ldots,n\};
\end{equation}
the \emph{width} of $E$ is the smallest $n$
for which $E$ can be represented in the form (\ref{eq_fw}).
By \cite[Theorem~4.8]{st1} and \cite[Theorem 2.1]{t_spsyn}, any such set is operator synthetic.
In this section we identify those sets of finite width which are operator
$M_1$-sets, and hence operator $M$-sets.

We first assume that the measures $\mu$ and $\nu$ are finite
and the standard measure spaces $X$ and $Y$ arise from compact topologies.
A \emph{system} is a finite set $D$  of disjoint rectangles $\Pi = \alpha\times \beta$,
where $\alpha\subseteq X$ and $\beta\subseteq Y$ are measurable.
Set $r(\alpha\times \beta) = \min\{\mu(\alpha),\nu(\beta)\}$.
The \emph{volume} 
of a system $D = \{\Pi_j: 1\le j\le J\}$
is the number $r(D) \stackrel{def}{=} \max_{1\leq j\leq J} r(\Pi_j)$.
Let $U_D = \cup_{j=1}^J\Pi_j$ and call the systems $D_1$ and $D_2$
disjoint if $U_{D_1}\cap U_{D_2} = \emptyset$;
in this case, denote by $D_1\vee D_2$ their union.

With each system $D=\{\alpha_j\times \beta_j: 1\le j\le J\}$,
we associate the projection $\pi_D$ on ${\cl B}(H_1,H_2)$ by setting
$$\pi_D(T) = \sum_{j=1}^J P(\beta_j)TP(\alpha_j), \ \ \ T\in \cl B(H_1,H_2).$$
It is easy to see that $\pi_D$ depends only on $U_D$ and that
$\pi_{D_1\vee D_2} = \pi_{D_1}+\pi_{D_2}$; thus,
the mapping $U\to \pi_U$ is a projection-valued measure
on the algebra of sets generated by all rectangles.
Note that the range of $\pi_D$ coincides with $\frak{M}_{\max}(U_D)$.

A system $D=\{\alpha_j\times \beta_j: 1\le j\le J\}$ will be called \emph{diagonal} if
$\alpha_i\cap \alpha_j = \beta_i\cap \beta_j = \emptyset$ whenever $i\neq j$.
The system $D$ will be called \emph{$n$-diagonal},
if $D = D_1\vee D_2\vee\dots \vee D_n$ where $D_1,\dots,D_n$ are diagonal systems.
It is easy to see that $\|\pi_D\| = 1$ if $D$ is diagonal.
Hence, $\|\pi_D\|\le n$ if $D$ is $n$-diagonal.

\begin{lemma}\label{seq}
Let $(D^k)_{k\in \mathbb{N}}$ be a sequence of
$n$-diagonal systems such that $r(D^k)\to_{k\to\infty} 0$.
Then $\|\pi_{D^k}(T)\|\to_{k\to\infty} 0$ for each compact operator $T$.
\end{lemma}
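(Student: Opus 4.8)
The plan is to prove the convergence first for rank one operators, then extend it to finite rank operators by linearity, and finally to all compact operators by a density argument that uses the uniform bound $\|\pi_{D^k}\|\le n$. For $u\in H_1$ and $v\in H_2$, I would write $\theta_{v,u}$ for the rank one operator $\theta_{v,u}(\zeta)=(\zeta,u)v$; finite linear combinations of such operators are dense in $\cl K(H_1,H_2)$. Since $P(\beta)\theta_{v,u}P(\alpha)=\theta_{\chi_\beta v,\chi_\alpha u}$, for a system $D=\{\alpha_j\times\beta_j:1\le j\le J\}$ one has $\pi_D(\theta_{v,u})=\sum_{j=1}^J\theta_{\chi_{\beta_j}v,\chi_{\alpha_j}u}$.

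The crucial point is to handle a single \emph{diagonal} system $D$. There the sets $\{\alpha_j\}$ are pairwise disjoint and the sets $\{\beta_j\}$ are pairwise disjoint, so the vectors $\chi_{\alpha_j}u$ are mutually orthogonal in $H_1$ and the vectors $\chi_{\beta_j}v$ are mutually orthogonal in $H_2$. Because the initial and final spaces of the summands are then orthogonal, the norm of the sum equals the maximum, rather than the sum, of the norms of the summands:
$$\|\pi_D(\theta_{v,u})\|=\max_{1\le j\le J}\|\chi_{\alpha_j}u\|_2\,\|\chi_{\beta_j}v\|_2.$$
Writing an $n$-diagonal system as $D=D_1\vee\cdots\vee D_n$ and using $\pi_D=\sum_{i=1}^n\pi_{D_i}$, this would give $\|\pi_D(\theta_{v,u})\|\le\sum_{i=1}^n\max_{\alpha\times\beta\in D_i}\|\chi_\alpha u\|_2\,\|\chi_\beta v\|_2$.

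The main obstacle, and the place where the hypothesis $r(D^k)\to 0$ is actually used, is to make each of these maxima small. This I would do via absolute continuity of the integral: given $\varepsilon>0$, choose $\delta>0$ so that $\mu(\alpha)<\delta$ forces $\int_\alpha|u|^2\,d\mu<\varepsilon^2$ and $\nu(\beta)<\delta$ forces $\int_\beta|v|^2\,d\nu<\varepsilon^2$. When $r(D)<\delta$, every rectangle $\alpha\times\beta$ of $D$ satisfies $\min\{\mu(\alpha),\nu(\beta)\}<\delta$, so either $\|\chi_\alpha u\|_2<\varepsilon$ or $\|\chi_\beta v\|_2<\varepsilon$; in both cases $\|\chi_\alpha u\|_2\,\|\chi_\beta v\|_2\le\varepsilon\max\{\|u\|_2,\|v\|_2\}$. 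Hence $\|\pi_D(\theta_{v,u})\|\le n\varepsilon\max\{\|u\|_2,\|v\|_2\}$, so $\|\pi_{D^k}(\theta_{v,u})\|\to 0$, and by linearity $\|\pi_{D^k}(F)\|\to 0$ for every finite rank $F$.

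To conclude, given $T\in\cl K(H_1,H_2)$ and $\varepsilon>0$, I would pick a finite rank $F$ with $n\|T-F\|<\varepsilon/2$ and estimate $\|\pi_{D^k}(T)\|\le\|\pi_{D^k}\|\,\|T-F\|+\|\pi_{D^k}(F)\|\le\varepsilon/2+\|\pi_{D^k}(F)\|$; since the last term tends to $0$ this yields $\limsup_k\|\pi_{D^k}(T)\|\le\varepsilon/2$, and letting $\varepsilon\to 0$ completes the proof.
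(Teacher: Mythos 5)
Your proof is correct, and while it follows the same overall skeleton as the paper's (reduce to rank one operators, then extend by linearity and by density using the uniform bound $\|\pi_{D^k}\|\le n$), the key estimate — the step where $r(D^k)\to 0$ actually enters — is genuinely different. The paper dominates the operator norm by the Hilbert--Schmidt norm: for a diagonal system $D$ and $T=u\otimes v$ with $u,v$ \emph{bounded} functions, it computes $\|\pi_D(T)\|_2\le\|u\|_\infty\|v\|_\infty\bigl(\sum_j\mu(\alpha_j)\nu(\beta_j)\bigr)^{1/2}$ and then uses $\mu(\alpha_j)\nu(\beta_j)\le r(\Pi_j)(\mu(\alpha_j)+\nu(\beta_j))$ together with disjointness to get the bound $\|u\|_\infty\|v\|_\infty\,r(D)^{1/2}(\mu(X)+\nu(Y))^{1/2}$; this relies on the standing assumptions of that section that $\mu$ and $\nu$ are finite, and on restricting to rank one operators with bounded symbols (which still span a dense subspace of $\cl K$). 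You instead exploit the orthogonality of the initial spaces and of the final spaces of the summands to identify $\|\pi_D(\theta_{v,u})\|$ \emph{exactly} as $\max_j\|\chi_{\alpha_j}u\|_2\|\chi_{\beta_j}v\|_2$, and then make this small via absolute continuity of the integrals $\int_\alpha|u|^2\,d\mu$ and $\int_\beta|v|^2\,d\nu$. Your route buys a bit more: it works for arbitrary $u\in H_1$, $v\in H_2$ and does not need the finiteness of $\mu$ and $\nu$, at the cost of a non-quantitative ($\varepsilon$--$\delta$) dependence on $r(D)$, whereas the paper's argument yields the explicit rate $O(r(D)^{1/2})$. Both arguments are complete and correct.
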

\begin{proof}
It suffices to prove the statement for rank one operators
$T = u\otimes v$ where $u,v$ are bounded functions on $X$ and $Y$, because the set of all linear combinations of such operators is dense in ${\cl K}(H_1,H_2)$
and the sequence $(\pi_{D^k})_{k\in \mathbb{N}}$ is uniformly bounded.

If $D=\{\alpha_j\times \beta_j\}_{j=1}^J$ is a diagonal system, then for $T=u\otimes v$, we have
\begin{eqnarray*}
\|\pi_D(T)\|&\leq &\|\pi_D(T)\|_2=
\left\|\sum_{j=1}^J(\chi_{\alpha_j}\otimes\chi_{\beta_j})(u\otimes v)\right\|_{L^2(X\times Y,\mu\times\nu)}\\
&\leq & \|u\|_\infty\|v\|_\infty\left(\sum_{j=1}^J \mu(\alpha_j)\nu(\beta_j)\right)^{1/2}\\
&\leq & \|u\|_\infty\|v\|_\infty\left(\sum_{j=1}^J r(\Pi_j)(\mu(\alpha_j)+\nu(\beta_j)\right)^{1/2}\\
&\leq & \|u\|_\infty\|v\|_\infty r(D)^{1/2}(\mu(X)+\nu(Y))^{1/2}.
\end{eqnarray*}
It follows that if $D$ is an $n$-diagonal system then
$$\|\pi_D(T)\|\leq n\|u\|_\infty\|v\|_\infty r(D)^{1/2}(\mu(X)+\nu(Y))^{1/2}.$$
Hence  $\|\pi_{D^k}(T)\|\to_{k\to\infty} 0$ whenever $r(D^k)\to_{k\to\infty} 0$.
\end{proof}

Let us call a set $E$ \emph{$n$-quasi-diagonal}
if for each $\varepsilon>0$ there is an $n$-diagonal system $D$ with $E\subseteq U_D$ and $r(D)<\varepsilon$.

We say that a (measurable) function defined on
a measure space is non-atomic if it is not constant on any set of positive measure.

\begin{lemma}\label{1-quas}
Let $f : X\to {\mathbb R}$, $g : Y\to {\mathbb R}$ be Borel maps and assume that
$f$ is non-atomic. Then the set
$$E_{f,g} = \{(x,y)\in X\times Y : f(x) = g(y)\}$$
is 1-quasi-diagonal.
\end{lemma}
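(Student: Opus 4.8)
The plan is to cover $E_{f,g}$ by a single \emph{diagonal} system whose volume is controlled purely on the $X$-side, exploiting that $r$ is a minimum of the two marginal measures. Consider the pushforward measure $f_*\mu$ on $\mathbb{R}$, defined by $f_*\mu(I) = \mu(f^{-1}(I))$ for Borel $I\subseteq\mathbb{R}$. Since $f$ is non-atomic, we have $\mu(f^{-1}(\{c\})) = 0$ for every $c\in\mathbb{R}$, so $f_*\mu$ is a finite, atomless Borel measure on $\mathbb{R}$; in particular its distribution function $t\mapsto f_*\mu((-\infty,t])$ is continuous.

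Given $\varepsilon>0$, I would first choose $N$ so large that $f_*\mu((-\infty,-N))<\varepsilon$ and $f_*\mu([N,\infty))<\varepsilon$, which is possible because $f_*\mu$ is finite. Then, using the continuity of the distribution function, I would pick points $-N = t_0 < t_1 < \cdots < t_m = N$ so that each half-open interval $[t_{j-1},t_j)$ has $f_*\mu$-measure $<\varepsilon$; finitely many points suffice since $f_*\mu([-N,N])<\infty$. Together with the two tails $(-\infty,-N)$ and $[N,\infty)$ this produces a finite partition of $\mathbb{R}$ into intervals $I_1,\dots,I_M$, each satisfying $f_*\mu(I_k)<\varepsilon$. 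Now set $\alpha_k = f^{-1}(I_k)$, $\beta_k = g^{-1}(I_k)$, and $D = \{\alpha_k\times\beta_k : 1\le k\le M\}$. As the $I_k$ are pairwise disjoint, the $\alpha_k$ are pairwise disjoint and so are the $\beta_k$; hence the rectangles are disjoint and $D$ is a diagonal (i.e. $1$-diagonal) system. If $(x,y)\in E_{f,g}$ then the common value $f(x)=g(y)$ lies in a unique $I_k$, so $x\in\alpha_k$, $y\in\beta_k$, giving $E_{f,g}\subseteq U_D$. Finally, for each $k$, $r(\alpha_k\times\beta_k) = \min\{\mu(\alpha_k),\nu(\beta_k)\} \le \mu(\alpha_k) = f_*\mu(I_k) < \varepsilon$, so $r(D)<\varepsilon$; since $\varepsilon$ was arbitrary, $E_{f,g}$ is $1$-quasi-diagonal.

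The only delicate point is ensuring the covering system can be taken \emph{finite}, and this is exactly where non-atomicity of $f$ is used: an atom of $f_*\mu$ at some $c$ would force every interval containing $c$ to carry at least the atom's mass, obstructing the subdivision into pieces of arbitrarily small $f_*\mu$-measure. Notice that no hypothesis on $g$ and no control of $\nu(\beta_k)$ is needed, precisely because $r$ is a minimum; this is what lets a single diagonal system do the job and yields the sharper conclusion that $E_{f,g}$ is $1$-quasi-diagonal rather than merely $n$-quasi-diagonal.
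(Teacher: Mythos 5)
Your proof is correct and takes essentially the same route as the paper: push $\mu$ forward by $f$, use non-atomicity to get a continuous distribution function, partition $\mathbb{R}$ into finitely many Borel intervals of small $f_*\mu$-measure, and pull back to a single diagonal system whose volume is controlled by the $X$-marginal alone (which suffices because $r$ is a minimum). The only cosmetic difference is that you truncate the two tails explicitly before subdividing, whereas the paper partitions the range of the distribution function $t\mapsto f_*\mu((-\infty,t])$ directly, which handles the unbounded pieces automatically.
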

\begin{proof}
Let $\mu_f$ be the measure on the Borel $\sigma$-algebra of
$\bb{R}$ given by $\mu_f(C) = \mu(f^{-1}(C))$. By our assumption, $\mu_f$ is non-atomic and finite. Hence, for every
$\varepsilon > 0$,
there exists a partition ${\mathbb R} = \cup_{j=1}^NC_j$ with $\mu_f(C_j)<\varepsilon/\nu(Y)$ for all $j$.
In fact,  letting $g(x)=\mu_f(-\infty,x])$ we have that $g$ is a bounded increasing function such that $g({\mathbb R})\subseteq [0,C]$, where $C=\mu_f({\mathbb R})$. As $\mu_f$ is non-atomic, $g$ is continuous and
$(0,C)\subseteq g({\mathbb R})$.
Let $0 = a_0<a_1<\ldots<a_{N+1} = C$ be a partition of $[0,C]$ such that
$a_{i+1}-a_i < \varepsilon/\nu(Y)$, $0\leq i\leq N$,
and $g(x_i) = a_i$, $1\leq i\leq N$.
Set $C_0 = (0,x_1]$, $C_i = (x_i,x_{i+1}]$ if $0<i<N$,
and $C_N = (x_{N},\infty)$. Then $\mathbb R=\cup_{i=1}^NC_i$ and $\mu_f(C_i)<\varepsilon/\nu(Y)$,
$1\leq i \leq N$.

Setting $\alpha_j = f^{-1}(C_j)$, $\beta_j = g^{-1}(C_j)$ and $D = \{\alpha_j\times \beta_j: 1\le j\le N\}$, we
now see that $D$ is diagonal, $E\subseteq U_D$ and $r(D) < \varepsilon$.
\end{proof}

Fix $T\in {\cl B}(H_1,H_2)$, $F\in \cl C_1(H_2,H_1)$ and set
$$\varphi(\Pi) = \langle\pi_{\Pi}(T),F\rangle,$$
for each rectangle $\Pi\subseteq X\times Y$.
We say that $\Pi$ is \emph{$\varphi$-null}, if $\varphi(\Pi^{\prime}) = 0$
for all rectangles $\Pi^{\prime}\subseteq \Pi$.

\begin{lemma}\label{phi-zero}
If $\Pi = \cup_{j=1}^{\infty}\Pi_j$ and each $\Pi_j$ is $\varphi$-null then $\Pi$ is $\varphi$-null.
\end{lemma}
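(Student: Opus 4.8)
The plan is to reduce the countable union to a finite one by means of the compactness Lemma \ref{e-compactness}, to dispose of finite unions by the bi-additivity of $\varphi$ alone, and then to bridge back to the countable case by a limiting argument resting on the trace-class nature of $F$. I would first isolate the only two structural properties of $\varphi$ that are needed. Writing $\varphi(\alpha\times\beta)=\tr(P(\beta)TP(\alpha)F)$, finite disjoint decompositions $\alpha=\dot{\cup}_p A_p$, $\beta=\dot{\cup}_q B_q$ give $P(\alpha)=\sum_p P(A_p)$, $P(\beta)=\sum_q P(B_q)$, whence $\varphi(\alpha\times\beta)=\sum_{p,q}\varphi(A_p\times B_q)$; thus $\varphi$ is bi-additive on rectangles. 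Secondly, for rectangles $\alpha_n\times\beta_n\subseteq\alpha'\times\beta'$ with $\mu(\alpha'\setminus\alpha_n)\to 0$ and $\nu(\beta'\setminus\beta_n)\to 0$, one has $P(\alpha_n)\to P(\alpha')$ and $P(\beta_n)\to P(\beta')$ strongly; since $F\in\cl C_1(H_2,H_1)$, this yields $P(\alpha_n)FP(\beta_n)\to P(\alpha')FP(\beta')$ in trace norm, and as $|\tr(TA)|\le\|T\|\,\|A\|_1$, it follows that $\varphi(\alpha_n\times\beta_n)\to\varphi(\alpha'\times\beta')$.

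To prove that $\Pi$ is $\varphi$-null it suffices, by definition, to show $\varphi(\Pi')=0$ for an arbitrary rectangle $\Pi'=\alpha'\times\beta'\subseteq\Pi$. Since $\Pi'\subseteq\cup_j\Pi_j$, with $\Pi'$ $\omega$-closed and each $\Pi_j$ $\omega$-open, Lemma \ref{e-compactness} provides, for each $\varepsilon>0$, sets $X_\varepsilon,Y_\varepsilon$ with $\mu(X\setminus X_\varepsilon)<\varepsilon$ and $\nu(Y\setminus Y_\varepsilon)<\varepsilon$ such that the rectangle $R_\varepsilon:=\Pi'\cap(X_\varepsilon\times Y_\varepsilon)$ is contained in a finite union $\Pi_{j_1}\cup\dots\cup\Pi_{j_N}$. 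For the finite step I would form the grid generated by the $X$- and $Y$-sides of $R_\varepsilon,\Pi_{j_1},\dots,\Pi_{j_N}$: each atom $C$ of this grid meeting $R_\varepsilon$ lies inside $R_\varepsilon$, and, being an atom contained in $\cup_m\Pi_{j_m}$ (each of which is a union of atoms), lies inside a single $\Pi_{j_m}$; as $\Pi_{j_m}$ is $\varphi$-null, $\varphi(C)=0$, so bi-additivity gives $\varphi(R_\varepsilon)=\sum_C\varphi(C)=0$ (any atom meeting $R_\varepsilon$ yet lying outside $\cup_m\Pi_{j_m}$ is marginally null, hence a rectangle with a null side, contributing $0$). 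Finally, taking $\varepsilon=1/n$, the rectangles $R_{1/n}=(\alpha'\cap X_{1/n})\times(\beta'\cap Y_{1/n})$ satisfy $\mu(\alpha'\setminus X_{1/n})\to0$ and $\nu(\beta'\setminus Y_{1/n})\to0$, so the continuity property forces $\varphi(\Pi')=\lim_n\varphi(R_{1/n})=0$, as required.

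The main obstacle is conceptual rather than computational. One is tempted to extend $\varphi$ to a countably additive complex measure on the product $\sigma$-algebra and then invoke countable subadditivity of its total variation; this fails, because $\varphi$ is merely a bimeasure whose Fr\'echet variation may be infinite (the entrywise sum $\sum_{p,q}|\varphi(A_p\times B_q)|$ over refining grids need not be bounded by $\|T\|\,\|F\|_1$), so no such measure need exist. The device that circumvents this is exactly Lemma \ref{e-compactness}: using the standing finiteness and compact-topology hypotheses of this subsection, it collapses every countable cover to a finite one on a rectangle of almost full measure, where only the elementary bi-additivity of $\varphi$ is needed, after which the trace-class continuity of $F$ carries the conclusion back to $\Pi'$. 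I expect verifying the trace-norm continuity and the marginal-null bookkeeping in the grid to be the only technical points, both routine.
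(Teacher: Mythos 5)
Your proof is correct and follows essentially the same route as the paper's: both reduce to a finite subcover via Lemma \ref{e-compactness}, dispose of the finite union by finite additivity of $\varphi$ over disjoint rectangle decompositions (the paper assumes the $\Pi_j$ pairwise disjoint where you pass to a common grid), and recover the value on the full rectangle in the limit using the strong convergence $P(X_\varepsilon)\to I$, $P(Y_\varepsilon)\to I$ paired against the trace-class $F$. The only cosmetic difference is that the paper argues for $\Pi$ itself and notes that the same argument applies to any subrectangle, whereas you run it directly on an arbitrary $\Pi'\subseteq\Pi$.
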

\begin{proof}
It suffices to show that $\varphi(\Pi)=0$. Without loss of generality we may assume that all $\Pi_j$ are mutually disjoint.

By Lemma \ref{e-compactness}, for each $\varepsilon$, there are $X_{\varepsilon}\subseteq X$ and $Y_{\varepsilon}\subseteq Y$ such that $\mu(X\setminus X_{\varepsilon})<\varepsilon$, $\nu(Y\setminus Y_{\varepsilon})<\varepsilon$ and the rectangle $\Pi^{\varepsilon} = \Pi\cap (X_{\varepsilon}\times  Y_{\varepsilon})$ is covered by a finite number of recangles $\Pi_j$, say, $\Pi^{\varepsilon}\subseteq \cup_{j=1}^m\Pi_j$.
Set $\Pi_j^{\epsilon} = \Pi_j \cap (X_{\epsilon} \times Y_{\epsilon})$; we have
$$\varphi(\Pi^{\varepsilon}) = \sum_{j=1}^m \varphi(\Pi_j^{\varepsilon}) = 0.$$
On the other hand,
if $\Pi = \alpha\times \beta$ then
$\varphi(\Pi^{\varepsilon}) = \langle P(Y_{\varepsilon})P(\beta)TP(\alpha)P(X_{\varepsilon}), F\rangle$ and,
since $P(X_{\varepsilon})\to I$, $P(Y_{\varepsilon})\to I$ in the strong operator topology,
we conclude that
$\lim_{\varepsilon\to 0}\varphi(\Pi^{\varepsilon}) = \varphi(\Pi)$.
Thus, $\varphi(\Pi)=0$ and the proof is complete.
\end{proof}

\begin{theorem}\label{comp-synth}
If $E$ is a set of finite width then
$\frak{M}_{\max}(E)\cap \cl K$ coincides with
the norm-closure $\frak{M}_0(E)$ of the subspace of
$\frak{M}_{\max}(E)$ generated by its rank one operators.
\end{theorem}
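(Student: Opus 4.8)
The plan is to prove the nontrivial inclusion $\frak{M}_{\max}(E)\cap\cl K\subseteq\frak{M}_0(E)$; the reverse is immediate, since every rank one operator supported on $E$ lies in $\frak{M}_{\max}(E)\cap\cl K$ and this intersection is norm closed. Working under the standing assumption of the section that $\mu,\nu$ are finite and arise from compact topologies (the general case following by exhausting $X$ and $Y$ by subsets of finite measure and localising the compact operator), I would fix $T\in\frak{M}_{\max}(E)\cap\cl K$ and $\varepsilon>0$, write $E=\{(x,y):f_i(x)\le g_i(y),\ 1\le i\le n\}$, and produce a finite sum of rank one operators supported on $E$ within $\varepsilon$ of $T$.

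For each coordinate $i$ I would partition $\bb R$ into ordered blocks, built from half-open intervals together with singletons $\{t\}$ at every atom of mass $\ge\varepsilon$ of $(f_i)_*\mu$ or of $(g_i)_*\nu$, chosen fine enough that every non-singleton block $C$ satisfies $(f_i)_*\mu(C)<\varepsilon$ and $(g_i)_*\nu(C)<\varepsilon$. Pulling back yields finite measurable partitions $X=\sqcup_{\vec j}\alpha_{\vec j}$, $Y=\sqcup_{\vec k}\beta_{\vec k}$, and hence $T=\sum_{\vec j,\vec k}P(\beta_{\vec k})TP(\alpha_{\vec j})$. I classify each cell: it is \emph{inside} if for every $i$ either $j_i<k_i$, or $j_i=k_i$ with $C^{(i)}_{j_i}$ a singleton (then $f_i\le g_i$ throughout, so the cell lies in $E$); it is \emph{outside} if $j_i>k_i$ for some $i$ (the half-open convention forces $f_i>g_i$, so the cell is marginally disjoint from $E$ and $P(\beta_{\vec k})TP(\alpha_{\vec j})=0$ because $T$ is supported on $E$); and \emph{straddling} otherwise. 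This gives $T=T_{\mathrm{in}}+T_{\mathrm{bd}}$. Each inside summand $P(\beta_{\vec k})TP(\alpha_{\vec j})$ is a compact operator supported on a rectangle contained in $E$, hence a norm limit of finite-rank operators whose rank one constituents are supported on that rectangle, so $T_{\mathrm{in}}\in\frak{M}_0(E)$.

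It remains to show $\|T_{\mathrm{bd}}\|$ is small. Let $\Delta_i=\bigcup_m f_i^{-1}(C^{(i)}_m)\times g_i^{-1}(C^{(i)}_m)$, the union over the \emph{non-singleton} blocks; each $\Delta_i$ is a diagonal system, every straddling cell lies in some $\Delta_i$, and every outside cell inside $\bigcup_i\Delta_i$ contributes $0$, so $T_{\mathrm{bd}}=\pi_W(T)$ with $W=\bigcup_{i=1}^n\Delta_i$. Since $U\mapsto\pi_U$ is a projection-valued measure on the Boolean algebra generated by rectangles, inclusion–exclusion gives $\pi_W=\sum_{\emptyset\ne S\subseteq\{1,\dots,n\}}(-1)^{|S|+1}\pi_{D_S}$, where $D_S=\bigcap_{i\in S}\Delta_i$ is again a diagonal system indexed by one non-singleton block per coordinate of $S$. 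Fixing any $i_0\in S$, the $X$-side of each rectangle of $D_S$ has $\mu$-measure at most $(f_{i_0})_*\mu(C^{(i_0)}_{m_{i_0}})<\varepsilon$, so $r(D_S)<\varepsilon$. Running $\varepsilon\to0$ through such partitions, Lemma \ref{seq} gives $\|\pi_{D_S}(T)\|\to0$ for each of the finitely many $S$, whence $\|T_{\mathrm{bd}}\|\le\sum_S\|\pi_{D_S}(T)\|\to0$ and $T=\lim T_{\mathrm{in}}\in\frak{M}_0(E)$.

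I expect the main obstacle to be the atom bookkeeping: without isolating the atoms of $(f_i)_*\mu$ and $(g_i)_*\nu$, the diagonal strips $\Delta_i$ need not have volume tending to $0$ (a common atom of $f_i$ and $g_i$ produces a persistent rectangle), so Lemma \ref{seq} would fail to apply. The resolving device is that a singleton block forces $f_i=g_i$ on its strip, so that strip may be absorbed into the inside part rather than the boundary; the delicate point is checking that this absorption is consistent with the simultaneous classification across all $n$ coordinates and that the resulting $D_S$ really satisfy $r(D_S)\to0$. Conceptually this is precisely the statement that the measure-theoretic boundary $\bigcup_i\{f_i=g_i\}$ is $n$-quasi-diagonal, which is exactly the hypothesis under which Lemma \ref{seq} annihilates the part of the compact operator $T$ living near it.
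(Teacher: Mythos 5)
Your proof is correct, but it is organised quite differently from the paper's. The paper argues by induction on the width $n$ and by duality: it fixes $F\in\Gamma(X,Y)$ annihilating $\frak{M}_0(E)$, first treats the case where all $f_j$ are non-atomic by covering $E\cap\bigl(\cup_j\{f_j=g_j\}\bigr)$ with $n$-diagonal systems of small volume (Lemma \ref{1-quas}), disposes of the complement of the covering system through the countable additivity of $\varphi$-nullity (Lemma \ref{phi-zero}, which in turn needs Lemma \ref{e-compactness}), and then handles atoms of the $f_j$ by splitting $X$ into pieces on which some $f_j$ is constant and reducing the width by one. You avoid both the induction and the duality: your single finite cell decomposition, obtained by partitioning the ranges of all $f_i,g_i$ simultaneously with the large atoms isolated as singletons, lets you absorb the \emph{equality} cells produced by atoms directly into the rank-one-approximable part (they are honest rectangles contained in $E$), which is exactly the role the paper's width-reduction step plays; the residual straddling cells form a union of $n$ diagonal systems of small volume, killed by inclusion--exclusion and Lemma \ref{seq}. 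What your route buys is a more elementary and constructive argument (explicit finite-rank approximants, no transfinite or inductive bookkeeping, no use of Lemmas \ref{e-compactness}, \ref{1-quas} or \ref{phi-zero}); both proofs ultimately rest on the same quantitative estimate of Lemma \ref{seq}. Two small points to tidy: after removing the atoms of mass $\ge\varepsilon$ the remaining intervals can only be guaranteed to have pushforward measure $<2\varepsilon$ (or take the atom threshold at $\varepsilon/2$), which is harmless; and your closing remark that the boundary $\cup_i\{f_i=g_i\}$ is $n$-quasi-diagonal is not literally true when the $f_i$ have atoms --- that failure is precisely why the paper inducts --- but your argument never uses it, only the quasi-diagonality of the boundary with the large-atom rectangles removed.
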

\begin{proof}
We may assume that the measures $\mu$ and $\nu$ are finite and
the standard spaces $X$ and $Y$ arise from compact topologies. Indeed,
if this is not the case, write $X = \cup_{n=1}^{\infty} X_n$ and $Y = \cup_{n=1}^{\infty} Y_n$
as increasing unions, where $X_n$ and $Y_n$ are compact, $\mu(X_n) < \infty$ and
$\nu(Y_n) < \infty$. Then $P(X_n)\to_{n\to \infty} I$ and $P(Y_n)\to_{n\to \infty} I$
in the strong operator topology. If $T\in \frak{M}_{\max}(E)\cap \cl K$ then
$P(Y_n)TP(X_n)\to_{n\to \infty} T$ in norm, and hence we may restrict our attention to
each of $E\cap (X_n\times Y_n)$, which is a set of finite width when considered
as a subset of $X_n\times Y_n$.

We use induction on the width $n$ of $E$.
With the convention that all measurable rectangles are sets
of width zero, the statement clearly holds for $n=0$.
Suppose that the assertion of the theorem is true for sets of width
smaller than $n$, and let
$$E = \{(x,y)\in X\times Y : f_j(x) \leq g_j(y), j = 1,\dots,n\},$$
where
$f_j : X\to {\mathbb R}$ and $g_j : Y\to {\mathbb R}$ are measurable functions, $j = 1,\ldots,n$.
Let $F\in \Gamma(X,Y)$ be in the annihilator of
$\frak{M}_0(E)$.  We need to show that $\langle T,F\rangle = 0$ for each compact
operator $T\in \frak{M}_{\max}(E)$.
Assume first that all $f_j$, $j = 1,\dots,n$,
are non-atomic.  By Lemma \ref{1-quas}, the sets
$$E_j = \{(x,y): f_j(x) = g_j(y)\}, \ \ \ j = 1,\dots,n,$$
are 1-quasi-diagonal and hence their union
$\cup_{j=1}^nE_j$ is $n$-quasi-diagonal. Let $E^{\prime} = E\cap (\cup_{j=1}^nE_j)$;
then $E^{\prime}$ is $n$-quasi-diagonal and
$$E^{\prime\prime} \stackrel{def}{=} E\setminus E^{\prime} = \{(x,y): f_j(x)<g_j(y), j = 1,\dots,n\}$$
is $\omega$-open.

Let $D$ be an $n$-diagonal system with $E^{\prime}\subseteq U_D$.
If $\Pi$ is a rectangle, disjoint from $U_D$, then
$\Pi\subseteq E^{\prime\prime}\cup E^c$; since both
$E^{\prime\prime}$ and $E^c$ are $\omega$-open,
$\Pi = \cup_{i=1}^{\infty}\Pi_i$ where each of $\Pi_i$ is either a subset of $E^{\prime\prime}$ or of $E^c$.

Let, as above, $\varphi(\alpha\times \beta) = \langle P(\beta)TP(\alpha),F\rangle$,
where $\alpha\subseteq X$ and $\beta\subseteq Y$ are measurable.
If $\Pi_i\subseteq E^c$ and $\Pi_i'\subseteq \Pi$ then $\varphi(\Pi_i') =0$
by the fact that $T$ is supported on $E$.

On the other hand,
if $\Pi_i = \alpha_i\times \beta_i\subseteq E^{\prime\prime}$ then,
clearly, $\Pi_i\subseteq E$ whence
$P(\beta_i)TP(\alpha_i) \in \frak{M}_{0}(\Pi_i)\subseteq \frak{M}_{0}(E)$.
It follows that $\varphi(\Pi_i) =0$. The same argument shows that
$\varphi(\Pi_i') =0$ whenever $\Pi_i'$ is a rectangle with
$\Pi_i'\subseteq \Pi_i$, and hence $\Pi_i$ is $\varphi$-null.
By Lemma \ref{phi-zero}, $\Pi$ is $\varphi$-null.
We thus showed that every rectangle disjoint from $U_D$ is $\varphi$-null.

Let $\widetilde{D} = \{\Pi^{\prime}_k: 1\le k\le m\}$ be a system
such that $(U_{D})^c = U_{\widetilde{D}}$. It follows from the previous paragraphs that
$$\langle\pi_{\widetilde{D}}(T),F\rangle = \sum_{k=1}^m \varphi(\Pi^{\prime}_k) = 0.$$
Hence
$$\langle T,F\rangle = \langle\pi_D(T),F\rangle+ \langle\pi_{\widetilde{D}}(T),F\rangle = \langle\pi_D(T),F\rangle$$
and $|\langle T,F\rangle| \le \|F\|\|\pi_D(T) \|$. Since $E'$ is $n$-quasi-diagonal, there exists a sequence $(D^k)_{k\in \bb{N}}$ of $n$-diagonal systems
such that $E'\subseteq U_{D^k}$ for each $k$ and $r(D^k)\to_{k\to\infty} 0$.
By Lemma \ref{seq}, $\|\pi_{D^k}(T)\|\to_{k\to\infty} 0$ and thus $\langle T,F\rangle = 0$.

Now let $f_j$ be arbitrary. Then we can write $X$ as a disjoint
union $\cup_{k=0}^{\omega}X_k$, $\omega\le \infty$, where  $X_0$ is a subset of $X$ such that all $f_j$ are non-atomic on $X_0$ and for each $k>0$  at least one of the functions $f_j$ is constant on $X_k$.

Set $P_k = P(X_k)$, $F_k(x,y) = \chi_{X_k}(x)F(x,y)$ and $T_k = TP_k$;
then $\langle T,F\rangle = \sum_{k=0}^{\omega} \langle T_k,F_k\rangle$ and it hence suffices
to show that $\langle T_k,F_k\rangle = 0$ for each $k$. It is clear that
$T_k$ is supported on $E_k \stackrel{def}{=} E\cap(X_k\times Y)$ and $F_k$ annihilates $\frak{M}_0(E_k)$.

The equality $\langle T_0,F_0\rangle  = 0$ follows from the
first part of the proof.
Let $k>0$, and suppose, for example, that the function $f_1$ is constant on $X_k$:
$f_1(x) = a$, for $x\in X_k$. Set $Y_k = \{y\in Y: g_1(y)\ge a\}$. Then
$$E_k = \{(x,y)\in X_k\times Y_k : f_j(x)\le g_j(y), j = 2,\dots,n\}.$$
Thus $E_k$ is a set of width at most $n-1$, when considered as a subset of $X_k\times Y_k$.
Since $T$ is supported on $E_k$, we have $T_k = P(Y_k)T_k$. Moreover,
$\chi_{X_k\times Y_k} F_k$ annihilates $\frak{M}_0(E_k)$ and hence
$$\langle T_k,F_k\rangle = \langle P(Y_k)T_k,\chi_{X_k\times Y_k} F_k\rangle = 0$$
by the inductive assumption.
\end{proof}

\begin{corollary}\label{fin-wid}
Let $E$ be a set of finite width.
The following conditions are equivalent:

(i) \ \ $E$ is an operator $U$-set;

(ii) \ $E$ does not support a non-zero Hilbert-Schmidt operator;

(iii) $\mu\times\nu(E) = 0$;

(iv) \ $E$ does not support a  non-zero nuclear operator;

(v) \ \ $E$ does not contain a rectangle of non-zero measure.
\end{corollary}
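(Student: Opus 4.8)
The plan is to combine a short dictionary between rank-one operators and positive-measure rectangles with the synthetic description of $\frak{M}_{\max}(E)\cap\cl K$ provided by Theorem~\ref{comp-synth}, and to peel off condition (iii) by a separate elementary argument.

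First I would record the rank-one dictionary. For nonzero $\xi\in H_1$, $\eta\in H_2$ one computes $P(\beta)(\xi\otimes\eta)P(\alpha)=(\chi_\alpha\xi)\otimes(\chi_\beta\eta)$, so $\xi\otimes\eta$ is supported on $E$ exactly when $(\supp\xi)\times(\supp\eta)\subseteq E$ up to a marginally null set. The easy consequence is that a rectangle $\alpha\times\beta\subseteq E$ of nonzero measure produces the nonzero supported operator $\chi_\alpha\otimes\chi_\beta$. For the converse I would use that $E$ is $\omega$-closed: if $\xi\otimes\eta$ is supported on $E$ but $((\supp\xi)\times(\supp\eta))\setminus E$ is not marginally null, then this difference is $\omega$-open of positive measure, hence marginally contains a rectangle $\alpha\times\beta$ with $\mu(\alpha),\nu(\beta)>0$; this rectangle is marginally disjoint from $E$ yet meets $\supp\xi$ and $\supp\eta$ in positive measure, contradicting the support condition. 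Thus $E$ supports a nonzero rank-one operator if and only if $E$ contains a rectangle of nonzero measure, that is, if and only if (v) fails.

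Next I would arrange the implications (i)$\Rightarrow$(ii)$\Rightarrow$(iv)$\Rightarrow$(v)$\Rightarrow$(i). The first three follow formally from the inclusions of the rank-one operators in the nuclear operators, the nuclear operators in $\cl C_2(H_1,H_2)$, and $\cl C_2(H_1,H_2)$ in $\cl K(H_1,H_2)$: if $E$ carries no nonzero operator of a larger class, then a fortiori it carries none of the smaller class, and for (iv)$\Rightarrow$(v) the easy half of the dictionary rephrases ``no supported rank-one operator'' as (v). To close the cycle I would prove (v)$\Rightarrow$(i): assuming $E$ contains no rectangle of positive measure, the converse half of the dictionary shows that no nonzero rank-one operator is supported on $E$, so none lies in $\frak{M}_{\max}(E)$, whence $\frak{M}_0(E)$, the norm closure of the span of the rank-one operators contained in $\frak{M}_{\max}(E)$, reduces to $\{0\}$. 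By Theorem~\ref{comp-synth}, $\frak{M}_{\max}(E)\cap\cl K=\frak{M}_0(E)=\{0\}$, which says precisely that $E$ is an operator $U$-set.

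Finally I would establish (ii)$\Leftrightarrow$(iii) directly, completing the equivalence of all five statements. Under the identification of $\cl C_2(H_1,H_2)$ with $L^2(Y\times X)$, the operator $T_k$ is supported on $E$ if and only if $k$ vanishes almost everywhere off $\hat E=\{(y,x):(x,y)\in E\}$, and a nonzero such $k$ exists if and only if $(\nu\times\mu)(\hat E)=(\mu\times\nu)(E)>0$; hence (ii) holds if and only if $(\mu\times\nu)(E)=0$. The genuinely nonformal ingredient in the whole argument is Theorem~\ref{comp-synth}, which carries the finite-width hypothesis and drives (v)$\Rightarrow$(i); the only real bookkeeping obstacle is the converse half of the rank-one dictionary, where the passage through $\omega$-open sets and marginal null sets must be handled with care.
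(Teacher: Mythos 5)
Your proof is correct and follows essentially the same route as the paper: the only non-formal step in both is (v)$\Rightarrow$(i), driven by Theorem~\ref{comp-synth} exactly as you use it, with the remaining implications handled by the trivial inclusions among rank-one, nuclear, Hilbert--Schmidt and compact operators and the observation that a supported rank-one operator $u\otimes v$ forces the rectangle $\supp u\times\supp v$ into $E$. The one small divergence is organisational: the paper runs the single cycle (i)$\Rightarrow$(ii)$\Rightarrow$(iii)$\Rightarrow$(iv)$\Rightarrow$(v)$\Rightarrow$(i) and invokes \cite[Theorem~6.7]{eks} (a supported nuclear operator yields a supported rank-one operator) for (iii)$\Rightarrow$(iv), whereas your separate equivalence (ii)$\Leftrightarrow$(iii) via the $L^2(\hat E)$ description of supported Hilbert--Schmidt operators lets you bypass that citation entirely -- just make sure to cut $\chi_{\hat E}$ down to a finite-measure rectangle so that it actually lies in $L^2$.
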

\begin{proof}
We may assume that $\mu$ and $\nu$ are finite, for if
$X=\cup_{k=1}^\infty X_k$, $Y=\cup_{k=1}^\infty Y_k$, where $(X_k)_{k=1}^\infty$ and
$(Y_k)_{k=1}^\infty$ are increasing sequences of subsets of finite measure and
$T\in{\cl B}(H_1,H_2)$ is a non-zero compact operator supported in $E$, then
so is $P(Y_k)TP(X_k)$ for some $k$.

(i)$\Rightarrow$(ii) is trivial.

(ii)$\Rightarrow$(iii)
If $\mu\times\nu(E)$ were non-zero, then $T_k$, where $k(x,y)=\chi_E(x,y)$, would be  a non-zero Hilbert-Schmidt operator supported in $E$.

(iii)$\Rightarrow$(iv)
If $E$ supports a non-zero nuclear operator then by
\cite[Theorem 6.7]{eks}, $E$ supports a non-zero rank one
operator $u\otimes v$, $u\in L^2(X,\mu)$, $v\in L^2(Y,\nu)$.
As $u\otimes v$ is supported on $\supp u\times\supp v$, we have
$\mu\times\nu(E)\ne 0$, a contradiction.

(iv)$\Rightarrow$(v) If $E$ contains a non-zero rectangle $\alpha\times \beta$ then
$\chi_{\alpha}\otimes\chi_{\beta}$ is a non-zero nuclear operator supported in $E$, a contradiction.

(v)$\Rightarrow$(i) If $E$ supports a non-zero compact operator then it follows from  Theorem~\ref{comp-synth} that there exists a non-zero rank one operator  $u\otimes v$ supported in $E$. But then $\supp u\times\supp v$ is a non-zero rectangle contained in $E$, a contradiction.
\end{proof}

\noindent {\bf Remark } We note that the conditions from Corollary \ref{fin-wid}
are also equivalent to the set $E$ being a $U_1$-set, as well as to $E$ being a $U_0$-set.

\medskip

We have the following immediate corollary.

\begin{corollary}
A non-zero bounded operator from $L^2(\mathbb{R}^n)$ to $L^2(\mathbb{R}^m)$
cannot be compact if it is supported on a manifold of the form
$y_j = \phi(x_1,\dots,x_n)$, for some measurable function
$\phi : \bb{R}^n\to \bb{R}$ and some $j = 1,\dots,m$,
or on a set that can be partitioned into finitely many such sets.

In particular, the support of a non-zero compact operator from
$L^2(\mathbb{R}^n)$ to $L^2(\mathbb{R}^1)$ is not contained in a smooth
manifold of dimension strictly less than $n+1$.
\end{corollary}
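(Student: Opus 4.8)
The plan is to deduce everything from Corollary~\ref{fin-wid}, which already characterises when a set of finite width carries a non-zero compact operator, namely precisely when it has positive product measure. First I would treat a single manifold $M=\{(x,y)\in\mathbb{R}^n\times\mathbb{R}^m: y_j=\phi(x)\}$. The key observation is that $M$ is a set of finite width: rewriting the equality $y_j=\phi(x)$ as the two inequalities $\phi(x)\le y_j$ and $-\phi(x)\le -y_j$ exhibits $M$ in the form (\ref{eq_fw}) with $f_1=\phi$, $g_1(y)=y_j$, $f_2=-\phi$, $g_2(y)=-y_j$, so $M$ has width at most $2$. Moreover $M$ is $(\mu\times\nu)$-null, since for each fixed $x$ the slice $\{y:y_j=\phi(x)\}$ is a hyperplane in $\mathbb{R}^m$, hence $\nu$-null, and Fubini gives $(\mu\times\nu)(M)=0$. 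By the implication (iii)$\Rightarrow$(i) of Corollary~\ref{fin-wid}, $M$ is an operator $U$-set, i.e. it supports no non-zero compact operator.

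For a finite partition $E=\bigcup_{k=1}^K M_k$ into such manifolds, each $M_k$ is $\omega$-closed (the complement of $\{f\le g\}$ equals $\bigcup_{q\in\mathbb{Q}}\{f>q\}\times\{g<q\}$, which is $\omega$-open, and finite intersections of $\omega$-closed sets are $\omega$-closed) and, by the previous step, an operator $U$-set. Since a finite union of operator $U$-sets is again an operator $U$-set \cite[Proposition~5.3]{stt}, $E$ is an operator $U$-set, so a compact operator supported on $E$ must vanish. This settles the first assertion.

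For the \lq\lq in particular'' statement ($m=1$), let $S\subseteq\mathbb{R}^{n+1}=\mathbb{R}^n_x\times\mathbb{R}_y$ be a smooth submanifold of dimension $d\le n$, and let $\pi$ be the projection forgetting $y$. If $d<n$ the argument is clean: every point of $S$ is a critical point of $\pi|_S$ (its differential has rank $\le d<n$), so $\pi(S)$ is $\mu$-null by Sard's theorem, whence $S\subseteq\pi(S)\times\mathbb{R}$ is contained in a single rectangle of product measure zero, an operator $U$-set by Corollary~\ref{fin-wid}. If $d=n$ I would split $S=V\sqcup R$ with $V=\{q\in S:\partial_y\in T_qS\}$ and $R=S\setminus V$: on $R$ the map $\pi|_S$ is a local diffeomorphism, so by the implicit function theorem and second countability $R$ is covered by countably many graphs $\{y=\phi_k(x)\}$, each an operator $U$-set as above; on $V$ the map $\pi|_S$ is critical, so $\pi(V)$ is $\mu$-null by Sard's theorem and $V\subseteq\pi(V)\times\mathbb{R}$ is again an operator $U$-set. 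Thus $S$ lies in a countable union of operator $U$-sets, and I would finish by invoking stability of operator $U$-sets under countable unions (the operator analogue of Bary's theorem), accessible through the m-resolution machinery of Theorem~\ref{p_msep}.

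The main obstacle is precisely this last passage from finite to countable unions. The finite-union result \cite[Proposition~5.3]{stt} does not suffice for a general hypersurface: an $n$-dimensional $S$ (for instance a plane spiral when $n=1$) may meet a vertical line $\{x\}\times\mathbb{R}$ in infinitely many points and so genuinely requires infinitely many graph pieces, none of which can be amalgamated. Establishing the countable-union stability for operator $U$-sets, and controlling the ambient $\omega$-closure (for instance by producing an explicit m-resolution adapted to the stratification $V\subseteq S$), is where the real work lies. For $d<n$, and more generally whenever $S$ is covered by finitely many of the pieces above, no such argument is needed and the statement follows directly from Corollary~\ref{fin-wid} and \cite[Proposition~5.3]{stt}.
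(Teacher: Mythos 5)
Your proof of the first assertion is correct, but it takes a different route from the paper's. The paper observes that $\{(x,y):y_j=\phi(x)\}=\{(x,y):\psi(y)=\phi(x)\}$ with $\psi(y)=y_j$ non-atomic, so Lemma \ref{1-quas} makes the set $1$-quasi-diagonal and Lemma \ref{seq} kills any compact operator supported on it; finite partitions are then handled, as in your argument, by \cite[Proposition 5.3]{stt}. You instead write the equality $y_j=\phi(x)$ as two inequalities to exhibit the graph as a set of width two, check by Fubini that it is $(\mu\times\nu)$-null, and invoke the implication (iii)$\Rightarrow$(i) of Corollary \ref{fin-wid}. Both arguments are valid. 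The paper's is the lighter one: it uses only the two elementary lemmas on quasi-diagonal systems, whereas your appeal to Corollary \ref{fin-wid} routes through (iii)$\Rightarrow$(iv)$\Rightarrow$(v)$\Rightarrow$(i) and hence through Theorem \ref{comp-synth}, the main structural result of the section. On the other hand, your version makes transparent exactly which hypothesis is doing the work (null product measure of a finite-width set), which is a useful way to remember the result.

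For the ``in particular'' clause your argument is incomplete, as you acknowledge: when $d=n$ the regular part of the hypersurface decomposes only into \emph{countably} many graphs over the $x$-coordinates, and neither \cite[Proposition 5.3]{stt} nor Theorem \ref{p_msep} (without an m-resolution) gives closure of operator $U$-sets under countable unions. Your Sard-theorem treatment of the critical set and of the case $d<n$ is fine (a rectangle $N\times\mathbb{R}$ with $\mu(N)=0$ is marginally null and so supports no non-zero operator), and your spiral example correctly shows that the second assertion does not reduce to the first by a finite partition. You should be aware, though, that the paper itself supplies no argument for this clause -- its proof stops after the finite-partition case -- so you have not missed a trick that the authors use; you have instead isolated a genuine point that the paper leaves implicit. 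To close it along your lines one would need either a countable-union statement for operator $U$-sets adapted to this stratification, or an argument showing directly that each chart of the manifold, being locally a graph over \emph{some} coordinate $d$-plane, is contained in a set of the form $\{f(x)=g(y)\}$ with one of $f,g$ non-atomic, to which Lemmas \ref{1-quas} and \ref{seq} apply.
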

\begin{proof}
Assume, without loss of generality, that $j = 1$. Let
$\psi : \bb{R}^m\to \bb{R}$ be given by $\psi(y_1,\ldots, y_m) = y_1$ and
$E=\{(x_1,\ldots x_n,y_1,\ldots, y_m)\in\mathbb R^n\times\mathbb R^m: y_1=\phi(x_1,\ldots, x_n)\}=\{(x,y)\in\mathbb R^n\times\mathbb R^m: \psi(y)=\phi(x)\}$.
As $\psi$ is non-atomic, Lemma \ref{1-quas} implies that
$E$ is $1$-diagonal. By Lemma~\ref{seq}, $E$ does not support a non-zero compact operator.
By \cite[Proposition 5.3]{stt} there is no non-zero compact operator supported
on a set that can be partitioned into finitely many sets of the form
$\{(x_1,\ldots,x_n,y_1,\ldots, y_m) : y_j = \phi(x_1,\dots,x_n)\}$.
\end{proof}

\begin{corollary}
Let $H$ be a closed subgroup of a locally compact second countable group $G$.
Then $H$ is an $M$-set if and only if $m(H)>0$.
\end{corollary}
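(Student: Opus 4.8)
The plan is to reduce the statement to the theory of sets of finite width developed in this section, using the correspondence between $M$-sets and operator $M$-sets from Theorem~\ref{th_iffop}. The \lq\lq if'' direction is immediate: if $m(H)>0$ then, by Remark~\ref{r_posme}, $H$ is an $M_0$-set and hence an $M$-set. So the substance lies in proving that $m(H)=0$ forces $H$ to be a $U$-set, which I would establish by contraposition.

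First I would record the structure of $H^*$. For a subgroup $H$ one has $(s,t)\in H^*$ if and only if $ts^{-1}\in H$, which is equivalent to $Hs=Ht$; thus $H^*$ is precisely the equivalence relation on $G$ given by equality of right cosets. Since $G$ is second countable and $H$ is closed, the coset space $H\backslash G$ is a standard Borel space, and hence admits a Borel injection $\iota$ into $\mathbb{R}$. Composing $\iota$ with the (continuous) quotient map $q:G\to H\backslash G$ yields a Borel, hence measurable, function $f:G\to\mathbb{R}$ with the property that $f(s)=f(t)$ if and only if $Hs=Ht$. Consequently
\begin{equation*}
H^* = \{(s,t)\in G\times G : f(s)\le f(t)\ \text{ and }\ (-f)(s)\le (-f)(t)\},
\end{equation*}
so that $H^*$ is a set of finite width (of width at most $2$) in the sense of (\ref{eq_fw}).

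Next I would compute the product measure of $H^*$. For fixed $s\in G$ the section $\{t:(s,t)\in H^*\}$ equals the right coset $Hs$, which is a right translate of $H$; since the right translate of left Haar measure is a positive scalar multiple of $m$, right translation carries $m$-null sets to $m$-null sets, so $m(H)=0$ implies $m(Hs)=0$ for every $s$. By Fubini's theorem, $(m\times m)(H^*)=\int_G m(Hs)\,ds=0$. Thus $H^*$ is a set of finite width with $(m\times m)(H^*)=0$, so condition (iii) of Corollary~\ref{fin-wid} holds; by the equivalence (i)$\Leftrightarrow$(iii) there, $H^*$ is an operator $U$-set. Finally, Theorem~\ref{th_iffop} shows that $H$ is a $U$-set, that is, $H$ is not an $M$-set, which completes the contrapositive.

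I expect the main obstacle to be the first step, namely producing a single measurable function separating the right cosets of $H$ so as to exhibit $H^*$ as a set of finite width; this rests on the standard Borel structure of the quotient $H\backslash G$, which in turn uses second countability in an essential way. The remaining measure-theoretic verifications (invariance of null sets under right translation and the Fubini computation) are routine.
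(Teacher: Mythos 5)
Your argument is correct, and it reaches the conclusion by a genuinely different route from the paper. The paper also reduces to showing that $H^*$ is an operator $U$-set via Theorem~\ref{th_iffop}, but it then writes $H^*=(q\times q)^{-1}(D)$, where $q:G\to H\backslash G$ is the quotient map and $D$ is the diagonal of the coset space, checks that $q_*m$ is non-atomic (precisely because $m(aH)=m(H)=0$), observes that any bounded operator on $L^2(H\backslash G,q_*m)$ supported on $D$ is a multiplication operator and hence cannot be a non-zero compact operator, and finally transfers this back to $H^*$ by the Inverse Image Theorem --- in the strengthened form of Remark~\ref{elef}~(ii) (Eleftherakis), which is needed because $q$ is far from injective. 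You instead push the coset space into $\bb{R}$ by a Borel injection (legitimate, since $H\backslash G$ is Polish by second countability and hence standard Borel), which exhibits $H^*$ as a set of width at most $2$, and then you invoke the Fubini computation $(m\times m)(H^*)=\int_G m(Hs)\,ds=0$ together with Corollary~\ref{fin-wid}. Your route has the advantage of staying entirely inside the finite-width machinery of Section~\ref{s_fw} and avoiding the appeal to the external (preprint) result quoted in Remark~\ref{elef}~(ii); the paper's route avoids the Borel-isomorphism step and is closer in spirit to the \lq\lq diagonal of the quotient'' picture. Both correctly handle the easy direction via Remark~\ref{r_posme}. One small point worth making explicit in a write-up: the function $f=\iota\circ q$ is only Borel, not continuous, but the definition (\ref{eq_fw}) of a set of finite width requires only measurability, so this is harmless.
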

\begin{proof}
If $m(H)>0$ then is an $M$-set by Remark~\ref{r_posme}.
Assume now that $m(H)=0$. By Theorem~\ref{th_iffop}, it suffices to see that
$$H^*=\{((s,t): ts^{-1}\in H\}=\{(s,t): Ht=Hs\}$$
is an operator $U$-set.
Let $f:G\to G/H$, $f(s)=Hs$.
Then $H^*=f^{-1}\times f^{-1}(D)$, where $D=\{(z,z):z\in G/H\}$. By \cite[5.22, 8.14]{hewittI}
$G/H$ is a metrisable $\sigma$-compact space.
The measure $f_*m$ on $G/H$ is non-atomic as $f_*m(\{aH\}) = m(aH) = m(H) = 0$, for every $a\in G$.
Since every bounded operator on $L^2(G/H,f_*m)$
supported on $D$ is a multiplication operator, the only compact operator on
$L^2(G/H,f_*m)$ supported on $D$ is the zero operator.  Therefore $D$ is an operator $U$-set and, by Remark~\ref{elef} (2), so is $H^*$.
 \end{proof}

If $\omega: G \rightarrow \bb{R}^+$ is a continuous homomorphism,
let $E_{\omega,t} = \{t \in G : \omega(t) \leq t\}$; we call the subsets of $G$ of this form
\emph{level sets} (see \cite{et}).

\begin{corollary}\label{c_fwha}
Let $E_j\subseteq G$, $j = 1,\dots,n$, be level sets.
The set $E \stackrel{def}{=} \cap_{j=1}^n E_j$ is an $M$-set if and only if $m(E) > 0$.
\end{corollary}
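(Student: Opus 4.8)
The plan is to reduce the assertion to Corollary \ref{fin-wid} through the correspondence $E\leftrightarrow E^*$ supplied by Theorem \ref{th_iffop}. First I would observe that $E$ is closed: each level set $E_j=\{s\in G:\omega_j(s)\le c_j\}$ is closed because $\omega_j$ is continuous, and a finite intersection of closed sets is closed. Hence Theorem \ref{th_iffop}(a) applies and tells us that $E$ is an $M$-set if and only if $E^*$ is an operator $M$-set, that is, if and only if $E^*$ is \emph{not} an operator $U$-set. (The easy implication $m(E)>0\Rightarrow M$-set is in any case already contained in Remark \ref{r_posme}.)

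The crux of the argument is to show that $E^*$ is a set of finite width in $G\times G$. Writing $\rho_j=\log\omega_j$, each $\rho_j:G\to\bb{R}$ is a continuous homomorphism into the additive reals, so that $\rho_j(ts^{-1})=\rho_j(t)-\rho_j(s)$. Consequently
$$E_j^*=\{(s,t):ts^{-1}\in E_j\}=\{(s,t):\rho_j(t)-\rho_j(s)\le\log c_j\}=\{(s,t):f_j(s)\le g_j(t)\},$$
where $f_j(s)=-\rho_j(s)$ and $g_j(t)=\log c_j-\rho_j(t)$ are continuous (in particular measurable). Therefore
$$E^*=\bigcap_{j=1}^n E_j^*=\{(s,t)\in G\times G:f_j(s)\le g_j(t),\ j=1,\dots,n\}$$
is a set of width at most $n$. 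I expect this to be the main obstacle: one must exploit the homomorphism property to linearise the membership condition $ts^{-1}\in E_j$ into a single inequality separating the two variables $s$ and $t$, which is exactly what places $E^*$ in the finite-width class to which Corollary \ref{fin-wid} applies.

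Finally I would relate the two measure conditions. By Corollary \ref{fin-wid}, applied to the finite-width set $E^*\subseteq G\times G$ with $\mu=\nu=m$, the set $E^*$ is an operator $U$-set if and only if $(m\times m)(E^*)=0$. It thus remains to check that $(m\times m)(E^*)=0$ precisely when $m(E)=0$. Since $G$ is second countable, $m$ is $\sigma$-finite, so Fubini's theorem applies; the $s$-slice of $E^*$ is $\{t:ts^{-1}\in E\}=Es$, whence
$$(m\times m)(E^*)=\int_G m(Es)\,dm(s).$$
By left invariance the set function $A\mapsto m(As)$ is again a left Haar measure, hence equals $\Delta(s)\,m(\cdot)$ for the modular function $\Delta$ of $G$; thus $m(Es)=\Delta(s)\,m(E)$ and $(m\times m)(E^*)=m(E)\int_G\Delta(s)\,dm(s)$. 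As $\Delta(s)>0$ for every $s$ and $m(G)>0$, this quantity vanishes when $m(E)=0$ and is strictly positive (possibly $+\infty$) when $m(E)>0$. Chaining the three steps gives
$$E\ \text{is an}\ M\text{-set}\iff E^*\ \text{is an operator}\ M\text{-set}\iff (m\times m)(E^*)>0\iff m(E)>0,$$
which is the desired equivalence.
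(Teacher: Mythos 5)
Your proof is correct and follows essentially the same route as the paper: pass to $E^*$ via Theorem \ref{th_iffop}, note that each $E_j^*$ has width one so $E^*$ has finite width, apply Corollary \ref{fin-wid}, and convert $(m\times m)(E^*)>0$ into $m(E)>0$ using the modular function. The only difference is that you spell out the width-one verification (via $\log\omega_j$) that the paper leaves as "straightforward to check."
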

\begin{proof}
It is straightforward to check that if $F\subseteq G$ is a level set then
$F^*$ is a set of width one.
It follows that $E^*$ is a set of finite width. By Theorem \ref{th_iffop} and
Corollary \ref{fin-wid}, $E$ is an $M$-set if and only if
$(m\times m)(E^*) > 0$.
This condition is equivalent to $m(E) > 0$ by
the identity
$$
(m\times m)(E^*) = \int_G m(Et) dt = \int_G \Delta(t) m(E) dt,
$$
where $\Delta$ is the modular function.
\end{proof}

\section{Closable multipliers on group C*-algebras}\label{s_cmga}

Let $G$ be a locally compact group equipped with left Haar measure $m$
and $\psi : G\to \bb{C}$ be a measurable function.
It is well-known \cite{bf}, \cite{j}
(see also \cite[Theorem 8.3]{pisier}) that
pointwise multiplication on $L^1(G)$ by the function
$\psi$ defines a completely bounded map on $C^*_r(G)$ if and only if
the function $N(\psi)$ is a Schur multiplier.
In this section, we prove a version of this result for closable maps (see Theorem \ref{th_duc}).

Let
$$D(\psi) = \{f\in L^1(G) : \psi f\in L^1(G)\};$$
it is easy to see that
the operator $f\rightarrow \psi f$, $f\in D(\psi)$, viewed as
a densely defined operator on $L^1(G)$, is closable.
Since $\lambda(L^1(G))$ is dense in $C_r^*(G)$ and $\|\lambda(f)\|\leq \|f\|_1$, $f\in L^1(G)$,
the space $\lambda(D(\psi))$ is dense in $C_r^*(G)$
in the operator norm.
Thus, the operator $S_{\psi} : \lambda(D(\psi)) \rightarrow C_r^*(G)$ given by
$S_{\psi}(\lambda(f)) = \lambda(\psi f)$ is a densely defined operator
on $C_r^*(G)$.

We wish to study the question of when $S_{\psi}$ is closable.
To this end, we recall \cite{eymard} that the Banach space dual of $C^*_r(G)$
can be canonically identified with
the weak* closure $B_{\lambda}(G)$ of $A(G)$ within the Fourier-Stieltjes algebra $B(G)$.
A direct verification shows that the domain of
$S_{\psi}^*$ is equal to
$$J_{\psi}^{\lambda} \stackrel{def}{=}
J_{\psi}^{B_\lambda(G)} = \{g\in B_\lambda(G) : \psi g \in^m B_\lambda(G)\}$$
and that $S_{\psi}^*(g)$ is equivalent to $\psi g$ for every $g\in J_{\psi}^\lambda$.
By Proposition \ref{wstarc}, $S_{\psi}$ is closable (resp. weak* closable) if and only if
$J_{\psi}^\lambda$ is weak* dense (resp. norm dense) in $B_{\lambda}(G)$.
We denote by $\clos(G)$ the set of all measurable functions $\psi$ for which $S_{\psi}$ is closable.

A function $f$ on $G$ is said to \emph{belong to} $A(G)$
(resp. \emph{almost belong to} $A(G)$) \emph{at the point} $t\in G$
if there exists a neighbourhood $U$ of $t$ and a function $u\in A(G)$ such that $f(s) = u(s)$ for all
(resp. $m$-almost all) points $s\in U$.
Set \cite{stt}
$$E_{f} = \{t\in G : f \mbox{ does not almost belong to } A(G) \mbox{ at } t\}.$$
We say that $f$ (almost) belongs locally to $A(G)$ if $f$
(almost) belongs to $A(G)$ at every point and let $A(G)^{\loc}$ be the set of functions which
belong to $A(G)$ at every point. 
If $f$ almost belongs to $A(G)$ at every point then $f$ is equivalent to a function from $A(G)^{\loc}$.
To see this, we first show that, given a compact set $K\subseteq G$ and a function
$f$ that almost belong to $A(G)$ at each point of $G$, there exists  $g\in A(G)$ such that $f$ is equivalent
to $g$ on $K$. In fact, for each $t\in G$ there exists a neighbourhood $V_t$ of $t$ and $g_t\in A(G)$ such that $f\sim g_t$on $V_t$. Then $K\subseteq \cup_{t\in F} V_t$ for some finite $F\subseteq K$.
By the regularity of $A(G)$, there exist $h_t\in A(G)$, $t\in F$,
such that $\sum_{t\in F} h_t(x)=1$ if $x\in K$ and $h_t(x)=0$ if $x\notin V_t$, $t\in F$.
Hence, for almost all $x\in K$, we have
$f(x)=\sum_{t\in F} f(x)h_t(x) = \sum_{t\in F} g_t(x)h_t(x)$,
while $\sum_{t\in F} g_th_t\in A(G)$.
As the group $G$ is $\sigma$-compact we can find compact subsets $K_n\subseteq G$,
$K_n\subseteq K_{n+1}$ such that $G = \cup_{n=1}^\infty K_n$, and a sequence of functions $g_n\in A(G)$ such that $f\sim g_n$ on $K_n$ for any $n$.
As $g_n$ are continuous, we obtain $g_{n+1}=g_n$ on $K_n$.
Define a function $g$ by letting $g(x)=g_n(x)$ if $x\in K_n$. Then $g$ is well-defined, continuous
and $f\sim g$. Clearly, $g$ belongs to $A(G)$ at every point of $G$.

The following fact was established in \cite{stt} in the case $G$ is abelian; its proof,
however, does not use the commutativity of $G$.

\begin{lemma}[\cite{stt}]\label{l_epsi}
For every measurable function $\psi : G\rightarrow \bb{C}$, let
$$J_{\psi} \stackrel{def}{=} J_{\psi}^{A(G)} = \{f\in A(G) : \psi f\in^m A(G)\}.$$
Then $E_{\psi} = \nul J_{\psi}$.
\end{lemma}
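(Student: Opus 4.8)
The plan is to prove the two inclusions $\nul J_{\psi}\subseteq E_{\psi}$ and $E_{\psi}\subseteq\nul J_{\psi}$ by working with their (open) complements. First I would dispose of the inclusion $\nul J_{\psi}\subseteq E_{\psi}$, equivalently $E_{\psi}^{c}\subseteq(\nul J_{\psi})^{c}$. Suppose $t_{0}\notin E_{\psi}$, so that $\psi = u$ $m$-almost everywhere on some open neighbourhood $U$ of $t_{0}$, for some $u\in A(G)$. Using the regularity of $A(G)$, choose $f\in A(G)$ with $f(t_{0})=1$ that vanishes on $U^{c}$. Then $\psi f$ and $uf$ agree $m$-almost everywhere on $U$ and identically (both being $0$) on $U^{c}$; hence $\psi f\in^{m} A(G)$, so that $f\in J_{\psi}$, and since $f(t_{0})\neq 0$ we conclude $t_{0}\notin\nul J_{\psi}$.

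The reverse inclusion $E_{\psi}\subseteq\nul J_{\psi}$ is the substantive one, and it reduces to dividing an element of $A(G)$ by another one locally. Assume $t_{0}\notin\nul J_{\psi}$ and fix $f\in J_{\psi}$ with $c:=f(t_{0})\neq 0$, together with $v\in A(G)$ satisfying $\psi f = v$ $m$-almost everywhere. Since $f$ is continuous, $\psi=v/f$ $m$-almost everywhere on a neighbourhood of $t_{0}$, so it suffices to show that $1/f$ agrees, near $t_{0}$, with an element of $A(G)$ up to an additive constant; multiplying through by $v\in A(G)$ will then exhibit $\psi$ as almost belonging to $A(G)$ at $t_{0}$. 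To construct this local inverse I would pass to the unitization $A(G)^{+}=A(G)\oplus\mathbb{C}1$, whose character space is $G\sqcup\{\infty\}$, consisting of the point evaluations at the elements of $G$ (recall that $A(G)$ has spectrum $G$) together with the augmentation character $\infty$.

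Choosing an open $V\ni t_{0}$ with $|f-c|<|c|$ on $V$ and, by regularity, a function $h\in A(G)$ with $0\le h\le 1$, $h\equiv 1$ on a neighbourhood of $t_{0}$ and $\supp h\subseteq V$, I would set
\[
g=hf+(1-h)c=c+h(f-c)\in A(G)^{+}.
\]
Then $g=c$ wherever $h=0$, $g=f$ on the neighbourhood where $h\equiv 1$, and $|g-c|\le|f-c|<|c|$ on $\supp h$, so $g$ vanishes at no point of $G$; since its augmentation value is $c\neq 0$ as well, the Gelfand transform of $g$ is nowhere zero on the character space of $A(G)^{+}$, whence $g$ is invertible there. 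Writing $g^{-1}=w+\tfrac{1}{c}1$ with $w\in A(G)$, and using $g=f$ near $t_{0}$, we obtain $1/f=w+\tfrac{1}{c}$ on a neighbourhood of $t_{0}$, so that $\psi=v/f=vw+\tfrac{1}{c}v\in A(G)$ $m$-almost everywhere there; thus $t_{0}\notin E_{\psi}$. I expect the construction and verification of this local inverse — in particular the choice of $h$ guaranteeing that $g$ is nonvanishing on the \emph{full} character space of $A(G)^{+}$, so that the Gelfand criterion yields invertibility — to be the main obstacle, the remaining steps being routine manipulations with null sets and the regularity of $A(G)$. Since only the (automatic) commutativity of the Banach algebra $A(G)$ is used, the argument applies to an arbitrary locally compact group $G$.
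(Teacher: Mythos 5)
Your proof is correct. The paper itself does not reprove this lemma (it is quoted from \cite{stt}, with the remark that the argument there does not use commutativity of $G$), and your argument is essentially the standard one behind that reference: regularity of $A(G)$ gives the inclusion $E_{\psi}^{c}\subseteq(\nul J_{\psi})^{c}$, and the substantive inclusion is obtained by locally inverting $f$ at $t_{0}$ — your explicit construction of $g=c+h(f-c)$ in the unitization, nonvanishing on all of $G\sqcup\{\infty\}$ and hence invertible by the Gelfand criterion since $A(G)$ has spectrum $G$, is a clean way to produce the local inverse that the usual "local invertibility in a regular semisimple commutative Banach algebra" argument asserts.
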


We say that a locally compact group $G$ has {\it property {\rm (A)}} if  there exists a net  $(u_i)\subseteq A(G)$ such that for each $g\in B_\lambda(G)$, $u_ig\to g$ in the weak$^*$-topology of $B_\lambda(G)$.
Note that if $(u_i)\subseteq A(G)$ is a net such that $u_i\to 1$  uniformly on compact sets and
$\text{sup}\|u_i\|_{MA(G)}<\infty$ (in particular, if $G$ is weakly amenable)
then $G$ has property (A).
In fact,
for $g\in B_\lambda(G)$ and $f\in C_c(G)$, we have
$$\langle \lambda(f),gu_i-g\rangle=\int_G f(t)g(t)(u_i(t)-1)dt\to 0.$$
It follows from \cite[Proposition~1.2]{de-canniere-haagerup} that, if $u\in A(G)$ then
$\|u\|_{MA(G)}$ coincides with the norm of $u$ as a
multiplier of $B_\lambda(G)$. Thus,
$\|gu_i-g\|_{B(G)}\leq \|u_i\|_{MA(G)}\|g\|_{B(G)}+\|g\|_{B(G)}$.
The statement now follows from the fact that the set of all $\lambda(f)$, $f\in C_c(G)$, is dense in $C_r^*(G)$.

Since $C_r^*(G)^* = B_\lambda(G)$ and $A(G)\subseteq B_\lambda(G)$, the elements of
$C_r^*(G)$ can be identified with functionals on $A(G)$ continuous with respect to
the restriction to $A(G)$ of the weak* topology of $B_\lambda (G)$; this identification
is made in the next proposition.

\begin{proposition}\label{p_chcl}
Let $G$ be a locally compact group with property {\rm (A)}
and $\psi : G\rightarrow \bb{C}$ be a measurable function.
The operator $S_\psi$ is closable if and only if there is no non-zero operator
$T\in C_r^*(G)$ which annihilates $J_\psi$.
In particular,

(i) \ if $E_\psi$ is an $U$-set then $S_\psi$ is closable;

(ii) if $E_\psi$ is an $M_1$-set then $S_\psi$ is not closable.
\end{proposition}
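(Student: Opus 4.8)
The plan is to exploit the computation of the adjoint given just before the statement, combined with Proposition~\ref{wstarc}, to reduce closability of $S_\psi$ to a weak* density statement, and then to relate that statement to the multiplicity-theoretic hypotheses through property (A). First I would recall that $D(S_\psi^*) = J_\psi^\lambda$, and apply Proposition~\ref{wstarc} with $\cl X = \cl Y = C_r^*(G)$, so that $\cl Y^* = C_r^*(G)^* = B_\lambda(G)$ and $\cl D = J_\psi^\lambda$. The equivalence (iii)$\Leftrightarrow$(iv) then says that $S_\psi$ is closable precisely when the linear subspace $J_\psi^\lambda$ is weak* dense in $B_\lambda(G)$. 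Since $B_\lambda(G)$ is the dual of $C_r^*(G)$, weak* density of $J_\psi^\lambda$ is equivalent to the triviality of its pre-annihilator in $C_r^*(G)$; that is, $S_\psi$ is closable if and only if no non-zero $T\in C_r^*(G)$ annihilates $J_\psi^\lambda$.

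The key step is to show that, for $T\in C_r^*(G)$, annihilating $J_\psi^\lambda$ is the same as annihilating the smaller set $J_\psi\subseteq A(G)$. One direction is immediate from $J_\psi\subseteq J_\psi^\lambda$. For the converse I would use property (A): fix $g\in J_\psi^\lambda$ and a net $(u_i)\subseteq A(G)$ with $u_i h\to h$ weak* in $B_\lambda(G)$ for all $h$. Each $u_i g$ lies in $J_\psi$: since $A(G)$ is a closed ideal of $B(G)$ and $B_\lambda(G)\subseteq B(G)$, we have $u_i g\in A(G)$, and $\psi(u_i g)=u_i(\psi g)$ is, up to an $m$-null set, the product of $u_i\in A(G)$ with an element of $B_\lambda(G)$, hence belongs to $A(G)$ up to a null set. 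Thus, if $T$ annihilates $J_\psi$ then $\langle T,u_i g\rangle=0$ for all $i$; as $T\in C_r^*(G)$ acts weak*-continuously on $B_\lambda(G)$ and $u_i g\to g$ weak*, passing to the limit yields $\langle T,g\rangle=0$. This proves the main equivalence.

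For the two consequences I would use the sandwich $J(E_\psi)\subseteq \overline{J_\psi}\subseteq I(E_\psi)$, where $\overline{J_\psi}$ is the norm closure of $J_\psi$ in $A(G)$. Indeed $J_\psi$ is an ideal of $A(G)$ with $\nul J_\psi = E_\psi$ by Lemma~\ref{l_epsi}; hence $\overline{J_\psi}$ is a closed ideal with the same null set, and the relation $J(E)\subseteq J\subseteq I(E)$ recorded in the preliminaries gives the sandwich. For (i), if $E_\psi$ is a $U$-set then $J(E_\psi)^\perp\cap C_r^*(G)=\{0\}$; any $T\in C_r^*(G)$ annihilating $J_\psi$ annihilates its norm closure, hence $J(E_\psi)$, so $T\in J(E_\psi)^\perp\cap C_r^*(G)=\{0\}$ and $S_\psi$ is closable. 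For (ii), if $E_\psi$ is an $M_1$-set then there is a non-zero $T\in I(E_\psi)^\perp\cap C_r^*(G)$; since $J_\psi\subseteq I(E_\psi)$, this $T$ annihilates $J_\psi$, so $S_\psi$ is not closable.

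The main obstacle is the second paragraph: verifying that property (A) genuinely allows one to replace $J_\psi^\lambda$ by $J_\psi$. The delicate points are that $u_i g$ really lands in $J_\psi$ (this needs the ideal property of $A(G)$ inside $B(G)$ and care with the ``$\in^m$'' equivalence used in the definition of $J_\psi^\lambda$), and that $\langle T,\cdot\rangle$ is weak*-continuous on $B_\lambda(G)$ so that the limit can be taken. Once this equivalence is in place, the reduction via Proposition~\ref{wstarc} and the deduction of (i) and (ii) from the sandwiching inclusions are routine.
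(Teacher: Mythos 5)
Your proposal is correct and follows essentially the same route as the paper: Proposition~\ref{wstarc} reduces closability to weak* density of $D(S_\psi^*)=J_\psi^\lambda$, property (A) together with the ideal property of $A(G)$ in $B(G)$ identifies the weak* closure (equivalently, the pre-annihilator in $C_r^*(G)$) of $J_\psi^\lambda$ with that of $J_\psi$, and the sandwich $J(E_\psi)\subseteq \overline{J_\psi}\subseteq I(E_\psi)$ from Lemma~\ref{l_epsi} gives (i) and (ii). The paper states the key density transfer in one line; your second paragraph is just the detailed verification of that same claim.
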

\begin{proof}
Since $A(G)$ is an ideal in $B(G)$, property (A) implies that
the weak* closures of $J_{\psi}$ and $J_\psi^{\lambda}$ in $B_\lambda(G)$ coincide.
The first statement now follows from Proposition \ref{wstarc}.

By Lemma \ref{l_epsi},
$$J(E_\psi)\subseteq \overline{J_\psi}\subseteq I(E_\psi).$$
Parts (i) and (ii) follow from these inclusions and the definitions
of a $U$-set and an $M_1$-set.
\end{proof}

\begin{corollary}\label{c_po}\
Let $G$ be a locally compact group with property {\rm (A)}
and $\psi : G\rightarrow \bb{C}$ be a measurable function.
If $m(E_{\psi}) > 0$ then $S_{\psi}$ is not closable.
\end{corollary}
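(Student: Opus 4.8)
The plan is to reduce the statement to Proposition \ref{p_chcl}(ii) by showing that the hypothesis $m(E_{\psi}) > 0$ forces $E_{\psi}$ to be an $M_1$-set; everything then follows from the machinery already in place.

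First I would check that $E_{\psi}$ is a closed subset of $G$, so that it is legitimate to speak of it as a set of multiplicity. By definition, $\psi$ almost belongs to $A(G)$ at a point $t$ precisely when there exist a neighbourhood $U$ of $t$ and a function $u\in A(G)$ with $\psi = u$ $m$-almost everywhere on $U$; once this holds at $t$, it holds at every point of $U$ with the same pair $(U,u)$. Hence the set of points at which $\psi$ almost belongs to $A(G)$ is open, and its complement $E_{\psi}$ is closed.

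Next, since $m(E_{\psi}) > 0$, Remark \ref{r_posme} applies and shows that $E_{\psi}$ is an $M_0$-set. Concretely, by inner regularity of Haar measure one chooses a compact set $E_0\subseteq E_{\psi}$ with $0 < m(E_0) < \infty$ and puts $d\theta = \chi_{E_0}\, dm$; then $\theta\in M(E_{\psi})$ and $0\neq\lambda(\theta) = \lambda(\chi_{E_0})\in C_r^*(G)$, so that $\lambda(M(E_{\psi}))\cap C_r^*(G)\neq\{0\}$. By Remark \ref{remark1}(i), every $M_0$-set is an $M_1$-set, and therefore $E_{\psi}$ is an $M_1$-set.

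Finally, with $E_{\psi}$ an $M_1$-set and $G$ enjoying property (A), Proposition \ref{p_chcl}(ii) yields at once that $S_{\psi}$ is not closable, which completes the argument. I do not anticipate any genuine obstacle: the only point demanding a moment's care is the closedness of $E_{\psi}$, after which the conclusion is a direct concatenation of Remark \ref{r_posme}, Remark \ref{remark1}(i), and Proposition \ref{p_chcl}(ii).
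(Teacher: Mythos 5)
Your proof is correct and follows essentially the same route as the paper: invoke Remark \ref{r_posme} to see that $E_{\psi}$ is a set of multiplicity of the appropriate kind and then apply Proposition \ref{p_chcl}(ii). You merely make explicit two steps the paper leaves implicit (the closedness of $E_{\psi}$ and the passage from $M_0$-set to $M_1$-set via Remark \ref{remark1}(i)), which is fine.
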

\begin{proof}
By Remark \ref{r_posme},
$E_{\psi}$ is an $M_1$-set. Now the claim follows from Proposition \ref{p_chcl} (ii).
\end{proof}

Recall from Section \ref{s_prel} that, for a measurable function
$\nph : G\times G \rightarrow \bb{C}$, we let
$S_{\nph}$ be the operator, densely defined on $\cl  K(L^2(G))$,
with domain
$$D(S_{\nph}) = \{T_k\in \cl C_2(H_1,H_2) : \hat{\nph} k\in L^2(G\times G)\}
\subseteq \cl  K(L^2(G)).$$
It was shown in \cite{stt} that
the domain $D(S_{\nph}^*) \subseteq \Gamma(G,G)$ of its adjoint
is given by
$$D(S_{\nph}^*) = \{h\in \Gamma(G,G) : \nph h\in^{m\times m} \Gamma(G,G)\}.$$

\begin{theorem}\label{th_duc}
Let $G$ be a second countable locally compact group
with property (A), $\psi : G\rightarrow \bb{C}$ be a measurable function and $\nph = N(\psi)$.
The following are equivalent:

(i) \ \ the operator $S_{\psi}$ is closable;

(ii) \ the operator $S_{\nph}$ is closable;

(iii) $\cl A \cap D(S_{\nph}^*)^{\perp} = \{0\}$;

(iv) \  $\cl R \cap D(S_{\nph}^*)^{\perp} = \{0\}$.
\end{theorem}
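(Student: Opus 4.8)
The statement asserts the equivalence of four conditions relating closability of the unbounded multiplier $S_\psi$ on $C_r^*(G)$ with a Schur-multiplier-type condition on $N(\psi)$. My plan is to exploit the symbolic calculus $E_\varphi$ from Theorem \ref{p_maps} to transport back and forth between the group level (conditions (i)) and the operator level (conditions (ii)--(iv)), using the spaces $\cl A$ and $\cl R$ as the natural intermediaries exactly as was done in Theorem \ref{th_iffop}. The key bridge is that annihilators of the relevant domains and subspaces correspond to one another under $E_\varphi$ and under the pre-adjoint maps. I expect the cyclic structure (i)$\Rightarrow$(ii)$\Rightarrow$(iii)$\Rightarrow$(iv)$\Rightarrow$(i) to be most economical, with (ii)$\Rightarrow$(iii)$\Rightarrow$(iv) nearly immediate from the inclusions $\cl K\subseteq\cl A\subseteq\cl R$ in (\ref{eq_kcon}).

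\textbf{Translating closability into annihilator conditions.}
First I would record the adjoint characterisations. By Proposition \ref{wstarc}, $S_\psi$ is closable if and only if $\overline{D(S_\psi^*)}^{w^*}=C_r^*(G)^*=B_\lambda(G)$, and by Proposition \ref{p_chcl} this holds precisely when no non-zero $T\in C_r^*(G)$ annihilates $J_\psi$. On the operator side, $S_\varphi$ is closable if and only if $D(S_\varphi^*)$ is weak* dense in $\Gamma(G,G)$, equivalently $D(S_\varphi^*)^\perp=\{0\}$ in $\cl B(L^2(G))$. Thus condition (ii) is literally the statement $\cl B(L^2(G))\cap D(S_\varphi^*)^\perp=\{0\}$, and (iii), (iv) restrict this annihilator condition to the smaller spaces $\cl A$ and $\cl R$. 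The implications (ii)$\Rightarrow$(iii)$\Rightarrow$(iv) are then immediate, since $\cl R\subseteq\cl B(L^2(G))$ and $\cl A\subseteq\cl R$.

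\textbf{The substantive implications.}
For (iv)$\Rightarrow$(i), I would take a non-zero $T\in C_r^*(G)$ annihilating $J_\psi$ (assuming (i) fails) and first note that $T\in\cl R$. The goal is to produce a non-zero element of $\cl R\cap D(S_\varphi^*)^\perp$, contradicting (iv). The mechanism is: for $u\in J_\psi$ one has $\psi u\in^m A(G)$, and the relation $S_{N(u)}(\lambda(f))=\lambda(uf)$ from (\ref{eq_snu}) shows that $N(u)\in D(S_\varphi^*)$ with $S_\varphi^*(N(u))=N(\psi u)$; conversely every element of $D(S_\varphi^*)$ built from the module action arises this way. One checks that $\langle T,\varphi h\rangle=0$ for every $h\in D(S_\varphi^*)$ translates, via $E_\varphi$ and Theorem \ref{p_maps}, into $T$ annihilating $J_\psi$. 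The converse direction (i)$\Rightarrow$(ii) runs the same dictionary backwards: if $S_\psi$ is closable then $J_\psi$ is weak* dense in $B_\lambda(G)$, and applying Lemma \ref{eab} together with $E_{a\otimes b}$ one shows any $T\in D(S_\varphi^*)^\perp$ must have all $E_{a\otimes b}(T)$ annihilating $J_\psi$, forcing $E_{a\otimes b}(T)=0$ for all $a,b$ and hence $T=0$ by Lemma \ref{eab}.

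\textbf{The main obstacle.}
The delicate point will be the precise identification of $D(S_\varphi^*)$ with the image of $J_\psi$ under $N$, together with the module-span density needed to conclude that annihilating the $N(u)\Gamma(G,G)$ is the same as annihilating all of $D(S_\varphi^*)$. This is exactly where property (A) and second countability enter: property (A) guarantees that the weak* closures of $J_\psi$ and $J_\psi^\lambda$ in $B_\lambda(G)$ coincide (as used in Proposition \ref{p_chcl}), and second countability supplies the exhaustion by compact sets underlying Lemma \ref{eab}. I would handle this by reducing, as in Lemma \ref{eab}, to the invariant subspace $\cl W=\operatorname{span}\{N(u)\psi_0:\psi_0\in\Gamma(G,G),\,u\in J_\psi\}$ and relating $\operatorname{null}(\cl W)$ to $E_\psi=\operatorname{null}J_\psi$ via Lemma \ref{l_epsi}; the support computation there is the technical heart, and everything else is a transcription between the group and operator formulations through $E_\varphi$.
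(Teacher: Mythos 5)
There is a genuine structural error in your plan, and it comes from a misidentification at the very first step. You assert that condition (ii) ``is literally the statement $\cl B(L^2(G))\cap D(S_{\nph}^*)^{\perp}=\{0\}$.'' That is not closability of $S_{\nph}$ but its weak** closability: by Proposition \ref{wstarc}, norm density of $D(S_{\nph}^*)$ in $\Gamma(G,G)$ (equivalently $D(S_{\nph}^*)^{\perp}=\{0\}$ in all of $\cl B(L^2(G))=\Gamma(G,G)^*$) characterises weak** closability, whereas closability of $S_{\nph}$, an operator densely defined on $\cl K$, is the strictly weaker condition $\cl K\cap D(S_{\nph}^*)^{\perp}=\{0\}$. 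Since $\cl K\subseteq\cl A\subseteq\cl R$, condition (ii) is therefore the \emph{weakest} of (ii), (iii), (iv), and the trivial implications run (iv)$\Rightarrow$(iii)$\Rightarrow$(ii) — exactly the opposite of the chain (ii)$\Rightarrow$(iii)$\Rightarrow$(iv) that you declare immediate. Passing from ``no compact operator annihilates $D(S_{\nph}^*)$'' to ``no element of the strictly larger space $\cl R$ does'' is one of the substantive points of the theorem; the paper obtains it only by routing through (i). With your cycle, the loop does not close: you would have (iv)$\Rightarrow$(i), (i)$\Rightarrow$(ii), and the genuinely trivial (iv)$\Rightarrow$(iii)$\Rightarrow$(ii), but nothing returning to (iv). (Your sketch of (i)$\Rightarrow$(ii) does in fact contain the germ of the missing implication (i)$\Rightarrow$(iv): for $T\in\cl R\cap D(S_{\nph}^*)^{\perp}$ one has $E_{a\otimes b}(T)\in C_r^*(G)$ by Theorem \ref{p_maps}, and it annihilates $J_{\psi}^{\lambda}$, so Proposition \ref{p_chcl} forces it to vanish and Lemma \ref{eab} gives $T=0$. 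But as written, applied to arbitrary $T\in D(S_{\nph}^*)^{\perp}$, the argument fails because $E_{a\otimes b}(T)$ need not lie in $C_r^*(G)$ for general $T\in\cl B(L^2(G))$ — which is precisely why (iv) is stated with $\cl R$ and not with $\cl B(L^2(G))$.)

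Two smaller points. First, for non-compact $G$ the function $N(u)$ does not belong to $\Gamma(G,G)$, so your claim that $N(u)\in D(S_{\nph}^*)$ with $S_{\nph}^*(N(u))=N(\psi u)$ is meaningless outside the compact case; the correct object is $(a\otimes b)N(u)\in\Gamma(G,G)$, which you do use later. Second, the ``technical heart'' you locate in a support computation for $\cl W=\operatorname{span}\{N(u)h: h\in\Gamma(G,G),\ u\in J_{\psi}\}$ and its relation to $E_{\psi}$ is not needed: the paper's implication (ii)$\Rightarrow$(i) proceeds by the contrapositive, taking a non-zero $T\in C_r^*(G)$ annihilating $J_{\psi}$, compressing by some $A\in\cl D_0$ to get a non-zero compact $AT$, and showing directly via the identity $\langle T,h\rangle=\langle T,P(h)\rangle$ and $P(\nph h)=\psi P(h)$ that $T$ (hence $AT$, using $\frak{S}(G,G)$-invariance of $D(S_{\nph}^*)$) annihilates $D(S_{\nph}^*)$. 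Lemma \ref{eab} is used only as a black box in the direction (i)$\Rightarrow$(iv), not re-proved for $J_{\psi}$.
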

\begin{proof}
(iv)$\Rightarrow$(iii)$\Rightarrow$(ii) follows from the fact that $\cl K\subseteq \cl A\subseteq \cl R$
and the fact that $S_{\nph}$ is closable if and only if $\cl K\cap D(S_{\nph}^*)^\perp = \{0\}$.

(ii)$\Rightarrow$(i) If $S_{\psi}$ is not closable then, by Proposition \ref{p_chcl},
there exists a non-zero $T\in C_r^*(G)$ which annihilates $J_{\psi}$.
Let $A\in \cl D_0$ be such that $AT\neq 0$.
In view of (\ref{eq_k}), it suffices to show that
$AT$ annihilates $D(S_{\nph}^*)$.
Since $D(S_{\nph}^*)$
is invariant under $\frak{S}(G,G)$,
it suffices to show that $T$ annihilates $D(S_{\nph}^*)$.

Let $h\in D(S_{\nph}^*)$.
Writing $h = \sum_{i=1}^{\infty} f_i\otimes g_i$,
where $\sum_{i=1}^{\infty} \|f_i\|_2^2 < \infty$ and $\sum_{i=1}^{\infty} \|g_i\|_2^2 < \infty$,
and using Lemma \ref{l_for}, for every $f\in L^1(G)$, we have
\begin{eqnarray*}
\langle\lambda(f),h\rangle & = &
\left\langle\lambda(f),\sum_{i=1}^{\infty} f_i\otimes g_i\right\rangle
= \sum_{i=1}^{\infty} (\lambda(f)(f_i),\bar g_i)\\
& = & \iint f(s) \sum_{i=1}^{\infty} g_i(t)f_i(s^{-1}t)dt ds
= \langle \lambda(f), P(h)\rangle.
\end{eqnarray*}
It follows that $\langle T,h\rangle = \langle T,P(h)\rangle$.
Since $\nph h \in^{m\times m} \Gamma(G,G)$,
identity (\ref{eq_P}) implies that
$\psi P(h) = P(\nph h)\in^m P(\Gamma(G,G)) = A(G)$,
and hence $P(h) \in J_{\psi}$.
Thus, $\langle T,P(h)\rangle = 0$ and hence
$\langle T, h\rangle = 0$.

(i)$\Rightarrow$(iv)
Let $S_{\psi}$ be closable and suppose that $0\neq T\in \cl R\cap D(S_{\nph}^*)^\perp$.
By Lemma~\ref{eab}, there exist $a,b\in L^2(G)$ such that
$E_{a\otimes b}(T) \neq 0$.
Suppose that $u\in J^{\lambda}_{\psi}$; then
$$\nph(a\otimes b)N(u) = (a\otimes b)N(\psi u)\in \Gamma(G,G)$$
and hence $(a\otimes b)N(u) \in D(S_{\nph}^*)$. Thus
$$\langle E_{a\otimes b}(T),u\rangle = \langle T,(a\otimes b)N(u)\rangle = 0.$$
By Theorem \ref{p_maps},
$E_{a\otimes b}(T)$ is a (non-zero) element of $C_r^*(G)$; in view of
Proposition \ref{p_chcl}, this contradicts the closability of $S_{\psi}$.
\end{proof}

\begin{corollary}\label{p_algec}
The set $\clos(G)$ is an algebra under pointwise addition and multiplication.
\end{corollary}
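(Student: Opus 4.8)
The plan is to use the characterization of closability furnished by Proposition \ref{p_chcl}: for a group with property (A), the operator $S_\psi$ is closable precisely when no non-zero $T\in C_r^*(G)$ annihilates the ideal $J_\psi=\{f\in A(G):\psi f\in^m A(G)\}$, i.e. when $J_\psi^{\perp}\cap C_r^*(G)=\{0\}$. Since $S_{c\psi}=cS_\psi$ and $S_0$ is the (trivially closable) zero operator, $\clos(G)$ is stable under scalar multiplication, so it remains to treat sums and products. The decisive algebraic observation is that $J_\psi$ is an ideal of $A(G)$ and that the set of products satisfies $J_{\psi_1}J_{\psi_2}\subseteq J_{\psi_1+\psi_2}$ and $J_{\psi_1}J_{\psi_2}\subseteq J_{\psi_1\psi_2}$. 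Indeed, for $f\in J_{\psi_1}$ and $g\in J_{\psi_2}$ one has $(\psi_1+\psi_2)fg=(\psi_1 f)g+f(\psi_2 g)$ and $(\psi_1\psi_2)fg=(\psi_1 f)(\psi_2 g)$, each a sum or product of functions that are a.e.\ equal to elements of $A(G)$, hence again a.e.\ equal to an element of $A(G)$ because $A(G)$ is an algebra.

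The heart of the argument is the following claim: \emph{if $\psi_1,\psi_2\in\clos(G)$ and $T\in C_r^*(G)$ satisfies $\langle T,fg\rangle=0$ for all $f\in J_{\psi_1}$ and $g\in J_{\psi_2}$, then $T=0$}. To prove it I would fix $f\in J_{\psi_1}$ and consider the $A(G)$-module action $f\cdot T\in \vn(G)$, defined by $\langle f\cdot T,v\rangle=\langle T,fv\rangle$. A short computation on $\lambda(L^1(G))$ shows $u\cdot\lambda(h)=\lambda(uh)$ for $u\in A(G)$, $h\in L^1(G)$; together with the norm-continuity of the action and the density of $\lambda(L^1(G))$ in $C_r^*(G)$, this gives $f\cdot T\in C_r^*(G)$. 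For every $g\in J_{\psi_2}$ we have $\langle f\cdot T,g\rangle=\langle T,fg\rangle=0$, so $f\cdot T$ is an element of $C_r^*(G)$ annihilating $J_{\psi_2}$; since $\psi_2\in\clos(G)$, Proposition \ref{p_chcl} forces $f\cdot T=0$, that is, $\langle T,fv\rangle=0$ for all $v\in A(G)$. Now property (A) supplies a net $(u_i)\subseteq A(G)$ with $u_if\to f$ in the weak* topology of $B_\lambda(G)$; pairing against $T\in C_r^*(G)$ yields $\langle T,f\rangle=\lim_i\langle T,u_if\rangle=0$. As $f\in J_{\psi_1}$ was arbitrary, $T$ annihilates $J_{\psi_1}$, whence $T=0$ because $\psi_1\in\clos(G)$.

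With the claim in hand the corollary follows at once. If $T\in C_r^*(G)$ annihilates $J_{\psi_1+\psi_2}$ then, by the first containment, $\langle T,fg\rangle=0$ for all $f\in J_{\psi_1}$, $g\in J_{\psi_2}$, so $T=0$ and $\psi_1+\psi_2\in\clos(G)$; the identical reasoning with the second containment gives $\psi_1\psi_2\in\clos(G)$, completing the proof that $\clos(G)$ is an algebra. The step I expect to require the most care is the passage from $f\cdot T=0$ back to $\langle T,f\rangle=0$: one must first verify that the module action keeps $T$ inside $C_r^*(G)$ (and not merely in $\vn(G)$), so that Proposition \ref{p_chcl} may be applied to $f\cdot T$, and then genuinely invoke property (A) to \lq\lq cancel'' the factor $f$. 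This is exactly where the hypothesis on $G$ enters, since without it the argument controls only $f\cdot T$ and not $T$ itself.
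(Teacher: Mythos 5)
Your proof is correct, but it takes a genuinely different route from the paper's. The paper's proof is a two-line deduction: it observes that $N(\psi_1+\psi_2)=N(\psi_1)+N(\psi_2)$ and $N(\psi_1\psi_2)=N(\psi_1)N(\psi_2)$, invokes Theorem \ref{th_duc} (closability of $S_\psi$ on $C_r^*(G)$ is equivalent to closability of the Schur multiplier $S_{N(\psi)}$ on $\cl K(L^2(G))$), and then quotes the external fact that closable multipliers on $\cl K(L^2(G))$ form an algebra. You instead stay entirely on the group side: you use the annihilator criterion of Proposition \ref{p_chcl}, the containments $J_{\psi_1}J_{\psi_2}\subseteq J_{\psi_1+\psi_2}\cap J_{\psi_1\psi_2}$, and a two-variable annihilation lemma proved via the $A(G)$-module action $f\cdot T$ (which stays in $C_r^*(G)$ by norm continuity of the action and the identity $u\cdot\lambda(h)=\lambda(uh)$) together with property (A) to cancel the factor $f$. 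Each step checks out, including the delicate passage from $f\cdot T=0$ to $\langle T,f\rangle=0$. What the two approaches buy: the paper's argument is essentially free once Theorem \ref{th_duc} is in place, but it inherits that theorem's hypotheses (second countability in addition to property (A)) and leans on the cited result for Schur multipliers; your argument is self-contained modulo Proposition \ref{p_chcl}, needs only property (A), and in effect transplants to the group algebra setting the same mechanism that underlies the quoted operator-level result.
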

\begin{proof}
Let $\psi_i\in \clos(G)$, $i = 1,2$. Then $N\psi_1 + N\psi_2 = N(\psi_1 + \psi_2)$
and $N(\psi_1\psi_2) = (N\psi_1)(N\psi_2)$. By \cite[Theorem 5.2]{stt},
the closable multipliers on $\cl K(L^2(G))$ form an algebra under pointwise addition and multiplication.
The claim now follows from Theorem \ref{th_duc}.
\end{proof}

We now give some examples of closable and non-closable multipliers.

\begin{example}\rm {\bf  A non-closable multiplier on $C_r^*(\mathbb T)$.}
Using the arguments in \cite[7.8.3-7.8.6]{rudin} (see also \cite[Proposition~9.9]{st2}),
one can show that there exist $c = (c_n)_{n\in \bb{Z}}\in \ell^p(\mathbb Z)$, $p>2$,
and $d = (d_n)_{n\in \bb{Z}}\in \ell^1(\mathbb Z)$ with
$\overline{d}_n \stackrel{def}{=} \bar d_{-n}$, $n\in \bb{Z}$, such that
$c\ast d=0$ and $c\ast\overline{d}\ne 0$.
Let  $f\in  A(\mathbb T)\subseteq L^1(\mathbb T)$
be the function whose sequence of Fourier coefficients is $d$ and $F$ be the
pseudo-function (that is, the bounded linear functional on $A(\bb{T})$)
whose sequence of Fourier coefficients is $c$.
We have $f\cdot F=0$ while $\bar f \cdot F\ne 0$.
After identifying the dual of $A(\bb{T})$
with $\vn(\bb{T})$, we view $F$ as the operator
on $L^2(\bb{T})$ determined by the identities $\widehat{F\xi}=c\hat\xi$, $\xi\in L^2(\mathbb T)$
(where $\hat{\eta}$ denotes the Fourier transform of a function $\eta\in L^2(\bb{T})$).
Moreover (see the start of Section 3),
$F\in C^*_r(\mathbb T)$.
Let ${h_n}\in L^1(\mathbb T)$  be such
that $\lambda(h_n)\to_{n\to \infty} F$ in the operator norm.
Then
$$\|\lambda(h_n) - F\|=\sup\{|\langle \lambda(h_n) - F, u\rangle| :  u\in A(\bb{T}), \|u\| = 1\} \to_{n\to \infty}  0.$$
It follows that
$$\sup\{|\langle \lambda(fh_n) - f\cdot F, u\rangle| :  u\in A(\bb{T}), \|u\| = 1\} \to_{n\to \infty} 0$$
which in turn implies that
$\lambda(f h_n)\to f\cdot F$ in the operator norm.
Similarly, $\lambda(\bar f h_n)\to_{n\to \infty} \bar f \cdot F$.

Let $\psi : \bb{T}\to \bb{C}$ be given by
$\psi(t)=\bar f(t)/f(t)$ if $f(t)\ne 0$ and $\psi(t)=0$ otherwise. Then
$$S_\psi(\lambda(fh_n))=\lambda(\psi fh_n)=\lambda(\bar f h_n)\not\to_{n\to \infty} 0$$
while $\lambda(fh_n)\to 0$.
Hence $\psi$ is a non-closable multiplier.
\end{example}

\begin{example}{\bf A continuous non-closable multiplier on $C_r^*(\bb{T})$.}
\rm
The following example was given in \cite{stt}. We recall the construction for
completeness.
Let $X\subseteq \bb{T}$ be a closed set of positive Lebesgue measure and
$S\subseteq X$ be a dense subset of Lebesgue measure zero.
By \cite[Chapter II, Theorem 3.4]{katznelson}, there exists
$h\in C({\mathbb T})$ whose Fourier series diverges at every point of $S$.
By the Riemann Localisation Principle, any function which belongs locally to
$A({\mathbb T})$ at $t\in {\mathbb T}$ has a convergent Fourier series at $t$;
hence, $S\subseteq E_h$ and since $E_h$ is closed, $X\subseteq E_h$.
Therefore $m(E_h) > 0$ and $S_h$ is not closable by Corollary \ref{c_po}.
\end{example}

\begin{example}
{\bf A class of idempotent closable multipliers on $C^*_r(\bb{R})$.}
\rm
Let $F\subseteq \bb{R}$ be a closed set which is
the union of countably many intervals. We claim that $\chi_F\in \clos(\bb{R})$.
Let $\psi = \chi_F$; then $E_{\psi}$ is the set of boundary points of $F$.
Thus $E_{\psi}$ is contained in the set of endpoints of the intervals
whose unions is $F$, and hence $E_{\psi}$ is countable.
The claim now follows from Proposition \ref{p_chcl} and Corollary \ref{c_count}.

This example should be compared with the well-known fact that there are no
bounded non-trivial idempotent multipliers on $C^*_r(\bb{R})$.
\end{example}

We next discuss the weak** closability of the operator $S_{\psi}$ (in the sense of Section \ref{sub_cl}).
We have the following necessary condition.

\begin{proposition}\label{p_nece}
If $S_\psi$ is weak** closable then $\psi\in A(G)^{\loc}$.
\end{proposition}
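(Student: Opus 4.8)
The plan is to pass to the adjoint and argue by contraposition, showing that a single point of $E_\psi$ already destroys the norm-density that weak** closability requires. Recall from the discussion above that $C_r^*(G)^* = B_{\lambda}(G)$ and that $D(S_\psi^*) = J_\psi^\lambda = \{g\in B_{\lambda}(G) : \psi g \in^m B_{\lambda}(G)\}$, with $S_\psi^*(g)$ equivalent to $\psi g$. By the equivalence (i)$\Leftrightarrow$(ii) of Proposition \ref{wstarc}, $S_\psi$ is weak** closable if and only if $J_\psi^\lambda$ is norm-dense in $B_{\lambda}(G)$. Thus it suffices to prove that if $E_\psi \neq \emptyset$, then $J_\psi^\lambda$ fails to be norm-dense.

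Fix $t_0\in E_\psi$. The decisive observation is a localisation: $J_\psi^\lambda$ is stable under multiplication by $A(G)$, and these products actually land in $J_\psi$. Indeed, let $g\in J_\psi^\lambda$ and $u\in A(G)$, and write $\psi g = h$ $m$-almost everywhere with $h\in B_{\lambda}(G)\subseteq B(G)$. Since $A(G)$ is an ideal of $B(G)$ and $g\in B_{\lambda}(G)\subseteq B(G)$, we have $ug\in A(G)$; moreover $\psi(ug) = uh$ $m$-almost everywhere and $uh\in A(G)$ by the same ideal property, so $\psi(ug)\in^m A(G)$. Hence $ug\in J_\psi = \{f\in A(G) : \psi f\in^m A(G)\}$. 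By Lemma \ref{l_epsi}, $E_\psi = \nul J_\psi$, so $(ug)(t_0)=0$, that is $u(t_0)g(t_0)=0$ for every $u\in A(G)$. Choosing, by the regularity of $A(G)$, a function $u\in A(G)$ with $u(t_0)\neq 0$, we conclude that $g(t_0)=0$.

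Therefore every $g\in J_\psi^\lambda$ vanishes at $t_0$, so $J_\psi^\lambda\subseteq\ker\mathrm{ev}_{t_0}$, where $\mathrm{ev}_{t_0}:B_{\lambda}(G)\to\bb{C}$ is evaluation at $t_0$. Since $|g(t_0)|\leq\|g\|_{B(G)}$ for every $g\in B(G)$, the functional $\mathrm{ev}_{t_0}$ is norm-continuous, and it is non-zero because $A(G)\subseteq B_{\lambda}(G)$ contains functions not vanishing at $t_0$. Consequently $\ker\mathrm{ev}_{t_0}$ is a proper norm-closed hyperplane containing $J_\psi^\lambda$, so $J_\psi^\lambda$ is not norm-dense and $S_\psi$ is not weak** closable. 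This establishes the contrapositive, whence $E_\psi=\emptyset$; equivalently, $\psi$ almost belongs to $A(G)$ at every point, and by the discussion preceding Lemma \ref{l_epsi} it is then equivalent to a function in $A(G)^{\loc}$.

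I expect the only genuinely delicate point to be the reduction in the first paragraph, namely the correct identification of $D(S_\psi^*)$ with $J_\psi^\lambda$ and the invocation of Proposition \ref{wstarc} in the form (i)$\Leftrightarrow$(ii); the remainder is light. The localisation $ug\in J_\psi$ is the crux, since it is what allows the norm-density of $J_\psi^\lambda$ to be read off pointwise through the nullity set $\nul J_\psi = E_\psi$, but it is immediate from the fact that $A(G)$ is an ideal in $B(G)$; in particular it requires neither property {\rm (A)} nor any local invertibility of elements of $B(G)$. The final passage from $E_\psi=\emptyset$ to membership in $A(G)^{\loc}$ uses only the $\sigma$-compactness of $G$, through the already-established remark.
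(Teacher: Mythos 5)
Your argument is correct and is essentially the paper's proof: both rest on the identification $D(S_\psi^*)=J_\psi^\lambda$, the equivalence (i)$\Leftrightarrow$(ii) of Proposition \ref{wstarc}, the ideal-property inclusion $A(G)J_\psi^\lambda\subseteq J_\psi$, and Lemma \ref{l_epsi}. The only difference is that the paper runs the implication forwards (density of $J_\psi^\lambda$ in $B_\lambda(G)$ forces density of $J_\psi$ in $A(G)$, hence $\nul J_\psi=\emptyset$), whereas you run it contrapositively through the norm-continuous evaluation functional at a point of $E_\psi$ -- a cosmetic reformulation of the same idea.
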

\begin{proof}
Suppose that $S_{\psi}$ is weak** closable. By Proposition \ref{wstarc},
$J_{\psi}^{\lambda}$ is dense in $B_\lambda(G)$.
Thus, $A(G)J_{\psi}^{\lambda}$ is dense in $A(G)B_\lambda(G) = A(G)$. However,
$A(G)J_{\psi}^{\lambda}\subseteq J_{\psi}$ and hence
$J_{\psi}$ is dense in $A(G)$.
By Lemma \ref{l_epsi}, $\psi\in A(G)^{\loc}$.
\end{proof}

We point out that the converse of Proposition \ref{p_nece} does not hold
for non-compact groups. In fact, let
$G$ be a non-discrete locally compact abelian group with dual group $\Gamma$.
Then $B_\lambda(\Gamma)=B(\Gamma)\ne A(\Gamma)$.
By \cite[Corollary 8.2.6]{gmcgehee}, there exists $f\in B(\Gamma)$, $|f(x)|>1$,
$x\in\Gamma$, such that $\psi\stackrel{def}{=}\frac{1}{f}\notin B(\Gamma)$; on the other hand,
 $\psi \in A(\Gamma)^{\loc}$ (see the arguments in \cite[Remark~7.11]{stt}). We have that $J_{\psi}^{\lambda}\subseteq
(f)$, where $(f)$ is the ideal in $B({\Gamma})$ generated by $f$.
As $f$ is not invertible in $B(\Gamma)$, $(f)$  is contained in a maximal
ideal, and hence can not be dense in $B(\Gamma)$.

It follows that a version of Theorem \ref{th_duc}, with weak** closability
in the place of closability, does not hold. Indeed, by \cite[Theorem 7.8]{stt},
for abelian groups,  $N(\psi)$ is a weak** closable multiplier
if and only if
$\psi\in A(G)^{\loc}$. In view of these remarks, the following question arises.

\medskip

\noindent {\bf Question.} Is  $S_\psi$  weak** closable only when $S_\psi$ is bounded?

\medskip

Note that if $G$ is compact then $S_\psi$ is weak** closable if and only if $S_\psi$ is
bounded, that is, if and only if $\psi\in A(G)$; this follows from
Proposition \ref{p_nece} and the fact that in this case $A(G) = A(G)^{\loc}$.

\section{Closable multipliers on group von Neumann algebras}\label{s_vna}

In this section we turn our attention to multipliers acting on $\vn(G)$.
We will need an appropriate version of
closability suited for working with dual spaces, which we now introduce.
Let $\cl X$ and $\cl Y$ be dual Banach spaces, with specified preduals $\cl X_*$
and $\cl Y_*$, respectively, and $D(\Phi)\subseteq \cl X$ be
a weak* dense subspace. We say that an operator $\Phi : D(\Phi)\rightarrow \cl Y$
is \emph{weak* closable}
if the conditions $x_i\in \cl X$, $y\in \cl Y$,
$x_i\rightarrow_{w^*} 0$, $\Phi(x_i)\rightarrow_{w^*} y$ imply that
$y = 0$. Here, the weak* convergence is in the designated weak* topologies of
$\cl X$ and $\cl Y$.

Note that, since the *-weak closure of the graph of $\Phi$ contains its norm-closure, each weak* closable operator is closable.

\medskip


We have the following characterisation of weak* closability.

\begin{proposition}\label{p_eqpw}
Let $D(\Phi)\subseteq \cl X$ be a weak* dense subspace and
$\Phi : D(\Phi) \rightarrow \cl Y$ be a linear operator.
The following are equivalent:

(i) \ the operator $\Phi$ is weak* closable;

(ii) the space $D_*(\Phi) = \{g\in \cl Y_* : x\rightarrow \langle
\Phi(x),g\rangle \mbox{ is w* -cont. on } D(\Phi)\}$ is dense in $\cl
Y_*$.
\end{proposition}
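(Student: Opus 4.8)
The plan is to characterize weak* closability of $\Phi$ via a closed-graph argument transplanted into the dual pairing, exploiting the fact that weak* closability is by definition a statement about nets $x_i \to_{w^*} 0$ with $\Phi(x_i) \to_{w^*} y$ forcing $y=0$. The natural bridge between this net condition and the density of $D_*(\Phi)$ is the observation that a functional $g \in \cl Y_*$ lies in $D_*(\Phi)$ precisely when the composite $x \mapsto \langle \Phi(x), g\rangle$ is weak* continuous on $D(\Phi)$, and weak* continuous functionals on a subspace are exactly those represented by elements of the predual of $\cl X$. First I would make this explicit: $g \in D_*(\Phi)$ if and only if there exists $f \in \cl X_*$ with $\langle \Phi(x), g\rangle = \langle x, f\rangle$ for all $x \in D(\Phi)$. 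In other words, $D_*(\Phi)$ is precisely the domain of the \emph{preadjoint} $\Phi_*$ of $\Phi$, defined analogously to the adjoint $T^*$ recalled in Section \ref{sub_cl} but with $\cl X_*, \cl Y_*$ in place of $\cl X^*, \cl Y^*$.

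\textbf{The implication (ii)$\Rightarrow$(i).} Assuming $D_*(\Phi)$ is dense in $\cl Y_*$, suppose $x_i \to_{w^*} 0$ and $\Phi(x_i) \to_{w^*} y$. For any $g \in D_*(\Phi)$, picking the associated $f \in \cl X_*$, I would compute
$$\langle y, g\rangle = \lim_i \langle \Phi(x_i), g\rangle = \lim_i \langle x_i, f\rangle = 0,$$
using weak* convergence of $\Phi(x_i)$ in the first equality and of $x_i$ in the last. Since this holds for all $g$ in the dense set $D_*(\Phi)$, and $y$ defines a bounded functional on $\cl Y_*$ (being an element of $\cl Y = (\cl Y_*)^*$), density forces $\langle y, g\rangle = 0$ for all $g \in \cl Y_*$, whence $y = 0$. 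This direction is essentially routine.

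\textbf{The implication (i)$\Rightarrow$(ii), the main obstacle.} This is the harder direction. I would argue by contraposition: suppose $D_*(\Phi)$ is \emph{not} dense in $\cl Y_*$. Then its norm-closure is a proper closed subspace of $\cl Y_*$, so by Hahn--Banach there is a nonzero element $y \in (\cl Y_*)^* = \cl Y$ annihilating $D_*(\Phi)$. The goal is to manufacture a net $x_i \in D(\Phi)$ with $x_i \to_{w^*} 0$ and $\Phi(x_i) \to_{w^*} y$, contradicting weak* closability. The cleanest route is to pass to the graph: consider $\Gr \Phi \subseteq \cl X \oplus \cl Y$, give $\cl X \oplus \cl Y$ the weak* topology coming from the predual $\cl X_* \oplus \cl Y_*$, and show that $(0,y)$ lies in the weak* closure of $\Gr \Phi$. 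If it did not, Hahn--Banach in this dual pairing would produce $(f,g) \in \cl X_* \oplus \cl Y_*$ with $\langle x, f\rangle + \langle \Phi(x), g\rangle = 0$ for all $x \in D(\Phi)$ but $\langle y, g\rangle \neq 0$; the first relation says exactly $g \in D_*(\Phi)$ (with preadjoint value $\Phi_*(g) = -f$), while the second contradicts that $y$ annihilates $D_*(\Phi)$. Hence $(0,y) \in \overline{\Gr \Phi}^{\,w^*}$, and unwinding the definition of the weak* topology yields the desired net. The subtlety here, which I expect to be the main obstacle, is the separation step: the weak* topology on $\cl X \oplus \cl Y$ is generated by the predual and need not be a dual-pair topology in which closed convex sets are weak* closed unless one is careful, so I would either invoke that $\Gr\Phi$ is a \emph{subspace} (so that a point outside its weak* closure is separated from it by a weak* continuous functional, i.e.\ an element of the predual) or, equivalently, apply the bipolar theorem for the dual pair $\langle \cl X \oplus \cl Y, \cl X_* \oplus \cl Y_*\rangle$ to identify $\overline{\Gr\Phi}^{\,w^*}$ with the annihilator of the annihilator of $\Gr\Phi$. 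Verifying that the separating functional genuinely comes from the predual (and not merely from the full dual) is where the argument must be handled with care.
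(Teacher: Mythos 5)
Your proof is correct and follows essentially the same route as the paper's: the implication (ii)$\Rightarrow$(i) is the identical direct computation, and for (i)$\Rightarrow$(ii) the paper likewise identifies $D_*(\Phi)$ with the domain of the preadjoint, shows that the preannihilator of $\Gr\Phi$ in $\cl X_*\oplus\cl Y_*$ sits inside the graph of $-\Phi_*$, and applies the bipolar theorem to conclude $(0,y)\in\overline{\Gr\Phi}^{w^*}$; your contrapositive Hahn--Banach separation is just the same argument read backwards. The subtlety you flag about the separating functional living in the predual is handled exactly as you suggest, since $\Gr\Phi$ is a subspace and the continuous functionals for the product weak* topology on $\cl X\oplus\cl Y$ are precisely the elements of $\cl X_*\oplus\cl Y_*$.
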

\begin{proof}
(ii)$\Rightarrow$(i) Suppose that $x_{i}\rightarrow 0$ and $\Phi(x_{i})\rightarrow y$
in the corresponding weak* topologies. If $g\in D_*(\Phi)$ then the map $x\rightarrow \langle \Phi(x),g\rangle$
is weak* continuous on $D(\Phi)$. Since $D(\Phi)$ is weak* dense in $\cl X$,
it extends to a weak* continuous functional on the whole of $\cl X$
and hence there exists $f\in \cl X_*$ such that $\langle \Phi(x),g\rangle = \langle x,f\rangle$, $x\in D(\Phi)$.
In particular, $\langle \Phi(x_{i}),g\rangle = \langle x_{i},f\rangle \rightarrow 0$.
On the other hand,  $\langle \Phi(x_{i}),g\rangle \rightarrow \langle y,g\rangle$. Thus,
$\langle y,g\rangle = 0$ for all $g\in D_*(\Phi)$. Since $D_*(\Phi)$ is (norm) dense in $\cl Y_*$,
we conclude that $y = 0$.

(i)$\Rightarrow$(ii)
For an operator $T$ with domain $D$, let $\Gr' T = \{(T\xi,\xi) : \xi \in D\}$.
Let $\Phi_* : D_*(\Phi)\to \cl X_*$ be defined by letting $\Phi_*(g) = f$, where,
for $g\in D_*(\Phi)$, the element $f\in \cl X_*$ is the (unique) weak* continuous
functional on $\cl X$ such that $\langle\Phi(x),g\rangle = \langle x,f\rangle$, $x\in D_*(\Phi)$.
We claim that
\begin{equation}\label{eq_dd}
(\Gr \Phi)_{\perp} \subseteq \Gr\mbox{}'(-\Phi_*).
\end{equation}
To see this, let $(f,g)\in (\Gr \Phi)_{\perp}$; then
$\langle f,x\rangle = - \langle g,\Phi(x)\rangle$, for all $x\in D(\Phi)$.
It follows that $g\in D(\Phi_*)$ and $\Phi_*(g) = -f$; thus, (\ref{eq_dd}) is proved.

Now suppose that $y\in \cl Y$ annihilates $D_*(\Phi)$.
Then $(0,y)$ annihilates $\Gr'(-\Phi_*)$ and (\ref{eq_dd}) implies that
$$(0,y) \in ((\Gr \Phi)_{\perp})^{\perp} = \overline{\Gr \Phi}^{w^*}.$$
Since $\Phi$ is weak* closable, $y = 0$ and so $D_*(\Phi)$ is norm dense in $\cl Y_*$.
\end{proof}

The von Neumann algebra $\vn(G)$ possesses two natural and, in the case
$G$ is non-discrete, genuinely different,
weak* dense selfadjoint subalgebras, one of them being $\lambda(L^1(G))$,
and the other being the (non-closed) linear span of the left translation operators
$$\vn\mbox{}_0(G) = [\lambda_s : s\in G].$$
Given a continuous function $\psi : G\rightarrow \bb{C}$, we can now consider, along with the
operator $S_{\psi}$ with domain $D(\psi)$, a
linear operator $S'_{\psi} : \vn_0(G)\rightarrow \vn_0(G)$
given by $S'_{\psi}(\lambda_s) = \psi(s)\lambda_s$, $s\in G$.
Our aim in the next theorem is to characterise the weak* closability of $S_{\psi}$ and $S'_{\psi}$.

\begin{theorem}\label{th_vn}
Let $\psi : G\rightarrow \bb{C}$ be a continuous function and $\nph = N(\psi)$.
The following are equivalent:

(i) \ \ the operator $S_{\psi}$ is weak** closable;

(ii) \ the operator $S'_{\psi}$ is weak* closable;

(iii) the function $\psi$ belongs locally to $A(G)$ at every point;

(iv) \ the function $\nph$ is a local Schur multiplier on $\cl K(L^2(G))$;

(v) \ \ the operator $S_{\nph}$ is weak** closable; 

(vi) \  $\overline{D(S_{\nph}^{*})}^{\|\cdot\|_{\Gamma}} = \Gamma(G,G)$,
$\overline{D(S_{\nph}^{**})}^{w^*} = \cl B(L^2(G))$, $\vn_0(G)\subseteq D(S_{\nph}^{**})$ and the operator
$S_{\nph}^{**} : D(S_{\nph}^{**})\rightarrow \cl B(L^2(G))$ is
weak* closable;

(vii) $\overline{D(S_{\nph}^{*})}^{\|\cdot\|_{\Gamma}} = \Gamma(G,G)$,
$\vn_0(G)\subseteq D(S_{\nph}^{**})$ and the operator $S_{\nph}^{**} :
D(S_{\nph}^{**})\rightarrow \cl B(L^2(G))$ is weak* closable.
\end{theorem}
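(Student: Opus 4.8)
The plan is to treat (ii), (iii), (iv) as three faces of the single condition $E_\psi=\emptyset$, to climb the adjoint tower of $S_\varphi$ to reach (v), (vi), (vii), and only at the end to graft on the weak** closability of $S_\psi$ through the symbolic calculus of Theorem \ref{p_maps}. I would begin with (ii)$\Leftrightarrow$(iii). Applying Proposition \ref{p_eqpw} to $S'_\psi$ on $\vn(G)$, whose predual is $A(G)$, the predual domain $D_*(S'_\psi)$ consists of those $u\in A(G)$ for which $\lambda_s\mapsto\psi(s)u(s)$ extends weak* continuously to $\vn_0(G)$; since any such functional is evaluation against an element of $A(G)$, this says exactly $\psi u\in A(G)$, so $D_*(S'_\psi)=J_\psi$. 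Thus $S'_\psi$ is weak* closable iff $J_\psi$ is norm dense in $A(G)$, and by Lemma \ref{l_epsi} together with $J(E_\psi)\subseteq\overline{J_\psi}\subseteq I(E_\psi)$ this is equivalent to $E_\psi=\emptyset$, i.e. to (iii).

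The hinge between the two groups of conditions is the pointwise identity
\[
N(\psi)(s^{-1}r,r)=\psi\bigl(r(s^{-1}r)^{-1}\bigr)=\psi(s),\qquad r\in G,
\]
which, fed into Lemma \ref{l_for} and (\ref{eq_p}), gives $\langle\lambda_s,\varphi h\rangle=\psi(s)\langle\lambda_s,h\rangle$ for every $h\in D(S_\varphi^*)$. This shows at once that $\vn_0(G)\subseteq D(S_\varphi^{**})$ with $S_\varphi^{**}|_{\vn_0(G)}=S'_\psi$, so that the weak* closability of $S'_\psi$ in (ii) is precisely the restriction to $\vn_0(G)$ of the weak* closability of $S_\varphi^{**}$ demanded in (vii). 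For (iii)$\Leftrightarrow$(iv) I would invoke the local Herz--Schur correspondence of \cite{stt}: since $N$ is a complete isometry of $A(G)$ into $\frak{S}(G,G)$ carrying the point $t$ to the set $\{(s,ts):s\in G\}$, the function $\psi$ belongs to $A(G)$ at every point precisely when $N(\psi)$ agrees locally with Schur multipliers, which is the definition of a local Schur multiplier.

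For the adjoint tower (iv)$\Leftrightarrow$(v)$\Leftrightarrow$(vi)$\Leftrightarrow$(vii) I would argue as follows. By Proposition \ref{wstarc}, $S_\varphi$ is weak** closable iff $\overline{D(S_\varphi^*)}^{\|\cdot\|_\Gamma}=\Gamma(G,G)$, which is simultaneously the density characterisation of local Schur multiplicity and the first clause of (vi) and (vii); this gives (iv)$\Leftrightarrow$(v). Since $S_\varphi^*=m_\varphi$ is multiplication by $\varphi$ on $\Gamma(G,G)$, the double adjoint $S_\varphi^{**}$ on $\cl B(L^2(G))=\Gamma(G,G)^*$ extends $S_\varphi$; as $D(S_\varphi)$ is norm dense in $\cl K$ and $\cl K$ is weak* dense in $\cl B(L^2(G))$, the clause $\overline{D(S_\varphi^{**})}^{w^*}=\cl B(L^2(G))$ is automatic, yielding (vii)$\Rightarrow$(vi). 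Moreover $D(S_\varphi^*)\subseteq D_*(S_\varphi^{**})$, so the weak* closability of $S_\varphi^{**}$ (Proposition \ref{p_eqpw}) reduces to norm density of $D(S_\varphi^*)$, hence again to (v); combined with $\vn_0(G)\subseteq D(S_\varphi^{**})$ established above, this closes (v)$\Leftrightarrow$(vi)$\Leftrightarrow$(vii) and, via $S_\varphi^{**}|_{\vn_0(G)}=S'_\psi$, reconnects to (ii).

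It remains to tie in (i), which I expect to be the main obstacle. By Proposition \ref{wstarc}, $S_\psi$ is weak** closable iff $D(S_\psi^*)=J_\psi^\lambda$ is \emph{norm} dense in $B_\lambda(G)$. To link this to the $S_\varphi$-conditions I would mimic the proof of Theorem \ref{th_duc} using the symbolic calculus: a non-zero $T\in\cl R\cap D(S_\varphi^*)^\perp$ yields, by Lemma \ref{eab}, functions $a,b$ with $E_{a\otimes b}(T)\neq0$, and since $(a\otimes b)N(u)\in D(S_\varphi^*)$ whenever $u\in J_\psi^\lambda$, the identity $\langle E_{a\otimes b}(T),u\rangle=\langle T,(a\otimes b)N(u)\rangle$ forces the element $E_{a\otimes b}(T)\in C_r^*(G)$ to annihilate $J_\psi^\lambda$. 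The delicate point is the topology: the weak** notion requires \emph{norm} density of $J_\psi^\lambda$ in $B_\lambda(G)$, whereas the transformation $\varphi\mapsto E_\varphi$ supplied by Theorem \ref{p_maps} is only weak* continuous. I therefore expect the crux to be a calibration, carried out on the uniform Roe algebra $\cl R$ and drawing on the auxiliary density clauses recorded in (vi) and (vii), that upgrades the weak* machinery of the symbolic calculus to the norm-topological statement needed to identify (i) with the remaining six conditions.
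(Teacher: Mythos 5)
Your handling of (ii)$\Leftrightarrow$(iii), the identity $P(\nph h)(s)=\psi(s)P(h)(s)$ giving $\vn_0(G)\subseteq D(S_{\nph}^{**})$ with $S_{\nph}^{**}|_{\vn_0(G)}=S'_{\psi}$, and the closing of the loop by restricting the weak* closability of $S_{\nph}^{**}$ to $\vn_0(G)$ all coincide with the paper's argument. The genuine gap is condition (i), which you leave unproven, and the ``calibration upgrading the weak* machinery to a norm statement'' that you hope for does not exist. If (i) is read with the meaning of Section \ref{s_cmga} --- norm density of $J_{\psi}^{\lambda}$ in $B_{\lambda}(G)$ --- then (i)$\Leftrightarrow$(iii) is simply false: the example following Proposition \ref{p_nece} produces, on a non-compact abelian group, a continuous $\psi\in A(G)^{\loc}$ for which $J_{\psi}^{\lambda}$ is contained in a proper maximal ideal of $B(G)$, and the paper records after the theorem that weak** closability of $S_{\psi}$ is \emph{strictly} stronger than the seven conditions listed here. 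What is actually proved in (i) is the weak* closability, in the sense introduced at the start of Section \ref{s_vna}, of $S_{\psi}$ regarded as a densely defined operator on the dual space $\vn(G)$ with predual $A(G)$. Under that reading the proof is the exact analogue of your treatment of $S'_{\psi}$: one computes $D_*(S_{\psi})=\{f\in A(G): \exists\, u\in A(G) \mbox{ with } \int_G\psi fg=\int_G ug \mbox{ for all } g\in D(\psi)\}=J_{\psi}$ (local boundedness of the continuous function $\psi$ gives $L^1(K)\subseteq D(\psi)$ for every compact $K$, whence $\psi f$ agrees almost everywhere with $u$), and then applies Proposition \ref{p_eqpw} and Lemma \ref{l_epsi}. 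No symbolic calculus, no Roe algebra $\cl R$, and no density upgrade are needed anywhere in this theorem.

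Two further steps are asserted as equivalences where only one implication is available. The paper proves only (iv)$\Rightarrow$(v), quoting from \cite{stt} that every local Schur multiplier is weak** closable, and only the inclusion $D(S_{\nph}^*)\subseteq D_*(S_{\nph}^{**})$; the converse implications are recovered by running the full cycle (iii)$\Rightarrow$(iv)$\Rightarrow$(v)$\Rightarrow$(vi)$\Rightarrow$(vii)$\Rightarrow$(ii)$\Rightarrow$(iii), so your direct claims that norm density of $D(S_{\nph}^*)$ ``is the density characterisation of local Schur multiplicity'' and that weak* closability of $S_{\nph}^{**}$ ``reduces to'' that density would each require an independent proof that the paper does not supply and that is not known for general $\nph$. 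Similarly, (iii)$\Rightarrow$(iv) is not a formal consequence of $N$ being a complete isometry: the paper first shows, using regularity of $A(G)$ and a partition of unity, that $\psi u\in A(G)$ for every $u\in A(G)\cap C_c(G)$, and then obtains the local Schur multiplier property by cutting $N(\psi\psi_n)$ down with $\chi_{K_n\times K_n}$ for an exhaustion by compacta.
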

\begin{proof}
We have that
\begin{eqnarray*}
D_*(S'_{\psi}) & = & \{f\in A(G) : T\rightarrow \langle S'_{\psi}(T),f\rangle \mbox{ is w*-continuous on } \vn\mbox{}_0(G)\}\\
& = & \{f\in A(G) : \exists \ u\in A(G): \langle S'_{\psi}(T),f\rangle = \langle T,u\rangle, T\in \vn\mbox{}_0(G)\}\\
& = &
\{f\in A(G) : \exists \ u\in A(G)\mbox{ with }  \langle S'_{\psi}(\lambda_s),f\rangle = \langle \lambda_s,u\rangle, \ s\in G\}\\
& = &
\{f\in A(G) :  \exists \ u\in A(G) \mbox{ with } \psi(s)f(s) = u(s), s\in G\}\\
& = &
\{f\in A(G) :  \psi f \in A(G)\} = J_{\psi},
\end{eqnarray*}
where the last equality follows from the fact that $\psi$ is continuous.
The equivalence (ii)$\Leftrightarrow$(iii) now follows from Lemma \ref{l_epsi} and Proposition \ref{p_eqpw}.

Similarly,
\begin{eqnarray*}
D_*(S_{\psi}) & = & \{f\in A(G) : g\rightarrow \langle S_{\psi}(\lambda(g)),f\rangle \mbox{ is w*-continuous on } D(\psi)\}\\
& = & \{f\in A(G) : \exists \ u\in A(G): \langle \lambda(\psi g), f\rangle = \langle \lambda(g), u\rangle, g\in D(\psi)\}\\
& = &
\{f\in A(G) : \exists \ u\in A(G): \int_G \psi f g = \int_G ug, \  g\in D(\psi)\}\\
& = &
\{f\in A(G) : \exists \ u\in A(G) \mbox{ such that } \psi f \sim u \} = J_{\psi}
\end{eqnarray*}
(recall that by $u \sim v$ we mean that $u=v$ almost everywhere on $G$).
The fourth equality in the latter chain can be seen as follows.
Let $K\subseteq G$ be a compact set; then $\psi|_K$ is bounded and hence
$L^1(K)\subseteq D(\psi)$. It follows that $\int_K \psi fg = \int_K ug$ for all
$g\in L^1(K)$. Since $\psi f|_K$ and $u|_K$ belong to $L^{\infty}(K)$, we conclude that
$\psi f|_K = u|_K$ almost everywhere. Since this holds for every compact $K\subseteq G$,
we have that $\psi f \sim u$.

The equivalence (i)$\Leftrightarrow$(iii) follows, as above, from Lemma \ref{l_epsi} and Proposition \ref{p_eqpw}.

(iii)$\Rightarrow$(iv)
We claim that $\psi u\in A(G)$ for every $u\in A(G)\cap C_c(G)$.
Indeed, since $\psi\in A(G)^{\loc}$, for every $t\in G$
there exists a neighborhood $V_t$ of $t$ and a function
$g_t\in A(G)$ such that $\psi = g_t$ on $V_t$.
Since $\text{supp}(u)$ is compact  there exists a finite set $F\subseteq G$ such that
$\text{supp}(u)\subseteq \cup_{t\in F}V_t$. It follows from the regularity of $A(G)$ that
there exist $h_t\in A(G)$, $t\in F$, such that $\sum_{t\in F}h_t(x)=1$ if $x\in\text{supp}(u)$ and
$h_s(x)=0$ if $x\notin  V_s$ for each $s\in F$ (see the proof of \cite[Theorem 39.21]{hewittI}).
Then for every $x\in G$ we have
$$\psi(x) u(x) =\sum_{t\in F}\psi(x)h_t(x) u(x) = \sum_{t\in F} g_t(x)h_t(x) u(x),$$
which gives $\psi u\in A(G)$.

Let $(K_n)_{n=1}^{\infty}$
be an increasing sequence
of compact sets such that, up to a null set,
$\cup_{n=1}^{\infty} K_n = G$.
Choose, for each $n\in \bb{N}$, a function  $\psi_n\in A(G)\cap C_c(G)$
that takes the value $1$ on $K_n K_n^{-1}$.
By the previous paragraph, $\psi \psi_n\in A(G)$ and therefore $N(\psi \psi_n)$ is a
Schur multiplier. Thus, for each $h\in \Gamma(G,G)$, we have
$$\nph \chi_{K_n\times K_n}h = N(\psi \psi_n)\chi_{K_n\times K_n}h\in \Gamma(G,G).$$
It follows that $\nph|_{K_n\times K_n}$ is a Schur multiplier and hence
$\nph$ is a local Schur multiplier.

(iv)$\Rightarrow$(v) follows from the fact that every local Schur multiplier is a weak* closable
multiplier \cite{stt}.

(v)$\Rightarrow$(vi) Suppose that $S_{\nph}$ is weak** closable.
By Proposition \ref{wstarc}, the space $D(S_{\nph}^*)$ is dense in
$\Gamma(G,G)$ in the norm topology.
We have that
$$D(S_{\nph}^{**}) = \{T\in \cl B(L^2(G)) : h\rightarrow \langle T, S_{\nph}^*(h)\rangle \mbox{ is continuous on }  D(S_{\nph}^*)\}.$$
The space $D(S_{\nph}^{**})$ is weak* dense in $\cl B(L^2(G))$
since it contains the norm dense subspace $D(S_{\nph})$.

Suppose that $h\in D(S_{\nph}^*)$; then $S_{\nph}^*(h) = \nph h\in^{m\times m} \Gamma(G,G)$
and hence, if $T\in D(S_{\nph}^{**})$ then
$$\langle T,\nph h\rangle  = \langle T,S_{\nph}^{*}(h)\rangle = \langle S_{\nph}^{**}(T),h\rangle.$$
The mapping
$$T\rightarrow  \langle S_{\nph}^{**}(T),h\rangle, \ \ \ T\in D(S_{\nph}^{**}),$$
is thus weak* continuous and hence $h\in D_*(S_{\nph}^{**})$. In other words,
$D(S_{\nph}^*) \subseteq D_*(S_{\nph}^{**})$; since
$D(S_{\nph}^*)$ is dense in norm in $\Gamma(G,G)$, the same holds true for
$D_*(S_{\nph}^{**})$. By Proposition \ref{p_eqpw}, $S_{\nph}^{**}$ is weak* closable.

Let $s\in G$. We show that $\lambda_s \in D(S_{\nph}^{**})$. Recall that
$P : \Gamma(G,G)\rightarrow A(G)$ is the canonical
contractive surjection satisfying (\ref{eq_P});
for every $h\in D(S_{\nph}^*)$, using Lemma \ref{l_for}, we see that
\begin{equation}\label{eq_ls}
\langle\lambda_s, S_{\nph}^*(h)\rangle =
\langle \lambda_s,\nph h\rangle = P(\nph h)(s) = \psi(s) P(h)(s) = \langle
\psi(s)\lambda_s,h\rangle.
\end{equation}
Thus, $\lambda_s\in D(S_{\nph}^{**})$,
$S_{\nph}^{**}(\lambda_s) = \psi(s)\lambda_s$, and (vi) is proved.

(vi)$\Rightarrow$(vii) is trivial.

(vii)$\Rightarrow$(ii) Suppose that $(T_i)_i\subseteq
\vn_0(G)$ and $T\in \vn(G)$ are such that $T_i\rightarrow^{w^*} 0$
and $S'_{\psi}(T_i)\rightarrow^{w^*} T$. Then $(T_i)$ (resp.
$(S'_{\psi}(T_i))$) converges to zero (resp. $T$) in the weak*
topology of $\cl B(L^2(G))$.
Identity (\ref{eq_ls}) shows that
$S_{\nph}^{**}(R) = S'_{\psi}(R)$ for every $R\in \vn_0(G)$.
Since $S_{\nph}^{**}$ is weak* closable, $T = 0$.
\end{proof}

\noindent {\bf Remark } If $\psi$ is not assumed to be continuous, then
all conditions in Theorem \ref{th_vn} apart from (ii) remain equivalent, provided that in (iii)
we require that $\psi$ almost belongs locally to $A(G)$ at every point.

\medskip

Proposition \ref{p_nece} and Theorems \ref{th_duc} and \ref{th_vn}
yield the following implications:
$$S_{\psi} \mbox{ is weak** closable } \Longrightarrow S_{\psi} \mbox{ is weak* closable } \Longrightarrow S_{\psi} \mbox{ is closable.}$$
Theorem \ref{th_vn} and the example after Proposition \ref{p_nece}
show that there exists a continuous function $\psi$ for which
$S_{\psi}$ is weak* closable but not weak** closable.
On the other hand, Proposition \ref{p_chcl} implies that if $E_{\psi}$ is a non-empty $U$-set
then $\psi$ is closable but $\psi\not\in A(G)^{\loc}$; thus, by Theorem \ref{th_vn}, $S_{\psi}$
is not weak* closable. For example, for $G={\mathbb R}$, $\psi=\chi_{[0,+\infty)}$,
we have $E_\psi=\{0\}$  which is a non-empty $U$-set by Corollary~\ref{c_count}. One can also find a continuous function $\psi$ for which $E_\psi$ is a one-point set of uniqueness.
In fact, consider a  function $\psi(t)$ on $[0,\pi]$ which is smooth on the open interval $(0,\pi)$ and  $\psi(0)=\psi(\pi)=0$. Assume also that $\psi'(\pi)=0$ and that
the integral $\int_0^1\psi(t)/tdt$ diverges. Extend $\psi$ to an odd (continuous) function on $[-\pi,\pi]$. By \cite[Chapter II.14]{kahane}, $\psi\not\in A({\mathbb T})^{\loc}=A({\mathbb T})$. As $\psi$ is smooth at any $t\ne 0$, $\psi$ belongs to $A(\mathbb T)$ at any such point $t$. Therefore $E_\psi=\{0\}$.


\begin{thebibliography}{99}


\bibitem{akt}
\textsc{ M. Anoussis, A. Katavolos and I.G. Todorov},
{\it Ideals of $A(G)$ and bimodules over maximal abelian selfadjoint algebras},
{\rm preprint}


\bibitem{arveson}
\textsc{ W.B. Arveson}, {\it Operator algebras and invariant subspaces},
{\rm Ann. Math. (2) 100 (1974), 433-532}


\bibitem{Blumlinger}
\textsc{M. Bl\"umlinger},
\textit{Characterization of measures in the group $C^*$-algebra of a locally compact group},
\textrm{Math. Ann. 289 (1991), no. 3, 393-402}


\bibitem{bozejko_pams}
\textsc{M. Bozejko},
\textit{Sets of uniqueness on noncommutative locally compact groups},
\textrm{Proc. Amer. Math. Soc. 64 (1977), no. 1, 93-96}



\bibitem{bf} \textsc{M. Bo\.{z}ejko and G. Fendler},
{\it Herz-Schur multipliers and completely bounded multipliers of
the Fourier algebra of a locally compact group},
\textrm{Boll. Un. Mat. Ital. A (6) 2 (1984), no. 2, 297-302}

\bibitem{de-canniere-haagerup}
\textsc{J. De Canni\`{e}re and U. Haagerup}, \textit{Multipliers of
the Fourier algebras of some simple Lie groups and their discrete
subgroups}, \textrm{Amer. J. Math. 107 (1985), no. 2, 455--500}


\bibitem{di}
{\sc J. Dixmier}, {\it Les C*-alg\'{e}bres et leurs repr$\acute{e}$sentations},
{\rm \'{\rm E}ditions Jacques Gabay, Paris, 1996}

\bibitem{george} {\sc G.K. Eleftherakis}, {\it Morita equivalence of masa-bimodules}, {\rm preprint}.

\bibitem{et}
{\sc G.K. Eleftherakis and I.G. Todorov}, {\it Ranges of bimodule projections and reflexivity},
\textrm{J. Funct. Anal. 262 (2012), no. 11, 4891-4915}


\bibitem{et2}
{\sc G.K. Eleftherakis and I.G. Todorov}, {\it Operator synthesis and tensor products},
\textrm{preprint, arXiv:1301.3640}



\bibitem{eks}
\textsc{ J.A. Erdos, A. Katavolos and V.S. Shulman},
{\it Rank one subspaces of bimodules over maximal abelian selfadjoint algebras},
{\rm J. Funct. Anal. 157 (1998), no.2, 554-587}


\bibitem{eymard}
{\sc P. Eymard}, {\it L'alg$\grave{e}$bre de Fourier d'un groupe
localement compact}, \textrm{Bulletin de la S.M.F. 92 (1964),
181-236}


\bibitem{f}  {\sc J. Froelich}, {\it Compact operators, invariant
subspaces and spectral synthesis}, \textrm{J. Funct. Anal. 81 (1988), 1-37}


\bibitem{gmcgehee}
{\sc C.C. Graham and O.C. McGehee}, {\it Essays in commutative harmonic analysis},
{\rm Springer-Verlag, Berlin-New York, 1979}


\bibitem{herz}{\sc C.S. Herz}, {\it The spectral theory of bounded functions},
{\rm Trans. Amer. Math. Soc. 94 (1960), 181-232}


\bibitem{hewittI}  {\sc E. Hewitt and K.A. Ross},  {\it Abstract harmonic analysis. Vol. I. Structure of topological groups, integration theory, group representations},
{\rm Springer-Verlag, Berlin-New York, 1979}


\bibitem{j}
{\sc P. Jolissaint},
{\it A characterisation of completely bounded multipliers of Fourier algebras},
{\rm Colloquium Math. 63 (1992), 311-313}

\bibitem{kp}
\textsc{A. Katavolos and V. Paulsen}, \textit{On the ranges of
bimodule projections}, \textrm{Canad. Math. Bull. 48 (2005), no. 1, 97-111}

\bibitem{kahane} {\sc J.P. Kahane}, {\it S\'{e}ries de Fourier absolument convergentes},
{\rm Springer-Verlag, Berlin, 1970}

\bibitem{katznelson} {\sc Y. Katznelson, } {\it An introduction to harmonic analysis}, {\rm Cambridge University Press,
Cambridge, 2004}

\bibitem{kl}
{\sc A.S. Kechris and A. Louveau},
{\it Descriptive set theory and the structure of sets of uniqueness},
{\rm Cambridge University Press, Cambridge, 1987}

\bibitem{lt}
{\sc J. Ludwig and L. Turowska}, {\it On the connection between sets of
operator synthesis and sets of spectral synthesis for locally compact groups},
{\rm J. Funct. Anal. 233 (2006), 206-227}


\bibitem{kato} { \sc T. Kato},
{\it Perturbation theory for linear operators},
{\rm Springer-Verlag, Berlin, 1995}

\bibitem{m}
{\sc M. McGarvey, L. Oliveira and I.G. Todorov},
{\it Normalisers of operator algebras and tensor product formulas},
{\rm Rev. Math. Iber., to appear}


\bibitem{paulsen}
{\sc V. Paulsen}, {\it Completely bounded maps and operator
algebras}, {\rm Cambridge University Press, Cambridge, 2002}

\bibitem{ped}
{\sc G. Pedersen}, {\it C*-algebras and their automorphism groups},
{\rm Academic Press, London-New York-San Francisco, 1979}


\bibitem{peller} {V.V. Peller,}
{\it Hankel operators in the theory of perturbations of unitary and
selfadjoint operators}, {\rm Funct. Anal. Appl. 19 (1985), no.
2, 111-123}


\bibitem{pisier}
{\sc G. Pisier}, {\it Introduction to operator space theory},
{\rm Cambridge University Press, Cambridge, 2003}


\bibitem{roe}
{\sc J. Roe},
{\it Lectures on coarse geometry},
{\rm University Lecture Series 31, American Mathematical Society, Providence, 2003}


\bibitem{rudin}
{\sc W. Rudin},
{\it Fourier analysis on groups},
{\rm  Interscience Publishers, New York-London, 1962}

\bibitem{st1}
{\sc V.S. Shulman and L. Turowska},
{\it Operator synthesis I. Synthetic sets, bilattices and tensor algebras},
{\rm J. Funct. Anal. 209 (2004), 293-331}

\bibitem{st2}
{\sc V.S. Shulman and L. Turowska},
{\it Operator synthesis II. Individual synthesis and linear operator equations},
{\rm  J. Reine Angew. Math.  590  (2006), 143-187}

\bibitem{stt}
{\sc V.S. Shulman, I.G. Todorov and L. Turowska},
{\it Closable multipliers},
{\rm  Integral Eq. Operator Th.  69  (2011),  no. 1, 29-62}


\bibitem{sourour}
{\sc A. R. Sourour},
{\it Pseudo-integral operators},
{\rm Trans. Amer. Math. Soc. 253 (1979), 339-363}

\bibitem{spronk} {\sc N. Spronk}, {\it Measurable Schur multipliers and completely bounded multipliers
of the Fourier algebras}, {\rm Proc. London Math. Soc (3) 89 (2004), 161-192}

\bibitem{st} {\sc N. Spronk and L. Turowska}, {\it Spectral synthesis and operator synthesis
for compact groups},
{\rm J. London Math. Soc. (2) 66 (2002), 361-376}

\bibitem{t_spsyn}
{\sc I.G. Todorov}, {\it Spectral synthesis and masa-bimodules},
{\rm J. London Math. Soc. (2) 65 (2002), no. 3, 733-744}

\bibitem{williams}
{\sc D. Williams}, {\it Crossed products of C*-algebras}, {\rm American Mathematical Society, Providence, 2007}
\end{thebibliography}
\end{document}